\definecolor{cite}{HTML}{11871E}
\definecolor{url}{HTML}{0851A6}
\definecolor{link}{HTML}{912F1B}
\tikzstyle{arrow} = [-{Straight Barb[scale=0.8]}, line width=0.2mm]
\tikzstyle{fillOperator} = [double distance = 0.65em, line cap=round]
\newcommand{\supportptX}[2]{(2*\dimension-1)*(#1)-2*\dimension + 1+(#2)}
\tikzset{
	pics/xCoordtick/.style n args = {2}{
		code = {
			\draw (#1, -1pt) -- (#1, 1pt) #2;
		}
	}
}
\tikzset{
	pics/yCoordtick/.style n args = {2}{
		code = {
			\draw (1pt, #1) -- (-1pt, #1) #2;
		}
	}
}
\tikzset{
	pics/Coord/.style n args = {2}{
		code = {
			\draw[arrow] (0, 0) -- (#1, 0) node[below, at end]{$c$};
			\draw[arrow] (0, 0) -- (0, #2) node[left, at end]{$k$};
		}
	}
}
\tikzset{
	pics/Grid/.style n args = {2}{
		code = {
			\def\kOffset{#1}
			
			\def\firstend{\number\numexpr 2 * \dimension - 3 \relax}
			\def\newstart{\number\numexpr 2 * \dimension - 1 \relax}
			\def\actualend{\number\numexpr 2 * \dimension \relax}
			\def\supportptX##1##2{{(2*\dimension-1)*(##1)-2*\dimension + 1+(##2)}}
			
			\foreach \k in {\kOffset, ..., #2} {
				\foreach \l in {0, ..., \firstend, \newstart, \actualend} {
					\node at ({\supportptX{\k}{\l}}, \k) {$\times$};
				}
				
				\node at ({\supportptX{\k}{\firstend + 1}}, \k) {$\dots$};
			}
		}
	}
}
\tikzset{
	pics/MyLine/.style n args = {5}{
		code = {
			\def\k{#1}
			\def\l{#2}
			\def\kprime{#3}
			\def\lprime{#4}
			
			\draw ({\supportptX{\k}{\l}}, \k) -- ({\supportptX{\kprime}{\lprime}}, \kprime) #5;
		}
	}
}
\tikzset{
	pics/LineThruOrgn/.style n args = {4}{
		code = {
			\begin{scope}
				\def\k{#1}
				\def\l{#2}
				\node (O) at (0, 0) {};
				\node (T) at ({\supportptX{\k}{\l}}, \k) {};
				\draw (0, 0) -- ($(O)!#3cm!(T)$) #4;
			\end{scope}
		}
	}
}
\tikzset{
	pics/SuppLines/.style n args = {1}{
		code = {
			\draw pic {MyLine={1}{0}{#1 + 0.7}{0}{node[above, pos=0.95, opacity=1]{$L_B$}}};
			
			\draw pic {MyLine={0}{2*\dimension-1}{#1 + 0.38}{2*\dimension-1}{node[above, pos=0.97, opacity=1]{$L_1$}}};
			
			\draw pic {MyLine={0}{2*\dimension}{#1 + 0.3}{2*\dimension}{node[above, pos=0.97, opacity=1]{$L_T$}}};
		}
	}
}
\tikzset{
	pics/FillRow/.style n args = {1}{
		code = {
			\draw[fillOperator] ({\supportptX{#1}{0}}, #1) -- ({\supportptX{#1}{2 * \dimension}}, #1);
		}
	}
}
\tikzset{
	pics/FillFirstLine/.style n args = {1}{
		code = {
			\draw[fillOperator] ({\supportptX{1}{0}}, 1) -- ({\supportptX{#1}{0}}, #1);
		}
	}
}
\tikzset{
	pics/FillPoint/.style n args = {2}{
		code = {
			\draw[fillOperator] ({\supportptX{#1}{#2}}, #1) -- ({\supportptX{#1}{#2}}, #1);
		}
	}
}
\DeclareMathAlphabet{\eur}{U}{zeus}{m}{n}
\newcommand{\matheur}[1]{\eur{#1}}
\theoremstyle{plain}
\newtheorem{prop}[subsubsection]{Proposition}
\newtheorem{lem}[subsubsection]{Lemma}
\newtheorem{cor}[subsubsection]{Corollary}
\newtheorem{thm}[subsubsection]{Theorem}
\newtheorem*{thm*}{Theorem}
\theoremstyle{definition}
\newtheorem{defn}[subsubsection]{Definition}
\newtheorem{notation}[subsubsection]{Notation}
\theoremstyle{remark}
\newtheorem{rmk}[subsubsection]{Remark}
\newtheorem*{rmk*}{Remark}
\newcommand{\teq}{\addtocounter{subsubsection}{1}\tag{\thesubsubsection}}
\newcommand{\arrdisp}{0.33ex}
\newcommand{\arrdisplacementsp}{0.72ex}
\DeclareMathOperator{\addUnit}{addUnit}
\DeclareMathOperator{\Alt}{Alt}
\DeclareMathOperator{\aug}{au}
\DeclareMathOperator{\car}{char}
\DeclareMathOperator{\coChev}{coChev}
\DeclareMathOperator{\coFib}{coFib}
\DeclareMathOperator{\coFree}{coFree}
\DeclareMathOperator{\coLie}{coLie}
\DeclareMathOperator*{\colim}{colim}
\DeclareMathOperator{\cone}{cone}
\DeclareMathOperator{\coPrim}{coPrim}
\DeclareMathOperator{\Cinfty}{C_\infty}
\DeclareMathOperator{\ComAlg}{ComAlg}
\DeclareMathOperator{\ComAlgstar}{\ComAlg^\star}
\DeclareMathOperator{\ComCoAlg}{ComCoAlg}
\newcommand{\cont}{\mathrm{cont}}
\DeclareMathOperator{\Corr}{Corr}
\newcommand{\devissage}{d\'evissage}
\newcommand{\DGCat}{\mathrm{DGCat}}
\newcommand{\DGCatprescont}{\DGCat_{\pres, \cont}}
\newcommand{\disj}{\mathrm{disj}}
\DeclareMathOperator{\ev}{ev}
\DeclareMathOperator{\Fact}{Fact}
\newcommand{\Factstar}{\Fact^\star}
\DeclareMathOperator{\FB}{FB}
\newcommand{\FBplus}{\FB_+}
\DeclareMathOperator{\FI}{FI}
\DeclareMathOperator{\Fib}{Fib}
\DeclareMathOperator{\FS}{FS}
\DeclareMathOperator{\FSplus}{\FS_+}
\newcommand{\Fqbar}{\lbar{\mathbb{F}}_q}
\DeclareMathOperator{\Free}{Free}
\DeclareMathOperator{\Fun}{Fun}
\DeclareMathOperator{\GL}{GL}
\newcommand{\HHo}{\mathbf{H}}
\newcommand{\Ho}{\mathrm{H}}
\DeclareMathOperator{\Hom}{Hom}
\newcommand{\id}{\mathrm{id}}
\DeclareMathOperator{\im}{im}
\newcommand{\iso}{\mathrm{iso}}
\newcommand{\Kunneth}{K\"unneth}
\DeclareMathOperator{\Linfty}{L_\infty}
\DeclareMathOperator{\Lie}{Lie}
\newcommand{\Mod}{\mathrm{Mod}}
\newcommand{\op}{\mathrm{op}}
\newcommand{\otimesshriek}{\overset{!}{\otimes}}
\newcommand{\otimesstar}{\overset{\star}{\otimes}}
\newcommand{\otimesstard}{\overset{*}{\otimes}}
\DeclareMathOperator{\PConf}{PConf}
\newcommand{\poly}{\mathrm{poly}}
\newcommand{\pres}{\mathrm{pres}}
\newcommand{\PreStk}{\mathrm{PreStk}}
\newcommand{\pseudoproper}{\mathrm{ps.p.}}
\newcommand{\pt}{\mathrm{pt}}
\newcommand{\Ql}{\mathbb{Q}_\ell}
\newcommand{\Qlbar}{\lbar{\mathbb{Q}}_\ell}
\DeclareMathOperator{\Ran}{Ran}
\DeclareMathOperator{\Rep}{Rep}
\DeclareMathOperator{\Sym}{Sym}
\newcommand{\Sch}{\mathrm{Sch}}
\DeclareMathOperator{\Spec}{Spec}
\newcommand{\surjects}{\twoheadrightarrow}
\DeclareMathOperator{\Shv}{Shv}
\newcommand{\Spc}{\mathrm{Spc}}
\DeclareMathOperator{\cTop}{\matheur{T}op}
\DeclareMathOperator{\TopTriv}{T}
\DeclareMathOperator{\tr}{tr}
\newcommand{\un}{\mathrm{un}}
\newcommand{\union}{\mathrm{union}}
\newcommand{\Vect}{\mathrm{Vect}}
\newcommand{\ardis}{\ar@<\arrdisp>}
\newcommand{\ardissp}{\ar@<\arrdisplacementsp>}
\newcommand{\lbar}[1]{\overline{#1}}
\newcommand{\oversetsupscript}[3]{\overset{#2}{#1}{}^{#3}}
\newenvironment{myenum}[1]{%
  \begin{enumerate}[label=#1,topsep=1pt,itemsep=0pt,partopsep=1pt,parsep=1pt]%
    }%
    {
  \end{enumerate}%
}
\newcommand{\alg}[1]{\matheur{A}_{#1}}
\newcommand{\algOp}[1]{\matheur{O}_{#1}}
\newcommand{\algGen}[1]{\matheur{G}_{#1}}
\newcommand{\liealg}[1]{\mathfrak{a}_{#1}}
\newcommand{\liealgOp}[1]{\mathfrak{o}_{#1}}
\newcommand{\liealgGen}[1]{\mathfrak{g}_{#1}}
\newcommand{\algOr}[1]{\mathsf{A}_{#1}}
\newcommand{\freeAlgOr}{\mathsf{A}}
\newcommand{\liealgOr}[1]{\mathsf{a}_{#1}}
\newcommand{\trivLieAlgOr}{\mathsf{a}}
\newcommand{\sOmega}{\vec{\omega}}
\title[Higher representation stability for ordered configuration spaces]{Higher representation stability for ordered configuration spaces}
\author{Quoc P. Ho}
\address{Department of Mathematics, The Hong Kong University of Science and Technology (HKUST), Clear Water Bay, Hong Kong}
\email{phuquocvn@gmail.com}
\date{\today}
\keywords{Ordered configuration spaces, generalized configuration spaces, representation stability, twisted commutative algebras, chiral algebras, chiral homology, factorization algebras, Koszul duality, Ran space.}
\subjclass[2010]{Primary 55R80. Secondary 18G55, 14L30, 20C30, 05E05, 81R99.}
\begin{document}

\begin{abstract}
  Using factorization homology with coefficients in twisted commutative algebras (TCAs), we prove two flavors of higher representation stability for the cohomology of (generalized) configuration spaces of a scheme/topological space $X$. First, we provide an iterative procedure to study higher representation stability using actions coming from the cohomology of $X$ and prove that all the modules involved are finitely generated over the corresponding TCAs. More quantitatively, we compute explicit bounds for the derived indecomposables in the sense of Galatius--Kupers--Randal-Williams. Secondly, when certain $C_\infty$-operations on the cohomology of $X$ vanish, we prove that the cohomology of its configuration spaces forms a free module over a TCA built out of the configuration spaces of the affine space. This generalizes a result of Church--Ellenberg--Farb on the freeness of $\FI$-modules arising from the cohomology of configuration spaces of open manifolds and, moreover, resolves the various conjectures of Miller--Wilson under these conditions.
\end{abstract}

\maketitle
\tableofcontents

\section{Introduction}

\subsection{Motivation}
Let $\{X_n\}_{n\in \mathbb{N}}$ be a sequence of spaces/schemes that arises, in some sense, naturally. Such a sequence is said to exhibit \emph{homological stability} if there are naturally constructed maps $\Ho_i(X_n) \to \Ho_i(X_{n+1})$ which are isomorphisms when $n\gg i$. Homological stability is a ubiquitous phenomenon and plays an important role in many fields of mathematics, going back to Quillen~\cites{quillen_notes_1971}, McDuff, and Segal~\cites{mcduff_homology_1976,segal_topology_1979}, to name a few. The last ten years have witnessed an explosion of activities in the subject. This is in part due to the newly discovered connection between homological stability phenomena and problems in arithmetic statistics, pioneered by Ellenberg--Venkatesh--Westerland, as well as the influx of many new techniques from algebraic topology and homotopy theory, see, for example,~\cites{ellenberg_homological_2016,vakil_discriminants_2015, ellenberg_fox-neuwirth-fuks_2017,farb_coincidences_2019,ho_homological_2021,kupers_$e_n$-cell_2018, galatius_cellular_2018, galatius_$e_2$-cells_2019,randal-williams_homology_2019}.

It is realized by Church--Farb in~\cite{church_representation_2013}  that while some sequences of spaces, such as the sequence of ordered configuration spaces of a manifold, do not exhibit homological stability, they do display a stability phenomenon up to some group action, which is the symmetric group action in the case of ordered configuration spaces. This so-called \emph{representation stability}, too, turns out to be a ubiquitous phenomenon, appearing in a plethora of settings, from topology, group cohomology, to arithmetic statistics, for instance~\cite{church_homological_2012,church_representation_2014,church_fi-modules_2015,putman_stability_2015,putman_representation_2017}.

Intriguingly, within the range where homological stability fails, the failure itself exhibits stability and we obtain, so to speak, \emph{secondary homological stability}, a phenomenon discovered in a breakthrough paper~\cite{galatius_$e_2$-cells_2019}. Roughly speaking, if we let $\Ho'_*(X_n)$ be the difference between $\Ho_*(X_n)$ and $\Ho_*(X_{n-1})$, then we can construct natural maps $\Ho'_*(X_n) \to \Ho'_{*+h}(X_{n+k})$ for some fixed $h$ and $k$ such that for any $i$, $\Ho'_{i+th}(X_{n+tk}) \to \Ho'_{i+(t+1)h}(X_{n+(t+1)k})$ is an isomorphism when $t\gg 0$. Observe that compared to the case of usual homological stability, secondary stability can relate homology groups of different degrees (i.e. $h>0$) and between spaces that are further apart in the sequence (i.e. $k>1$).

It is a natural question to ask if~\emph{secondary representation stability} also appears~\emph{in nature}. In~\cite{miller_higher_2016}, it is discovered that this is indeed the case for ordered configuration spaces of surfaces with a boundary. It is thus tempting to ask the following questions:
\begin{myenum}{(\roman*)}
\item Do ordered configuration spaces of manifolds/schemes of higher dimensions satisfy secondary representation stability?
\item Within the range where secondary representation stability fails, does the failure satisfy some sort of stability? In other words, do ordered configuration spaces exhibit \emph{tertiary} and even \emph{higher order representation stability}?
\item Can we provide explicit ranges for higher representation stability?
\item What about generalized ordered configuration spaces?
\end{myenum}
Of these questions, the first three were stated as conjectures in~\cite{miller_higher_2016}*{\S3.6}.

\begin{rmk*}
  Progresses in these directions have also recently been made in~\cite{bibby_generating_2019}. One important feature which distinguishes our approach from theirs is that for rational coefficients, we are able to deduce higher representation stability results even in cases where one cannot completely compute the cohomology. See~\S\ref{subsubsec:compare_to_Bibby_Gadish} for a more complete comparison.
\end{rmk*}

\subsection{A sample theorem}
Before describing the general goal of the paper, to fix ideas and to whet the reader's appetite, let us record here one simple and concrete consequence of our work. For a manifold $X$, we let $P_2^k(X) = \oversetsupscript{X}{\circ}{k}$ be the ordered configuration space of $k$ distinct points on $X$, i.e. the open complement of all the diagonals $x_i=x_j$ in $X^k$. Then, for each cohomological degree $p$, the rational cohomology group $\Ho^p(P^k_2(X))$ admits a natural action of the symmetric group $S_k$ by permuting the factors. In fact, $\bigoplus_k \Ho^*(P^k_2(X))$ has a natural structure of a twisted commutative algebra (see~\S\ref{subsec:FB_twisted_comalg} for a review) and hence, in particular, admits actions by multiplying with $\Ho^0(X)$ and $\Ho^1(X)$.

\begin{thm} \label{thm:intro:sample}
  Let $X$ be a connected non-compact finite type manifold of dimension at least $4$. Then, there exists a function $r: \mathbb{N}_0 \to \mathbb{N}_0$ tending to infinity such that for $k > p-r(k)$, the images under the actions of $\Ho^0(X)$ and $\Ho^1(X)$ on $\Ho^p(P^{k-1}_2(X))$ and $\Ho^{p-1}(P^{k-1}_2(X))$ together span $\Ho^p(P^k_2(X))$ up to the action of the symmetric group $S_k$.
\end{thm}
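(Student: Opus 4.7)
The plan is to view $A := \bigoplus_{k \geq 0} \Ho^*(P_2^k(X); \mathbb{Q})$ as a twisted commutative algebra (TCA) arising from factorization homology, and translate the desired generation property into a vanishing range for derived indecomposables in the sense of Galatius--Kupers--Randal-Williams. The TCA structure on $A$ is induced by the (co)factorization algebra on the disjoint union $\bigsqcup_k P_2^k(X)/S_k$: the multiplication $A_k \otimes A_\ell \to A_{k+\ell}$ arises by combining disjoint configurations. The classes $\Ho^0(X)$ and $\Ho^1(X)$ sit in $A_1 = \Ho^*(X)$ in weight $1$, and the actions appearing in the statement are precisely multiplication by these classes. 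Hence the theorem is equivalent to the assertion that the sub-TCA $B \subset A$ generated by $\Ho^{\leq 1}(X) \subset A_1$ surjects onto $A$ in every bidegree $(p, k)$ satisfying $k > p - r(k)$.

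Via the Koszul-duality ($\Chev \dashv \coChev$) framework developed earlier in the paper, this surjectivity in bidegree $(p, k)$ is equivalent to the vanishing of the derived indecomposables of the cofiber of $B \to A$ in that bidegree. A failure of generation at $(p, k)$ produces a nonzero class in the $\coChev$ of this cofiber at $(p, k)$, so the task reduces to pinning down an explicit vanishing range.

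To obtain the range I would combine the factorization-homology description of $A$---built out of the configuration-space TCA of $\mathbb{R}^n$ (with $n = \dim X \geq 4$) and the finite-dimensional $\Ho^*(X)$ (using that $X$ is non-compact of finite type)---with the fact that the Koszul-dual cooperadic structure shifts cohomological degree by $n - 1 \geq 3$ per Lie word. A weight-$k$ class in the derived indecomposables of the cofiber of $B \to A$ must then live in cohomological degree of order at least $(n-1)(k-1)$, up to a bounded correction coming from $\Ho^{\geq 2}(X)$. This yields an $r(k)$ growing at least linearly in $k$, hence $r(k) \to \infty$ as required.

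The main obstacle, as I see it, is this last quantitative step: forming the derived cofiber of $B \to A$ inside TCAs, identifying its $\coChev$ with a concrete bicomplex whose bidegrees can be estimated, and absorbing the contributions from $\Ho^{\geq 2}(X)$ without spoiling linear growth. Non-compactness and finite type of $X$ are needed to keep those corrections bounded; the strict inequality $n \geq 4$ (rather than $n \geq 2$, which would already suffice for generation by all of $\Ho^*(X)$) is precisely what lets the bracket shift $n-1$ exceed the amount consumed by restricting to generators in $\Ho^{\leq 1}(X)$.
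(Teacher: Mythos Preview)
Your framework is right and your estimate on derived indecomposables is essentially Corollary~\ref{cor:factoring_out_first_line_k=1} with $c_0=1$. The gap is the claimed equivalence ``surjectivity in bidegree $(p,k)$ $\Leftrightarrow$ vanishing of the derived indecomposables of the cofiber in that bidegree.'' Surjectivity of the multiplication map is vanishing of the \emph{underived} indecomposables (i.e.\ $\mathrm{Tor}_0$), whereas your $\coChev$ computation controls the \emph{derived} tensor $\Lambda\otimes^L_B A$. Higher $\mathrm{Tor}$'s can cancel against $\mathrm{Tor}_0$ in a fixed total bidegree, so vanishing of the derived object there does not force the cokernel of the multiplication to vanish. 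The paper flags exactly this issue (see Remark~\ref{rmk:hyper_vs_actual_stability} and the discussion before Theorem~\ref{thm:finite_generation_higher_rep_stab}): beyond the $\FI$ case handled by Gan--Li, there is no general mechanism to turn bounds on derived indecomposables into generation statements.

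The paper's argument for Theorem~\ref{thm:intro:sample} is therefore structured differently, in two steps. First, non-compactness ($\TopTriv_1$) makes $\alg{2}(X)$ a \emph{free} $\FI$-module (Corollary~\ref{cor:T1_image_spanned}), so at the primary level derived and underived indecomposables coincide and one can speak directly of the $\FI$-homology $\Ho^*(\alg{2}^{(0)}(X))$. Second, finite generation of this $\FI$-homology over $\Alt(\Ho^1(X)_1)$ is obtained not from the explicit vanishing bound on $\alg{2}^{(1)}(X)$, but via Theorem~\ref{thm:finite_generation_higher_rep_stab}: one runs the spectral sequence whose $E_1$ page is the $i$-acyclic (free) case and then invokes local Noetherianity of modules over free twisted (anti)commutative algebras generated in degree~$1$ (Sam--Snowden, Lemma~\ref{lem:rohit_nagpal}) to pass to $E_\infty$. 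This Noetherian step is what produces an abstract $r(k)\to\infty$ rather than the linear $r(k)$ your degree-count would suggest; the dimension hypothesis $\dim X\geq 4$ enters only to ensure $c_0=1<(2d-1)/2$, which you correctly identified.
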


This theorem is best understood in comparison with the following result.

\begin{thm}[\cite{church_fi-modules_2015}*{Theorem 6.4.3}, \cite{miller_higher_2016}*{Theorems 3.12 and 3.27}]
  Let $M$ be a noncompact connected manifold of dimension $d>2$. Then, for $k>p$, the image under the action of $\Ho^0(X)$ on $\Ho^p(P^{k-1}_2(X))$ spans $\Ho^p(P^k_2(X))$ up to the action of the symmetric group $S_k$.
\end{thm}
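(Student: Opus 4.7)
The plan is to exhibit $\bigoplus_k \Ho^p(\Conf_k(X); \mathbb{Q})$ as an $\FI$-module of weight $\le p$, which is equivalent to the stated surjectivity: weight $\le p$ means precisely that for $k > p$ the structure map from arity $k-1$ (namely, the action of the unit of $\Ho^0(X) \cong \mathbb{Q}$, which corresponds to adjoining a new point) generates arity $k$ up to the $S_k$-action.

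The key tool is an $\FI$-equivariant spectral sequence converging to $\Ho^*(\Conf_k(X); \mathbb{Q})$, obtainable either from the Leray spectral sequence of the open inclusion $\Conf_k(X) \hookrightarrow X^k$ or from the Totaro--Kriz diagonal stratification, and most cleanly packaged via the factorization-homology perspective underlying this paper. The $E_2$-page decomposes over set partitions $\pi$ of $[k]$ into blocks $B_1,\dots,B_c$, with each summand of the form $\Ho^*(X^c) \otimes \matheur{A}_\pi$, where $\matheur{A}_\pi$ is an Arnold-type factor concentrated in cohomological degree exactly $(d-1)(k-c) = (d-1)\sum_i(|B_i|-1)$ coming from the collisions inside each block. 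Viewed as an $\FI$-module in $k$, with the $\FI$-structure adjoining singleton blocks to $\pi$, such a summand is induced from arity equal to the total size of the non-singleton blocks, which is bounded by $2(k-c)$ since each non-singleton block has size $\ge 2$. Consequently, a class in total cohomological degree $p$ has $\FI$-weight at most $2(k-c) \le 2p/(d-1) \le p$, using the hypothesis $d - 1 \ge 2$.

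I would then conclude by the standard Church--Ellenberg--Farb fact that the subcategory of $\FI$-modules of weight $\le p$ is closed under subquotients, so the bound propagates from $E_2$ through every page to $E_\infty$, giving weight $\le p$ for $\Ho^p(\Conf_k(X); \mathbb{Q})$ and hence the claimed surjectivity for $k > p$. The main obstacle is verifying $\FI$-equivariance of both the spectral sequence and the forest decomposition: the $\FI$-structure must act on each page by the combinatorial operation that adjoins a new singleton block to $\pi$ and leaves the Arnold factor untouched. This is handled cleanly by realizing the whole construction as an $\FB$-diagram of topological spaces, equivalently as the factorization homology of $X$ with values in the unital commutative factorization algebra built from $\Ho^*(X)$, so that $\FI$-functoriality is intrinsic to the setup rather than an ad hoc compatibility to be checked.
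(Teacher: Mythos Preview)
Your argument is correct and is essentially the classical Church--Ellenberg--Farb approach from the cited references. Note that the paper itself does not prove this statement; it is quoted from the literature for comparison with the paper's own Theorem~\ref{thm:intro:sample}.

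The paper does, however, recover and generalize this statement in Corollary~\ref{cor:T1_image_spanned} via a genuinely different route. Rather than running a spectral sequence and invoking closure of weight-$\leq p$ $\FI$-modules under subquotients, the paper identifies $\alg{2}(X) \simeq \coChev^{\un}(\liealg{2}(X))$ via Koszul duality (Theorem~\ref{thm:coh_conf_vs_coChev}) and then, under the $\TopTriv_1$ hypothesis (equivalent to non-compactness), splits the $\coLie$-coalgebra $\liealg{2}(X)$ as a direct sum $\liealgOp{2, \leq 1}(X) \oplus \liealgGen{2, \leq 1}(X)$ (Proposition~\ref{prop:splitting_n=2}). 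Since $\coChev^{\un}$ takes direct sums to tensor products, this exhibits $\alg{2}(X)$ as a \emph{free} module over $\Sym(\Lambda_1)$, with generators $\algGen{2, \leq 1}(X)$ whose degree bound is read off directly from the support of $\liealgGen{2, \leq 1}(X)[-1]$. Your spectral sequence is in fact the one arising from the $\coChev$ filtration (Remark~\ref{rmk:spectral_sequence_coChev_gConf}), but the paper sidesteps the subquotient-closure step entirely by obtaining the decomposition already at the level of dg-$\coLie$-coalgebras. This buys the stronger conclusion of freeness rather than mere finite generation, and extends beyond manifolds to arbitrary non-proper irreducible schemes.
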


Here, we see that within the range where representation stability fails, there is a range in which the failure itself exhibits stability. The range improvement is measured by the function $r(k)$.

\subsection{The goal of this paper}

Theorem~\ref{thm:intro:sample} above is a combination two facts. Firstly, by~\cite{church_fi-modules_2015}, for each cohomological degree $p$, $\bigoplus_k \Ho^p(P^k_2(X))$ is a finitely generated free module over the free twisted commutative algebra generated by $\Ho^0(X)$, or equivalently, it is a finitely generated free $\FI$-module. Moreover, we know the range of the generators. Secondly, the space of generators, formulated in a precise sense using $\FI$-homology, is a finitely generated module over the free twisted commutative algebra generated by $\Ho^1(X)[-1]$. This is a special case of a type of higher representation stability statement we prove.

In this paper, we provide generalizations of these two statements which go beyond the smooth case, which work both in the topological and algebro-geometric settings, and which apply also to generalized configuration spaces. We refer to these as the two different flavors of higher stability. Together, they provide answers to questions posed above in many cases.

\subsubsection{First flavor of higher representation stability}
We prove that when $X$ satisfies $\TopTriv_{m}$ (see Definition~\ref{defn:TopTriv_m_condition}), the cohomology groups of ordered configuration spaces of $X$ form a free module over an explicit free twisted commutative algebra which is built out of the cohomology of ordered configuration spaces of $\mathbb{A}^{\dim X}$ and which provides operators witnessing (higher) representation stability of degrees from $1$ up to $m$. Moreover, we provide explicit bounds on the degrees of generators of the module. The answers to the first three questions i.--iii. follow, in this case, as immediate consequences. Note also that in this case, instead of an iterative procedure, representation stability from degrees $1$ to $m$ is seen simultaneously.

It is known, see~\cite{church_fi-modules_2015}, that when $X$ satisfies $\TopTriv_1$, which is equivalent to saying that $X$ is non-compact, the cohomology groups of ordered configuration spaces of $X$ form a free $\FI$-module. In the language of twisted commutative algebras of~\cite{sam_introduction_2012}, they form a free module over the free twisted commutative algebra generated by one element in graded degree $1$ and cohomological degree $0$. Our result could thus be viewed as a generalization of this statement. Note also that the twisted algebra used in~\cite{miller_higher_2016} to witness secondary representation stability is the degree $2$ part of ours.

\subsubsection{Second flavor of higher representation stability}
In general (i.e. when the module involved is not free), representation stability, or higher variants thereof, could be understood as the module involved being finitely generated. This is usually established in two steps. First, one proves that the relevant twisted commutative algebra involved is Noetherian. Second, one constructs a spectral sequence converging to our module whose first/second page satisfies the finite generation condition. The Noetherian condition then guarantees that finite generation persists and implies the same condition on page infinity. In some cases, finer analysis yields more explicit bounds on the degrees of the generators, establishing explicit stability range. One example of particular interest to us is~\cite{miller_$mathrmfi$-hyperhomology_2019} where the authors establish explicit bounds on the $\FI$-hyperhomology and then use a result of~\cite{gan_linear_2019} to produce explicit stability range.

With this in mind, we provide a generalization of the first step of~\cite{miller_$mathrmfi$-hyperhomology_2019}. More specifically, we give an iterative procedure to construct new operators which witness higher representation stability (without any extra condition on $X$) for \emph{generalized} ordered configuration spaces of $X$. Then, using these operators, we formulate higher analogs of $\FI$-hyperhomology, which fits nicely into the framework of derived indecomposables of~\cite{galatius_cellular_2018}, and provide explicit bounds in these cases. Note that the operators involved here are different from the one mentioned above in that they involve the cohomology of $X$ itself. The results we prove in this paper could thus be viewed as the analog of~\cite{miller_higher_2016}*{Conjecture 3.31} for these operators.

We expect that once analogs of~\cite{gan_linear_2019} are available for more general twisted commutative algebras, our results could be used to yield explicit bounds for higher representation stability in the general case. In this paper, using Noetherianity of free twisted commutative/anti-commutative algebras generated in degree $1$, we provide a qualitative version. Namely, we show that all the modules that appear in the iterative procedure described above are finitely generated with respect to higher representation actions.

\subsubsection{Factorization homology of twisted commutative algebras}
While the main goal of the paper is to understand higher representation stability, a large part of the paper is devoted to the introduction and study of factorization homology and Koszul duality for twisted commutative factorization algebras, a subject of independent interest. Using these techniques, we obtain expressions for the (rational Borel--Moore) homology\footnote{\label{ftn:renormalized_Borel_Moore}To keep the terminology less cluttered, we will, from now on, loosely use the term cohomology to refer to a renormalized version of Borel--Moore homology, i.e. sheaf cohomology with coefficients in the renormalized dualizing sheaf, see Notation~\ref{not:renormalized_dualizing_sheaf}.} of generalized ordered configuration spaces of $X$ in terms of Lie cohomology of some twisted Lie algebras.\footnote{Technically, we use the Koszul duality between commutative algebras and $\coLie$-coalgebras.} The two flavors of representation stability presented above are direct consequences of this analysis.

\subsection{Relation to other work}
\subsubsection{Relation to Miller-Wilson~\cite{miller_higher_2016}}
Our work is closely related to, and is inspired by, the work of Miller--Wilson~\cite{miller_higher_2016}, where, to our knowledge, the notion of secondary representation stability first appears in the literature. There, the authors introduce a pair of distinct points at the boundary of a $2$-dimensional manifold $M$ with boundary to get a map
\[
  \PConf_2 \mathbb{R}^2 \times \PConf_n M \to \PConf_{n+2}M,
\]
where $\PConf_n M = M^n \setminus \Delta$ is the ordered configuration space of $n$ distinct points on $M$. This induces an operator
\[
  \Ho_1(\PConf_2 \mathbb{R}^2) \otimes \Ho_*(\PConf_n M) \to \Ho_{*+1}(\PConf_{n+2} M)
\]
used to formulate and prove secondary stability.

In contrast, our actions, obtained algebraically, are defined for higher dimensional spaces (that are not necessarily manifolds) and provide examples for higher representation stability beyond the secondary case. However, already for secondary stability, we need the extra condition $\TopTriv_2$ (see Definition~\ref{defn:TopTriv_m_condition}) to obtain the kind of action~\cite{miller_higher_2016} constructed. On the other hand, we are able to prove certain free-ness result that is not available without this extra assumption, generalizing the result of~\cite{church_fi-modules_2015}.

\subsubsection{Relation to Bibby--Gadish~\cite{bibby_generating_2019}} \label{subsubsec:compare_to_Bibby_Gadish}
The paper~\cite{bibby_generating_2019} appeared while we were writing up the current paper. While the methods employed are completely different, the applications we have for configuration spaces share many similar features with the main results of~\cite{bibby_generating_2019}. Let us now comment on the main differences.

On the one hand, \cite{bibby_generating_2019} works with coefficients in a Noetherian ring (with some extra assumption on the cohomology of $X$ to make sure that \Kunneth{} formula holds), whereas our coefficients are always fields of characteristic $0$. Moreover, \cite{bibby_generating_2019} works in the equivariant setting and proves results about \emph{orbit configuration spaces} whereas we concern ourselves with only the trivial group case. We expect, however, that a relatively straightforward adaptation of our method to the equivariant setting will yield similar results for the case of orbit generalized configuration spaces as well.

On the other hand, our method works uniformly for both ordered configuration spaces and the generalized variants. Most importantly, unlike their work, we are able to deduce representation stability results even in situations where one cannot completely compute all the cohomology. Indeed, the main higher representation stability results~\cite{bibby_generating_2019}*{Theorem C, Theorem 4.4.2} require the space involved to be $i$-acyclic, leaving the more general cases open~\cite{bibby_generating_2019}*{Conjectures 4.3.11 and 4.4.3}. Here, the $i$-acyclicity condition ensures that a certain spectral sequence involved collapses, allowing one to completely compute all the cohomology groups. Working systematically with dg-algebras and $\Linfty$-algebras, we are able to go beyond the $i$-acyclic cases, settling various questions left open in~\cite{bibby_generating_2019}, such as Conjectures 4.3.11 and 4.4.3 as well as Remark 3.7.5.ii. Moreover, we are able to establish precise bounds for the derived indecomposables (i.e. higher analogs of $\FI$-hyperhomology) with respect to actions used to witness higher representation stability.

\subsubsection{Relation to Petersen~\cite{petersen_cohomology_2020}}
In this paper, we give an expression for the cohomology of generalized configuration spaces in terms of cohomological Chevalley complex of a certain dg-$\Lie$-algebra. This could be viewed as a categorical and model-free construction of the functor $\matheur{C}\matheur{F}$ appearing in~\cite{petersen_cohomology_2020}*{Theorem 1.15} when working over a field in characteristic $0$.

\subsection{An outline of the results}
\label{subsec:intro_outline_results}
While the paper's main goal is to understand higher representation stability of generalized ordered configurations spaces, most of the paper concerns itself with the development of a theory of factorization homology with coefficients in a twisted commutative algebra, which is of independent interest. In fact, configuration spaces do not make an appearance until the last section~\S\ref{sec:rep_stab}. The results about higher representation stability could thus be viewed as simple applications of the theory. We will therefore split this subsection into two parts, corresponding to the general theory~\S\ref{intro:subsubsec:results_gen_theory} and its applications~\S\ref{intro:subsubsec:results_applied_conf} respectively.

To keep the terminology and numerology uniform, we will, in this paper, restrict ourselves to the case where $X$ is a scheme even though the proofs also carry over to more general topological spaces, see also~\S\ref{subsubsec:use_scheme_as_default}. Throughout, we will also assume that $X$ is an equi-dimensional of dimension $d\geq 1$. We will use $\pi: X\to \pt = \Spec k$ to denote the structure map.

\subsubsection{Factorization (co)homology with coefficients in a twisted commutative algebra}
\label{intro:subsubsec:results_gen_theory}
Compared to factorization homology with coefficients in a commutative algebra, factorization homology with coefficients in a twisted commutative algebra gives rise to a twisted commutative algebra.\footnote{See~\S\ref{subsec:FB_twisted_comalg} for a quick overview of twisted commutative algebras.} Namely, the latter keeps track of the symmetric group $S_n$-actions. To set up the theory, we introduce a twisted commutative version of the $\Ran$ space (or more precisely, prestack, see~\S\ref{subsec:prestacks_and_sheaves}), denoted by $\Ran(X, \FBplus)$, playing the role of the usual (graded) $\Ran$ space in this new setting.

The space $\Ran(X, \FBplus)$ is easy to describe
\[
  \Ran(X, \FBplus) \simeq \bigsqcup_{k>0} X^k/S_k,
\]
where the quotient is to be understood in the sense of groupoid quotient. It is thus equipped with a natural morphism $\Ran(\pi)$ to the constant prestack
\[
  \Ran(\pt, \FBplus) = \FBplus = \bigsqcup_{k>0} \pt/S_k.
\]
Here, we view the groupoid $\FBplus$ as a constant prestack.

Observe that sheaves on $\Ran(X, \FBplus)$ are collections of $S_k$-equivariant sheaves on $X^k$ and similarly, sheaves on $\FBplus$ are collections of $S_k$-representations. Pushing forward along $\Ran(\pi)$, $\Ran(\pi)_!$, amounts to taking cohomology of $X^k$ while keeping track of all the $S_k$-actions. While simple, this is a very convenient way to package everything in terms of sheaf theory.

Following~\cite{beilinson_chiral_2004, francis_chiral_2011,gaitsgory_weils_2014}, we construct a convolution monoidal structure $\otimesstar$ on the category of sheaves $\Shv(\Ran(X, \FBplus))$, consider the category of commutative algebra objects on it $\ComAlgstar(\Ran(X, \FB_+))$, formulate the factorizability condition, and define the category twisted commutative factorizable algebras $\Factstar(X, \FBplus)$ as the full sub-category of $\ComAlgstar(\Ran(X, \FBplus))$ consisting of objects satisfying the factorizability condition. By factorization homology, we simply mean applying $\Ran(\pi)_!$ to an object in $\Factstar(X, \FBplus)$ and obtain an object in $\ComAlg(\Vect^{\FBplus})$, a twisted commutative algebra.\footnote{There is also a cohomological variant, which we will not elaborate in the introduction.}

Combinatorially, $\Ran(X, \FBplus)$ is more complicated than the usual $\Ran$ space, making several proofs more complicated than the corresponding ones for the usual $\Ran$ space. Modulo these technical complications, however, the general outlines of these proofs follow the same spirit as the usual case, and in the end, results regarding factorization algebras and homology are transported to the twisted settings in this paper.  Let us highlight the two main results that are important for the applications we have in mind. Note that these two results are well-known in the non-twisted setting.

The first one, Theorem~\ref{intro:thm:tcfa_vs_tcaX} below, says that twisted commutative factorization algebras are simply twisted commutative algebra objects over $X$, and moreover, the equivalence is given by restricting along the diagonal.

\begin{thm}[Theorem~\ref{thm:tcfa_vs_tcaX}] \label{intro:thm:tcfa_vs_tcaX}
  The functor
  \[
    \delta^!: \ComAlgstar(\Ran(X, \FBplus)) \to
    \ComAlg(\Shv(X)^{\FBplus})
  \]
  admits a fully-faithful left adjoint $\Fact$, whose essential image is $\Factstar(X, \FBplus)$. In particular, we have the following equivalence of categories
  \[
    \Fact: \ComAlg(\Shv(X)^{\FBplus}) \rightleftarrows \Factstar(X, \FBplus): \delta^!.
  \]
\end{thm}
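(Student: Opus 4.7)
The strategy is to transpose the classical non-twisted analogue (as in~\cite{beilinson_chiral_2004, francis_chiral_2011, gaitsgory_weils_2014}) to the $\FBplus$-labeled setting. The plan splits into three steps: construct an explicit left adjoint $\Fact$, verify that the unit of the adjunction is an equivalence (fully-faithfulness), and identify the essential image with $\Factstar(X, \FBplus)$.

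For the construction of $\Fact$, I would first unwind $\Ran(X, \FBplus)$ as a colimit indexed by a suitable category of nonempty finite sets equipped with $\FBplus$-labels and surjections between them; in this presentation the convolution $\otimesstar$ corresponds to disjoint union of labeled sets. Given $A \in \ComAlg(\Shv(X)^{\FBplus})$, the functor $\Fact(A)$ is then defined on each piece $X^I/S_I$ of the colimit as an external $!$-tensor product of copies of $A$, with the $\FBplus$-grading and $S_I$-equivariance dictated by the labels and by the commutativity of $A$. The commutative algebra structure with respect to $\otimesstar$ then arises formally from concatenation of labeled sets, and the adjunction $\Hom(\Fact(A), B) \simeq \Hom(A, \delta^! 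B)$ drops out of the universal property of left Kan extension applied to the inclusion of the ``one-point'' stratum.

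Fully-faithfulness, i.e.\ that the unit $A \to \delta^! \Fact(A)$ is an equivalence, is immediate from this formula: restricting the external $!$-tensor product to the $k=1$ stratum tautologically recovers $A$ together with its $\FBplus$-grading.

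The hard part is identifying the essential image. The inclusion $\Fact(A) \in \Factstar(X, \FBplus)$ is transparent from the external tensor product description, since on the open locus of pairwise disjoint configurations the $!$-restriction of $\Fact(A)$ manifestly factors. The converse --- that every factorizable commutative algebra $B$ satisfies $\Fact(\delta^! B) \to B$ being an equivalence --- is where the main obstacle lies. I would argue by \devissage{} along the diagonal stratification of $\Ran(X, \FBplus)$, inducting on the $\FBplus$-degree. On the open locus of pairwise disjoint configurations, the factorizability condition on $B$ identifies it with the external tensor product of $\delta^! B$, matching $\Fact(\delta^! B)$ there; one then propagates the identification across strata using the multiplication maps of $B$ as a $\otimesstar$-commutative algebra. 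The technical obstacle, already flagged in the excerpt, is the more elaborate combinatorics of the $\FBplus$-indexing category compared to the usual category of nonempty finite sets with surjections; once the colimit description of $\Ran(X, \FBplus)$ and the $\otimesstar$-monoidal structure are carefully set up, however, the argument should proceed in close parallel with the untwisted case.
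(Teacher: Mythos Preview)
Your strategy is correct and close to the paper's, but the organization differs in two noteworthy ways.

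First, the paper does not construct $\Fact$ by an explicit formula. Instead it obtains $\Fact$ abstractly as a left adjoint and deduces fully-faithfulness from a general categorical lemma: since $\delta$ is a closed embedding, pseudo-proper base change gives $\id \simeq \delta^! \delta_!$, so $\delta_!$ is fully faithful at the level of sheaves; a general result (\cite{gaitsgory_weils_2014}*{Prop.~5.6.6}) then upgrades this to fully-faithfulness of the induced left adjoint on commutative algebras. Your ``external $!$-tensor product'' description of $\Fact(A)$ is correct for free algebras (and the paper computes exactly this in Lemma~\ref{lem:free_com_is_fact}), but for a general $A$ the explicit formula on $X^I$ is more intricate than a bare box product, and making it rigorous as a $\otimesstar$-commutative algebra object is genuine work. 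The paper sidesteps this entirely.

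Second, for the essential image: to show $\Fact(A)$ is factorizable, the paper does not argue directly from a formula but writes $A$ as a geometric realization of free algebras and uses that factorizability is preserved under sifted colimits (Remark~\ref{rmk:geometric_realization_sifted_colim}) together with the free case (Lemma~\ref{lem:free_com_is_fact}). For the converse, rather than running the \devissage{} inline, the paper first isolates a clean conservativity statement (Proposition~\ref{prop:check_isom_on_diagonal}): a morphism in $\Factstar(X,\FBplus)$ is an equivalence iff it is after $\delta^!$. This is proved via the cardinality filtration on $\Ran(X,\FBplus)$ --- filtering by the size of the \emph{image} of $\gamma:I\to X$, not by $|I|$ --- which is the technically delicate part alluded to in the introduction. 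Once this is in hand, the counit $\Fact(\delta^!\matheur{A})\to\matheur{A}$ is an equivalence simply because $\delta^!$ applied to it is (by fully-faithfulness). Your direct \devissage{} would arrive at the same place, but packaging the stratification argument as a standalone conservativity lemma is cleaner and reusable.
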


Thanks to this theorem, we can speak of taking factorization homology/cohomology of a twisted commutative algebra on $X$. In what follows, we will denote these functors as $\pi_?$ and $\pi_{?*}$ respectively.

The main computational tool we use to analyze the output of factorization (co)homology is Koszul duality between twisted commutative algebras and twisted $\coLie$-coalgebras.\footnote{See~\S\ref{subsec:Koszul_duality} for a quick review of Koszul duality. As we only use very formal aspects of the theory, the readers who are unfamiliar with Koszul duality can take these statements as a blackbox without losing the gists of the arguments carried out in the paper.} To that end, the second main result we want to highlight shows that Koszul duality is compatible with factorization (co)homology.

\begin{thm}[Theorem~\ref{thm:pi_?*_vs_coChev}] \label{intro:thm:pi_?*_vs_coChev}
  We have a natural equivalence of functors
  \[
    \pi_{?*} \circ \coChev \simeq \coChev \circ \pi_*: \coLie(\Shv(X)^{\FBplus}) \to \ComAlg(\Vect^{\FBplus}).
  \]
\end{thm}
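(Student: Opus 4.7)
My plan is to translate the statement into a comparison of factorization algebras via Theorem~\ref{intro:thm:tcfa_vs_tcaX}, and then exploit an explicit Chevalley--Eilenberg model for $\coChev$. Concretely, for $\mathfrak{c} \in \coLie(\Shv(X)^{\FBplus})$, the LHS unfolds as $\Ran(\pi)_*(\Fact(\coChev(\mathfrak{c})))$, while the RHS $\coChev(\pi_*(\mathfrak{c}))$ lives in $\ComAlg(\Vect^{\FBplus})$. Working in characteristic zero, both sides admit a presentation as a CE complex whose underlying graded commutative algebra is a symmetric algebra on a shift of the coLie coalgebra, equipped with a differential $d_{CE}$ built functorially from the cobracket. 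The proof therefore splits into matching the underlying graded algebras and then matching the differentials.

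For the first matching, I would handle the "free" case where the cobracket vanishes, i.e.\ compute $\pi_{?*}(\Sym V)$ for arbitrary $V \in \Shv(X)^{\FBplus}$. By factorizability, $\Fact(\Sym V)$ restricts on the stratum $X^n/S_n$ of $\Ran(X, \FBplus)$ to the external symmetric power of $V$, and a \Kunneth{}-type computation across strata identifies
\[
	\Ran(\pi)_* \bigl( \Fact(\Sym V) \bigr) \simeq \bigoplus_{n} \Sym^n (\pi_* V) \simeq \Sym(\pi_* V).
\]
Taking $V = \mathfrak{c}[-1]$ identifies the underlying graded commutative algebras on the two sides of the theorem.

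For the second matching, I would use naturality: since $\pi_*$ is lax symmetric monoidal for the tensor structure at play, it induces $\pi_*: \coLie(\Shv(X)^{\FBplus}) \to \coLie(\Vect^{\FBplus})$, sending the cobracket of $\mathfrak{c}$ to that of $\pi_*(\mathfrak{c})$. As the CE differential is constructed functorially from the cobracket and $\Ran(\pi)_*$ commutes with the finite weight-$n$ filtration of the CE complex, the two differentials must agree. The main obstacle is the \Kunneth{} step: it has to be formulated $\infty$-categorically and with careful bookkeeping of the $S_n$-equivariance along the quotient strata $X^n/S_n \subset \Ran(X, \FBplus)$, and it relies crucially on the factorizability condition to reduce $\Fact(\Sym V)$ to its external-power description. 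Once this geometric input is in place, the matching of differentials and the promotion of the identification from the free case to arbitrary $\mathfrak{c}$ are formal consequences of naturality.
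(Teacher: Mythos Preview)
Your approach differs from the paper's and is considerably more laborious. The paper's proof is a formal one-liner via adjunctions: write $\pi_{?*} = \lbar{\pi}_? \circ j_*$ for a compactification $j\colon X \hookrightarrow \lbar{X}$. Since $j_*$ is symmetric monoidal and a right adjoint, it commutes with $\coChev$. Then one invokes the already-established equivalence $\lbar{\pi}_? \circ \coChev \simeq \coChev \circ \lbar{\pi}_!$ (Proposition~\ref{prop:coChev_coPrim_vs_pi!}), which holds because the right adjoints $\lbar{\pi}^!$ and $\coChev$ commute ($\lbar{\pi}^!$ being symmetric monoidal and continuous), hence so do the left adjoints $\lbar{\pi}_?$ and $\coPrim[1]$; since Koszul duality is an equivalence in the pro-nilpotent setting, all four functors in that square commute. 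Finally $\lbar{\pi}_! \circ j_* \simeq \pi_*$ by properness of $\lbar{X}$. No \Kunneth{} computation, no CE models, no stratification of the Ran space is needed.

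Your CE-model approach has a gap at the ``matching differentials'' step. In the paper's $\infty$-categorical framework, $\coChev$ is \emph{defined} as the right adjoint to $\coPrim[1]$; the filtration of Proposition~\ref{prop:computing_coChev_as_limit} is a consequence, not the definition. To compare $\pi_{?*}(\coChev \mathfrak{c})$ with $\coChev(\pi_* \mathfrak{c})$ you must first produce a comparison morphism in $\ComAlg(\Vect^{\FBplus})$, and ``the differential is built functorially from the cobracket'' does not supply one: it presumes explicit dg-models for both sides and for all the intervening functors ($\Fact$, $\Ran(\pi)_*$, the $S_n$-equivariant structures), which the paper never fixes. Once such a map exists, your \Kunneth{}/associated-graded argument could indeed verify it is an equivalence; but the natural way to construct the map is precisely via the adjunctions above, at which point the filtration check is redundant because Koszul duality is already known to be an equivalence.
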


\subsubsection{Applications to generalized ordered configuration spaces}
\label{intro:subsubsec:results_applied_conf}
We will start by introducing the notation we use throughout the paper regarding generalized ordered configuration spaces. For any finite set $I$ or non-negative integer $k$, we write $P^I(X) = X^I$ and $P^k(X) = X^k$; when $I=\emptyset$ or $k=0$, the corresponding space is, by convention, just a point.

For any integer $n \geq 2$, we let $P^I_n(X)$ be the open subspace of $X^I$ consisting of ordered configurations of points on $X$ such that the multiplicity of each point is less than $n$. In other words, it is the space of maps $I \to X$ such that the fiber of any point is of size at most $n-1$. We define $P^k_n(X)$ in a similar way.

We will also use $P(X)$ and $P_n(X)$ to put them all in a family, i.e.
\[
  P(X) = \bigsqcup_{k\geq 0} P^k(X) = \bigsqcup_{k\geq 0} X^k \quad\text{and}\quad P_n(X) = \bigsqcup_{k\geq 0} P^k_n(X). \teq\label{eq:powers_of_X}
\]
and the ``non-unital'' variants $P(X)_+$ and $P_n(X)_+$ where we only take the union over positive $k$'s. We refer to these as generalized ordered configuration spaces of $X$. When $n=2$, this is precisely the usual ordered configuration space of $X$.

\subsubsection{}
For each $n\geq 2$, consider the augmented unital $n$-truncated twisted commutative algebra $\algOr{n} = \Lambda[x]/x^n$ and its augmentation ideal $\algOr{n, +}$, where $x$ lives graded degree $1$, cohomological degree $2d$, and Tate twist $d$. The starting point is the following result which connects the cohomology of $P_n(X)$ and factorization cohomology.

\begin{prop}[Proposition~\ref{prop:fact_coh_vs_pconf}]
  The factorization cohomology of $X$ with coefficients in the twisted commutative algebra $\pi^! (\algOr{n, +})$, denoted by $\alg{n, +}(X)$, computes the cohomology of $P_n(X)_+$.\footnote{See Footnote~\ref{ftn:renormalized_Borel_Moore}.} We will also use $\alg{n}(X)$, which is an augmented unital twisted commutative algebra, to denote the cohomology of $P_n(X)$.\footnote{Due to some technical reasons, when Koszul duality is concerned, it is more convenient to work in the non-unital setting. On the other hand, if we think about modules over an algebra, it is more convenient to work unitally. Fortunately, passing between non-unital and augmented unital settings is only a formal procedure involving either formally adding a unit or taking the augmentation ideal, see also~\S\ref{subsubsec:unit_and_augmentation}. We will pass between the two settings freely in the introduction.}
\end{prop}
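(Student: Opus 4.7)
The plan is to translate the computation through the equivalence of Theorem~\ref{intro:thm:tcfa_vs_tcaX} so that factorization cohomology of the twisted commutative algebra $\pi^!(\algOr{n,+})$ becomes a concrete pushforward of sheaves from $\Ran(X, \FBplus)$ to $\FBplus$, and then to identify that pushforward stratum-by-stratum with the cohomology of $P_n(X)_+$. Since $\Ran(X, \FBplus) \simeq \bigsqcup_{k>0} X^k/S_k$, the twisted commutative factorization algebra $\Fact(\pi^!(\algOr{n,+}))$ is specified by its restrictions to each $X^k$ together with the $S_k$-equivariance and factorizability data, and factorization cohomology amounts to applying $\Ran(\pi)_*$ componentwise (while keeping track of the $S_k$-actions). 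So the whole task reduces to identifying, for each $k$, the sheaf $\Fact(\pi^!(\algOr{n,+}))|_{X^k}$.

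The key step is to compute this restriction explicitly. On the open locus $P^k_2(X) \subset X^k$, the factorizability condition forces the sheaf to be $(\pi^!(\algOr{n,+}))^{\boxtimes k}$, which, since $\algOr{n,+}$ is one-dimensional in each graded degree $1, \ldots, n-1$ (with the appropriate cohomological shift and Tate twist encoded in the generator $x$), is just the renormalized dualizing sheaf up to the shift accounting for $k$ factors of $x$. More generally, on the stratum of $X^k$ indexed by a set partition of $\{1, \ldots, k\}$ with block sizes $(n_1, \ldots, n_r)$, the factorization structure computes the restriction in terms of the products $x^{n_1} \cdots x^{n_r}$ inside $\algOr{n}$; the relation $x^n = 0$ then kills every stratum where some block has size $\geq n$. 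The remaining strata are precisely the ones lying in $P^k_n(X) \subset X^k$, and a stratification argument (or a clean-extension argument using the universal property of $\Fact$ as the left adjoint to $\delta^!$) identifies the restriction with $(j^k_n)_!$ of the renormalized dualizing sheaf on $P^k_n(X)$, where $j^k_n : P^k_n(X) \hookrightarrow X^k$ is the open inclusion.

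Granting this identification, the conclusion is immediate: applying the pushforward to $\FBplus$ computes, in $S_k$-equivariant graded degree $k$, the cohomology (in the renormalized Borel--Moore sense of Notation~\ref{not:renormalized_dualizing_sheaf}) of $P^k_n(X)$, assembling into the cohomology of $P_n(X)_+$ with its natural twisted commutative algebra structure. The algebra structure on $\alg{n,+}(X)$ is then the one induced from the commutative algebra structure of $\pi^!(\algOr{n,+})$ by $\Fact$, which matches the one coming from the usual union maps on configuration spaces.

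The main obstacle is the stratum-by-stratum identification in the second paragraph: one needs to check that the truncation condition $x^n = 0$ on the coefficients translates exactly into the geometric condition that defines $P^k_n(X)$, and that the resulting sheaf is the $j_!$-extension rather than some other extension. I expect this to follow from a clean-gluing or \v{C}ech-type argument on the stratification of $X^k$ by $S_k$-orbits of partitions, combined with the fact that $\Fact = \LKE_{\delta^!}$ so its restriction to each open stratum is determined by $\delta^!$ on that stratum, and the zero value of $x^n$ forces vanishing on the deep diagonals. Once this identification is made, the rest is formal manipulation of pushforwards compatible with the description of $\Ran(X, \FBplus)$ as a disjoint union of $X^k/S_k$.
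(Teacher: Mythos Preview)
Your overall strategy is sound and close in spirit to the paper's, but there is one concrete error: the extension you write down is the wrong one. In the paper's $!$-based sheaf theory on prestacks, a sheaf whose $!$-restriction to a closed stratum vanishes is the $j_*$-extension from the open complement, not the $j_!$-extension. Your own argument (``the relation $x^n=0$ kills every stratum where some block has size $\geq n$'') is a statement about $!$-restrictions to the diagonals, so it forces $\Fact(\pi^!(\algOr{n,+}))|_{X^k} \simeq (j^k_n)_* \sOmega_{P^k_n(X)}$, not $(j^k_n)_!$. This matters: factorization cohomology is $\Ran(\pi)_* \circ \Fact$, and $C^*(X^k,(j^k_n)_* \sOmega_{P^k_n(X)}) \simeq C^*(P^k_n(X),\sOmega_{P^k_n(X)})$ gives the desired (renormalized Borel--Moore) cohomology, whereas $C^*(X^k,(j^k_n)_! \sOmega_{P^k_n(X)})$ does not. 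Once you replace $j_!$ by $j_*$, your argument goes through.

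As for the comparison with the paper: the paper does not carry out the stratification analysis directly. Instead it first computes $\Fact(\pi^! \freeAlgOr_+) \simeq \sOmega_{\Ran(X,\FBplus)}$ for the \emph{untruncated} free algebra (Lemma~\ref{lem:computing_fact_pi!_freeAlgOr}), and then invokes the general truncation machinery of \S\ref{subsec:variants_tcfas} (specifically Proposition~\ref{prop:truncation}) to conclude that passing from $\freeAlgOr_+$ to $\algOr{n,+} = \iota_{<n}\tr_{<n}\freeAlgOr_+$ corresponds on the $\Ran$ side to applying $\iota_{n,*}\iota_n^!$, the $*$-extension from the open substack $\Ran(X,\FBplus)_n$. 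Your approach is more hands-on and would recover this identification stratum-by-stratum; the paper's approach packages the ``$x^n=0$ kills deep diagonals'' step into the adjunction $\iota_n^* \dashv \iota_{n,*}$ once and for all, which is cleaner and explains structurally why it is $j_*$ rather than $j_!$ that appears.
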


Let $\liealgOr{n}$ be the twisted $\coLie$-coalgebra which is the Koszul dual of $\algOr{n}$. The compatibility between Koszul duality and factorization cohomology gives a new expression of the cohomology of generalized configuration spaces in terms of Lie cohomology and we arrive at the following result.

\begin{thm}[Theorem~\ref{thm:coh_conf_vs_coChev}]
  \label{intro:thm:coh_conf_vs_coChev}
  When $n\geq 2$, we have the following equivalence
  \[
    \alg{n, +}(X) \simeq \coChev(C^*(X, \omega_X) \otimes \liealgOr{n})
  \]
  and the corresponding unital version
  \[
    \alg{n}(X) \simeq \coChev^{\un}(C^*(X, \omega_X) \otimes \liealgOr{n}).
  \]
  In other words, $\liealg{n}(X) := C^*(X, \omega_X) \otimes \liealgOr{n}$ is the Koszul dual of $\alg{n, +}(X)$.
\end{thm}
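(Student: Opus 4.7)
The plan is to chain together three ingredients already in place: the identification of $\alg{n,+}(X)$ as factorization cohomology, the compatibility between Koszul duality and factorization cohomology (Theorem~\ref{intro:thm:pi_?*_vs_coChev}), and the projection formula for $\pi_* \pi^!$.

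First, by the preceding proposition, $\alg{n,+}(X) \simeq \pi_{?*}\bigl(\pi^!(\algOr{n,+})\bigr)$, where $\pi: X \to \pt$ and we regard $\pi^!(\algOr{n,+})$ as a twisted commutative (factorization) algebra on $X$ via the fact that $\pi^!$ is symmetric monoidal for $\otimesshriek$ and hence transports commutative algebra structures, or equivalently, via the functor $\Fact$ of Theorem~\ref{intro:thm:tcfa_vs_tcaX}. Next, since $\liealgOr{n}$ is by definition the Koszul dual of $\algOr{n}$, the Koszul duality equivalence yields $\algOr{n,+} \simeq \coChev(\liealgOr{n})$ in $\ComAlgstar(\Vect^{\FBplus})$. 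Pulling back along $\pi$, and using that $\pi^!$ is symmetric monoidal and therefore commutes with the formation of $\coChev$, we obtain
\[
\pi^!(\algOr{n,+}) \simeq \pi^!\bigl(\coChev(\liealgOr{n})\bigr) \simeq \coChev\bigl(\pi^!(\liealgOr{n})\bigr)
\]
in $\ComAlgstar(\Shv(X)^{\FBplus})$.

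Now apply $\pi_{?*}$ to both sides and invoke Theorem~\ref{intro:thm:pi_?*_vs_coChev} to commute $\coChev$ past $\pi_{?*}$ (replacing the latter with $\pi_*$ on the coLie side):
\[
\alg{n,+}(X) \simeq \pi_{?*}\bigl(\coChev(\pi^!(\liealgOr{n}))\bigr) \simeq \coChev\bigl(\pi_*\pi^!(\liealgOr{n})\bigr).
\]
Finally, since $\liealgOr{n}$ lives on the point, the projection formula identifies $\pi_*\pi^!(\liealgOr{n}) \simeq \pi_*(\omega_X) \otimes \liealgOr{n} \simeq C^*(X, \omega_X) \otimes \liealgOr{n}$, yielding the stated formula $\alg{n,+}(X) \simeq \coChev\bigl(C^*(X, \omega_X) \otimes \liealgOr{n}\bigr)$.

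The unital statement is then purely formal: unitalizing (i.e.\ adjoining a unit to the non-unital side, equivalently taking $\coChev^{\un}$ on the Koszul-dual side) converts the above identification into the one for $\alg{n}(X)$, since the passage between non-unital and augmented-unital settings is a formal equivalence of $\infty$-categories. The main subtlety I anticipate is bookkeeping: one has to make sure that $\pi^!$ really is the symmetric monoidal functor needed to transport the $\coLie$-coalgebra structure and commute with $\coChev$, and that the variant of Theorem~\ref{intro:thm:pi_?*_vs_coChev} used here matches the shriek-pullback conventions (as opposed to a $*$-pullback). Once those compatibilities are in place, the argument is a three-line composition, and the last identity $\coChev$ on the right is the desired Koszul dual description of $\alg{n,+}(X)$.
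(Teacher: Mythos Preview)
Your proposal is correct and follows essentially the same route as the paper's proof, which simply cites Proposition~\ref{prop:fact_coh_vs_pconf}, Theorem~\ref{thm:pi_?*_vs_coChev}, and the identification $\pi_*\pi^!\liealgOr{n}\simeq C^*(X,\omega_X)\otimes\liealgOr{n}$ as the three ingredients to be chained. The only point worth sharpening is your justification for $\pi^!\circ\coChev\simeq\coChev\circ\pi^!$: symmetric monoidality alone is not enough, since $\coChev$ is a limit construction; you also need that $\pi^!$ preserves limits (the paper records this as ``$\pi^!$ is continuous and co-continuous, hence compatible with Koszul duality''), which you correctly flag as a bookkeeping concern.
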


\begin{rmk}
  If one works only with ordered configuration spaces of topological manifolds, a similar statement readily follows from the general theory of topological factorization homology, which is more flexible than what's available in the algebro-geometric setting; see~\cite{knudsen_higher_2018}*{Proof of Theorem C}. For ordered configuration spaces of smooth manifolds, a similar result was also obtained in~\cite{getzler_homology_1999}*{\S2}.
\end{rmk}

Roughly speaking, $\coChev$ behaves like $\Sym$, the functor of taking free (non-unital) twisted commutative algebra. Thus, knowledge about which cohomological and graded degrees $\liealgOr{n}$ is supported translates directly to that of $\alg{n, +}(X)$.

The situation is particularly simple when $n=2$. Indeed, $\liealgOr{2}$ is a free twisted $\coLie$-coalgebra generated by a one dimensional vector space sitting at graded degree $1$ and cohomological degree $2d-1$, and hence, in particular, at each graded degree, $\liealgOr{2}$ is concentrated in one cohomological degree, see Lemma~\ref{lem:a_2_is_free}. For $\liealgOr{n}$ in general, we only know some bound on where it is supported, see Lemma~\ref{lem:homological_est_a_n}. This purely algebraic question is the main obstacle for why we can say more about the cohomology of ordered configuration spaces than those of its generalized variants. For the rest of the introduction, to keep it readable and less notation heavy, we will only state the results for ordered configuration spaces and merely reference the body of the text for results concerning the generalized variants.

\subsubsection{} The expression in Theorem~\ref{intro:thm:coh_conf_vs_coChev} allows us to produce operators on $\alg{n}(X)$ in a convenient way. These operators are then used to formulate (higher) representation stability. The idea of the procedure is sketched in more details in~\S\ref{subsec:sketch_higher_rep_stab}. But roughly speaking, for each morphism of twisted $\coLie$-coalgebras $\mathfrak{o} \to \liealgOr{n}(X)$, we obtain, via Koszul duality, a morphism of twisted commutative algebras $\coChev^{\un} \mathfrak{o} \to \alg{n}(X)$. In this paper, the $\mathfrak{o}$ involved is always trivial (i.e. it has trivial $\coLie$-coalgebra structure), and hence, the map above is simply $\Sym \mathfrak{o}[-1] \to \alg{n}(X)$. In particular, $\alg{n}(X)$ is naturally equipped with the structure of a module over a free twisted commutative algebra $\Sym \mathfrak{o}[-1]$.

For example, when $X$ is irreducible, we can take $\mathfrak{o} = \Ho^{-1}(\liealg{n}(X))_1[1] = \Lambda_1[1]$, the piece of $\liealg{n}(X)$ sitting in graded degree $1$ and cohomological degree $-1$. The discussion above implies that $\alg{n}(X)$ has a natural structure as a module over $\Sym \Lambda_1 = \Lambda[x]$. In other words, $\alg{n}(X)$ is an $\FI$-module. Moreover, the procedure of taking $\FI$-hyperhomology corresponds to taking the relative (derived) tensor
\[
  \Lambda \otimes_{\Lambda[x]} \alg{n}(X) \simeq \coChev^{\un} (0 \sqcup_{\liealg{n}(X)} \Lambda[1]_1) = \coChev^{\un}(\cone(\Lambda[1]_1 \to \liealg{n}(X))). \teq\label{intro:eq:hyper_FI_homology}
\]

In general, $\mathfrak{o}$ is constructed by extracting certain graded and cohomological degree pieces of $\liealg{n}(X)$. The analog of $\FI$-hyperhomology, the so-called \emph{derived indecomposables} of a module in the language of~\cite{galatius_cellular_2018}, is defined via a relative tensor as the above. From~\eqref{intro:eq:hyper_FI_homology}, we see that in the $\coLie$-side, taking derived indecomposables corresponds to simply ``deleting'' certain graded and cohomological degree pieces from $\liealg{n}(X)$, and hence, its effect on the support of the final result can be read off in a straightforward manner.

The whole procedure itself can be iterated. Namely, after taking the derived indecomposables with respect to the action induced by $\mathfrak{o}$, we can find another $\mathfrak{o}'$ and continue as before. In this paper, higher representation stability comes in two flavors, corresponding to, roughly speaking, actions induced by cohomology of $X$ and actions induced by cohomology of ordered configuration spaces of $\mathbb{A}^{\dim X}$.

\subsubsection{The first flavor}
For the first flavor, we have the following

\begin{thm}[Corollary~\ref{cor:factoring_out_first_line_k=1}] \label{intro:cor:factoring_out_first_line_k=1}
  Let $X$ be an equidimensional scheme of dimension $d$, $c_0 \geq 0$ an integer, and $b=\min\left(\frac{2d-1}{2}, c_0 + 1\right)$. Let $\alg{2}^{(c_0)}(X)$ be the object obtained from iteratively taking derived indecomposables with respect to actions on $\alg{2}(X)$ by
  \[
    \Sym (\Ho^0(X, \sOmega_X)_1), \Sym (\Ho^1(X, \sOmega_X)[-1]_1), \dots, \Sym (\Ho^{c_0}(X, \sOmega_X)[-c_0]_1).
  \]
  Then, $\Ho^c(\alg{2}^{(c_0)}(X))_k \simeq 0$ when $c < bk$. In general, $\Ho^c(\alg{2}^{(c_0)}(X))_k$ is finite dimensional.
\end{thm}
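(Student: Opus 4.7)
The plan is to translate the problem onto the Koszul-dual \(\coLie\)-coalgebra side, where derived indecomposables become iterated cofibers and cohomological vanishing can be read off from a support estimate. By Theorem~\ref{thm:coh_conf_vs_coChev}, \(\alg{2}(X)\simeq \coChev^{\un}(\liealg{2}(X))\) with \(\liealg{2}(X)=C^*(X,\omega_X)\otimes \liealgOr{2}\), and by Lemma~\ref{lem:a_2_is_free}, \(\liealgOr{2}\) is cofree on a one-dimensional generator in graded degree \(1\) and cohomological degree \(2d-1\); in particular its arity-\(k\) piece sits in cohomological degree \(k(2d-1)\), being a shift of \(\Lie(k)\) as an \(S_k\)-representation.

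I would first identify each action of \(\Sym(\Ho^i(X,\sOmega_X)[-i]_1)\) on \(\alg{2}(X)\) as arising, via Koszul duality applied to the trivial source coLie-coalgebra, from a morphism that selects the \(\Ho^i(X,\sOmega_X)\)-summand in the graded-degree-\(1\) part of \(\liealg{2}(X)\), in the spirit of the discussion surrounding~\eqref{intro:eq:hyper_FI_homology}. Derived indecomposables translate to cofibers on the coLie side, so iterating over \(i=0,1,\dots,c_0\) gives
\[
\alg{2}^{(c_0)}(X)\simeq \coChev^{\un}\bigl(\liealg{2}^{(c_0)}(X)\bigr),
\]
where \(\liealg{2}^{(c_0)}(X)_k=\liealg{2}(X)_k\) for \(k\geq 2\), while the graded-degree-\(1\) piece has its \(\Ho^i\)-summand removed for \(0\leq i\leq c_0\) and is therefore supported in cohomological degrees \(\geq c_0\).

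Next I would combine the following support estimates to read off the bound \(bk\). At graded degree \(1\), the \([-1]\)-shift built into \(\coChev\) lifts the previous bound to \(\geq c_0+1\), which is \(\geq b\) since \(b\leq c_0+1\). For \(k\geq 2\), the equidimensionality of \(X\) forces \(C^*(X,\omega_X)\) to lie in cohomological degrees \(\geq -2d\), so \(\liealg{2}(X)_k=C^*(X,\omega_X)\otimes \liealgOr{2}(k)\) is in degrees \(\geq k(2d-1)-2d\). The graded-degree-\(k\) part of \(\coChev^{\un}\) decomposes as a sum over partitions \(k=k_1+\dots+k_m\) of symmetric products of the shifted factors \(\liealg{2}^{(c_0)}(X)_{k_i}[-1]\); such a contribution lives in cohomological degrees \(\geq\sum_i (L_{k_i}+1)\), where \(L_{k_i}\) is the lower cohomological bound on \(\liealg{2}^{(c_0)}(X)_{k_i}\). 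A direct check shows \(L_{k_i}+1\geq b\cdot k_i\) in all cases: at \(k_i=1\) from the killing step, and at \(k_i\geq 2\) from the elementary inequality \((2d-1)(k_i-2)\geq 0\), which is precisely where the slope \((2d-1)/2\) in the definition of \(b\) comes from. Summing yields \(\Ho^c(\alg{2}^{(c_0)}(X))_k=0\) for \(c<bk\).

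Finite-dimensionality of each \(\Ho^c(\alg{2}^{(c_0)}(X))_k\) then follows because \(X\) is of finite type (so every \(\Ho^*(X,\sOmega_X)\) is finite-dimensional), \(\liealgOr{2}(k)\) is concentrated in a single cohomological degree, and only finitely many partitions of \(k\) contribute at a fixed cohomological degree. The main obstacle is bookkeeping: keeping track of the various shifts arising from Koszul duality, from the renormalization between \(\omega_X\) and \(\sOmega_X\), and from the \([-1]\)-shift inside \(\coChev\). The sharpness of the estimate \(L_k\geq k(2d-1)-2d\) precisely at \(k=2\) is what forces the slope \((2d-1)/2\) in \(b\); the generalized version of this mechanism for \(n\geq 3\) (and for the finer quantitative statements proved later in~\S\ref{sec:rep_stab}) would rest on the same translation but would require the more delicate support estimates on \(\liealgOr{n}\) of Lemma~\ref{lem:homological_est_a_n} in place of the clean cofreeness of \(\liealgOr{2}\).
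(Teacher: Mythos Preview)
Your proposal is correct and follows essentially the same approach as the paper. The paper leaves the proof of this corollary implicit in the discussion of~\S\ref{subsec:sketch_higher_rep_stab}--\S\ref{subsec:higher_rep_stab_general}, phrasing the support estimate geometrically as ``the smallest cone through the origin containing the support of $\liealg{2}^{(c_0)}(X)[-1]$'' (see Figure~\ref{fig:iterative_rep_stability} and the surrounding text); you have unpacked this into the explicit inequality $L_{k_i}+1\geq b\cdot k_i$, which is exactly what the cone picture encodes.
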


In fact, this theorem is only the graded degree $1$ part of a more general procedure described in~\S\ref{subsubsec:iterative_procedure} and culminated in Theorem~\ref{thm:iterative_procedure_conf}. For manifolds, the case where $c_0 = 0$ was obtained in~\cite{miller_$mathrmfi$-hyperhomology_2019}. Moreover, for generalized configuration spaces, one has a similar result, see Proposition~\ref{prop:first_flavor_gen_conf}.

In the case where $C^*_c(X, \Lambda)$ has trivial multiplication structure (i.e. all multiplication operations vanish), we show, in Proposition~\ref{prop:spectral_seq_collapse_triv_alg}, that a simplification occurs and we get
\[
  \Ho^*(\alg{n}(X)) \simeq \Sym(\Ho^*(\liealg{n}(X))[-1]),
\]
as twisted commutative algebras, resolving the question stated in~\cite{bibby_generating_2019}*{Rmk 3.7.5.ii}. This implies that all modules involved are free over the corresponding twisted commutative algebra. As a result, taking derived indecomposables, which is an inherently derived construction, can be computed for each cohomological degree separately.\footnote{We will not, however, emphasize this point of view as the the re-indexing involved would be too much of a distraction. See also the discussion at~\S\ref{subsubsec:first_flavor_i-acyclic}}

Using this simplification, we obtain the a more traditionally sounding representation stability statement.

\begin{cor}[Corollary~\ref{cor:conf_spaces_k=1_i-acyclic}]
  Let $X$ be an equidimensional scheme of dimension $d$ such that the multiplication structure on $C^*_c(X, \Lambda)$ is trivial (e.g. when $X$ is $i$-acyclic), and $c_0, i \geq 0$ be integers such that $c_0 < \frac{2d-1}{2}$. Then, $\bigoplus_{t=0}^\infty \Ho^{i + tc_0}(\alg{2}^{(c_0-1)}(X))_{1+t}$ is a finitely generated free module over $\Sym \Ho^{c_0}(X, \sOmega_X)_1$ (resp. $\Alt \Ho^{c_0}(X, \sOmega_X)_1$) when $c_0$ is even (resp. odd), where $\alg{2}^{(c_0-1)}$ is defined in Corollary~\ref{cor:factoring_out_first_line_k=1}. Moreover, the generators live in graded degrees $k$ with
  \begin{myenum}{(\roman*)}
  \item $1\leq k\leq i-c_0$, when $c_0 < d-1$, and
  \item $1\leq k\leq 2(i-d+1)$, when $c_0 = d-1$.
  \end{myenum}
\end{cor}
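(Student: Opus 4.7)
The plan is to exploit the simplification afforded by the trivial-multiplication hypothesis to reduce the problem to a combinatorial analysis of free symmetric algebras. By Proposition~\ref{prop:spectral_seq_collapse_triv_alg}, we have $\Ho^*(\alg{2}(X)) \simeq \Sym(L)$ as twisted commutative algebras, where $L := \Ho^*(\liealg{2}(X))[-1]$. In this free setting, taking derived indecomposables with respect to a free TCA action $\Sym(\Ho^j(X,\sOmega_X)[-j]_1)$ simply deletes the corresponding graded-degree-$1$ summand of $L$: the relevant inclusion into $L$ is a split inclusion of a direct summand (the twisted $\coLie$-structure having collapsed in the free setting), so the relative tensor reduces to the symmetric algebra on the complement. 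Iterating for $j = 0, 1, \ldots, c_0 - 1$ produces $\Ho^*(\alg{2}^{(c_0-1)}(X)) \simeq \Sym(\wtilde L)$, where $\wtilde L$ is $L$ with all graded-degree-$1$ summands in cohomological degrees $< c_0$ removed.

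Set $V := \Ho^{c_0}(X,\sOmega_X)_1$, the graded-degree-$1$ summand of $\wtilde L$ in cohomological degree $c_0$, and write $\wtilde L = V \oplus L'$. The monoidality of $\Sym$ yields a decomposition $\Sym(\wtilde L) \simeq (\Sym V) \otimes \Sym(L')$; the free symmetric TCA on a generator in cohomological degree $c_0$ evaluates, by Koszul signs, to $\Sym V$ when $c_0$ is even and to $\Alt V$ when $c_0$ is odd. Since multiplication by an element of $V$ shifts the bigrading by $(1,c_0)$, the diagonal slice
\[
	M := \bigoplus_{t\geq 0}\Ho^{i+tc_0}(\alg{2}^{(c_0-1)}(X))_{1+t}
\]
is a submodule over this free symmetric algebra, and the decomposition identifies $M \simeq (\Sym V)\otimes N$ (resp.\ $(\Alt V)\otimes N$) with
\[
	N := \bigoplus_{k\geq 1} \Sym(L')^{i+(k-1)c_0}_k.
\]
Thus $M$ is free with generating space $N$, reducing the claim to a support bound on $N$.

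It remains to bound the graded degrees of $N$, which amounts to analyzing the minimum cohomological degree achievable in $\Sym(L')_k$. Using the explicit form $\liealg{2}(X) = C^*(X,\omega_X) \otimes \liealgOr{2}$ from Theorem~\ref{intro:thm:coh_conf_vs_coChev} together with the known support of $\liealgOr{2,g}$ in each graded degree $g$, one finds that $L'$ in graded degree $1$ is supported in cohomological degrees $\geq c_0+1$ and $L'$ in graded degree $2$ has minimum cohomological degree $\geq 2d-1$. A product of $k$ graded-degree-$1$ factors lives in cohomological degree $\geq k(c_0+1)$, so the requirement $k(c_0+1) \leq i+(k-1)c_0$ forces $k \leq i - c_0$. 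For $c_0 < d-1$, the graded-degree-$2$ minimum $2d-1$ strictly exceeds $2(c_0+1)$, and mixing in graded-degree-$2$ factors only raises the cohomological degree, so bound~(i) is tight. For $c_0 = d-1$, the inequality $2d-1 < 2(c_0+1)$ is reversed, and filling $\lfloor k/2 \rfloor$ of the $k$ factors with graded-degree-$2$ pieces lowers the minimum cohomological degree to $k(2d-1)/2$ (for even $k$); the inequality $k(2d-1)/2 \leq i+(k-1)(d-1)$ rearranges to $k \leq 2(i-d+1)$, yielding bound~(ii). A parallel check shows that contributions from graded degrees $\geq 3$ provide no further relaxation. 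The main obstacle is exactly this support bookkeeping on $L'$ in each graded degree, for which one combines the cohomological support of $\liealgOr{2,g}$ with that of $C^*(X,\omega_X)^{\otimes g}$ and then tallies the surviving minimum-degree contributions to $N$.
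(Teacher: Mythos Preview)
Your proof is correct and follows essentially the same approach as the paper. Both arguments identify the generators of the module as the diagonal slice $\bigoplus_{t\geq 0}\Ho^{i+tc_0}(\alg{2}^{(c_0)}(X))_{1+t}$ and then bound its support; the only difference is packaging. The paper simply cites Corollary~\ref{cor:factoring_out_first_line_k=1} (the support of $\alg{2}^{(c_0)}(X)$ lies above the line $c=bk$ with $b=\min(\tfrac{2d-1}{2},c_0+1)$) and intersects it with the slice line $c=i+c_0(k-1)$, whereas you reprove that support bound by hand via the combinatorics of $\Sym(L')$. Your observation that the minimal ``slope'' $\tfrac{(2d-1)(g-1)}{g}$ among graded degrees $g\geq 2$ is attained at $g=2$ is exactly what underlies the $\tfrac{2d-1}{2}$ in the paper's $b$, so the two computations are the same in content.
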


As above, similar statements for generalized ordered configuration spaces can also be obtained, albeit with more complicated expressions for the bounds, and are worked out in Proposition~\ref{prop:first_flavor_gen_conf_degree_wise}.

\begin{rmk}
  By~\cite{petersen_cohomology_2020}*{Thm. 1.20}, $i$-acyclic spaces, i.e. those with vanishing maps $\Ho^i_c(X, \Lambda) \to \Ho^i(X, \Lambda), \forall i$, form a large class of examples of spaces where the multiplication structure on $C^*_c(X, \Lambda)$ is trivial. Examples of $i$-acyclic spaces include oriented manifolds $M$ with vanishing cup products on $\Ho^*_c(M, \Lambda)$ or spaces of the form $X \times \mathbb{A}^1$. More examples can be found in \cite{arabia_espaces_2018}*{Prop. 1.2.4}.
\end{rmk}

In general, one hopes that a precise bound for derived indecomposables could be used to yield explicit bounds for higher representation stability. However, little is known beyond the $\FI$ case, see~\cite{gan_linear_2019,miller_$mathrmfi$-hyperhomology_2019}. In this paper, we content ourselves with a more qualitative result. Indeed, a similar analysis as the above coupled with the Noetherian results of~\cite{sam_grobner_2016} yields the following finite generation statement. Since the statement for generalized configuration spaces is not more complicated than the usual one, we include it here as well. Replacing $n=2$ in the theorem below recovers the statement for ordered configuration spaces, which settles the non-equivariant case of~\cite{bibby_generating_2019}*{Conjecture 4.3.11}.

\begin{thm}[Theorem~\ref{thm:finite_generation_higher_rep_stab}]
  \label{intro:thm:finite_generation_higher_rep_stab}
  Let $X$ be an equidimensional scheme of dimension $d$, and $c_0, i\geq 0, n\geq 2$ be integers such that $c_0 < \frac{(2d-1)(n-1)}{n}$. Then $\bigoplus_{t=0}^\infty \Ho^{i+tc_0}(\alg{n}^{(c_0-1)}(X))_{t+1}$ is a finitely generated module over $\Sym \Ho^{c_0}(X, \sOmega_X)_1$ (resp. $\Alt \Ho^{c_0}(X, \sOmega_X)_1$) when $c_0$ is even (resp. odd).

  More concretely, when $t\gg 0$, the map
  \[
    \Ho^{c_0}(X, \sOmega_X)_1 \otimes \Ho^{i+(t-1)c_0}(\alg{n}^{(c_0-1)}(X))_{t} \to \Ho^{i+tc_0}(\alg{n}^{(c_0-1)}(X))_{t+1}
  \]
  is surjective. Here, the tensor on the LHS is the one in $\Vect^{\FB}$.
\end{thm}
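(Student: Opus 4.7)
The plan is to translate the statement, via Koszul duality, into a finiteness claim about a Koszul dual $\coLie$-coalgebra, and then invoke Noetherianity of free (anti-)commutative twisted algebras generated in graded degree~$1$ from~\cite{sam_grobner_2016}. By Theorem~\ref{intro:thm:coh_conf_vs_coChev}, $\alg{n}(X) \simeq \coChev^{\un}(\liealg{n}(X))$ with $\liealg{n}(X) = C^*(X, \omega_X) \otimes \liealgOr{n}$. As explained around~\eqref{intro:eq:hyper_FI_homology}, taking derived indecomposables with respect to an action induced by some $\mathfrak{o}$ corresponds, on the $\coLie$-side, to forming the cofiber along the inclusion $\mathfrak{o} \to \liealg{n}(X)$. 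Iterating through cohomological degrees $c = 0, 1, \dots, c_0-1$ yields
\[
	\alg{n}^{(c_0-1)}(X) \simeq \coChev^{\un}(\liealg{n}^{(c_0-1)}(X)),
\]
where $\liealg{n}^{(c_0-1)}(X)$ is obtained from $\liealg{n}(X)$ by deleting the graded degree~$1$ pieces in cohomological degrees $<c_0$. Its graded degree~$1$ part then sits in cohomological degrees $\geq c_0$, and its lowest cohomological piece is exactly $\Ho^{c_0}(X, \sOmega_X)_1$ in degree~$c_0$.

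Next, I would promote the natural map $\Ho^{c_0}(X, \sOmega_X)[-c_0]_1 \to \liealg{n}^{(c_0-1)}(X)$ (with trivial $\coLie$-coalgebra structure on the source) through $\coChev^{\un}$ to the twisted commutative algebra map
\[
	\Sym \Ho^{c_0}(X, \sOmega_X)_1 \;\text{(resp.}\ \Alt\text{)} \;\longrightarrow\; \alg{n}^{(c_0-1)}(X),
\]
the choice of $\Sym$ versus $\Alt$ being determined by the parity of $c_0$ via the cohomological shift. Because multiplication by an element of $\Ho^{c_0}(X, \sOmega_X)_1$ raises graded degree by~$1$ and cohomological degree by~$c_0$, the diagonal strip $\bigoplus_{t\geq 0}\Ho^{i+tc_0}(\alg{n}^{(c_0-1)}(X))_{t+1}$ is stable under this action and becomes a module over $\Sym/\Alt\,\Ho^{c_0}(X, \sOmega_X)_1$.

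To establish finite generation, I would take one further step of derived indecomposables, killing $\Ho^{c_0}(X, \sOmega_X)_1$ to produce $\liealg{n}^{(c_0)}(X)$, which gives
\[
	\Lambda \otimes_{\Sym/\Alt\,\Ho^{c_0}(X, \sOmega_X)_1} \alg{n}^{(c_0-1)}(X) \simeq \coChev^{\un}(\liealg{n}^{(c_0)}(X)).
\]
The hypothesis $c_0 < (2d-1)(n-1)/n$, combined with the homological estimate of Lemma~\ref{lem:homological_est_a_n} on $\liealgOr{n}$ (together with a \Kunneth{} analysis against $C^*(X, \omega_X)$), is exactly what forces the support of $\liealg{n}^{(c_0)}(X)$ to sit \emph{strictly} above the line of slope $c_0$ through the origin in the $(k, c)$-plane of graded and cohomological degrees. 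Consequently, the diagonal strip of $\coChev^{\un}(\liealg{n}^{(c_0)}(X))$ is nonzero only in finitely many bidegrees, so the derived indecomposables on this strip are finite-dimensional. Noetherianity of $\Sym/\Alt$ of a graded-degree-$1$ vector space, due to~\cite{sam_grobner_2016}, then lifts finite-dimensionality of the derived indecomposables to finite generation of the original module. The surjectivity statement for $t \gg 0$ is an immediate consequence: beyond the maximum graded degree of any chosen finite set of generators, every new element must be produced by the algebra action.

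The main obstacle will be the careful bookkeeping in the passage from the homological estimate on $\liealgOr{n}$ to the slope condition on $\liealg{n}^{(c_0)}(X)$. The $\Kunneth{}$ factor $C^*(X, \omega_X)$ (which is concentrated in cohomological degrees in $[-2d, 0]$ after the renormalization convention) interacts with the iterated cofibers in a way that demands one verify, essentially term-by-term in the graded degree, that the inequality $c_0 < (2d-1)(n-1)/n$ really does rule out contributions to the relevant diagonal strip from graded degrees $k \geq 2$. This is exactly the arithmetic content of the bound, and it is the step where all hypotheses conspire.
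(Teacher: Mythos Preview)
Your approach has a genuine gap at the step ``Noetherianity of $\Sym/\Alt$ of a graded-degree-$1$ vector space then lifts finite-dimensionality of the derived indecomposables to finite generation of the original module.'' What you have computed is the \emph{derived} tensor $\Lambda \otimes^L_{\Sym/\Alt V_1} \alg{n}^{(c_0-1)}(X)$ at the chain level, and shown its cohomology along the strip is finite-dimensional. What you need is that each individual cohomology group $\Ho^{i+tc_0}(\alg{n}^{(c_0-1)}(X))_{t+1}$ has finite-dimensional (underived) indecomposables, or is otherwise finitely generated. These are linked by a hypercohomology spectral sequence whose $E_2$-page is $\mathrm{Tor}$ of the cohomology groups, but the implication runs the wrong way: knowing the abutment is finite does not force the $E_2$-terms to be finite, since infinite-dimensional terms could in principle cancel via differentials. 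Local Noetherianity of the module category says nothing about this passage. Indeed, the paper itself remarks that the general link between bounds on derived indecomposables and finite generation/stability ranges is \emph{not} known beyond the $\FI$ case of~\cite{gan_linear_2019}.

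The paper circumvents this by running a \emph{different} spectral sequence, the one coming from the $\coChev$ filtration of Proposition~\ref{prop:computing_coChev_as_limit}. Its $E_1$-page is $\Ho^*(\Sym(\liealg{n}^{(c_0-1)}(X)[-1]))$, which is exactly the situation where the $\coLie$-coalgebra is trivial; there the relevant strip is a \emph{free} module over $\Sym/\Alt\,\Ho^{c_0}(X,\sOmega_X)_1$, hence finitely generated (this is Proposition~\ref{prop:first_flavor_gen_conf_degree_wise}). Now Noetherianity is used in the standard way: finitely generated on $E_1$ $\Rightarrow$ subquotients finitely generated on every page $\Rightarrow$ finitely generated on $E_\infty$ $\Rightarrow$ the abutment is finitely generated (being a finite iterated extension of the $E_\infty$-pieces). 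Your support analysis of $\liealg{n}^{(c_0)}(X)$ is correct and is essentially the content of Proposition~\ref{prop:first_flavor_gen_conf}, but it should feed into this $\coChev$ spectral sequence rather than into a derived-indecomposables argument.
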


\begin{rmk}
  When $X$ is non-compact, $\alg{2}(X)$ is a free $\FI$-module, as we will see below. The derived indecomposables $\Ho^*(\alg{2}^{(0)}(X))$ can thus be obtained by taking usual $\FI$-homology. In this case, the statement above is hence about the finite generation of $\FI$-homology with respect to the secondary stability action of $\Ho^1(X, \sOmega)_1$.
\end{rmk}

\subsubsection{The second flavor} We will now describe the second flavor, which was first introduced in~\cite{miller_higher_2016}. For this flavor, we only work with configuration spaces rather than the generalized variants. Moreover, we need an extra condition on $X$, the $\TopTriv_m$ condition, see Definition~\ref{defn:TopTriv_m_condition}. Roughly speaking, it has to do with requiring certain multiplications on $C^*_c(X, \Lambda)$ vanish.

We prove the following theorem, which, as a consequence (see Corollary~\ref{cor:miller_wilson_conj}) resolves~\cite{miller_higher_2016}*{Conjecture~3.31} in the case where $X$ satisfies $\TopTriv_m$.

\begin{thm}[Theorem~\ref{thm:stability_Tm}]
  Let $X$ be an irreducible scheme of dimension $d$ which satisfies $\TopTriv_m$ for some $m\geq 1$. Then the cohomology of the ordered configuration spaces of $X$, $\alg{2}(X) = \bigoplus_{k=0}^\infty C^*(P^k_2(X), \sOmega_{P^k_2(X)})$, is a free module over the free twisted commutative algebra
  \[
    \algOp{2, \leq m}(X) = \bigotimes_{k=1}^m \Sym(\Ho^{(2d-1)(k-1)}(P^k_2(\mathbb{A}^d))[-(2d-1)(k-1)]_k),
  \]
  generated by $\algGen{2, \leq m}(X) = \coChev^{\un}(\liealgGen{2, \leq m}(X))$, with $\liealgGen{2, \leq m}(X)$ given in Proposition~\ref{prop:splitting_n=2}, i.e.
  \[
    \alg{2}(X) \simeq \algOp{2, \leq m}(X) \otimes \algGen{2, \leq m}(X).
  \]
  Moreover, if $s\geq 1$ be such that $\Ho^k(X, \sOmega_X) = 0$ for all $k \in (0, s)$ and
  \[
    b = \min\left(\frac{(2d-1)m}{m+1}, s\right),
  \]
  then the support of $\algGen{2, \leq m}(X)$ lies below (and possibly including) the line $c=bk$ where $c$ and $k$ denote cohomological and graded degrees respectively. In other words, $\Ho^c(\algGen{2, \leq m}(X))$ lives in graded degrees $\leq c/b$.
\end{thm}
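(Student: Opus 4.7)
The plan is to reduce everything to a statement purely about the twisted $\coLie$-coalgebra $\liealg{2}(X) = C^*(X, \sOmega_X) \otimes \liealgOr{2}$ via the Koszul-dual description of $\alg{2}(X)$ and then exploit the very constrained bigrading of $\liealgOr{2}$. Starting from Theorem~\ref{intro:thm:coh_conf_vs_coChev} applied to $n=2$, one has $\alg{2}(X) \simeq \coChev^{\un}(\liealg{2}(X))$ as augmented unital twisted commutative algebras. Under $\TopTriv_m$, I would invoke Proposition~\ref{prop:splitting_n=2} to produce a splitting of $\coLie$-coalgebras $\liealg{2}(X) \simeq \liealgOp{2,\leq m}(X) \sqcup \liealgGen{2,\leq m}(X)$ in which $\liealgOp{2,\leq m}(X)$ is \emph{trivial} as a $\coLie$-coalgebra. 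The vanishing of the cobracket on this summand is exactly the algebraic content of $\TopTriv_m$: the low-cohomological-degree pieces of $C^*(X,\sOmega_X)$ that give rise to the minimal-cohomological-degree components of $\liealg{2}(X)_k$ for $1 \leq k \leq m$ are annihilated by the cup-coproduct inherited from $C^*(X,\sOmega_X)$.

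Next, since $\coChev^{\un}$ carries coproducts of $\coLie$-coalgebras to tensor products of augmented commutative algebras, the splitting above yields
\[
\alg{2}(X) \simeq \coChev^{\un}(\liealgOp{2,\leq m}(X)) \otimes \coChev^{\un}(\liealgGen{2,\leq m}(X)).
\]
On the trivial $\coLie$-coalgebra $\liealgOp{2,\leq m}(X)$, $\coChev^{\un}$ collapses to $\Sym$ (with the appropriate shift), so the first tensor factor is a \emph{free} twisted commutative algebra. A direct identification of generators, using the computation of $\liealgOr{2}$ in Lemma~\ref{lem:a_2_is_free} together with the identification of the top cohomology of $P^k_2(\mathbb{A}^d)$ sitting in bidegree $(c,k) = ((2d-1)(k-1), k)$, shows that this free t.c.a.\ is precisely $\algOp{2,\leq m}(X)$. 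Setting $\algGen{2,\leq m}(X) := \coChev^{\un}(\liealgGen{2,\leq m}(X))$ then gives the desired freeness decomposition.

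For the support bound, I would argue directly at the level of $\liealgGen{2,\leq m}(X)$ and then transfer to $\algGen{2,\leq m}(X)$. By construction, $\liealgGen{2,\leq m}(X)$ is supported in the bidegrees of $\liealg{2}(X) = C^*(X, \sOmega_X) \otimes \liealgOr{2}$ that \emph{remain} after removing the $m$ operator lines, i.e.\ either $k > m$ or $1 \leq k \leq m$ with cohomological contribution coming from $\Ho^{\geq s}(X, \sOmega_X)$. Since $(\liealgOr{2})_k$ sits in a single cohomological degree proportional to $(2d-1)k$, a short case analysis—balancing the bound $k > m$ (which forces $c/k \geq (2d-1)m/(m+1)$ asymptotically) against the bound $c \geq s$ (when the $X$-factor is responsible)—produces the stated $b = \min((2d-1)m/(m+1), s)$. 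Finally, $\coChev^{\un}$ preserves the property of being supported in $c \geq bk$: its underlying object is $\Sym$ of a shift of $\liealgGen{2,\leq m}(X)$, and this support region is stable under $\Sym$ and under the induced Chevalley differential, so the bound descends to $\algGen{2,\leq m}(X) = \coChev^{\un}(\liealgGen{2,\leq m}(X))$.

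The main obstacle is the splitting at the level of $\coLie$-coalgebras, i.e.\ Proposition~\ref{prop:splitting_n=2}: one must show not merely that the underlying graded object decomposes, but that the cobracket respects the decomposition and vanishes on the operator summand. This is where the hypothesis $\TopTriv_m$ does essential work; once it is in hand, everything else is a fairly formal manipulation of Koszul duality and bookkeeping of bidegrees.
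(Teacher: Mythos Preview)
Your proposal is correct and follows essentially the same route as the paper: invoke the Koszul-dual description $\alg{2}(X)\simeq\coChev^{\un}(\liealg{2}(X))$, use Proposition~\ref{prop:splitting_n=2} to split $\liealg{2}(X)$ as $\liealgOp{2,\leq m}(X)\oplus\liealgGen{2,\leq m}(X)$ with the first summand trivial, apply $\coChev^{\un}$ (which sends direct sums to tensor products and trivial $\coLie$-coalgebras to $\Sym$), identify the operator algebra via Corollary~\ref{cor:a_2_vs_coh_P_2}, and read off the support bound from the smallest cone through the origin containing the support of $\liealgGen{2,\leq m}(X)[-1]$. The paper's own argument (spread over \S\ref{subsec:higher_rep_stab_TopTrivm}) is organized identically and leaves the support computation at the same level of detail (``it is easy to see'').
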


When $m=1$, this theorem is a generalization of a result by~\cite{church_fi-modules_2015} which says that when $X$ is a non-compact manifold, $\alg{2}(X)$ is a free $\FI$-module. Moreover, this theorem also provide precise bounds on the generator in this case (see Corollary~\ref{cor:T1_image_spanned}), generalizing~\cite{miller_higher_2016}*{Theorem 3.27} beyond the case where $X$ is a manifold.

When $m=2$, the condition $\TopTriv_2$ corresponds to the \emph{acyclic diagonal} condition of~\cite{bibby_generating_2019} and the theorem above strengthens~\cite{bibby_generating_2019}*{Theorem 4.4.2} beyond the $i$-acyclic case, settling~\cite{bibby_generating_2019}*{Conjecture 4.4.3} in the non-equivariant setting.

\subsection{An outline of the paper}
In~\S\ref{sec:prelim}, we set up the necessary notation and review results used throughout the paper. Then, in~\S\ref{sec:tcfas}, we introduce the twisted commutative $\Ran$ space and develop the theory of twisted commutative factorization algebras (tcfa's). These algebras serve as the coefficient objects for our version of factorization (co)homology, developed in~\S\ref{sec:factorization_coh}. Finally, in the last section~\S\ref{sec:rep_stab}, we apply the theory developed so far to problems regarding higher representation stability of generalized configuration spaces.

\section{Preliminaries}
\label{sec:prelim}

We will set up the necessary notation and review results used throughout the paper.

\subsection{Notation and conventions}
\subsubsection{Categories} \label{subsubsec:conventions_categories}
We will use $\DGCatprescont$ to denote the infinity category whose objects are presentable stable infinity categories linear over $\Lambda$, a field of characteristics $0$, and whose morphisms are continuous functors, i.e. functors that commute with colimits. The main references for the subject are~\cite{lurie_higher_2017-1, lurie_higher_2017, gaitsgory_study_2017}.

$\DGCatprescont$ is the natural setting where adjoint functor theorem holds~\cite{lurie_higher_2017-1}*{Corollary 5.5.2.9} and where the theory of factorization homology is developed in the literature~\cite{francis_chiral_2011,gaitsgory_weils_2014,gaitsgory_atiyah-bott_2015}. In some sense, it is the correct setting to perform universal constructions in homological algebra. In this paper, all ``linear categories,'' such as the categories of chain complexes of vector spaces or of sheaves, are viewed as objects in $\DGCatprescont$, and, unless otherwise specified, functors between them are assumed to be continuous. In particular, we assume that the tensor operations are continuous in each variable in all symmetric monoidal categories we consider. Moreover, all operations, e.g. push-forward, relative tensors etc., are assumed to be derived by default.

We should note that the theory of presentable stable infinity categories appears in the paper in an essential but superficial way: we use it only as much as to be able to cite the various results proved in the literature which are built on this technical foundation. Thus, the readers who are not familiar with the theory could ignore this technical point, and pretend that the categories involved are triangulated, without losing much of the gist of the paper.

\subsubsection{Geometry}
\label{subsubsec:prelim_geometry}
The arguments used in this paper take as input a category of \emph{spaces} $\matheur{S}$ together with a sheaf theory
\[
  (S \in \matheur{S}) \rightsquigarrow (\Shv(S) \in \DGCatprescont)
\]
equipped with a six-functor formalism.

For example, if we are interested in schemes over some algebraically closed ground field $k$, we take $\matheur{S} = \Sch$ to be the category of separated schemes of finite type over $k$ and
\begin{myenum}{(\alph*)}
\item when $k=\mathbb{C}$, we take $\Lambda$ to be any field of characteristic $0$, and for any $S\in \Sch$, $\Shv(S)$ is the ind-completion of the category of constructible sheaves on $S$ with $\Lambda$-coefficients,
\item when $k$ is any algebraically closed field in general, we take $\Lambda = \Ql, \Qlbar$, $\ell \neq \car k$, and for any $S\in \Sch$, $\Shv(S)$ is the ind-completion of the category of constructible $\ell$-adic sheaves on $S$ with $\Lambda$-coefficients.
\end{myenum}
See~\cite{gaitsgory_weils_2014}*{\S4}, \cite{liu_enhanced_2012,liu_enhanced_2014} for an account of the theory. We will also use the notation $\pt = \Spec k$ to denote the final object of $\Sch$.

If we are interested in topological spaces, we can take $\matheur{S} = \cTop$, the category of topological spaces, and the theory of sheaves to be the one developed in~\cite{schnurer_six_2018}.

\subsubsection{} \label{subsubsec:use_scheme_as_default} To keep the exposition streamlined, however, we opt to take schemes over $k = \Fqbar$ along with the theory of $\ell$-adic sheaves as our default language. One reason is that we want to emphasize the fact that our results are, by construction, compatible with the Frobenius actions. We expect that the topologically minded readers can easily extract the desired topological statements by ignoring the Tate twists in the cohomology groups and by taking into account the fact that $d$-dimensional varieties are ``secretly $2d$-dimensional'' as far as $\ell$-adic cohomology is concerned.

\subsection{Prestacks and sheaves on prestacks}
\label{subsec:prestacks_and_sheaves}
As mentioned in the introduction, the theory sheaves on prestacks provides a convenient and flexible framework to deal with equivariant sheaves and to keep track of $S_n$-actions. This theory has been developed extensively in~\cite{gaitsgory_weils_2014, gaitsgory_atiyah-bott_2015}. In this subsection, we will give a quick summary of the theory.

\subsubsection{Prestacks}
A prestack $\matheur{Y}$ is a contravariant functor $\matheur{Y}: \Sch^{\op} \to \Spc$, where $\Spc$ denotes the $\infty$-category of spaces, i.e. $\infty$-groupoids. The pre-stacks that we are interested in this paper in fact only land in the category of ($1$-)groupoids. Directly from the definition, we see that any such $\matheur{Y}$ can be tautologically written as a colimit of schemes $\matheur{Y} = \colim_{S\in \Sch_{/\matheur{Y}}} S$.

Most of the prestacks appearing in this paper are quotient prestacks. Let $X\in \Sch$ equipped with an action of a group $G$. Then, the prestack quotient $X/G$ has the following functor of points description: for each test scheme $S$, $(X/G)(S) = X(S)/G$. Here, the quotient on the right is in the groupoid sense. Namely, for any set $T$ with an action of a group $G$, $T/G$ is a groupoid whose set of objects is $T$ and for any $t_1, t_2 \in T/G$,
\[
  \Hom_{T/G}(t_1, t_2) = \{g\in G: gt_1 = t_2\}.
\]
$X^n/S_n$ and $\pt/S_n$ are two main examples of this construction that appear in the paper.

\subsubsection{Sheaves on prestacks}
The theory of sheaves on schemes provides us with a contravariant functor
\[
  \Shv: \Sch^{\op} \to \DGCatprescont,
\]
where we use the $!$-pullback functor to move between schemes. Right Kan extending $\Shv$ along the Yoneda embedding $\Sch^{\op} \hookrightarrow \PreStk^{\op}$, we obtain a functor
\[
  \Shv: \PreStk^{\op} \to \DGCatprescont. \teq\label{eq:shv_prestk_contra}
\]

By abstract nonsense, this functor preserves limits, i.e. $\Shv(\colim_i \matheur{Y}_i) \simeq \lim_i \Shv(\matheur{Y}_i)$. In particular, $\Shv(\matheur{Y}) \simeq \lim_{S \in (\Sch_{/\matheur{Y}})^{\op}} \Shv(S)$.
Unwinding the definition, we see that informally, each $\matheur{F} \in \Shv(\matheur{Y})$ corresponds to the following data
\begin{myenum}{(\roman*)}
\item a sheaf $\matheur{F}_{S, y} \in \Shv(S)$ for each $S\in \Sch$ and $y \in \matheur{Y}(S)$, i.e. $y: S \to \matheur{Y}$, and
\item an equivalence of sheaves $\matheur{F}_{S', f(y)} \to f^! \matheur{F}_{S, y}$ for each morphism of schemes $f: S \to S'$.
\end{myenum}

When $\matheur{Y} = \colim_i Y_i$ where $Y_i\in \Sch$, then $\Shv(\matheur{Y}) = \lim_i \Shv(Y_i)$ and we have a similar description as the above except that we only need to take $S$ to be $Y_i$'s. In particular, for any $X\in \Sch$, $\Shv(X^n/S_n)$ is the category of $S_n$-equivariant sheaves on $X^n$. When $X = \pt$, $\Shv(\pt/S_n)$ is precisely the category of representations of $S_n$.\footnote{Note that by our convention, this is the derived category of representations of $S_n$.}

\subsubsection{$f_! \dashv f^!$}
From~\eqref{eq:shv_prestk_contra}, we see that for any $f: \matheur{Y}_1 \to \matheur{Y}_2$, we obtain a functor $f^!: \Shv(\matheur{Y}_2) \to \Shv(\matheur{Y}_1)$ which can be shown to commute with both limits and colimits. It thus has a left adjoint, denoted by $f_!$, and we obtain a functor $\Shv_!: \PreStk \to \DGCatprescont$.  In the paper, if we want to emphasize the contravariant aspect of $\Shv$, we will also use the notation $\Shv^!$.

For $\matheur{Y} \in \PreStk$ with the structure map $f: \matheur{Y} \to \pt$ and $\matheur{F} \in \Shv(\matheur{Y})$, we write $C^*_c(\matheur{Y}, \matheur{F}) = f_! \matheur{F} \in \Shv(\pt) = \Vect$.

\subsubsection{$f^* \dashv f_*$}
When a morphism $f: \matheur{Y}_1 \to \matheur{Y}_2$ is schematic, i.e. any pullback to a scheme is a scheme, then we can also construct a pair of adjoint functors $f^* \dashv f_*$. Moreover, the functor $f_*$ satisfies base change with respect to the $(-)^!$ functor, see~\cite{ho_free_2017}*{\S2.6}.

\subsubsection{Monoidal structure}
Let $\matheur{F}_i \in \Shv(\matheur{Y}_i)$ where $\matheur{Y}_i \in \PreStk$, $i\in \{1, 2\}$. Then, $\matheur{F}_1 \boxtimes \matheur{F}_2 \in \Shv(\matheur{Y}_1 \times \matheur{Y}_2)$ is characterized by the condition that for each $f_i: S_i \to \matheur{Y}_i, i \in \{1, 2\}$ with $S_i$'s being schemes, we have
\[
  (f_1 \times f_2)^! (\matheur{F}_1 \boxtimes \matheur{F}_2) \simeq f_1^! \matheur{F}_1 \boxtimes f_2^! \matheur{F}_2.
\]

As for schemes, $!$-restricting along the diagonal map allows us to define $\otimesshriek$, and when the diagonal is schematic, $*$-restricting gives $\otimesstard$.

\subsection{$\FB$ and twisted commutative algebras (tca's)}
\label{subsec:FB_twisted_comalg}
In this subsection, we recall the notion of an $\FB$-object in a category. Historically, representation stability was formulated for $\FI$-objects by Church, Ellenberg, Farb in~\cite{church_fi-modules_2015} and was later understood to be an example of a category of modules over a twisted commutative algebra~\cite{sam_introduction_2012}, i.e. a commutative algebra in the category of $\FB$-objects. For the purpose of this paper, we will exclusively use the latter formulation as it is more convenient in our case.

\subsubsection{The category $\FB$}
Let $\FB$ be the category whose objects are finite sets and morphisms are bijections between finite sets. This category is equipped with a natural symmetric monoidal structure with disjoint union being the monoidal operation and the empty set $\emptyset$ being the unit. For each $n\in \mathbb{N}$, we fix, once and for all, an object $[n] \in \FB$ which is a linearly ordered set of $n$ elements. For each $n_1, n_2 \in \mathbb{N}$, we have a canonical map
\[
  [n_1] \sqcup [n_2] \simeq [n_1 + n_2].
\]
Unless confusion is likely to occur, we will sometimes write $n$ in place of $[n]$, as in $S_n$, the symmetric group of $n$ letters, and $X^n$, the $n$-th power of $X$, rather than $S_{[n]}$ and $X^{[n]}$. The ordering on $[n]$ gives canonical isomorphisms between $S_n$ and $S_{[n]}$, as well as $X^n$ and $X^{[n]}$. Moreover, such a choice of $[n]$'s gives us an equivalence of categories
\[
  \FB \simeq \bigsqcup_{n \geq 0} BS_n.
\]

We will use $\FBplus$ to denote the full-subcategory of $\FB$ containing only non-empty finite sets. Note that $\FBplus$ is a non-unital symmetric monoidal subcategory of $\FB$.

For future usage, we will also use $\FS$ to denote the category of finite sets together with surjections as morphisms. Moreover, $\FSplus$ will be used to denote the full-subcategory consisting of non-empty finite sets.

\subsubsection{$\FB$-objects}
\label{subsubsec:FB_objs}
Recall the following construction. For any symmetric monoidal categories $\matheur{I}$ and $\matheur{C}$ with $\matheur{C}$ being co-complete, the category of functors $\matheur{C}^\matheur{I} = \Fun(\matheur{I}, \matheur{C})$ is equipped with a symmetric monoidal structure given by Day convolution (see~\cite{lurie_higher_2017-1}*{\S2.2.6}). More explicitly, given $V, W \in \matheur{C}^\matheur{I}$, we have the following diagram
\[
  \begin{tikzcd}
    \matheur{I} \times \matheur{I} \ar{dr} \ar{r}{(V, W)} \ar{d}{\otimes}  & \matheur{C} \times \matheur{C} \arrow{d}{\otimes} \\
    \matheur{I} \ar[dashed]{r}{V\otimes W} & \matheur{C}
  \end{tikzcd}
\]
where $V\otimes W$ is defined to be the left Kan extension of the diagonal arrow along the vertical arrow on the left.

In this paper, we are mainly interested in the case where $\matheur{I}$ is either $\FB$ or $\FBplus$ and $\matheur{C}$ is either $\Vect$ or $\Shv(X)$ for some space $X$. Objects in these categories will be referred to as $\FB$-objects. Moreover, tensors of $\FB$-objects are always understood to be in the sense above.

Given an object $c\in \matheur{C}^{\FB}$, we will write $c_I \in \Rep_{S_I}(\matheur{C}) = \matheur{C}^{BS_I}$ to denote its $I$-th component. In particular, we have $c \simeq \bigoplus_{I \in \FB/\iso} c_I$. We will also use the phrase \emph{graded degrees} to refer to the cardinality $|I|$ of $I$, e.g. $c_I$ lives in graded degree $|I|$.

For each object $c\in \Rep_{S_n}(\matheur{C})$, we will write $c_n \in \matheur{C}^{\FB}$ obtained by putting $c$ in graded degree $n$ with the given $S_n$-action.

\subsubsection{Twisted algebra(s)}
Let $\matheur{C}$ be a stable symmetric monoidal category, we will use $\ComAlg(\matheur{C})$, resp. $\Lie(\matheur{C})$, and resp. $\coLie(\matheur{C})$ to denote the categories of commutative algebra, resp. $\Lie$-algebra, and resp. $\coLie$-coalgebra objects in $\matheur{C}$. We adopt the convention that our symmetric monoidal categories are not necessarily unital and hence, our commutative algebras are non-unital so that $\ComAlg(\matheur{C})$ makes sense even when the monoidal structure on $\matheur{C}$ is non-unital.

Using the Day convolution symmetric monoidal structures on $\matheur{C}^{\FB}$ and $\matheur{C}^{\FBplus}$ discussed above, we obtain the categories $\ComAlg(-)$, $\Lie(-)$, and $\coLie(-)$ where $-$ is either $\matheur{C}^{\FB}$ or $\matheur{C}^{\FBplus}$. We will refer to objects in these categories as twisted (co)algebras, adhering to the tradition that commutative algebras in $\Vect^{\FB}$ are usually called twisted commutative algebras.

\subsubsection{}
In terms of size, free twisted commutative algebras behave like associative algebras. Indeed, we have the following straightforward observation

\begin{lem}
  \label{lem:free_twisted_commutative_alg}
  Let $V_1 \in \Vect^{\FBplus}$ be a chain complex $V \in \Vect$ placed in graded degree $1$ with the (necessarily) trivial $S_1$ action. Then
  \[
    \Free_{\ComAlg} V_1 \simeq \bigoplus_{n>0} (V^{\otimes n})_n,
  \]
  where $V^{\otimes n}$ has a natural $S_n$-action by permuting the factors.
\end{lem}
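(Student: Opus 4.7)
The plan is to unfold the Day convolution formula to compute $V_1^{\otimes n}$ as an $\FB$-object, and then take symmetric coinvariants to extract $\Sym^n V_1$; summing over $n > 0$ gives $\Free_{\ComAlg} V_1$ (in the non-unital convention used in the paper).

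First, I would unpack the Day convolution formula: for $W, W' \in \Vect^{\FBplus}$,
\[
(W \otimes W')_I \;=\; \bigoplus_{I = I_1 \sqcup I_2} W_{I_1} \otimes W'_{I_2}.
\]
Iterating, $(V_1^{\otimes n})_I$ decomposes over ordered decompositions $I = I_1 \sqcup \cdots \sqcup I_n$. Because $V_1$ is supported on singletons, only decompositions into singletons contribute, and each such datum is the same as a bijection $\tau \colon [n] \to I$. Hence $(V_1^{\otimes n})_I$ vanishes unless $|I| = n$, in which case it is non-canonically $V^{\otimes n} \otimes \Lambda[\mathrm{Bij}([n], I)]$.

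Next, I would identify the two commuting $S_n$-actions. The $S_I$-action coming from the $\FB$-structure acts by post-composition on $\tau$ and trivially on $V^{\otimes n}$, while the symmetric braiding action of $S_n$ acts by pre-composition $\tau \mapsto \tau \sigma^{-1}$ and simultaneously permutes tensor factors in $V^{\otimes n}$. Since the braiding action on $\mathrm{Bij}([n], I)$ is free and transitive, passing to $S_n$-coinvariants (equivalently invariants, since we are in characteristic $0$) collapses the direct sum to a single copy of $V^{\otimes n}$. A short bookkeeping check, after fixing a reference bijection, identifies the residual $S_I$-action with the standard permutation action on $V^{\otimes n}$. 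Assembling the components over $n > 0$ then yields the claim.

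The only real obstacle is the bookkeeping in the second step: one must carefully align the pre-composition action on bijections with the induced permutation on tensor factors, and verify that no spurious inversion is introduced when collapsing the torsor to a single summand. Everything else is a formal consequence of the Day convolution definition and the splitting $\Free_{\ComAlg} V_1 = \bigoplus_{n>0} \Sym^n V_1$.
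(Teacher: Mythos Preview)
Your proposal is correct and follows essentially the same approach as the paper: both compute $V_1^{\otimes n}$ via Day convolution as $(\Lambda[S_n] \otimes V^{\otimes n})_n$ (your $\Lambda[\mathrm{Bij}([n],I)]$ is the same thing before fixing $I=[n]$), identify the two commuting $S_n$-actions with left multiplication and the diagonal right-multiplication-plus-permutation, and then take coinvariants by the latter to collapse the regular representation and leave the permutation action on $V^{\otimes n}$. The only difference is cosmetic: the paper fixes $I=[n]$ from the outset, whereas you keep a general $I$ and a reference bijection, but the bookkeeping is identical.
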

\begin{proof}
  We have $V_1^{\otimes n} \simeq (\Lambda[S_n] \otimes V^{\otimes n})_n \in \Vect^{\FBplus}$ is equipped with two commuting $S_n$ actions, one coming from the fact that it is a degree $n$ part of an object in $\Vect^{\FB_+}$ and the other from the symmetric monoidal structure of $\Vect^{\FBplus}$ by permuting the factors of $V_1^{\otimes n}$. It is easy to see that the first action is by left multiplication on $\Lambda[S_n]$ whereas the second action is the diagonal action by inverse multiplying on the right of $\Lambda[S_n]$ and permuting $V^{\otimes n}$. Thus,
  \[
    (\Lambda[S_n] \otimes V^{\otimes n})_{S_n} \simeq V^{\otimes n}
  \]
  where we take co-invariants with respect to the diagonal action and where the remaining $S_n$-action on $V^{\otimes n}$ is given by permuting the factors.

  Now, the proof is concluded since
  \[
    \Free_{\ComAlg} V_1 = \bigoplus_{n>0} ((\Lambda[S_n] \otimes V^{\otimes n})_{S_n})_n \simeq \bigoplus_{n>0} (V^{\otimes n})_n.
  \]
\end{proof}

\subsubsection{Unit and augmentation}
\label{subsubsec:unit_and_augmentation}
When $\matheur{C}$ is unital symmetric monoidal, we use $\ComAlg^{\un}(\matheur{C})$, $\ComAlg^{\aug}(\matheur{C})$, and $\ComAlg^{\un, \aug}(\matheur{C})$ to denote the categories of unital, augmented, and augmented unital commutative algebra objects in $\matheur{C}$ respectively. Note that we have an equivalence of categories
\[
  \addUnit: \ComAlg(\matheur{C}) \rightleftarrows \ComAlg^{\un, \aug}(\matheur{C}): (-)_+
\]
given by formally adding a unit and taking the augmentation ideal.

When $\matheur{C}$ is non-unital but is a full-subcategory of a unital symmetric monoidal category $\matheur{C}^{\un}$, we let $\ComAlg^{\un, \aug}(\matheur{C})$ be the category of unital augmented commutative algebras in $\matheur{C}^{\un}$ whose augmentation ideal lies in $\matheur{C}$. More formally, we have the following pullback square of categories
\[
  \begin{tikzcd}
    \ComAlg^{\un, \aug}(\matheur{C}) \ar{d}{(-)_+} \ar[hookrightarrow]{r} & \ComAlg^{\un, \aug}(\matheur{C}^{\un}) \ar{d}{(-)_+} \\
    \ComAlg(\matheur{C}) \ar[hookrightarrow]{r} & \ComAlg(\matheur{C}^{\un})
  \end{tikzcd}
\]

As above, we have an equivalence of categories
\[
  \addUnit: \ComAlg(\matheur{C}) \rightleftarrows \ComAlg^{\un, \aug}(\matheur{C}): (-)_+
\]
Thus, as the notation suggests, the category $\ComAlg^{\un, \aug}(\matheur{C})$ does not depend on the embedding $\matheur{C} \subseteq \matheur{C}^{\un}$.

\subsection{Koszul duality}
\label{subsec:Koszul_duality}
The duality between co-commutative coalgebras and $\Lie$ algebras was first developed by Quillen in~\cite{quillen_notes_1971} and later generalized by Ginzburg--Kapranov in~\cite{ginzburg_koszul_1994} to general operads. In~\cite{francis_chiral_2011}, Francis and Gaitsgory realized this duality in the context of factorization algebras and formulated the notion of pro-nilpotence for categories, a sufficient condition for turning Koszul duality into an equivalence.

The passage between the two could be thought of as taking $\log$ and $\exp$, with the $\coLie$ coalgebra side being the target of $\log$. Under this analogy,\footnote{In fact, it is more than just an analogy. Decategorifying these constructions by, for instance, taking Euler characteristics, does indeed yield $\log$ and $\exp$. See~\cite{ho_homological_2021}*{\S5.3}.} the pro-nilpotence condition guarantees that certain limits converge.  The $\coLie$ coalgebra associated to a commutative algebra, also as suggested by the analogy, is much smaller in size compared to the original algebra, which allows us to deduce many properties of the algebra in a straightforward manner.

In this paper, we will use Koszul duality between commutative algebras and $\coLie$ coalgebras for categories of the form $\matheur{C}^{\FBplus}$ considered above. In fact, we will only make use of the very formal aspects of the theory, which, interestingly, are already enough to deduce strong representation stability results for ordered configurations spaces. In this subsections, we will review the basics of the theory in the form needed in the paper.

\subsubsection{Koszul duality for $\ComAlg$ and $\coLie$} Let $\matheur{C}$ be a symmetric monoidal category in $\DGCatprescont$. Then there is a pair of adjoint functors
\[
  \coPrim[1] : \ComAlg(\matheur{C}) \rightleftarrows \coLie(\matheur{C}) : \coChev \teq\label{eq:Koszul_duality}
\]
where $\coChev$ is the functor of taking cohomological Chevalley complex and $\coPrim[1]$ is a shift of the co-tangent fiber in the terminology of~\cite{gaitsgory_weils_2014}*{\S 6.1} or of the (operadic) derived indecomposables in the terminology of~\cite{galatius_cellular_2018}*{\S 8.2.3}. See also~\cite{ho_homological_2021}*{\S 2.2} for a brief overview in the notation that we will adopt in this paper.

When $\matheur{C}$ is pro-nilpotent, i.e. it can be given as a limit of stable nilpotent symmetric monoidal categories $\matheur{C} \simeq \lim_p \matheur{C}_p$ with transition functors commuting with both limits and colimits (see~\cite{francis_chiral_2011}*{Defn. 4.1.1} for the precise definition), we will use $\ev_p: \matheur{C} \to \matheur{C}_p$ to denote the canonical functor, which is symmetric monoidal and commutes with both limits and colimits. In this case, we have the following important result.

\begin{prop}[\cite{francis_chiral_2011}*{Prop. 4.1.2, Lem. 3.3.4, Lem. 4.1.6 (b)}] \label{prop:koszul_duality_pro-nilp}
  When $\matheur{C}$ is pro-nilpotent, the adjoint pair $\coPrim[1] \dashv \coChev$ at~\eqref{eq:Koszul_duality} is an equivalence. Moreover, they exchange trivial and free objects.\footnote{\label{ftn:trivial_algebra_meaning} A trivial (co)algebra is one where all operations vanish. For example, a trivial $\Lie$ algebra is what we usually call an abelian $\Lie$ algebra.}
\end{prop}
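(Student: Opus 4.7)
The plan is to verify the two assertions of the proposition in sequence: first the equivalence of categories under pro-nilpotence, and then the exchange of free and trivial objects.

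For the equivalence, I would first observe that the adjoint pair $\coPrim[1] \dashv \coChev$ exists formally in any stable symmetric monoidal $\matheur{C} \in \DGCatprescont$, as a consequence of the bar-cobar duality between the operads $\ComAlg$ and $\coLie$ in characteristic zero; no pro-nilpotence is needed to write the functors down or to establish the adjunction. The substantive content is thus that the unit $\id \to \coChev \circ \coPrim[1]$ and counit $\coPrim[1]\circ \coChev \to \id$ are equivalences. Since $\matheur{C} \simeq \lim_p \matheur{C}_p$ with transition functors preserving limits and colimits, the evaluation functors $\ev_p : \matheur{C} \to \matheur{C}_p$ are symmetric monoidal and jointly conservative, and they intertwine $\coChev$ and $\coPrim[1]$ on both sides. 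So the equivalence can be checked after applying each $\ev_p$, which reduces the problem to the case where $\matheur{C}$ itself is a stable nilpotent symmetric monoidal category.

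In the nilpotent case, the key point is that every commutative algebra admits a finite filtration by powers of its ``augmentation ideal'' whose associated graded is a trivial algebra, and dually every $\coLie$-coalgebra has a finite filtration with trivial associated graded. Because $\coChev$ and $\coPrim[1]$ are exact between stable categories, they carry such finite iterated extensions to finite iterated extensions, so the question of whether unit and counit are equivalences is reduced to the corresponding question on \emph{trivial} objects. This in turn is exactly the second claim of the proposition.

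To settle the exchange of trivial and free objects I would argue by direct computation. If $\mathfrak{l} \in \coLie(\matheur{C})$ is trivial, all its cobrackets vanish, hence the Chevalley differential on $\Sym(\mathfrak{l}[-1])$ is zero and $\coChev(\mathfrak{l}) \simeq \Free_{\ComAlg}(\mathfrak{l}[-1])$. Conversely, viewing $\coPrim[1]$ as the shifted derived indecomposables (cotangent fiber), it returns only the generators of a free commutative algebra with no residual cobracket, so $\coPrim[1] \Free_{\ComAlg}(V) \simeq V[1]$ as a trivial $\coLie$-coalgebra. Composing, one gets $\coChev \coPrim[1] \Free_{\ComAlg}(V) \simeq \Free_{\ComAlg}(V)$ and the analogous identity on trivial coLie coalgebras, which feeds back into the nilpotent reduction above and closes the loop.

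The step I expect to be the main obstacle is the reduction to the nilpotent case: although $\coChev$ and $\coPrim[1]$ look like they should commute with the tower $\{\matheur{C}_p\}$ on general grounds, one has to verify carefully that the a priori infinite totalizations defining them interact correctly with the inverse limit --- and, crucially, that within each nilpotent piece $\matheur{C}_p$ these totalizations truncate after finitely many stages. It is precisely the quantitative bound built into the definition of nilpotence (the vanishing of sufficiently iterated tensor products) that makes this truncation possible; without it, neither the convergence of the filtration argument nor the reduction to trivial objects would go through.
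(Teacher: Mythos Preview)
The paper does not prove this proposition at all: it is stated purely as a citation to Francis--Gaitsgory (Prop.~4.1.2, Lem.~3.3.4, Lem.~4.1.6(b)) and used as a black box. So there is no ``paper's proof'' to compare against beyond the reference.

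That said, your sketch has the correct architecture --- reduce to the nilpotent case via the tower, then handle the nilpotent case by a filtration argument bottoming out in the free/trivial computation --- and your final paragraph correctly isolates the genuine technical crux (finite truncation of the bar/cobar totalizations in each $\matheur{C}_p$). There is, however, a real gap in your middle step. You write that ``$\coChev$ and $\coPrim[1]$ are exact between stable categories, so they carry finite iterated extensions to finite iterated extensions.'' But $\ComAlg(\matheur{C})$ and $\coLie(\matheur{C})$ are \emph{not} stable categories, and even when viewed as functors to the underlying stable $\matheur{C}$, neither $\coChev$ nor $\coPrim[1]$ is exact: already on trivial coalgebras $\coChev$ is $\Sym((-)[-1])$, which certainly does not preserve (co)fiber sequences. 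So you cannot pass from ``unit/counit is an equivalence on associated graded'' to ``unit/counit is an equivalence on the filtered object'' by a bare exactness appeal.

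What actually makes the filtration argument go through in Francis--Gaitsgory is precisely the point you flag at the end: in a nilpotent category the bar and cobar constructions defining $\coPrim[1]$ and $\coChev$ are \emph{finite} (the relevant (co)simplicial objects truncate), and this finiteness is what lets one compare the filtrations on source and target of the unit/counit and identify the associated graded maps with the free/trivial exchange. In other words, the truncation is not merely an obstacle to be overcome in the reduction step --- it is the mechanism that replaces the exactness you invoked. Your sketch would be repaired by making this explicit: rather than asserting exactness, use the finite bar/cobar filtrations (as in Propositions~\ref{prop:computing_coChev_as_limit} and~\ref{prop:computing_coPrim[1]_as_colimit} of the present paper) directly to control the unit and counit.
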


In general, it is quite hard to compute the Koszul dual of an object. However, we can sometimes extract good information by reducing to the case where the object involved is trivial.

\begin{prop}[\cite{ho_homological_2021}*{Cor. 3.3.6}] \label{prop:computing_coChev_as_limit}
  Let $\matheur{C}$ be a pro-nilpotent category and $\mathfrak{a} \in \coLie(\matheur{C})$. Then $\coChev \mathfrak{a}$ could be canonically written as a limit
  \[
    \coChev \mathfrak{a} \simeq \lim(\coChev^1 \mathfrak{a} \leftarrow \coChev^2 \mathfrak{a} \leftarrow \cdots) \simeq \lim_{i \geq 1} \coChev^i \mathfrak{a} \teq\label{eq:coChev_limit_pro-nilp}
  \]
  such that
  \[
    \Fib(\coChev^n \mathfrak{a} \to \coChev^{n-1} \mathfrak{a}) \simeq \Sym^n (\mathfrak{a}[-1]) \simeq (\mathfrak{a}[-1])_{S_n}^{\otimes n}, \quad\forall n\geq 1.
  \]
  Moreover, the limit~\eqref{eq:coChev_limit_pro-nilp} stabilizes in $\matheur{C}_p$ for each $p$, i.e. the sequence $\ev_p(\coChev^i \mathfrak{a})$ stabilizes when $i\gg 0$.
\end{prop}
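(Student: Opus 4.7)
My plan is to realize the tower $(\coChev^n \mathfrak{a})_n$ as the polynomial-degree truncation of the Chevalley complex, and then to use nilpotence of each $\matheur{C}_p$ to identify its limit with $\coChev \mathfrak{a}$.

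The starting point is that, as an object of $\matheur{C}$, $\coChev \mathfrak{a}$ is built from $\Sym(\mathfrak{a}[-1]) = \bigoplus_n \Sym^n(\mathfrak{a}[-1])$ equipped with a differential encoding the $\coLie$-cobracket on $\mathfrak{a}$. Because the cobracket strictly increases polynomial degree by one, the subobjects $F^{\geq n} \coChev \mathfrak{a} := \bigoplus_{k \geq n} \Sym^k(\mathfrak{a}[-1])$ form a decreasing filtration by ideals. Setting $\coChev^n \mathfrak{a} := \coChev \mathfrak{a} / F^{\geq n+1}$ yields the tower, and by inspection the fiber of $\coChev^n \mathfrak{a} \to \coChev^{n-1} \mathfrak{a}$ is precisely $\Sym^n(\mathfrak{a}[-1])$. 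To make this coherent at the $\infty$-categorical level, I would first handle the case of a trivial $\coLie$-coalgebra, where $\coChev(\mathfrak{a}) \simeq \Sym(\mathfrak{a}[-1])$ is the free commutative algebra and both the filtration and its quotients are manifest. The general case is then obtained by transporting along a suitable resolution of $\mathfrak{a}$, using that $\coChev$ commutes with the relevant colimits/limits.

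Next, I invoke pro-nilpotence: write $\matheur{C} \simeq \lim_p \matheur{C}_p$ with each $\matheur{C}_p$ stable nilpotent. By definition of nilpotence, for each $p$ there exists some $N_p$ such that $\Sym^n$ vanishes on $\matheur{C}_p$ for all $n > N_p$. Applying $\ev_p$ to our tower, the fibers $\Sym^n(\ev_p(\mathfrak{a})[-1])$ vanish for $n > N_p$, so the sequence $(\ev_p \coChev^n \mathfrak{a})_{n \geq 1}$ stabilizes, which gives the final claim of the proposition. In particular, $\ev_p \coChev \mathfrak{a} \simeq \ev_p \coChev^n \mathfrak{a}$ for $n \geq N_p$, which agrees with $\ev_p(\lim_n \coChev^n \mathfrak{a})$ since the tower is eventually constant in $\matheur{C}_p$. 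Reassembling via $\matheur{C} \simeq \lim_p \matheur{C}_p$ then yields the equivalence $\coChev \mathfrak{a} \simeq \lim_n \coChev^n \mathfrak{a}$ in $\matheur{C}$.

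The principal obstacle will be lifting the polynomial-degree filtration to an $\infty$-categorically coherent $\mathbb{N}^{\op}$-indexed diagram with the claimed fiber sequences. The classical word-length filtration on the Chevalley complex is entirely transparent, but arranging the higher coherences needed to land in $\ComAlg(\matheur{C})$ (or even just in $\matheur{C}$) with fiber sequences involving $\Sym^n$ requires some care. Once this is set up, the rest---identification of the fibers, stabilization in each $\matheur{C}_p$, and assembly of the limit---is essentially formal.
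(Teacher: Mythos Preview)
The paper does not supply its own proof of this proposition: it is stated with a citation to \cite{ho_homological_2018}*{Cor.~3.3.6} and used as a black box. There is therefore nothing in the present paper to compare your argument against.

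That said, your outline is the standard one and matches how such statements are typically established. The word-length filtration on the Chevalley complex is exactly the right object, the identification of the associated graded with $\Sym^n(\mathfrak{a}[-1])$ is correct, and the stabilization argument via nilpotence of each $\matheur{C}_p$ is the intended mechanism. Your own caveat about lifting the filtration to a coherent $\mathbb{N}^{\op}$-diagram in the $\infty$-categorical setting is the genuine technical content; one clean way to arrange this is to work with the bar/cobar construction (or equivalently the operadic $\Sym$-filtration on $\coChev$), where the tower and its fibers arise functorially rather than by hand. If you want to fill this in rigorously, that is where to look; otherwise your sketch is sound.
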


Dually, we have the following statement about $\coPrim[1]$.
\begin{prop}[\cite{gaitsgory_study_2017}*{Vol. II, \S6.1.4}]
  \label{prop:computing_coPrim[1]_as_colimit}
  Let $\matheur{C}$ be a pro-nilpotent category and $A \in \ComAlg(\matheur{C})$. Then $\coPrim[1] A$ could be canonically written as a colimit
  \[
    \coPrim[1] A \simeq \colim(\coPrim^1[1] A \to \coPrim^2[1] A \to \dots) \simeq \colim_{i\geq 1} \coPrim^i[1] A \teq\label{eq:coPrim_colimit_pro-nilp}
  \]
  such that
  \[
    \coFib(\coPrim^{n-1}[1] A \to \coPrim^n[1] A) \simeq \coFree_{\coLie}(A)_n \simeq (\coLie(n) \otimes (A[1])^{\otimes n})_{S_n}, \quad \forall n\geq 1.
  \]
  Moreover, the colimit~\eqref{eq:coPrim_colimit_pro-nilp} stabilizes in $\matheur{C}_p$ for each $p$, i.e. the sequence $\ev_p(\coPrim^i[1] \mathfrak{a})$ stabilizes when $i\gg 0$.
\end{prop}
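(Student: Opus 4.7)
The plan is to dualize the argument underlying Proposition~\ref{prop:computing_coChev_as_limit}. There, $\coChev$ is expressed as a limit of truncations by symmetric powers, reflecting the decomposition of the commutative operad by arity. Here I want the mirror image: a colimit of truncations indexed by ``cofree coLie arity,'' reflecting the analogous decomposition of the coLie cooperad. Since $\coPrim[1]$ is the left adjoint of $\coChev$, flipping limits to colimits is formally consistent with the adjunction.

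Concretely, I would model $\coPrim[1] A$ (for $A \in \ComAlg(\matheur{C})$ non-unital) by the cofree $\coLie$-coalgebra $\coFree_{\coLie}(A[1])$ equipped with a differential encoding the multiplication on $A$. This is the $\coLie$-side of the standard Koszul bar construction for the self-dual Koszul pair $(\mathrm{Com},\coLie)$. The cofree coalgebra carries a natural increasing filtration $F_{\leq n}$ by ``cofree arity at most $n$,'' and I set $\coPrim^n[1] A := F_{\leq n}$. By the operadic formula for $\coFree_{\coLie}$, the associated graded piece of this filtration is
\[
    (\coLie(n) \otimes (A[1])^{\otimes n})_{S_n},
\]
matching the claimed cofiber. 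That $\colim_n \coPrim^n[1] A \simeq \coPrim[1] A$ is then obtained either from exhaustiveness of the filtration, or, more formally, by computing mapping spaces $\Hom_{\coLie(\matheur{C})}(\coPrim[1] A,\mathfrak{a})$ via the adjunction with $\coChev\mathfrak{a}$ and identifying them with $\lim_n \Hom(A, \coChev^n \mathfrak{a})$ using Proposition~\ref{prop:computing_coChev_as_limit}.

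For stabilization in $\matheur{C}_p$: by pro-nilpotence, each $\matheur{C}_p$ is a stable nilpotent symmetric monoidal category, which means there is a uniform bound $N = N(p)$ such that every $n$-fold tensor product in $\matheur{C}_p$ with $n > N$ vanishes. Applied to the cofibers above, this forces $\ev_p(\coFree_{\coLie}(A)_n) \simeq 0$ for $n > N$, so the sequence $\ev_p(\coPrim^i[1] A)$ becomes constant once $i > N$. The main obstacle, exactly as in Proposition~\ref{prop:computing_coChev_as_limit}, is constructing the bar/cobar model functorially at the $\infty$-categorical level and checking that the cofree-arity filtration is compatible with the $\coPrim[1] \dashv \coChev$ adjunction; this technical input is precisely what~\cite{gaitsgory_study_2017}*{Vol. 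II, \S6.1.4} sets up in the required generality.
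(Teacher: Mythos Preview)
The paper does not give its own proof of this proposition: it is simply cited from~\cite{gaitsgory_study_2017}*{Vol.~II, \S6.1.4}, with no argument supplied. So there is nothing to compare your proposal against in the paper itself.

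That said, your sketch is the correct shape for how one would prove this, and it matches the approach taken in the cited reference: the bar construction for the Koszul pair $(\mathrm{Com},\coLie)$ models $\coPrim[1]A$ as $\coFree_{\coLie}(A[1])$ with a twisted differential, the arity filtration gives the tower $\coPrim^n[1]A$ with the stated cofibers, and nilpotence of each $\matheur{C}_p$ forces high-arity tensor products to vanish, yielding stabilization. Your remark that this is formally dual to Proposition~\ref{prop:computing_coChev_as_limit} is exactly right, and your identification of the technical input (the $\infty$-categorical bar/cobar machinery from~\cite{gaitsgory_study_2017}) is accurate.
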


\subsubsection{Unital variant}
\label{subsubsec:unital_variant_Koszul}
We will adopt the following notation. Let $\matheur{C}$ be Proposition~\ref{prop:koszul_duality_pro-nilp} and suppose that we have a monoidal embedding $\matheur{C} \hookrightarrow \matheur{C}^{\un}$ as in \S\ref{subsubsec:unit_and_augmentation}. For $\mathfrak{a} \in \coLie(\matheur{C})$, we write
\[
  \coChev^{\un}(\mathfrak{a}) = \addUnit(\coChev(\mathfrak{a})) \in \ComAlg^{\un, \aug}(\matheur{C}).
\]

\subsubsection{$\FBplus$-objects}
\label{subsubsec:FBplus_cat_pro-nilp}
\begin{lem}
  Let $\matheur{C}$ be a symmetric monoidal category in $\DGCatprescont$, then $\matheur{C}^{\FBplus}$ is pro-nilpotent.
\end{lem}
\begin{proof}
  For each $i \in \mathbb{N}$, we let $\FBplus^{\leq i}$ (resp. $\FBplus^{>i}$) be the full-subcategory of $\FBplus$ consisting of non-empty sets of cardinality at most (resp. greater than) $i$. We have a fully faithful symmetric monoidal functor $\iota_{>i}: \matheur{C}^{\FBplus^{> i}} \hookrightarrow \matheur{C}^{\FBplus}$, given by the left Kan extension along the inclusion $\FBplus^{> i} \hookrightarrow \FBplus$. Concretely, this functor is given by extending by $0$. Thus, the essential image of this functor is a monoidal ideal in $\matheur{C}^{\FBplus}$, and hence, by~\cite{lurie_higher_2017}*{Prop. 2.2.1.9}, the quotient category $\matheur{C}^{\FBplus}/\matheur{C}^{\FBplus^{> i}}$ inherits a symmetric monoidal structure such that the quotient functor $\tr_{\leq i}: \matheur{C}^{\FBplus} \to \matheur{C}^{\FBplus}/\matheur{C}^{\FBplus^{> i}}$ is symmetric monoidal.

  By construction, we have the following localization sequence of categories
  \[
    \begin{tikzcd}
      \matheur{C}^{\FBplus^{> i}} \ar[hookrightarrow, shift left=\arrdisp]{r}{\iota_{>i}} & \ar[shift left=\arrdisp]{l}{\iota_{>i}^R}\matheur{C}^{\FBplus} \ar[shift left=\arrdisp]{r}{\tr_{\leq i}} & \ar[hookrightarrow, shift left=\arrdisp]{l}{\tr_{\leq i}^R} \matheur{C}^{\FBplus}/\matheur{C}^{\FBplus^{> i}}
    \end{tikzcd}
  \]
  where $\iota_{>i}^R$ is the right adjoint of $\iota_{>i}$ and $\tr_{\leq i}^R$ is the right adjoint of $\tr_{\leq i}$. Thus, for any $c\in \matheur{C}^{\FBplus}$, we have the following fiber sequence
  \[
    \iota_{>i}\iota_{>i}^R c \to c \to \tr_{\leq i}^R \tr_{\leq i} c.
  \]

  Observe that $\iota_{>i}^R$ is given by restricting along $\FBplus^{>i} \hookrightarrow \FBplus$, which means that $\tr_{\leq i}^R \tr_{\leq i} c$ is given by setting all components of $c$ in graded degree greater than $i$ to be $0$. In particular, the essential image of $\tr_{\leq i}^R$ is precisely $\matheur{C}^{\FBplus^{\leq i}}$. Thus, we have an equivalence of categories $\matheur{C}^{\FBplus}/\matheur{C}^{\FBplus^{> i}} \simeq \matheur{C}^{\FBplus^{\leq i}}$, and under this identification, $\tr_{\leq i}$ corresponds to the restriction functor $\matheur{C}^{\FBplus} \to \matheur{C}^{\FBplus^{\leq i}}$ (and $\tr_{\leq i}^R$ corresponds to the right adjoint of the restriction functor, which is given by extending by $0$). By abuse of notation, we will also denote the restriction functor $\matheur{C}^{\FBplus} \to \matheur{C}^{\FBplus^{\leq i}}$ by $\tr_{\leq i}$.

  The discussion above equips $\matheur{C}^{\FBplus^{\leq i}}$ with a natural symmetric monoidal structure such that the restriction functor $\matheur{C}^{\FBplus} \to \matheur{C}^{\FBplus^{\leq i}}$ is symmetric monoidal. But now we are done since $\matheur{C}^{\FBplus} \simeq \lim_i \matheur{C}^{\FBplus^{\leq i}}$.
\end{proof}

The lemma above implies that Propositions~\ref{prop:koszul_duality_pro-nilp} and~\ref{prop:computing_coChev_as_limit} hold in this case. For this reason, as far as Koszul duality is concerned, it is technically more advantageous to work with $\FBplus$-, rather than $\FB$-, objects.

\section{Twisted commutative factorization algebras (tcfa's)}
\label{sec:tcfas}
In this section, we develop the theory of twisted commutative algebras (tcfa's). More specifically, we will introduce in~\S\ref{subsec:twisted_commutative_Ran} a new gadget, the twisted commutative $\Ran$ space, which plays the same role as the $\Ran$ space in the usual theory of factorization algebras. In~\S\ref{subsec:cardinality_filtration}, we construct the cardinality filtration on this space and prove various technical results needed in~\S\ref{subsec:reduction_to_diagonal} to show that tcfa's, introduced in~\S\ref{subsec:tcfas}, are simply twisted commutative algebras over $X$. Even though the main spirit of the various proofs in this section is the same as for the usual (i.e. non-twisted) case, they are more technically involved since the twisted commutative $\Ran$ space is a more complicated object. Finally, in the last subsection~\S\ref{subsec:variants_tcfas}, we describe the procedure of truncating tcfa's which will later be used to connect factorization cohomology with coefficients in a certain truncated twisted commutative algebra with the cohomology of generalized configuration spaces.

\subsection{Twisted commutative $\Ran$ space}
\label{subsec:twisted_commutative_Ran}
In this subsection, we give the construction of and prove various elementary properties of the twisted commutative $\Ran$ space.

\subsubsection{The construction/definition}
Let $X$ be a scheme. We consider the prestack $\Ran(X, \FBplus)$ defined as follows:
\[
  \Ran(X, \FBplus) = \colim_{I\in \FBplus} X^I = \bigsqcup_{k > 0} X^k/S_k, \teq\label{eq:defn_Ran_FBplus}
\]
where the quotient should be understood in the groupoid sense. $\Ran(X, \FBplus)$ should be viewed as a way to package $P(X)_+$ (see~\eqref{eq:powers_of_X}) along with all the $S_k$ actions in one space.

More concretely, for each test scheme $S\in \Sch$, the groupoid $\Ran(X, \FBplus)(S)$ consists of pairs $(I, \gamma)$ where $I \in \FBplus$ and where $\gamma: I \to X(S)$ is a map of sets. Moreover, a map $(I, \gamma) \to (I', \gamma')$ is a map $I \to I'$ in $\FBplus$ that makes the following diagram commute
\[
  \begin{tikzcd}
    I \ar[d] \ar{r}{\gamma} & X(S) \\
    I' \ar{ur}[swap]{\gamma'}
  \end{tikzcd}
\]

A sheaf $\matheur{F} \in \Shv(\Ran(X, \FBplus))$ thus consists of a collection of sheaves $\matheur{F}^{(I)} \in \Shv(X^I)$ that are equivariant with respect to $S_I$ for each $I \in \FBplus$.

\subsubsection{}The category $\FBplus$ itself, which is a groupoid, could be viewed as a constant prestack, which will be denoted by the same notation. Namely, for each test scheme $S \in \Sch$,
\[
  \FBplus(S) = \FBplus = \bigsqcup_{n \geq 1} BS_n,
\]
where $BS_n$ is the groupoid associated to the group $S_n$.

Observe that
\[
  \Shv(\FBplus) \simeq \Vect^{\FBplus} \simeq \Fun(\FBplus, \Vect)
\]
and moreover, we have a canonical morphism between prestacks
\[
  \Ran(\pi): \Ran(X, \FBplus) \to \FBplus \teq\label{eq:Ran(pi)}
\]
where $\pi: X\to \pt$ is the structure map. Explicitly, this map sends a pair $(I, \gamma)$ to $I$ in the functor of points description. Or in a slightly less canonical way, this is given by the obvious map $X^n/S_n \to BS_n$. In particular, given a sheaf $\matheur{F} \in \Shv(\Ran(X, \FBplus))$, $\Ran(\pi)_! \matheur{F} \in \Vect^{\FBplus}$ is a sequence of representations of $S_n$, as expected.

\begin{rmk}
  \label{rmk:taking_coinvariant}
  Note that we can also consider the constant prestack (valued in sets) given by the set $\mathbb{N}_+$ and consider the following composition
  \[
    \Ran(X, \FBplus) \to \FBplus \to \mathbb{N}_+.
  \]
  Pushing forward along this diagram amounts to taking the cohomology (with compact support) of the $X^I$'s followed by taking $S_I$-coinvariants.
\end{rmk}

\begin{rmk}
  \label{rmk:Ran(f)}
  One should think of $\Ran(X, \FB_+)$ as the multi-colored version of $\FB_+$. Indeed, when $X = \pt$, $\Ran(X, \FB_+) = \FB_+$. The map $\Ran(X, \FBplus) \to \FBplus$ is a special case of the following: given a map $f: X\to Y$ between schemes, we obtain a map $\Ran(f): \Ran(X, \FBplus) \to \Ran(Y, \FBplus)$.
\end{rmk}

\subsubsection{Commutative semigroup structures}
As mentioned before, the category $\FBplus$ is a (non-unital) symmetric monoidal category and hence, its associated prestack is a semigroup object in the category of prestacks with the Cartesian symmetric monoidal structure.

The prestack $\Ran(X, \FBplus)$ is also equipped with a semigroup structure given by
\[
  \union_n: \Ran(X, \FBplus)^n \to \Ran(X, \FBplus)
\]
such that for each test scheme $S$, we have
\begin{align*}
  (\union_n)_S: \Ran(X, \FBplus)^n(S) &\to \Ran(X, \FBplus)(S) \\
  ((I_k, \gamma_k))_{k=1}^n &\mapsto (\sqcup_{k=1}^n I_k, \sqcup_{k=1}^n \gamma_k).
\end{align*}
Unless confusion might occur, we will suppress $n$ from the notation $\union_n$.

From the description, it is immediate that the morphism $p: \Ran(X, \FBplus) \to \FBplus$ is compatible with the commutative semigroup structures on both sides.

\subsubsection{Convolution symmetric monoidal structure}
Similar to how Day convolution monoidal structure is defined, the commutative semigroup structure on $\Ran(X, \FB_+)$ could be used to construct a convolution symmetric monoidal structure on $\Shv(\Ran(X, \FBplus))$. We start with the following

\begin{lem} \label{lem:pseudo_properness_union}
  For each $n$, the map
  \[
    \union_n: \Ran(X, \FBplus)^n \to \Ran(X, \FBplus)
  \]
  is pseudo-proper.\footnote{See~\cite{gaitsgory_atiyah-bott_2015}*{\S1.5} for the definition of pseudo-properness.}
\end{lem}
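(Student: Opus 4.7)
The plan is to reduce pseudo-properness of $\union_n$ to a statement about each ``component'' of the source and target under the obvious decomposition, and then to recognize each such component map as a (representable) finite étale cover, which is in particular proper.

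First, I would decompose both sides. We have $\Ran(X, \FBplus) = \bigsqcup_{k > 0} X^k/S_k$, and since products distribute over disjoint unions and commute with groupoid quotients,
\[
\Ran(X, \FBplus)^n \;=\; \bigsqcup_{(k_1, \dots, k_n) \in \mathbb{N}_+^n} X^{k_1 + \cdots + k_n} \big/ (S_{k_1} \times \cdots \times S_{k_n}).
\]
On the component indexed by $(k_1, \dots, k_n)$ with $k := k_1 + \cdots + k_n$, the map $\union_n$ lands in the component $X^k/S_k$ of the target and is, by inspection of the functor of points description, nothing other than the quotient map
\[
\union_n\big|_{(k_1,\dots,k_n)} : X^k \big/ \textstyle\prod_j S_{k_j} \;\longrightarrow\; X^k / S_k
\]
induced by the inclusion of subgroups $H := \prod_j S_{k_j} \hookrightarrow S_k =: G$.

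Next, I would observe that any such map $X^k/H \to X^k/G$ (for a subgroup inclusion of finite groups acting on a scheme) is representable and finite étale. Indeed, the canonical atlas $X^k \to X^k/G$ is a $G$-torsor, and base-changing $X^k/H \to X^k/G$ along it gives $X^k \times (G/H) \to X^k$, a finite étale cover of degree $[G:H]$. By descent, the original map is itself representable and finite étale.

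Finally, to conclude pseudo-properness of $\union_n$, take any scheme $T$ with a map to $\Ran(X, \FBplus)$. Since $T$ maps into the disjoint union $\bigsqcup_k X^k/S_k$, one reduces (component by component on $T$) to the case $T \to X^k/S_k$ for fixed $k$. The fiber product $\Ran(X, \FBplus)^n \times_{\Ran(X,\FBplus)} T$ is then the finite disjoint union, over compositions $(k_1, \dots, k_n)$ of $k$ into $n$ positive parts, of the pullbacks $(X^k/\prod_j S_{k_j}) \times_{X^k/S_k} T$; each such pullback is finite étale over $T$ by the previous paragraph, so the total fiber product is representable by a scheme which is finite étale (hence proper) over $T$. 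This is stronger than the pseudo-properness condition of \cite{gaitsgory_atiyah-bott_2015}*{\S1.5}, which only demands a colimit presentation by proper-over-$T$ schemes. There is no real obstacle here; the only subtlety is making sure the groupoid-quotient bookkeeping in steps one and two identifies the component maps correctly, which is why I would write out the functor of points explicitly rather than reasoning purely at the level of coarse quotients.
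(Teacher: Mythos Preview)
Your proof is correct and in fact establishes something slightly stronger than the paper does: you show that $\union_n$ is schematic and finite \'etale (hence proper), whereas the paper only records pseudo-properness. The underlying computation, however, is the same. The paper pulls back $\union_n$ along the atlas $X^I \to \Ran(X,\FBplus)$ and identifies the fiber product as $\colim_{(\FBplus^n)_{/I}} X^I$; since the indexing groupoid $(\FBplus^n)_{/I}$ has trivial automorphisms and $\pi_0$ equal to the set of ordered partitions of $I$ into $n$ nonempty blocks, this colimit is just a finite disjoint union of copies of $X^I$, each mapping by the identity. Your argument reaches the same conclusion by first decomposing the source over compositions $(k_1,\dots,k_n)$ and then recognizing each $X^k/\prod_j S_{k_j} \to X^k/S_k$ as the map of quotient prestacks associated to a finite-index subgroup inclusion, which is finite \'etale by the torsor argument you give. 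The counts match: $\sum_{(k_1,\dots,k_n)} [S_k : \prod_j S_{k_j}]$ is exactly the number of ordered partitions. So the two approaches are repackagings of one another; yours is a bit more structural, the paper's stays closer to the colimit language used elsewhere in the section.
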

\begin{proof}
  Observe that for each $I\in \FBplus$, we have the following pullback square
  \[
    \begin{tikzcd}
      \matheur{Q} \ar{d} \ar{r} & \Ran(X, \FBplus)^n \ar{d} \\
      X^I \ar{r} & \Ran(X, \FBplus)
    \end{tikzcd}
  \]
  where
  \[
    \matheur{Q} = \colim_{I \simeq \sqcup_{k=1}^n I_k} X^{\sqcup_{k=1}^n I_k} \simeq \colim_{I\simeq \sqcup_{k=1}^n I_k} X^I,
  \]
  where the category that we are taking the colimit over is $(\FBplus^n)_{/I}$.

  Now, the map $\matheur{Q} \to X^I$ is visibly pseudo-proper since each term in the colimit maps to $X^I$ identically.
\end{proof}

\begin{lem} \label{lem:right_laxness_Shv_!}
  Let $\PreStk_{\pseudoproper}$ denote the (non-full) subcategory of $\PreStk$ where we only allow pseudo-proper maps. Then,
  \[
    \Shv_!|_{\PreStk_{\pseudoproper}}: \PreStk_{\pseudoproper} \to \DGCatprescont
  \]
  is right-lax monoidal.
\end{lem}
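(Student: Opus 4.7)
The plan is to derive the right-lax symmetric monoidal structure on $\Shv_!|_{\PreStk_{\pseudoproper}}$ by passing to left adjoints from the (strong) symmetric monoidal structure on the contravariant functor $\Shv^!: \PreStk^{\op} \to \DGCatprescont$. First, I would recall that on schemes, $!$-pullback commutes with external product: for morphisms $f_i: Y_i \to Z_i$ of schemes, there is a canonical \Kunneth{} equivalence $(f_1 \times f_2)^!(\matheur{F}_1 \boxtimes \matheur{F}_2) \simeq f_1^! \matheur{F}_1 \boxtimes f_2^! \matheur{F}_2$, together with the identification $\Shv(Y_1 \times Y_2) \simeq \Shv(Y_1) \otimes \Shv(Y_2)$ in $\DGCatprescont$. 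Right Kan extending $\Shv$ along $\Sch^{\op} \hookrightarrow \PreStk^{\op}$ promotes $\Shv^!$ to a strong symmetric monoidal functor with respect to the Cartesian monoidal structure on $\PreStk$ and the Lurie tensor product on $\DGCatprescont$.

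Second, I would invoke the general $\infty$-categorical principle that the left adjoint of a strong symmetric monoidal functor carries a canonical right-lax symmetric monoidal structure. Concretely, for any morphisms $f_i: \matheur{Y}_i \to \matheur{Z}_i$ in $\PreStk$, this produces a natural transformation
\[
(f_1)_! \matheur{F}_1 \boxtimes (f_2)_! \matheur{F}_2 \to (f_1 \times f_2)_!(\matheur{F}_1 \boxtimes \matheur{F}_2),
\]
obtained by adjunction from the external product of the counits $f_i^! (f_i)_! \matheur{F}_i \to \matheur{F}_i$, composed with the \Kunneth{} identification of $(f_1 \times f_2)^!$ with $f_1^! \boxtimes f_2^!$. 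Since every $f^!$ lies in $\DGCatprescont$ and hence admits a continuous left adjoint by the adjoint functor theorem, this construction produces a candidate right-lax symmetric monoidal structure on all of $\Shv_!: \PreStk \to \DGCatprescont$.

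Third, I would restrict to the subcategory $\PreStk_{\pseudoproper}$. The reason for this restriction is twofold. On the one hand, pseudoproper morphisms are closed under products and compositions, so the restricted functor is genuinely monoidal on source. On the other hand, pseudoproperness is the correct hypothesis under which the lax structure assembled above is coherent and in fact sharp on schemes: a pseudoproper morphism is, by definition, obtained by base change from a pseudoproper morphism of schemes, which is in turn a colimit of proper morphisms of schemes, and for proper morphisms of schemes the lax map is an equivalence (proper base change / \Kunneth{} for $f_* = f_!$). Writing everything as such a colimit and using that $f_!$ commutes with colimits in the source reduces the coherence check to the scheme case.

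The main obstacle is purely $\infty$-categorical: promoting the pointwise natural transformation above to a fully coherent right-lax symmetric monoidal structure on a functor between $\infty$-categories. This technical point is handled by the general formalism for compatibility of $!$- and $*$-pushforwards developed in~\cite{gaitsgory_study_2017}, and is entirely parallel to the untwisted Ran-space setting of~\cite{gaitsgory_atiyah-bott_2015,gaitsgory_weils_2014} upon which the present framework is modeled; indeed, one can simply cite the same coherence statement from that work.
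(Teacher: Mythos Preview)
Your overall instinct—derive the structure on $\Shv_!$ from the one on $\Shv^!$ via the adjunctions $f_! \dashv f^!$—matches the paper's approach. But the execution has two concrete errors and one conceptual misplacement of emphasis.

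First, the Beck--Chevalley transformation goes the other way. Starting from the compatibility $(f_1\times f_2)^!(\matheur{G}_1\boxtimes\matheur{G}_2)\simeq f_1^!\matheur{G}_1\boxtimes f_2^!\matheur{G}_2$ and passing to left adjoints, what you get is
\[
(f_1\times f_2)_!(\matheur{F}_1\boxtimes\matheur{F}_2)\;\longrightarrow\; f_{1!}\matheur{F}_1\boxtimes f_{2!}\matheur{F}_2,
\]
adjoint to $\matheur{F}_1\boxtimes\matheur{F}_2 \to f_1^! f_{1!}\matheur{F}_1 \boxtimes f_2^! f_{2!}\matheur{F}_2 \simeq (f_1\times f_2)^!(f_{1!}\matheur{F}_1\boxtimes f_{2!}\matheur{F}_2)$. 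This uses the \emph{units} $\matheur{F}_i\to f_i^! f_{i!}\matheur{F}_i$, not counits; what you wrote as ``counits $f_i^!(f_i)_!\matheur{F}_i\to\matheur{F}_i$'' does not exist.

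Second, and more importantly, you treat the \Kunneth{} equivalence as a peripheral coherence check, but it is the entire content of the lemma. The lax structure maps are the functors $\boxtimes:\Shv(\matheur{X}_1)\otimes\Shv(\matheur{X}_2)\to\Shv(\matheur{X}_1\times\matheur{X}_2)$; for these to constitute a lax monoidal structure on $\Shv_!$ one needs them to be \emph{natural} in the $\matheur{X}_i$ with respect to $f_!$, i.e.\ the square the paper writes must commute on the nose. A priori it only commutes up to the transformation above, and the paper's proof consists precisely in showing that this transformation is an equivalence for pseudo-proper $f_i$, citing the \Kunneth{} formula for pseudo-proper morphisms. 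There is no lax monoidal structure ``for free'' before this step. Relatedly, the general principle you invoke (left adjoint of strong monoidal is right-lax) does not apply: $\Shv_!$ is not the left adjoint of $\Shv^!$ as a functor; they are functors of opposite variance related by levelwise adjunctions, and the standard doctrine would in any case produce an oplax structure on the left adjoint, not a lax one. Finally, $\Shv^!$ on prestacks is only right-lax monoidal, not strong, so your first step overclaims as well.
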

\begin{proof}
  For each family of maps $f_i: \matheur{X}_i \to \matheur{Y}_i$, $1\leq i \leq n$, between prestacks, we have the following commutative diagram
  \[
    \begin{tikzcd}
      \bigotimes_i \Shv(\matheur{X}_i) \ar{r}{\boxtimes} & \Shv(\sqcap_i \matheur{X}_i) \\
      \bigotimes_i \Shv(\matheur{Y}_i) \ar{u} \ar{r} {\boxtimes} & \Shv(\sqcap_i \matheur{Y}_i) \ar{u}
    \end{tikzcd}
  \]
  where the vertical arrows are given by $!$-pullbacks. Passing to left adjoints for the vertical arrows, we obtain the following diagram, which commutes up to a natural transformation
  \[
    \begin{tikzcd}
      \bigotimes_i \Shv(\matheur{X}_i) \ar{d} \ar{r}{\boxtimes} & \Shv(\sqcap_i \matheur{X}_i) \ar{d} \\
      \bigotimes_i \Shv(\matheur{Y}_i) \ar{r} {\boxtimes} & \Shv(\sqcap_i \matheur{Y}_i)
    \end{tikzcd}
  \]
  where the vertical arrows are given by $!$-pushforward. It suffices to show that this diagram in fact commutes.

  We will prove the case where $n=2$. The general case is exactly the same. Note that the lax commutativity of the diagram is given as follows: for each pair $\matheur{F}_i \in \Shv(\matheur{X}_i), i \in \{1, 2\}$, we have a natural map
  \[
    (f_1 \times f_2)_! (\matheur{F}_1 \boxtimes \matheur{F}_2) \to f_{1!} \matheur{F}_1 \boxtimes f_{2!} \matheur{F}_2, \teq\label{eq:Kuenneth_pseudo-proper}
  \]
  which is the adjoint of the following map
  \[
    \matheur{F}_1 \boxtimes \matheur{F}_2 \to f_1^! f_{1!} \matheur{F}_1 \boxtimes f_2^! f_{2!} \matheur{F}_2 \simeq (f_1\times f_2)^! (f_{1!} \matheur{F}_1 \boxtimes f_{2!}\matheur{F}_2).
  \]
  Now, the map~\eqref{eq:Kuenneth_pseudo-proper} being an equivalence when $f_1$ and $f_2$ are pseudo-proper is just the \Kunneth{} formula for pseudo-proper morphisms between prestacks, which was proved in~\cite{ho_free_2017}*{Lem. 2.10.7}.
\end{proof}

Lemma~\ref{lem:pseudo_properness_union} implies that $\Ran(X, \FBplus)$ is in fact a commutative semigroup in $\PreStk_{\pseudoproper}$. Using the fact that right-lax monoidal functors preserve algebra structures, Lemma~\ref{lem:right_laxness_Shv_!} directly implies the following

\begin{cor} \label{cor:otimesstar_monoidal_RanFB}
  The category $\Shv(\Ran(X, \FBplus))$ has a natural symmetric monoidal structure, which we will denote by $\otimesstar$, given concretely as follows: for each $(\matheur{F}_k)_{k=1}^n \in \Shv(\Ran(X, \FBplus))$,
  \[
    \matheur{F}_1 \otimesstar \cdots\otimesstar \matheur{F}_n = \union_{n!}(\boxtimes_{k=1}^n \matheur{F}_k).
  \]
\end{cor}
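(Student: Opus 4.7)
The corollary is essentially a formal consequence of the two preceding lemmas, so my plan is to assemble them carefully and then verify the explicit formula.

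First, I would observe that $\Ran(X, \FBplus)$ has already been equipped with a commutative semigroup structure in $\PreStk$ via the $\union_n$ maps, with associativity/commutativity being automatic from the semigroup structure on $\FBplus$ (or checkable on $S$-points by the disjoint-union description). By Lemma~\ref{lem:pseudo_properness_union}, every structure map $\union_n$ is pseudo-proper, so in fact $\Ran(X, \FBplus)$ lifts to a commutative semigroup object in the subcategory $\PreStk_{\pseudoproper}$. (Strictly speaking, this requires knowing that products in $\PreStk_{\pseudoproper}$ are computed as in $\PreStk$, and that identity maps are pseudo-proper; both are immediate.)

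Next, by Lemma~\ref{lem:right_laxness_Shv_!}, the functor $\Shv_!|_{\PreStk_{\pseudoproper}}: \PreStk_{\pseudoproper} \to \DGCatprescont$ is right-lax symmetric monoidal. A right-lax symmetric monoidal functor carries commutative (semigroup) algebra objects to commutative (semigroup) algebra objects in the target; applied here, $\Shv(\Ran(X, \FBplus))$ inherits the structure of a commutative algebra object in $\DGCatprescont$, which is precisely a (non-unital) symmetric monoidal presentable stable $\Lambda$-linear category. I would denote the resulting tensor product by $\otimesstar$.

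Finally, I would unwind the construction to get the explicit formula. The $n$-fold multiplication is the composite of the \Kunneth{} functor $\boxtimes: \Shv(\Ran(X, \FBplus))^{\otimes n} \to \Shv(\Ran(X, \FBplus)^n)$ with the pushforward $\union_{n!}$, which gives exactly $\union_{n!}(\matheur{F}_1 \boxtimes \cdots \boxtimes \matheur{F}_n)$ as claimed.

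The only slightly delicate point, and the one I would be most careful about, is the passage from a lax monoidal functor to one that strictly preserves commutative algebra objects — here this works because Lemma~\ref{lem:right_laxness_Shv_!} actually shows that the lax structure maps $(f_1 \times f_2)_!(\matheur{F}_1 \boxtimes \matheur{F}_2) \to f_{1!}\matheur{F}_1 \boxtimes f_{2!} \matheur{F}_2$ are equivalences on pseudo-proper maps (the \Kunneth{} formula), so $\Shv_!|_{\PreStk_{\pseudoproper}}$ is in fact strong symmetric monoidal, not merely lax. With this upgrade the preservation of semigroup objects is completely formal.
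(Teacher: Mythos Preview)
Your proposal is correct and follows essentially the same route as the paper: the paper's entire argument is the sentence preceding the corollary, which says that Lemma~\ref{lem:pseudo_properness_union} makes $\Ran(X,\FBplus)$ a commutative semigroup in $\PreStk_{\pseudoproper}$, and then Lemma~\ref{lem:right_laxness_Shv_!} plus the fact that right-lax monoidal functors preserve algebra objects yields the result. Your final paragraph---observing that the \Kunneth{} equivalence in the proof of Lemma~\ref{lem:right_laxness_Shv_!} actually upgrades $\Shv_!|_{\PreStk_{\pseudoproper}}$ to a \emph{strong} symmetric monoidal functor---is a nice clarification the paper leaves implicit.
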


\subsubsection{Compatibility with $\Ran(f)$}
\label{subsubsec:conv_monoidal_vs_Ran(f)}
Let $f: X\to Y$ be a map between schemes. Then by \Kunneth{} formula, the induced map
\[
  \Ran(f)_!: \Shv(\Ran(X, \FBplus)) \to \Shv(\Ran(Y, \FBplus))
\]
is symmetric monoidal, where $\Ran(f)$ is as in Remark~\ref{rmk:Ran(f)}.

In the case where $Y = \pt$ and $f=\pi$, the structure map of $X$, we have $\Shv(\Ran(Y, \FBplus)) \simeq \Vect^{\FBplus}$. Unwinding the definition of the convolution monoidal structure, we see that it coincides with the Day convolution monoidal structure on $\Vect^{\FBplus}$. Thus, the map
\[
  \Ran(\pi)_!: \Shv(\Ran(X, \FBplus)) \to \Vect^{\FBplus}
\]
is symmetric monoidal.

\subsection{The cardinality filtration on $\Ran(X, \FBplus)$}
\label{subsec:cardinality_filtration}
In this subsection, we introduce the cardinality filtration, a natural filtration on $\Ran(X, \FBplus)$ which allows us to perform inductive arguments for sheaves on the $\Ran$ space. More specifically, we will realize $\Ran(X, \FBplus)$ as a colimit
\[
  \Ran(X, \FBplus) \simeq \colim(\Ran^{\leq 1}(X, \FBplus) \xrightarrow{\iota_{\leq 1, \leq 2}} \Ran^{\leq 2}(X, \FBplus) \xrightarrow{\iota_{\leq 2, \leq 3}} \cdots),
\]
where $\Ran^{\leq n}(X, \FBplus)$ parametrizes maps $\gamma: I\to X$ such that $|\im \gamma| \leq n$.

The main output of this subsection is the \devissage{} sequence given in Proposition~\ref{prop:devissage_Ran_leq_n}, which says that we have the following natural cofiber sequence for each $\matheur{F} \in \Shv(\Ran^{\leq n}(X, \FBplus))$
\[
  (\iota_{\leq n-1, \leq n})_! \iota_{\leq n-1, \leq n}^! \matheur{F} \to \matheur{F} \to (\jmath_n)_* (\jmath_n)^! \matheur{F}.
\]
This would be straightforward if we had the following \devissage{} diagram of prestacks
\[
  \begin{tikzcd}
    \Ran^{\leq n-1}(X, \FBplus) \ar{r}{\iota_{\leq n-1, \leq n}} & \Ran^{\leq n}(X, \FBplus) & \Ran^{=n}(X, \FBplus) \ar{l}[swap]{\jmath_n}
  \end{tikzcd}
\]
where $\iota_{\leq n-1, \leq n}$ is a closed embedding and $\jmath_n$ its open complement.  However, the map $\iota_{\leq n-1, \leq n}$ is not a closed embedding in general, and most of this subsection is there to deal with this issue. We recommend the reader to, at the first pass, take as faith Proposition~\ref{prop:devissage_Ran_leq_n} and return to this subsection to look up the notation when necessary.

\subsubsection{The diagonal of $\Ran(X, \FBplus)$}
We start by realizing $\FBplus \times X$ as the diagonal of $\Ran(X, \FBplus)$. Let $\Delta(X, \FBplus)$ be the full-sub-prestack of $\Ran(X, \FBplus)$ given by the following functor of points description: for each test scheme $S$, $\Delta(X, \FBplus)(S)$ consists of pairs $(I, \gamma)$ such that $\gamma(I)$ is a singleton set, i.e. we have a factorization
\[
  I \to \{*\} \to X(S).
\]

\begin{lem} \label{lem:diagonal_closed_embedding}
  Let $\delta: \Delta(X, \FBplus) \to \Ran(X, \FBplus)$ denote the inclusion map. Then, $\delta$ is a closed embedding.
\end{lem}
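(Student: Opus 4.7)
The plan is to reduce the question to verifying that the scheme-theoretic diagonal $X \hookrightarrow X^n$ is a closed embedding, after pulling back along an atlas. Unwinding the definitions, both source and target of $\delta$ decompose as disjoint unions indexed by $n = |I| \geq 1$:
\[
	\Delta(X, \FBplus) \simeq \bigsqcup_{n \geq 1} X/S_n \qandq \Ran(X, \FBplus) \simeq \bigsqcup_{n \geq 1} X^n/S_n,
\]
where in the left-hand decomposition $S_n$ acts trivially on $X$, and $\delta$ preserves the $n$-indexing; on the $n$-th component, it is the map of groupoid quotient prestacks induced by the $S_n$-equivariant diagonal $\Delta_n: X \hookrightarrow X^n$ (trivial action on the source, permutation action on the target).

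I would then verify closed-embedding-ness of $\delta$ by pulling back along the atlas $\bigsqcup_n X^n \to \bigsqcup_n X^n/S_n$. On the $n$-th component, the relevant fiber product is $(X/S_n) \times_{X^n/S_n} X^n$, whose $T$-points form a groupoid with objects triples $(a, b, \phi)$, where $a \in X(T)$, $b \in X^n(T)$, and $\phi \in S_n$ is an isomorphism $\Delta_n(a) \xrightarrow{\sim} b$ in $(X^n/S_n)(T)$. Since $\Delta_n(a) \in X^n(T)$ is fixed by the whole of $S_n$, the condition $\phi \cdot \Delta_n(a) = b$ forces $b = \Delta_n(a)$; and for each fixed $a$, the choices of $\phi$ together with the induced morphisms form a contractible groupoid (between any two objects $(a, \phi)$ and $(a, \phi')$ there is a unique morphism, given by the element $(\phi')^{-1}\phi \in S_n$). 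Hence the fiber product is equivalent to the scheme $X$, with structure map to $X^n$ being the diagonal $\Delta_n$, which is a closed embedding because $X$ is separated of finite type. This concludes the reduction.

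The computation is essentially formal once the groupoid quotients are carefully unwound; the only real risk is mislabeling which $S_n$-automorphisms live in which groupoid, since $(X^n/S_n)(T)$ uses the permutation action while $(X/S_n)(T)$ uses the trivial action. An alternative, more high-level, phrasing would be to invoke that $S_n$-equivariant closed embeddings of schemes descend to closed embeddings of the associated groupoid quotient prestacks, applied to the $S_n$-equivariant closed embedding $\Delta_n: X \hookrightarrow X^n$; the atlas calculation above is precisely the contents of this principle in our specific case.
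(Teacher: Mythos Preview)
Your proof is correct and takes essentially the same approach as the paper: both pull back $\delta$ along the covering $X^I \to \Ran(X, \FBplus)$ and identify the result with the small diagonal $X \hookrightarrow X^I$. The paper's proof is a single sentence asserting this identification, while you spell out the groupoid-theoretic fiber product computation explicitly; your added care in tracking the two different $S_n$-actions is exactly what makes the paper's ``immediate'' precise.
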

\begin{proof}
  It is immediate from the fact that the pullback of $\delta$ over $X^I \to \Ran(X, \FBplus)$ is given precisely by the small diagonal of $X^I$.
\end{proof}

From the functor of points description, it is immediate that
\[
  \Delta(X, \FBplus) \simeq \FBplus \times X \simeq \bigsqcup_{n\geq 1} X/S_n \simeq \bigsqcup_{n\geq 1} BS_n \times X \teq\label{eq:description_diagonal}
\]
where in the third term, $S_n$ acts on $X$ trivially, and as usual, the quotient is to be understood in the groupoid sense. Thus, $\FBplus\times X$ can be viewed in a natural way the diagonal of $\Ran(X, \FBplus)$.

\subsubsection{The sub-prestack $\Ran^{\leq n}(X, \FBplus)$}
The diagonal of $\Ran(X, \FBplus)$ is in fact part of a natural filtration. For each $n\geq 1$, we let $\Ran^{\leq n}(X, \FBplus)$ be the full sub-prestack of $\Ran(X, \FBplus)$ such that for each test scheme $S$,
\[
  \Ran^{\leq n}(X, \FBplus)(S) = \{(I, \gamma) \in \Ran(X, \FB_+)(S): |\im(\gamma)| \leq n\}.
\]

We will use the following notation to denote the various natural embeddings
\begin{align*}
  \iota_{\leq n}&: \Ran^{\leq n}(X, \FBplus) \to \Ran(X, \FBplus), \quad \forall n\geq 1, \\
  \iota_{\leq n, \leq m}&: \Ran^{\leq n}(X, \FBplus) \to \Ran^{\leq m}(X, \FBplus), \quad \forall n\leq m.
\end{align*}
Clearly, $\Ran^{\leq 1}(X, \FBplus) \simeq \Delta(X, \FB_+) \simeq \FBplus\times X$ is the diagonal of $\Ran(X, \FBplus)$ discussed above. Moreover, the following is immediate.

\begin{lem}
  \label{lem:cardinality_filtration}
  We have a natural equivalence
  \[
    \Ran(X, \FBplus) \simeq \colim_{n\geq 1} \Ran^{\leq n}(X, \FBplus)
  \]
  and hence
  \[
    \Shv(\Ran(X, \FBplus)) \simeq \lim_{n\geq 1} \Shv(\Ran^{\leq 1}(X, \FBplus)).
  \]
\end{lem}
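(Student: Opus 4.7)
The plan is to verify the first equivalence at the level of the functor-of-points description and then deduce the second as a formal consequence of the sheaf formalism.

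For the first equivalence, colimits in $\PreStk$ are computed objectwise in $\Spc$, so it suffices to check that for every test scheme $S$, the natural map
\[
	\colim_{n\geq 1} \Ran^{\leq n}(X, \FBplus)(S) \to \Ran(X, \FBplus)(S)
\]
is an equivalence of groupoids. The diagram $n \mapsto \Ran^{\leq n}(X, \FBplus)(S)$ is filtered and all transition maps are fully faithful inclusions of sub-groupoids, so its colimit in $\Spc$ is just the union of these sub-groupoids inside $\Ran(X, \FBplus)(S)$. To see that this union exhausts $\Ran(X, \FBplus)(S)$, note that any object is a pair $(I, \gamma)$ with $I \in \FBplus$ a finite set, so $|\im(\gamma)| \leq |I| < \infty$ and hence $(I, \gamma)$ belongs to $\Ran^{\leq |I|}(X, \FBplus)(S)$. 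Morphisms $(I, \gamma) \to (I', \gamma')$ are bounded by $\max(|I|, |I'|)$ in the same way, so full faithfulness is preserved in the colimit.

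For the second equivalence, recall from~\eqref{eq:shv_prestk_contra} that $\Shv$ is a contravariant functor $\PreStk^{\op} \to \DGCatprescont$ obtained as a right Kan extension, and in particular it preserves limits in its source; equivalently, it sends colimits of prestacks to limits of categories. Applying this to the colimit presentation just established yields
\[
	\Shv(\Ran(X, \FBplus)) \simeq \lim_{n\geq 1} \Shv(\Ran^{\leq n}(X, \FBplus)),
\]
with transition functors given by $!$-restriction along the inclusions $\iota_{\leq n-1, \leq n}$ (correcting the typo in the statement).

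There is no real obstacle here; the only thing to keep in mind is that the colimit of prestacks is filtered and the transition maps are monomorphisms, which is precisely what makes the pointwise colimit compute the honest union. The rest is a direct invocation of the general formalism recalled in~\S\ref{subsec:prestacks_and_sheaves}.
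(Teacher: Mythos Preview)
Your proof is correct. The paper itself offers no proof, merely stating that the lemma ``is immediate''; your argument spells out exactly the verification the paper is implicitly invoking, namely checking the colimit objectwise via the functor of points and then applying the fact (recalled in \S\ref{subsec:prestacks_and_sheaves}) that $\Shv$ sends colimits of prestacks to limits of categories.
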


The map $\iota_{\leq n}$ and $\iota_{\leq n, \leq m}$ enjoy nice geometric properties. Note, however, that they are not closed embeddings.

\begin{prop}
  The morphisms $\iota_{\leq n}$ and $\iota_{\leq n, \leq m}$ are finitary pseudo-proper in the sense of~\cite{gaitsgory_atiyah-bott_2015}*{\S 7.4.6}, i.e. for each test scheme $S$ mapping to $\Ran(X, \FBplus)$ (resp. $\Ran^{\leq m}(X, \FBplus)$), the fiber product $S\times_{\Ran(X, \FBplus)} \Ran^{\leq n}(X, \FBplus)$ (resp. $S \times_{\Ran^{\leq m}(X, \FBplus)} \Ran^{\leq n}(X, \FBplus)$) can be written as a finite colimit of schemes proper over $S$.
\end{prop}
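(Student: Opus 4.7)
The plan is to show something stronger: for any test scheme $S$ mapping to $\Ran(X, \FBplus)$ (resp.\ $\Ran^{\leq m}(X, \FBplus)$), the relevant fiber product is already a single scheme closed in $S$ and hence proper over $S$, which is trivially a finite colimit of schemes proper over $S$.

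First, I would reduce to a universal case. Any $S$-point of $\Ran(X, \FBplus)$ is represented, up to isomorphism in the groupoid $\Ran(X, \FBplus)(S)$, by some $f : I \to X(S)$, i.e.\ by a map $S \to X^I$ for a suitable $I \in \FBplus$; after breaking $S$ into connected components we may assume a single $I$ works. By base-change invariance, it suffices to compute the universal fiber product $X^I \times_{\Ran(X, \FBplus)} \Ran^{\leq n}(X, \FBplus)$.

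Next, I would unravel this universal fiber product from the functor-of-points description. An $S$-point consists of a triple $(f, (J, \gamma), \phi)$ where $f \in X^I(S)$, $(J, \gamma) \in \Ran^{\leq n}(X, \FBplus)(S)$, and $\phi : I \to J$ is a bijection with $\gamma \circ \phi = f$. The equation $\gamma \circ \phi = f$ rigidifies the groupoid: every such triple is uniquely isomorphic to its canonical representative $(I, f, \id_I)$, and this canonical representative has no nontrivial automorphisms. The constraint $|\im \gamma| \leq n$ then becomes $|\im f| \leq n$, cutting out the closed subscheme $X^I_{\leq n} \subset X^I$ of tuples whose image set has size at most $n$. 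Hence the universal fiber product is the scheme $X^I_{\leq n}$, and its inclusion into $X^I$ is a closed embedding, hence proper.

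Finally, for an arbitrary $S \to \Ran(X, \FBplus)$ factoring through $X^I$, base change gives $S \times_{\Ran(X, \FBplus)} \Ran^{\leq n}(X, \FBplus) \simeq S \times_{X^I} X^I_{\leq n}$, a closed subscheme of $S$, \emph{a fortiori} proper and a trivial finite colimit of schemes proper over $S$. The argument for $\iota_{\leq n, \leq m}$ is entirely parallel: any $S \to \Ran^{\leq m}(X, \FBplus)$ is represented by a map to $X^I_{\leq m}$ for some $I$, and since $\Ran^{\leq n} \hookrightarrow \Ran^{\leq m}$ is a full sub-prestack inclusion, the same rigidification yields $X^I_{\leq m} \times_{\Ran^{\leq m}} \Ran^{\leq n} \simeq X^I_{\leq \min(n, m)}$, once again a closed subscheme of the source. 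The only nontrivial step is the groupoid computation in the middle paragraph; everything else is formal base change.
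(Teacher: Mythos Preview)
Your groupoid computation in the middle paragraph is correct: the fiber product $X^I \times_{\Ran(X,\FBplus)} \Ran^{\leq n}(X,\FBplus)$ is indeed equivalent to the set-valued functor sending $S'$ to $\{\gamma\colon I \to X(S') : |\im \gamma| \leq n\}$. The gap is in the next sentence, where you assert that this condition cuts out a closed subscheme of $X^I$. It does not; this functor is not representable by any scheme.

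Concretely, take $X = \mathbb{A}^1$, $I = \{1,2,3\}$, $n = 2$, and evaluate on $S' = \Spec k[\epsilon]/(\epsilon^2)$. Over the deepest diagonal point $(0,0,0)$ the functor picks out those tangent vectors $(b_1, b_2, b_3) \in k^3$ with at most two distinct entries, i.e.\ the union of the three hyperplanes $b_i = b_j$. If the functor were a scheme this fiber would have to be a $k$-vector space, which it visibly is not. Equivalently, $(0,\epsilon,2\epsilon)$ lies in the scheme-theoretic union of the pairwise diagonals (the product $(x_1-x_2)(x_1-x_3)(x_2-x_3)$ vanishes there) yet has $|\im\gamma|=3$, so your proposed scheme has strictly more $S'$-points than the fiber product.

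The paper's proof handles this by exhibiting the fiber product as a genuine nontrivial finite colimit. Factoring each $\gamma$ through its image identifies the functor with $\colim_{(I\twoheadrightarrow J)\in \matheur{K}_{I,n}} X^J$, where $\matheur{K}_{I,n}$ is the finite category of surjections $I\twoheadrightarrow J$ with $|J|\leq n$, and each $X^J\hookrightarrow X^I$ is a partial diagonal, hence a closed embedding and in particular proper. This colimit presentation is exactly what the definition of finitary pseudo-proper asks for; it does not collapse to a single scheme. Indeed the paper later introduces a separate prestack $\lbar{\Ran}^{\leq n}(X,\FBplus)$ built from the scheme-theoretic images precisely because it differs from $\Ran^{\leq n}(X,\FBplus)$, and proves as an additional step that the two have equivalent categories of sheaves.
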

\begin{proof}
  We start with $\iota_{\leq n}$. Using~\eqref{eq:defn_Ran_FBplus}, it suffices to show that for each $I \in \FBplus$, $X^I \times_{\Ran(X, \FBplus)} \Ran^{\leq n}(X, \FBplus)$ can be written as a finite colimits of schemes proper over $X^I$. In fact, we will show that
  \[
    X^I \times_{\Ran(X, \FBplus)} \Ran^{\leq n}(X, \FBplus) \simeq \colim_{(I \surjects J) \in \matheur{K}_{I, n}} X^J, \teq\label{eq:ins_leq_n_pseudo-proper}
  \]
  where $\matheur{K}_{I, n}$ is the category whose objects are surjections $I\surjects J$ for some finite set $J$ of size at most $n$ and morphisms from $I\surjects J_1$ to $I \surjects J_2$ are surjections $J_1 \surjects J_2$ that are compatible with the surjections from $I$. This will immediately imply pseudo-properness since $\matheur{K}_{I, n}$ is finite, and the map from each term in the colimit to $X^I$ is a closed embedding.

  To prove~\eqref{eq:ins_leq_n_pseudo-proper}, first note that
  \begin{align*}
    (X^I\times_{\Ran(X, \FBplus)} \Ran^{\leq n}(X, \FBplus))(S)
    &= \{(\gamma: I \to X(S)): |\im(\gamma)| \leq n\}.
  \end{align*}
  Now, for any set $T$ in general (which is $X(S)$ in the case of interest), we have
  \begin{align*}
    \{(\gamma: I \to T): |\im(\gamma)| \leq n\}
    &\simeq \colim_{\substack{\gamma: I \to T \\ |\im \gamma|\leq n}} \{*\}
    \simeq \colim_{\substack{(I\surjects J)\in \matheur{K}_{I, n} \\ J \to T}} \{*\}
    \simeq \colim_{I\surjects J \in \matheur{K}_{I, n}} \colim_{J\to T} \{*\}
    \simeq \colim_{I\surjects J \in \matheur{K}_{I, n}} T^J.
  \end{align*}
  The second equivalence is due to cofinality. The others are immediate. The proof of~\eqref{eq:ins_leq_n_pseudo-proper} thus concludes.

  From the calculation above, we obtain an expression of $\Ran^{\leq n}(X, \FBplus)$ as a colimit of schemes:
  \[
    \Ran^{\leq n}(X, \FBplus) \simeq \colim_{\substack{I \in \FBplus}} \colim_{(I \surjects J) \in \matheur{K}_{I, n}} X^J \simeq \colim_{I \surjects J \in \matheur{K}_n} X^J,
  \]
  where $\matheur{K}_n$ is the category consisting of surjections of non-empty finite sets $I \surjects J$ with morphisms given by commutative squares of the following form
  \[
    \begin{tikzcd}
      I_1 \ar{d}{\simeq} \ar[two heads]{r} & J_1 \ar[two heads]{d} \\
      I_2 \ar[two heads]{r} & J_2
    \end{tikzcd}
  \]

  Using a similar argument as the one above but for $X^J \times_{\Ran^{\leq m}(X, \FBplus)} \Ran^{\leq n}(X, \FBplus)$, we see that $\iota_{\leq n, \leq m}$ is also finitary pseudo-proper for all $n \leq m$.
\end{proof}

\subsubsection{The sub-prestack $\Ran^{=n}(X, \FBplus)$}
We will now study the ``complement'' of $\Ran^{\leq n-1}(X, \FBplus)$ inside $\Ran^{\leq n}(X, \FBplus)$. For each $I \in \FBplus$ and each $n\geq 1$, we let $X^{I}_{\leq n}$ denote the closed subscheme of $X^I$, which is the scheme-theoretic image of
\[
  \bigsqcup_{I\surjects J \in \matheur{K}_{I, n}} X^J \to X^I
\]
and $X^I_{=n}$ the complement of $X^I_{\leq n-1}$ inside $X^I_{\leq n}$. Here, $\matheur{K}_{I, n}$ is as in~\eqref{eq:ins_leq_n_pseudo-proper}.

Clearly, $X^I_{\leq n}$ and $X^I_{=n}$ are invariant under $S_I$ and we let
\[
  \lbar{\Ran}^{\leq n}(X, \FBplus) = \colim_{I\in\FBplus} X^I_{\leq n} \quad\text{and}\quad \Ran^{=n}(X, \FBplus) = \colim_{I \in \FBplus} X^I_{=n}.
\]

\subsubsection{} By construction, $\lbar{\Ran}^{\leq n}(X, \FBplus)$ is a closed sub-prestack of $\Ran(X, \FBplus)$ and moreover, the map $\iota_{\leq n}$ naturally factors through $\lbar{\Ran}^{\leq n}(X, \FBplus)$ and we have the following commutative diagram
\[
  \begin{tikzcd}
    \Ran^{\leq n}(X, \FBplus) \ar{dr}[swap]{h_{\leq n}} \ar{rr}{\iota_{\leq n}} && \Ran(X, \FBplus) \\
    & \lbar{\Ran}^{\leq n}(X, \FBplus) \ar{ur}[swap]{\lbar{\iota}_{\leq n}}
  \end{tikzcd} \teq\label{eq:insert_to_image_leq_n}
\]

Observe also that
\[
  X^I_{=n} = \bigsqcup_{\substack{I \surjects J \\ |J| = n}} \oversetsupscript{X}{\circ}{J}
\]
where $\oversetsupscript{X}{\circ}{J}$ is the open subscheme of $X^J$ where we remove all diagonals. Thus, we have the following commutative diagram
\[
  \begin{tikzcd}
    \Ran^{=n}(X, \FBplus) \ar{dr}{\jmath'_n} \ar{d}{\jmath_n} \ar{r}{\iota_n} & \Ran(X, \FBplus) \\
    \Ran^{\leq n}(X, \FBplus) \ar{r}{h_{\leq n}} & \lbar{\Ran}^{\leq n}(X, \FBplus) \ar{u}[swap]{\lbar{\iota}_{\leq n}}
  \end{tikzcd}
\]
where $\jmath_n$ and $\jmath'_n$ are open embeddings, and in fact, $\Ran^{=n}(X, \FBplus)$ is the open complement of $\lbar{\Ran}^{\leq n-1}(X, \FBplus)$ inside $\lbar{\Ran}^{\leq n}(X, \FBplus)$. Moreover, it admits the following functor of points description:
\[
  \Ran^{=n}(X, \FBplus)(S) = \{(I, \gamma) \in \Ran^{\leq n}(X, \FBplus)(S): |\im (\gamma|_{k(s)})| = n, \forall s\in S\} \teq\label{eq:functor_of_pts_description_Ran=n}
\]
for each test scheme $S$, where $\gamma|_{k(s)} \in \Ran(X, \FBplus)(\Spec k(s))$ is the pullback of $\gamma$ to $\Spec k(s) \to S$.

\subsubsection{A \devissage{} sequence} We will now prove a \devissage{} sequence arising from the cardinality filtration. To start, recall the following

\begin{lem}[\cite{gaitsgory_atiyah-bott_2015}*{Lem. 7.4.11}] \label{lem:gait_finitary_pseudoproper_and_sheaves}
  Let $f: \matheur{Y}_1 \to \matheur{Y}_2$ be a finitary pseudo-proper map between prestacks.
  \begin{myenum}{(\alph*)}
  \item If $f$ is injective, i.e. $f_S: \matheur{Y}_1(S) \to \matheur{Y}_2(S)$ is fully-faithful for each test scheme $S$, then the functor $f_!$ is fully faithful.

  \item If $f$ is surjective on $k$-points, then $f^!$ is conservative.

  \item If $f$ is injective, and surjective on $k$-points, then the functors $f_!$ and $f^!$ are mutual inverses.
  \end{myenum}
\end{lem}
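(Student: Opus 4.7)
The plan is to reduce all three assertions to statements about finite diagrams of proper maps between schemes, where each of them becomes transparent from standard properties of proper pushforward. The starting observation is that by finitary pseudo-properness, for any test scheme $\varphi \colon S \to \matheur{Y}_2$, the base change $\widetilde S := S \times_{\matheur{Y}_2} \matheur{Y}_1$ is a finite colimit of schemes proper over $S$, and $f_!$ satisfies base change against the $!$-pullback (this is the content of the formalism developed in \cite{gaitsgory_atiyah-bott_2015}*{\S 7.4}).

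For (a), I would verify that the unit $\matheur{F} \to f^! f_! \matheur{F}$ is an equivalence. Since $\Shv(\matheur{Y}_1) \simeq \lim_{S \in (\Sch_{/\matheur{Y}_1})^{\op}} \Shv(S)$, it suffices to check this after $!$-pullback along any $\varphi \colon S \to \matheur{Y}_1$ from a scheme. Composing with $f$ and applying base change, the question reduces to showing that the pushforward of $\varphi^!\matheur{F}$ along the projection $\widetilde S \to S$ (for the base change of $f$ over $f\circ \varphi$) recovers $\varphi^!\matheur{F}$ itself. Injectivity of $f$ forces the diagonal section $S \hookrightarrow \widetilde S$ to be an equivalence of prestacks, since fully-faithfulness on $S$-points means the fiber of each factor in the finite colimit presentation of $\widetilde S$ is either empty or identified with $S$ via the diagonal. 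Thus the pushforward collapses to the identity.

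For (b), I would show $f^!$ is conservative. A sheaf $\matheur{F} \in \Shv(\matheur{Y}_2)$ with $f^!\matheur{F} \simeq 0$ is tested by pulling back along arbitrary $\bar s \colon \Spec k \to \matheur{Y}_2$; surjectivity of $f$ on $k$-points provides a lift $\bar s' \colon \Spec k \to \matheur{Y}_1$ with $f \circ \bar s' \simeq \bar s$, and then $\bar s^! \matheur{F} \simeq (\bar s')^! f^! \matheur{F} \simeq 0$. Standard arguments on prestack-sheaves (reducing from general schemes to $k$-points via stratification) then give $\matheur{F} \simeq 0$. Finally, (c) is formal: given (a) and (b), the left adjoint $f_!$ is fully faithful, so the counit $f_! f^! \to \id$ has cofiber $C$ with $f^! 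C = 0$ (by fully-faithfulness applied to $f^! f_! f^! \simeq f^!$), and conservativity forces $C \simeq 0$.

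The main technical obstacle I anticipate is the step in (a) where injectivity is converted into the statement that the diagonal $S \hookrightarrow \widetilde S$ yields an equivalence of $!$-pushforwards through the finite colimit presentation; this requires some care because $\widetilde S$ is only presented as a finite colimit of proper schemes indexed by a combinatorial category (as one sees concretely in~\eqref{eq:ins_leq_n_pseudo-proper}), and one must match up the diagonal with the initial/terminal object of this indexing category compatibly with the proper structure maps. Once this combinatorial bookkeeping is done, the rest of the argument is formal.
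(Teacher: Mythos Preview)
The paper does not supply its own proof of this lemma; it is quoted verbatim from \cite{gaitsgory_atiyah-bott_2015}*{Lem.~7.4.11} and used as a black box. Your outline is essentially the standard argument and is correct.

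One small clarification regarding your anticipated obstacle in part (a): the worry about matching the diagonal with the combinatorics of the finite-colimit presentation is a red herring. Injectivity of $f$ means that $f_T: \matheur{Y}_1(T) \to \matheur{Y}_2(T)$ is fully faithful for every test scheme $T$, and this alone forces the diagonal section $S \to \widetilde S = S \times_{\matheur{Y}_2} \matheur{Y}_1$ to be an equivalence of prestacks, directly at the level of functors of points: a $T$-point of $\widetilde S$ is a pair $(s, y_1)$ with $f_T(y_1) \simeq f_T(\varphi(s))$, and full faithfulness gives $y_1 \simeq \varphi(s)$. No bookkeeping with the indexing category of the colimit presentation is needed; the finitary pseudo-properness hypothesis enters only to ensure that $f_!$ exists and satisfies base change against $!$-pullback, which you correctly invoke.
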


\begin{cor}
  The natural map $h_{\leq n}$ of~\eqref{eq:insert_to_image_leq_n}, which is pseudo-proper, injective, and surjective on $k$-points, induces an equivalence of categories
  \[
    \Shv(\Ran^{\leq n}(X, \FBplus)) \simeq \Shv(\lbar{\Ran}^{\leq n}(X, \FBplus)). \teq \label{eq:equivalence_category_sheaves_leq_n}
  \]
\end{cor}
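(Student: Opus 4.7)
The plan is to verify the hypotheses of Lemma~\ref{lem:gait_finitary_pseudoproper_and_sheaves}(c) directly for $h_{\leq n}$ and let the lemma produce the equivalence. Three things need to be checked: finitary pseudo-properness, injectivity (i.e. fully faithful on $S$-points for every test scheme $S$), and surjectivity on $k$-points.

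I would handle finitary pseudo-properness first, by the following reduction: since $\lbar{\iota}_{\leq n}$ is a closed embedding and hence a monomorphism of prestacks, for any test scheme $S \to \lbar{\Ran}^{\leq n}(X, \FBplus)$ the fiber product $S \times_{\lbar{\Ran}^{\leq n}(X, \FBplus)} \Ran^{\leq n}(X, \FBplus)$ coincides with $S \times_{\Ran(X, \FBplus)} \Ran^{\leq n}(X, \FBplus)$. The preceding proposition already exhibits the latter as a finite colimit (indexed by $\matheur{K}_{I, n}$ for the appropriate $I$) of closed subschemes of $S$, hence as a finite colimit of schemes proper over $S$.

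Injectivity is essentially formal: both $\iota_{\leq n}$ (inclusion of a full sub-prestack) and $\lbar{\iota}_{\leq n}$ (closed embedding) are monomorphisms of prestacks, and the factorization $\iota_{\leq n} = \lbar{\iota}_{\leq n} \circ h_{\leq n}$ of~\eqref{eq:insert_to_image_leq_n} then forces $h_{\leq n}$ to be fully faithful on each $S$-point groupoid as well. For surjectivity on $k$-points, I would argue as follows: a $k$-point of $\lbar{\Ran}^{\leq n}(X, \FBplus)$ descends to a $k$-point of $X^I_{\leq n}$ for some $I \in \FBplus$; since $X^I_{\leq n}$ is by definition the scheme-theoretic image of $\bigsqcup_{(I \surjects J) \in \matheur{K}_{I, n}} X^J \to X^I$ and we are over an algebraically closed field, this $k$-point lies on some diagonal $X^J \subset X^I_{\leq n}$. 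Thus the underlying $\gamma: I \to X(k)$ factors through $J$, so $|\im \gamma| \leq |J| \leq n$, producing the required lift to $\Ran^{\leq n}(X, \FBplus)(k)$.

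The most delicate point is the surjectivity on $k$-points: it relies on the set-theoretic and scheme-theoretic notions of ``image of size $\leq n$'' agreeing on $k$-points even though they genuinely diverge on more general test schemes, which is precisely why $\Ran^{\leq n}(X, \FBplus)$ and $\lbar{\Ran}^{\leq n}(X, \FBplus)$ are distinct prestacks. The content of the corollary is that this discrepancy is invisible to the $!$-sheaf theory, which is exactly the situation Lemma~\ref{lem:gait_finitary_pseudoproper_and_sheaves}(c) is designed to detect.
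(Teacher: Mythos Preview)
Your proposal is correct and follows exactly the approach the paper intends: the paper's proof is the single line ``Immediate from the construction of $\lbar{\Ran}^{\leq n}(X, \FBplus)$,'' which is shorthand for invoking Lemma~\ref{lem:gait_finitary_pseudoproper_and_sheaves}(c) after noting that the three hypotheses hold by how $\lbar{\Ran}^{\leq n}(X, \FBplus)$ was built. You have simply spelled out those verifications in detail, and each step (transferring finitary pseudo-properness from $\iota_{\leq n}$ via the monomorphism $\lbar{\iota}_{\leq n}$, deducing injectivity from the factorization, and checking surjectivity on $k$-points via the diagonals) is sound.
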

\begin{proof}
  Immediate from the construction of $\lbar{\Ran}^{\leq n}(X, \FBplus)$.
\end{proof}

\begin{prop} \label{prop:devissage_Ran_leq_n}
  Let $\matheur{F} \in \Shv(\Ran^{\leq n}(X, \FBplus))$. Then we have the following cofiber sequence
  \[
    (\iota_{\leq n-1, \leq n})_! \iota_{\leq n-1, \leq n}^! \matheur{F} \to \matheur{F} \to (\jmath_n)_* (\jmath_n)^! \matheur{F}.
  \]
\end{prop}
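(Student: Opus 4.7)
The plan is to transfer the claim from $\Ran^{\leq n}(X, \FBplus)$ to $\lbar{\Ran}^{\leq n}(X, \FBplus)$, where the analogous assertion is the standard closed/open recollement cofiber sequence, and then pull back along the equivalence~\eqref{eq:equivalence_category_sheaves_leq_n}. Concretely, I would set $\matheur{G} := (h_{\leq n})_! \matheur{F} \in \Shv(\lbar{\Ran}^{\leq n}(X, \FBplus))$, so that $(h_{\leq n})^! \matheur{G} \simeq \matheur{F}$ by Lemma~\ref{lem:gait_finitary_pseudoproper_and_sheaves}(c). On $\lbar{\Ran}^{\leq n}(X, \FBplus)$, the map $\lbar{\iota}_{\leq n-1, \leq n}$ is a genuine closed embedding with open complement $\jmath'_n$ (this is built into the definitions of the $X^I_{\leq n}$ and $X^I_{=n}$), so the usual six-functor recollement yields the cofiber sequence
\[
(\lbar{\iota}_{\leq n-1, \leq n})_! (\lbar{\iota}_{\leq n-1, \leq n})^! \matheur{G} \to \matheur{G} \to (\jmath'_n)_* (\jmath'_n)^! \matheur{G}.
\]

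Next, I would apply the exact functor $(h_{\leq n})^!$ to this sequence and match each term with the corresponding term in the statement. The middle term immediately returns $\matheur{F}$. For the closed term, the commutativity $h_{\leq n}\circ\iota_{\leq n-1,\leq n} = \lbar{\iota}_{\leq n-1,\leq n}\circ h_{\leq n-1}$ provides both $(\iota_{\leq n-1,\leq n})^!(h_{\leq n})^! \simeq (h_{\leq n-1})^!(\lbar{\iota}_{\leq n-1,\leq n})^!$ and $(\lbar{\iota}_{\leq n-1,\leq n})_!(h_{\leq n-1})_! \simeq (h_{\leq n})_!(\iota_{\leq n-1,\leq n})_!$; combined with the fact that both $h_{\leq n}$ and $h_{\leq n-1}$ are shriek-equivalences, one inserts $(h_{\leq n-1})_!(h_{\leq n-1})^!\simeq\id$ in the middle and rearranges to obtain
\[
(h_{\leq n})^!(\lbar{\iota}_{\leq n-1,\leq n})_!(\lbar{\iota}_{\leq n-1,\leq n})^!\matheur{G} \simeq (\iota_{\leq n-1,\leq n})_!(\iota_{\leq n-1,\leq n})^!\matheur{F}.
\]

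For the open term, the factorization $\jmath'_n = h_{\leq n}\circ\jmath_n$ at once gives $(\jmath'_n)^!\matheur{G} \simeq (\jmath_n)^!\matheur{F}$. To handle the $*$-pushforward, I would invoke base change along the square with vertical maps $\id$ (on the left) and $h_{\leq n}$ (on the right) and horizontal maps $\jmath_n$ (top) and $\jmath'_n$ (bottom); this square is Cartesian because the open condition~\eqref{eq:functor_of_pts_description_Ran=n} cutting out $\Ran^{=n}$ only refers to the image of $\gamma$, hence is preserved under $h_{\leq n}$. Since $\jmath'_n$ is schematic (open embeddings are schematic), the base change between $(-)^!$ and $(-)_*$ recalled in~\S\ref{subsubsec:prelim_geometry} delivers $(h_{\leq n})^!(\jmath'_n)_* \simeq (\jmath_n)_*(\id)^! = (\jmath_n)_*$, and the open term is identified with $(\jmath_n)_* (\jmath_n)^!\matheur{F}$, as desired.

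The main subtlety I foresee is the last step: confirming that the $*$-pushforward/$(-)^!$ base change is genuinely applicable in this prestack setting. This is guaranteed by the schematicness of open embeddings together with the base change statement cited in~\S\ref{subsubsec:prelim_geometry}. Everything else is formal manipulation once the cardinality filtration, the shriek-equivalence~\eqref{eq:equivalence_category_sheaves_leq_n}, and the fact that $\lbar{\Ran}^{\leq n-1}$ is an honest closed sub-prestack of $\lbar{\Ran}^{\leq n}$ with open complement $\Ran^{=n}$ are in hand.
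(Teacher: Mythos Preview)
Your proposal is correct and follows essentially the same approach as the paper: transfer to $\lbar{\Ran}^{\leq n}(X,\FBplus)$ via the equivalence~\eqref{eq:equivalence_category_sheaves_leq_n}, invoke the standard closed/open recollement there, and come back. The paper's own proof is a two-line sketch that stops at ``it suffices to establish the corresponding cofiber sequence on $\lbar{\Ran}^{\leq n}$, and this is clear''; you have simply filled in the term-by-term identifications that the paper leaves implicit. One small correction: the $(-)_*$/$(-)^!$ base change you appeal to is stated in \S2.2 (the paragraph on $f^*\dashv f_*$), not in \S\ref{subsubsec:prelim_geometry}.
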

\begin{proof}
  Using the equivalence~\eqref{eq:equivalence_category_sheaves_leq_n}, it suffices to establish the corresponding cofiber sequence for sheaves on $\lbar{\Ran}^{\leq n}(X, \FBplus)$. But this is clear, since we have the following diagram of spaces
  \[
    \begin{tikzcd}
      \lbar{\Ran}^{\leq n-1}(X, \FBplus) \ar{r}{\lbar{\iota}_{\leq n-1, \leq n}} & \lbar{\Ran}^{\leq n}(X, \FBplus) & \Ran^{=n}(X, \FBplus) \ar{l}[swap]{\jmath'_n}
    \end{tikzcd}
  \]
  where $\lbar{\iota}_{\leq n-1, \leq n}$ is a closed embedding with $\jmath'_n$ being its open complement.
\end{proof}

\begin{cor} \label{cor:conservativity_i_n^!}
  Let $f: \matheur{F} \to \matheur{G}$ be a morphism of sheaves on $\Ran(X, \FBplus)$. Then, $f$ is an equivalence if and only if for each $n \geq 1$, the map $\iota^!_n(f): \iota_n^! \matheur{F} \to \iota_n^! \matheur{G}$ is an equivalence as objects in $\Shv(\Ran^{=n}(X, \FBplus))$.
\end{cor}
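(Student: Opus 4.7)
The ``only if'' direction is immediate from functoriality of $\iota_n^!$. For the converse, my plan is to combine Lemma~\ref{lem:cardinality_filtration} with an induction on the cardinality filtration using the d\'evissage of Proposition~\ref{prop:devissage_Ran_leq_n}.

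More precisely, by Lemma~\ref{lem:cardinality_filtration}, we have
\[
	\Shv(\Ran(X, \FBplus)) \simeq \lim_{n\geq 1} \Shv(\Ran^{\leq n}(X, \FBplus))
\]
with transition functors given by $!$-restriction. Consequently, the morphism $f$ is an equivalence if and only if $\iota_{\leq n}^! f$ is an equivalence in $\Shv(\Ran^{\leq n}(X, \FBplus))$ for every $n\geq 1$. So it suffices to establish the latter by induction on $n$.

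The base case $n=1$ is trivial since $\Ran^{\leq 1}(X, \FBplus) = \Ran^{=1}(X, \FBplus)$, so that $\iota_{\leq 1}^! f$ coincides with $\iota_1^! f$, which is an equivalence by hypothesis. For the inductive step, set $g := \iota_{\leq n}^! f$ and apply the cofiber sequence of Proposition~\ref{prop:devissage_Ran_leq_n} to both the source and target of $g$, obtaining a map between cofiber sequences
\[
\begin{tikzcd}
(\iota_{\leq n-1, \leq n})_! \iota_{\leq n-1, \leq n}^! \iota_{\leq n}^! \matheur{F} \ar{r} \ar{d} & \iota_{\leq n}^! \matheur{F} \ar{r} \ar{d}{g} & (\jmath_n)_* \jmath_n^! \iota_{\leq n}^! \matheur{F} \ar{d} \\
(\iota_{\leq n-1, \leq n})_! \iota_{\leq n-1, \leq n}^! \iota_{\leq n}^! \matheur{G} \ar{r} & \iota_{\leq n}^! \matheur{G} \ar{r} & (\jmath_n)_* \jmath_n^! \iota_{\leq n}^! \matheur{G}
\end{tikzcd}
\]
The left-hand vertical map is an equivalence because $\iota_{\leq n-1, \leq n}^! \iota_{\leq n}^! = \iota_{\leq n-1}^!$, which is an equivalence by the inductive hypothesis. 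The right-hand vertical map is an equivalence because $\jmath_n^! \iota_{\leq n}^! = \iota_n^!$ (using $\iota_n = \iota_{\leq n}\circ \jmath_n$), which is an equivalence by assumption. Since cofiber sequences in a stable category are also fiber sequences, the middle vertical map $g$ is an equivalence, completing the induction.

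The main step is essentially bookkeeping: one needs the two hypotheses to match exactly the two outer terms of the d\'evissage sequence, which they do via the factorizations $\iota_{\leq n-1} = \iota_{\leq n}\circ \iota_{\leq n-1,\leq n}$ and $\iota_n = \iota_{\leq n}\circ \jmath_n$. No real obstacle arises, since the geometric heavy lifting (finitary pseudo-properness of $\iota_{\leq n-1,\leq n}$ and openness of $\jmath_n$) has already been absorbed into Proposition~\ref{prop:devissage_Ran_leq_n}.
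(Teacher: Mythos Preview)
Your proof is correct and follows exactly the approach the paper intends: the paper's proof simply reads ``This is a direct consequence of Lemma~\ref{lem:cardinality_filtration} and Proposition~\ref{prop:devissage_Ran_leq_n},'' and you have spelled out precisely the induction along the cardinality filtration that those two results encode.
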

\begin{proof}
  This follows from Lemma~\ref{lem:cardinality_filtration} and Proposition~\ref{prop:devissage_Ran_leq_n}.
\end{proof}

\subsection{Twisted commutative factorizable algebras (tcfa's)}
\label{subsec:tcfas}
We will now come to the definition of twisted \emph{commutative factorization algebras}, the twisted commutative counterpart of commutative factorization algebras.

\subsubsection{The category $\ComAlgstar(\Ran(X, \FB_+))$}
Using the $\otimesstar$-monoidal structure on $\Ran(X, \FBplus)$, we can talk about commutative algebra objects in $\Shv(\Ran(X, \FBplus)^{\otimesstar})$ and define
\[
  \ComAlgstar(\Ran(X, \FBplus)) = \ComAlg(\Shv(\Ran(X, \FBplus))^{\otimesstar})
\]
to be the category of such objects.

\subsubsection{Twisted commutative factorizable algebras (tcfa's)}
For each $n\in \mathbb{N}_+$, let
\[
  j_n: (\Ran(X, \FBplus))^n_{\disj} \hookrightarrow \Ran(X, \FBplus)^n
\]
be the open sub-prestack given by the condition that the graphs of the components in the product are disjoint from each other. We will often suppress the subscript $n$ from $\jmath_n$ unless confusion is likely to occur.

Consider the following diagram
\[
  \begin{tikzcd}
    \Ran(X, \FBplus)^n_{\disj} \ar[bend right=10]{rr}[swap]{\union'_n} \ar{r}{j_n} & \Ran(X, \FBplus)^n \ar{r}{\union_n} & \Ran(X, \FBplus)
  \end{tikzcd} \teq\label{eq:union_disj_open_inclusion_notation}
\]
Given an object $\matheur{A} \in \ComAlgstar(\Ran(X, \FBplus))$, the multiplication maps give us the following
\[
  \union_{n!}(\matheur{A}^{\boxtimes n}) \to \matheur{A}
\]
for each $n$, which, in turns, induce maps
\[
  \matheur{A}^{\boxtimes n} \to \union_n^! \matheur{A}
\]
and hence
\[
  j^!_n(\matheur{A}^{\boxtimes n}) \to j_n^! \union_n^! \matheur{A} \simeq \union'_n{}^! \matheur{A}. \teq\label{eq:factorizable_map}
\]

The object $\matheur{A}$ is said to be factorizable if the map~\eqref{eq:factorizable_map} is an equivalence for each $n$. We let $\Factstar(X, \FBplus)$ be the full-subcategory of $\ComAlgstar(\Ran(X, \FBplus))$ spanned by factorizable objects. We will call these objects \emph{twisted commutative factorizable algebras} (tcfa's) over $X$.

\begin{rmk}
  \label{rmk:geometric_realization_sifted_colim}
  Let $\matheur{A}_\bullet$ be a simplicial object in $\ComAlgstar(\Ran(X, \FBplus))$. Then its geometric realization, $|\matheur{A}_\bullet|$, is factorizable if each term of $\matheur{A}_\bullet$ is. This is because the forgetful functor $\ComAlg(\matheur{C}) \to \matheur{C}$ in general commutes with sifted colimits, by~\cite{lurie_higher_2017}*{Cor. 3.2.3.2}, and the map~\eqref{eq:factorizable_map} is compatible with sifted colimits.
\end{rmk}

\subsection{Reduction to the diagonal}
\label{subsec:reduction_to_diagonal}
The notation $\Factstar(X, \FBplus)$ suggests that objects in this category can somehow be defined as the category of certain objects living over $X$ rather than over the whole of $\Ran(X, \FBplus)$. In fact, this is a well-known phenomenon in the non-twisted setting: in the case of curves, it is due to Beilinson--Drinfeld~\cite{beilinson_chiral_2004} whereas the general case is due to Gaitsgory--Lurie~\cite{gaitsgory_weils_2014}. The goal of this subsection is to prove Theorem~\ref{thm:tcfa_vs_tcaX}, which says that this is indeed the case in the twisted setting as well.

\subsubsection{Twisted commutative algebras (tca's) on $X$}
From~\eqref{eq:description_diagonal}, we get
\[
  \Shv(\Delta(X, \FBplus)) \simeq \Shv(\FBplus\times X) \simeq \Shv(X)^{\FBplus},
\]
and we are in the purview of~\S\ref{subsec:FB_twisted_comalg}. The category $\Shv(\FBplus\times X)$ is thus equipped with a Day convolution symmetric monoidal structure, where we use the $\otimesshriek$-symmetric monoidal structure on $\Shv(X)$. We will use $\otimesshriek$ to denote this symmetric monoidal structure on $\Shv(\FBplus\times X)$ as well.

It thus makes sense to talk about commutative algebras in $\Shv(\FBplus \times X)$. We will use $\ComAlg(\Shv(X)^{\FBplus})$ to denote the category of all such objects. We will call these twisted commutative algebras (tca's) on $X$.

\subsubsection{tcfa's vs. tca's on $X$}
We will now show that the two categories $\ComAlg(\Shv(X)^{\FBplus})$ and $\Factstar(X, \FBplus)$ are naturally equivalent. We will start with some preparation.

\begin{lem}
  The functor
  \[
    \delta^!: \Shv(\Ran(X, \FBplus)) \to \Shv(\FBplus \times X) \simeq \Shv(X)^{\FBplus}
  \]
  is a symmetric monoidal functor with respect to the $\otimesstar$-monoidal structure on the source and the $\otimesshriek$-monoidal structure (the one coming from Day convolution) on the target.
\end{lem}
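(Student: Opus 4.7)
The plan is to compare the two monoidal operations by computing both as pushforwards of the same external tensor product via a pseudo-proper base change argument.

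For each $n \geq 1$, form the Cartesian square
\[
\begin{tikzcd}
Q_n \ar{r}{\iota_Q} \ar{d}{\pi_Q} & \Ran(X, \FBplus)^n \ar{d}{\union_n} \\
\Delta(X, \FBplus) \ar{r}{\delta} & \Ran(X, \FBplus).
\end{tikzcd}
\]
Unwinding the functor of points, an $S$-point of $Q_n$ is a tuple $((I_k, \gamma_k))_{k=1}^n$ such that $\sqcup_k \gamma_k\colon \sqcup_k I_k \to X(S)$ factors through a single $S$-point, which forces each $\gamma_k$ to factor through the same point. Therefore $Q_n \simeq \FBplus^n \times X$; the projection $\pi_Q$ is $\sqcup \times \id_X$ (disjoint union on the $\FBplus$ coordinates, identity on $X$); and $\iota_Q$ factors as
\[
\FBplus^n \times X \xrightarrow{\id \times \Delta_X^{(n)}} \FBplus^n \times X^n \simeq \Delta(X, \FBplus)^n \hookrightarrow \Ran(X, \FBplus)^n,
\]
where $\Delta_X^{(n)}\colon X \to X^n$ is the iterated diagonal and the final map is $\delta^n$.

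Since $\union_n$ is pseudo-proper by Lemma~\ref{lem:pseudo_properness_union}, pseudo-proper $!$-base change gives $\delta^!(\union_n)_! \simeq (\pi_Q)_! \iota_Q^!$. For $\matheur{A}_1, \ldots, \matheur{A}_n \in \Shv(\Ran(X, \FBplus))$, combining this with the compatibility of $!$-pullback with $\boxtimes$ and the factorization of $\iota_Q$ yields
\begin{align*}
\delta^!(\matheur{A}_1 \otimesstar \cdots \otimesstar \matheur{A}_n)
&\simeq (\pi_Q)_! \iota_Q^!(\matheur{A}_1 \boxtimes \cdots \boxtimes \matheur{A}_n) \\
&\simeq (\sqcup \times \id_X)_! (\id \times \Delta_X^{(n)})^!(\delta^!\matheur{A}_1 \boxtimes \cdots \boxtimes \delta^!\matheur{A}_n).
\end{align*}
The right-hand side is exactly $\delta^!\matheur{A}_1 \otimesshriek \cdots \otimesshriek \delta^!\matheur{A}_n$ computed via Day convolution on $\Shv(X)^{\FBplus}$: pushing forward along $\sqcup$ on the $\FBplus$ coordinates encodes the disjoint union structure on $\FBplus$, while pulling back along $\Delta_X^{(n)}$ encodes $\otimesshriek$ on $\Shv(X)$.

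To upgrade this pointwise comparison to a morphism of symmetric monoidal $\infty$-categories, observe that both $\otimesstar$ and $\otimesshriek$ arise from the same recipe, namely $!$-convolution along a pseudo-proper multiplication-like datum on a prestack. The Cartesian squares above, assembled for all $n$ and compatible under composition, supply the coherence data required to promote $\delta^!$ to a symmetric monoidal functor; alternatively, one can view $\delta^!$ as the natural transformation induced from the right-lax monoidal functor $\Shv_!|_{\PreStk_{\pseudoproper}}$ of Lemma~\ref{lem:right_laxness_Shv_!} and check that the lax structure maps are the isomorphisms computed above. The principal technical obstacle is the pseudo-proper $!$-base change itself, which reduces (by writing $\Ran(X, \FBplus)^n$ as a colimit of schemes along the lines of the proof of Lemma~\ref{lem:pseudo_properness_union}) to the standard proper base change for sheaves on schemes.
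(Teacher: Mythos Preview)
Your proposal is correct and follows essentially the same approach as the paper: both form the Cartesian square over $\delta$ and $\union_n$, identify the fiber product with $\FBplus^n \times X$, and apply pseudo-proper base change to obtain the Day convolution formula. The paper works only with the binary case and identifies $(\union\times\id_X)_!$ with the left Kan extension defining Day convolution by a quick adjunction argument, whereas you spell out the functor-of-points computation of $Q_n$ and discuss the $n$-ary coherence more explicitly; these are presentational differences rather than substantive ones.
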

\begin{proof}
  The proof is similar to that of~\cite{gaitsgory_weils_2014}*{Prop. 5.5.19}. We will show that for $\matheur{F}, \matheur{G} \in \Shv(\Ran(X, \FBplus))$, we have a canonical equivalence
  \[
    \delta^!(\matheur{F} \otimesstar \matheur{G}) \simeq \delta^! \matheur{F} \otimesshriek \delta^! \matheur{G}.
  \]
  Consider the following pullback diagram
  \[
    \begin{tikzcd}
      (\FBplus\times \FBplus) \times X \ar{r}{\id \times \delta} \ar{d}{\union \times \id_X} & (\FBplus \times X) \times (\FBplus \times X) \ar{r}{\delta\times \delta} & \Ran(X, \FBplus) \times \Ran(X, \FBplus) \ar{d}{\union} \\
      \FBplus \times X \ar{rr}{\delta} && \Ran(X, \FBplus)
    \end{tikzcd}
  \]
  where on the top left, $\id\times \delta$ is used to denote the map which sends the two factors of $\FBplus$ identically to the target and which acts at the diagonal map on the $X$ factor.

  Let $\matheur{F}, \matheur{G} \in \Shv(\Ran(X, \FBplus))$. Now, by pseudo-proper base change, see~\cite{gaitsgory_atiyah-bott_2015}*{Cor. 1.5.4}, we see that
  \[
    \delta^!(\matheur{F} \otimesstar \matheur{G}) = \delta^!\union_! (\matheur{F}\boxtimes \matheur{G}) \simeq (\union\times\id_X)_! (\id\times \delta)^!(\delta^! \matheur{F} \boxtimes \delta^! \matheur{G}). \teq\label{eq:delta^!_monoidal}
  \]
  Note that $(\id\times \delta)^!(\delta^! \matheur{F} \boxtimes \delta^! \matheur{G})$ in the last term is an object in $\Shv((\FBplus\times \FBplus) \times X) \simeq \Shv(X)^{\FBplus\times \FBplus}$. But now, observe that $(\union\times\id_X)_!$ is the the left Kan extension
  \[
    \Shv(\FB_+ \times \FBplus \times X) \simeq \Shv(X)^{\FBplus\times \FBplus} \to \Shv(X)^{\FBplus} \simeq \Shv(\FBplus \times X),
  \]
  used to define the Day convolution symmetric monoidal structure on $\Shv(\FBplus \times X) \simeq \Shv(X)^{\FBplus}$. Indeed, this is because its right adjoint, the pullback, is also the right adjoint to the left Kan extension. Thus, the last term of~\eqref{eq:delta^!_monoidal} is naturally identified with $\delta^! \matheur{F} \otimesshriek \delta^!\matheur{G}$, where here, $\otimesshriek$ denotes the Day convolution symmetric monoidal structure on $\Shv(\FBplus \times X)$.
\end{proof}

Since the functor $\delta^!$ is symmetric monoidal, it upgrades to a functor
\[
  \delta^!: \ComAlgstar(\Ran(X, \FBplus)) \to \ComAlg(\Shv(X)^{\FBplus})
\]
In the remainder of this subsection, we will prove the following result which is a straightforward adaptation of the corresponding result in the non-twisted setting~\cite{gaitsgory_weils_2014}*{Thm. 5.6.4}.

\begin{thm} \label{thm:tcfa_vs_tcaX}
  The functor
  \[
    \delta^!: \ComAlgstar(\Ran(X, \FBplus)) \to
    \ComAlg(\Shv(X)^{\FBplus})
  \]
  admits a fully-faithful left adjoint $\Fact$, whose essential image is $\Factstar(X, \FBplus)$. In particular, we have the following equivalence of categories
  \[
    \Fact: \ComAlg(\Shv(X)^{\FBplus}) \rightleftarrows \Factstar(X, \FBplus): \delta^!.
  \]
\end{thm}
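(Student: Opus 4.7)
The strategy is to adapt the proof of \cite{gaitsgory_weils_2014}*{Thm. 5.6.4} to the twisted setting, using the cardinality filtration of~\S\ref{subsec:cardinality_filtration} to reduce both the construction of $\Fact$ and the verification of its key properties to the strata $\Ran^{=n}(X, \FBplus)$. Existence of a left adjoint $\Fact$ at the level of algebras is essentially automatic: both $\ComAlgstar(\Ran(X, \FBplus))$ and $\ComAlg(\Shv(X)^{\FBplus})$ are presentable, monadic over the underlying categories of sheaves, and $\delta^!$ is a right adjoint at the sheaf level (to $\delta_!$), so it preserves all limits; the adjoint functor theorem then produces $\Fact$. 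The real content is to give an explicit formula for $\Fact$, to check factorizability of its values, and to identify its essential image.

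For the explicit formula, given $A \in \ComAlg(\Shv(X)^{\FBplus})$ with components $A^{(I)}$, I would define $\Fact(A)$ by specifying, for each $K \in \FBplus$, an $S_K$-equivariant sheaf on $X^K$ as a colimit indexed by surjections out of $K$: schematically,
\[
	\Fact(A)|_{X^K} \;\simeq\; \colim_{(K \twoheadrightarrow J) \in \FSplus_{K/}} (\Delta_{K \twoheadrightarrow J})_* \Bigl(\boxtimes_{j \in J} A^{(K_j)}\Bigr),
\]
where $K_j$ denotes the fiber of $K \to J$ and $\Delta_{K \twoheadrightarrow J}: X^J \hookrightarrow X^K$ is the corresponding diagonal; equivariance is built in because the indexing category carries a natural $S_K$-action. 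This is the twisted analog of Beilinson--Drinfeld's formula. The algebra structure on $A$ provides the transition maps for this colimit (each refinement of partitions is implemented by the multiplication maps of $A$), and the commutative algebra structure on $\Fact(A)$ with respect to $\otimesstar$ comes from the semigroup maps $\union_n$, together with the way partitions compose.

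Factorizability of $\Fact(A)$ follows directly from the formula: on the disjoint locus $\Ran(X, \FBplus)^n_{\disj}$, a surjection $K_1 \sqcup \cdots \sqcup K_n \twoheadrightarrow J$ must split as a disjoint union of surjections $K_i \twoheadrightarrow J_i$ (since the supports of the points are disjoint), so the colimit defining $\Fact(A)|_{X^{\sqcup K_i}}$ restricted to the disjoint locus factors as an external product of the individual $\Fact(A)|_{X^{K_i}}$. Fully faithfulness, i.e.\ that the unit $A \to \delta^! \Fact(A)$ is an equivalence, is checked by restricting the formula to the small diagonal $\Delta(X, \FBplus) = \FBplus \times X$: the surjection $K \twoheadrightarrow \{*\}$ is initial among those that land inside the small diagonal, and a cofinality/contractibility argument shows only the term with $J$ a singleton survives, recovering $A^{(K)}$. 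For the essential image, given any $\matheur{B} \in \Factstar(X, \FBplus)$, I would show that the counit $\Fact(\delta^! \matheur{B}) \to \matheur{B}$ is an equivalence by applying Corollary~\ref{cor:conservativity_i_n^!} and checking it on each stratum $\Ran^{=n}(X, \FBplus)$; there, factorizability of $\matheur{B}$ identifies $\iota_n^! \matheur{B}$ with an external product of components of $\delta^! \matheur{B}$, which is precisely what the formula for $\Fact$ produces on that stratum.

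The main obstacle is the combinatorial bookkeeping inherent to the twisted setting: every construction needs to be made functorial in $\FBplus$ and compatible with the $S_I$-equivariance, so the cofinality arguments that are essentially trivial in the untwisted case must be carried out over groupoids of partitions with compatible symmetric group actions. In particular, verifying that the formula for $\Fact(A)$ really does assemble into a commutative algebra for the $\otimesstar$-structure on $\Shv(\Ran(X, \FBplus))$ (rather than merely a $\Shv(\FBplus)$-indexed family of algebras) requires one to trace through how the semigroup maps $\union_n$ interact with the pseudo-proper pushforwards used to define $\otimesstar$ in Corollary~\ref{cor:otimesstar_monoidal_RanFB}. Once this compatibility is pinned down, the devissage from Proposition~\ref{prop:devissage_Ran_leq_n} promotes the stratum-by-stratum statements to global ones, and the equivalence follows.
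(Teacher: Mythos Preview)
Your proposal is correct in outline but takes a genuinely different route from the paper. You opt for the explicit Beilinson--Drinfeld formula for $\Fact(A)$ and verify factorizability, full faithfulness, and the essential image directly from that formula. The paper instead never writes down a formula for $\Fact$: it obtains full faithfulness of $\Fact$ for free from a general categorical lemma (a monoidal adjunction with a fully faithful left adjoint at the sheaf level, applied to $\delta_! \dashv \delta^!$), and it proves factorizability of $\Fact(A)$ by resolving $A$ as a geometric realization of free twisted commutative algebras, invoking Lemma~\ref{lem:free_com_is_fact} (free algebras on $\delta_!$ of something are factorizable) term by term, and then using that sifted colimits of factorizable objects are factorizable. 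The only place where the paper argues stratum by stratum is in Proposition~\ref{prop:check_isom_on_diagonal}, which is then used exactly as you describe to identify the essential image via the counit.

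What each approach buys: the paper's argument is shorter and sidesteps all of the equivariant/partition bookkeeping you correctly flag as the main obstacle, at the cost of being less explicit about what $\Fact(A)$ actually is on each $X^K$. Your approach yields that explicit description, which can be useful downstream, but requires you to carry the $S_K$-equivariant combinatorics through every step. One small slip: in your full-faithfulness argument, the surjection $K \twoheadrightarrow \{*\}$ is \emph{terminal} in $(\FSplus)_{K/}$, not initial; this is what makes the colimit collapse to $A^{(K)}$ after restriction along $\delta$.
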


By Lemma~\ref{lem:diagonal_closed_embedding}, we know that $\delta$ is a closed embedding. Pseudo-proper base change then implies that the unit map $\id_{\Shv(X)^{\FBplus}} \to \delta^! \delta_!$ is an equivalence, which means that $\delta_!$ is fully-faithful. The fully-faithfulness of $\Fact$ is then given by the following general category-theoretic result, see~\cite{gaitsgory_weils_2014}*{Prop. 5.6.6}.

\begin{prop}
  Let $\matheur{C}, \matheur{D}$ be non-unital symmetric monoidal categories (satisfying the conventions given in~\S\ref{subsubsec:conventions_categories}). Suppose we are given a pair of adjoint functors
  \[
    f: \matheur{C} \rightleftarrows \matheur{D}: g
  \]
  where $g$ is right-lax symmetric monoidal, so that it induces a functor $G: \ComAlg(\matheur{D}) \to \ComAlg(\matheur{C})$. Then $G$ admits a left adjoint functor $F: \ComAlg(\matheur{C}) \to \ComAlg(\matheur{D})$.

  Suppose that $g$ is continuous\footnote{i.e. it commutes with all colimits} and symmetric monoidal, and $f$ is fully-faithful, then $F$ is fully-faithful.
\end{prop}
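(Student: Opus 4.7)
The plan is to establish existence of $F$ via the adjoint functor theorem in the presentable setting, and then verify fully-faithfulness by first checking the unit on free commutative algebras and extending to all algebras via the bar resolution.

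For existence, since $g$ is right-lax symmetric monoidal, the induced functor $G$ satisfies $\oblv_{\matheur{C}} \circ G \simeq g \circ \oblv_{\matheur{D}}$, where $\oblv$ denotes the forgetful functor to the underlying category. Because $\oblv_{\matheur{C}}$ creates limits and filtered colimits, and $g$ is a right adjoint (hence limit-preserving and accessible), one concludes that $G$ preserves limits and is accessible. The adjoint functor theorem in the presentable setting then produces the desired left adjoint $F$.

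For the fully-faithfulness statement, under the added hypotheses, I would show that the unit $\eta_A : A \to GF(A)$ is an equivalence for all $A \in \ComAlg(\matheur{C})$. The key computation is on free algebras: a chain of adjunctions
\[
\Hom_{\ComAlg(\matheur{D})}(F\Free_{\matheur{C}}(V), -) \simeq \Hom_{\matheur{C}}(V, g\oblv_{\matheur{D}}(-)) \simeq \Hom_{\ComAlg(\matheur{D})}(\Free_{\matheur{D}}(fV), -)
\]
gives $F \circ \Free_{\matheur{C}} \simeq \Free_{\matheur{D}} \circ f$, while the assumption that $g$ is (strong) symmetric monoidal and continuous guarantees $G \circ \Free_{\matheur{D}} \simeq \Free_{\matheur{C}} \circ g$ (since $g$ then commutes with tensor products and with $S_n$-coinvariants, and hence with $\Sym^{\geq 1}$). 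Combining these two identifications and using that the unit $\id \to gf$ is an equivalence (since $f$ is fully-faithful), we obtain that $\eta$ is an equivalence on free algebras.

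To pass to arbitrary $A$, I would use the bar resolution, which realizes every $A \in \ComAlg(\matheur{C})$ as a geometric realization $|\Free_{\matheur{C}}^{\bullet+1}(A)|$ of a simplicial diagram of free algebras (a consequence of the monadicity of $\oblv$). Since $F$ is a left adjoint it preserves this realization, and $G$ preserves geometric realizations because $\oblv_{\matheur{D}}$ preserves sifted colimits by~\cite{lurie_higher_2017}*{Cor.~3.2.3.2}, $\oblv_{\matheur{C}}$ is conservative, $g$ is continuous, and $\oblv_{\matheur{C}} \circ G \simeq g \circ \oblv_{\matheur{D}}$. Therefore $\eta_A$ is obtained as a colimit of the equivalences established in the free case, hence is itself an equivalence. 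The main subtlety is ensuring that both $F$ and $G$ interact correctly with the bar-resolution colimit, which rests essentially on the continuity of $g$ and the monadic nature of the forgetful functors.
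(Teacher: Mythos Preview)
Your argument is correct and follows the standard route: adjoint functor theorem for existence, then check the unit on free algebras using $F\circ\Free_{\matheur{C}}\simeq\Free_{\matheur{D}}\circ f$ and $G\circ\Free_{\matheur{D}}\simeq\Free_{\matheur{C}}\circ g$, and extend via the bar resolution using that both $F$ and $G$ preserve sifted colimits.

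Note, however, that the paper does not supply its own proof of this proposition: it is stated with a reference to~\cite{gaitsgory_weils_2014}*{Prop.~5.6.6}, and is used as a black box to deduce the fully-faithfulness of $\Fact$. Your proof is essentially the argument one finds in that reference, so there is nothing to compare beyond observing that you have correctly reconstructed the cited proof.
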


To characterize the essential image, we need some more preparation.

\begin{prop}
  \label{prop:check_isom_on_diagonal}
  Let $f: \matheur{A} \to \matheur{B}$ be a morphism in $\Factstar(X, \FBplus)$. Then, $f$ is an equivalence if and only if $\delta^!(f)$ is an equivalence.\footnote{There is a slight ambiguity in the notation $\delta^!(f)$: we can view it as a morphism in $\ComAlg(\Shv(X)^{\FBplus})$ or as a morphism between the underlying objects in $\Shv(X)^{\FBplus}$. However, it makes no difference, since forgetting the algebra structure is conservative.}
\end{prop}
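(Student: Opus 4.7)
The plan is to reduce to the diagonal using the cardinality filtration together with the factorization condition. By Corollary~\ref{cor:conservativity_i_n^!}, $f$ is an equivalence if and only if $\iota_n^!(f)$ is an equivalence in $\Shv(\Ran^{=n}(X, \FBplus))$ for every $n \geq 1$. For $n = 1$ there is nothing to do: unwinding the definitions, $\Ran^{=1}(X, \FBplus) = \Delta(X, \FBplus)$ and $\iota_1 = \delta$, so $\iota_1^!(f) = \delta^!(f)$ is an equivalence by hypothesis.

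For general $n$, I would use the factorization isomorphism~\eqref{eq:factorizable_map} pulled back along the diagonal. Consider the open sub-prestack $\Delta^n_{\disj} := (\Delta(X, \FBplus))^n \times_{\Ran(X, \FBplus)^n} \Ran(X, \FBplus)^n_{\disj}$ together with its canonical map $j_n^\Delta: \Delta^n_{\disj} \hookrightarrow \Delta(X, \FBplus)^n$, and let $u_n$ be the composition
\[
u_n: \Delta^n_{\disj} \xrightarrow{\delta^n|_{\disj}} \Ran(X, \FBplus)^n_{\disj} \xrightarrow{\union'_n} \Ran(X, \FBplus).
\]
By the functor of points description~\eqref{eq:functor_of_pts_description_Ran=n}, $u_n$ factors through $\iota_n$ via some map $\tilde u_n: \Delta^n_{\disj} \to \Ran^{=n}(X, \FBplus)$, since the image of each $\gamma_k$ is a single point and these $n$ points are distinct. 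Applying $(\delta^n|_{\disj})^!$ to the factorization equivalence $j_n^!(\matheur{A}^{\boxtimes n}) \simeq \union'_n{}^! \matheur{A}$ and using that $\delta^!$ is monoidal yields a natural equivalence
\[
\tilde u_n^! \iota_n^! \matheur{A} \;\simeq\; (j_n^\Delta)^! (\delta^! \matheur{A})^{\boxtimes n},
\]
and similarly for $\matheur{B}$. Consequently $\tilde u_n^! \iota_n^!(f) \simeq (j_n^\Delta)^! (\delta^! f)^{\boxtimes n}$, which is an equivalence whenever $\delta^!(f)$ is.

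It remains to argue that this pullback determines $\iota_n^!(f)$, i.e. that the collection of maps obtained from $\tilde u_n$ is jointly conservative on $\Shv(\Ran^{=n}(X, \FBplus))$. Since $\Ran^{=n}(X, \FBplus) = \colim_{I \in \FBplus} X^I_{=n}$, an equivalence on the $\Ran^{=n}$-side can be checked after pullback to each scheme $X^I_{=n}$; and each such scheme decomposes as a disjoint union $X^I_{=n} = \bigsqcup_{[\sigma]} \oversetsupscript{X}{\circ}{n}$ indexed by surjections $\sigma: I \surjects [n]$ up to relabeling. For a fixed such $\sigma$, the closed embedding $p_\sigma: \oversetsupscript{X}{\circ}{n} \hookrightarrow \Delta^n_{\disj}$ sending $(x_k)$ to the tuple $((\sigma^{-1}(k), x_k))_{k=1}^n$ satisfies $\tilde u_n \circ p_\sigma = \iota_n|_{\oversetsupscript{X}{\circ}{n}}$, so the family $\{(\tilde u_n \circ p_\sigma)^!\}_{I, \sigma}$ is jointly conservative on $\Shv(\Ran^{=n}(X, \FBplus))$. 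Combining with the previous paragraph, $\iota_n^!(f)$ is an equivalence for every $n$, and the proof concludes.

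The only mildly delicate step is the last one, where one must verify that the tautological cover of $\Ran^{=n}(X, \FBplus)$ by the schemes $\oversetsupscript{X}{\circ}{n}$ really does factor through $\tilde u_n$; but this is essentially bookkeeping in the combinatorics of surjections $I \surjects [n]$ and the definition of $\Delta^n_{\disj}$, and poses no real obstacle.
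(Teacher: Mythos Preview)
Your proof is correct and follows essentially the same strategy as the paper: reduce via Corollary~\ref{cor:conservativity_i_n^!}, then use factorizability to identify $\tilde u_n^! \iota_n^!(f)$ with a restriction of $(\delta^! f)^{\boxtimes n}$. The only difference is in how conservativity of $\tilde u_n^!$ is established: the paper observes that $\tilde u_n$ (their $p_n$) arises as a pullback in the square~\eqref{eq:pullback_union_Ran=n} and is finitary pseudo-proper and surjective on $k$-points, so Lemma~\ref{lem:gait_finitary_pseudoproper_and_sheaves}(b) applies directly, whereas you unpack this by exhibiting explicit sections $p_\sigma$ over the strata $\oversetsupscript{X}{\circ}{n}\subset X^I_{=n}$---both are valid and amount to the same thing.
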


\begin{proof}
  We only need to prove the if direction. By Corollary~\ref{cor:conservativity_i_n^!}, it suffices to show that
  \[
    \iota_n^!(f): \iota_n^! \matheur{A} \to \iota_n^! \matheur{B} \teq\label{eq:equivalence_on_strata}
  \]
  is an equivalence for all $n$.

  The case where $n=1$ is given by the hypothesis since $\iota_1 = \delta$.

  Now, for any $n$, by the functor of points description of $\Ran^{=n}(X, \FBplus)$ in~\eqref{eq:functor_of_pts_description_Ran=n}, we have the following pullback square
  \[
    \begin{tikzcd}
      (\Delta(X, \FBplus))^n_{\disj} \ar{r}{\delta^n} \ar{d}{p_n} & \Ran(X, \FBplus)^n_{\disj} \ar{d}{\union_n \circ j_n = \union_n'} \\
      \Ran^{=n}(X, \FBplus) \ar{r}{\iota_n} & \Ran(X, \FBplus)
    \end{tikzcd} \teq\label{eq:pullback_union_Ran=n}
  \]
  where $\Delta(X, \FBplus) \simeq \FBplus \times X$ is the diagonal of $\Ran(X, \FBplus)$, see~\eqref{eq:description_diagonal}. Moreover, since $p_n$ is a finitary pseudo-proper morphism that is surjective on $k$-points,\footnote{In fact, $(\Delta(X, \FBplus))^n_{\disj}$ is the disjoint union of $n!$ copies of $\Ran^{=n}(X, \FBplus)$.} we know, by Lemma~\ref{lem:gait_finitary_pseudoproper_and_sheaves} b., that $p_n^!$ is conservative. Thus, to prove that~\eqref{eq:equivalence_on_strata} is an equivalence, it suffices to show that it is so after composing with $p_n^!$.

  The fact that $f$ is a morphism in $\Factstar(X, \FBplus)$ means that we have the following commutative diagram
  \[
    \begin{tikzcd}
      j_n^! \matheur{A}^{\boxtimes n} \ar{d} \ar{r}{\simeq} & j_n^!\union_n^! \matheur{A} \ar{d}  \\
      j_n^! \matheur{B}^{\boxtimes n} \ar{r}{\simeq} & j_n^!\union_n^! \matheur{B}
    \end{tikzcd}
  \]
  Applying $(\delta^n)^!$ to this diagram, we get
  \[
    \begin{tikzcd}
      (\delta^! \matheur{A})^{\boxtimes n}|_{(\Delta(X, \FBplus))^n_{\disj}} \ar{d}{\simeq} \ar{r}{\simeq} & p_n^! \iota_n^! \matheur{A} \ar{d} \\
      (\delta^! \matheur{B})^{\boxtimes n}|_{(\Delta(X, \FBplus))^n_{\disj}} \ar{r}{\simeq} & p_n^! \iota_n^! \matheur{B}
    \end{tikzcd}
  \]
  Here, the objects in the column on the right are obtained by using the commutativity of~\eqref{eq:pullback_union_Ran=n}. Moreover, the vertical arrow on the left is an equivalence by the current hypothesis. Thus, the map on the right
  \[
    p_n^! \iota_n^! \matheur{A} \to p_n^! \iota_n^! \matheur{B}
  \]
  is an equivalence as well, and we are done.
\end{proof}

\begin{lem}
  \label{lem:free_com_is_fact}
  Let $\matheur{F} \in \Shv(X)^{\FBplus}$, then
  \[
    \Free_{\ComAlg} \delta_!\matheur{F} = \bigoplus_{n>0} \Sym^n \delta_! \matheur{F} = \bigoplus_{n>0} (\delta_! \matheur{F})^{\otimesstar n}_{S_n} \in \Factstar(X, \FBplus),
  \]
  i.e. it is a twisted commutative factorization algebra over $X$.
\end{lem}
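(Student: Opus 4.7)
Set $V := \delta_! \matheur{F}$, so that $\matheur{A} = \bigoplus_{k\geq 1} \Sym^k(V)$, and we must show that for each $n \geq 2$ the multiplication-induced map $j_n^!(\matheur{A}^{\boxtimes n}) \to \union_n'{}^! \matheur{A}$ is an equivalence of sheaves on $\Ran(X, \FBplus)^n_{\disj}$. The first key input is the formula
\[
\Sym^k(V) \simeq \bigl((\sigma_k)_! \matheur{F}^{\boxtimes k}\bigr)_{S_k},
\]
where $\sigma_k := \union_k \circ \delta^{\boxtimes k} \colon (\FBplus \times X)^k \to \Ran(X, \FBplus)$; this follows from $V^{\boxtimes k} \simeq (\delta^{\boxtimes k})_! \matheur{F}^{\boxtimes k}$ combined with pseudo-proper base change for the $\otimesstar$-power, using Lemma~\ref{lem:pseudo_properness_union} and the fact that $\delta$ is a closed embedding. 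Since the multiplication in $\Sym(V)$ preserves the total degree $k = k_1 + \cdots + k_n$, it suffices to check, for each $k \geq 1$, that the summand map
\[
\bigoplus_{\substack{k_1, \ldots, k_n \geq 1 \\ k_1 + \cdots + k_n = k}} j_n^!\bigl(\Sym^{k_1}(V) \boxtimes \cdots \boxtimes \Sym^{k_n}(V)\bigr) \longrightarrow \union_n'{}^! \Sym^k(V)
\]
is an equivalence.

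The plan is to compute both sides by pseudo-proper base change and then match them combinatorially. For the LHS, base-changing $\sigma_{k_1} \times \cdots \times \sigma_{k_n}$ along the open embedding $j_n$ expresses each summand as a pushforward $\tilde{\sigma}_!\bigl(\matheur{F}^{\boxtimes k}|_{Q_{(k_l)}}\bigr)_{\prod_l S_{k_l}}$, where
\[
Q_{(k_l)} := (\FBplus \times X)^k \times_{\Ran(X,\FBplus)^n} \Ran(X,\FBplus)^n_{\disj}
\]
parametrizes $k$-tuples $((I_j, x_j))_{j=1}^k$, implicitly partitioned into the consecutive blocks of sizes $(k_1, \ldots, k_n)$, such that the point-sets $\{x_j : j \text{ in block } l\}$ are pairwise disjoint across distinct $l$. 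For the RHS, base-changing $\sigma_k$ against $\union_n'$ (which is valid since $\sigma_k$ is pseudo-proper) gives $(\tilde{\sigma}_k)_!\bigl(\matheur{F}^{\boxtimes k}|_{\matheur{P}}\bigr)_{S_k}$, where $\matheur{P} := (\FBplus \times X)^k \times_{\Ran(X, \FBplus)} \Ran(X, \FBplus)^n_{\disj}$. Unwinding the functor of points, $\matheur{P}$ parametrizes $k$-tuples $((I_j, x_j))_{j=1}^k$ equipped with an \emph{arbitrary} ordered partition $\{1, \ldots, k\} = J_1 \sqcup \cdots \sqcup J_n$ into nonempty parts, subject to the same disjointness condition on the point-sets $\{x_j : j \in J_l\}$.

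The identification is then purely combinatorial: the group $S_k$ acts on $\matheur{P}$ by permuting indices, the orbits on ordered partitions into $n$ nonempty parts are classified by the size sequence $(|J_1|, \ldots, |J_n|)$, and the stabilizer of the consecutive-block representative of sizes $(k_1, \ldots, k_n)$ is the Young subgroup $\prod_l S_{k_l}$. Hence
\[
\matheur{P} \simeq \bigsqcup_{\substack{(k_1, \ldots, k_n) \\ k_l \geq 1, \sum k_l = k}} S_k \times^{\prod_l S_{k_l}} Q_{(k_l)}
\]
as $S_k$-spaces over $\Ran(X, \FBplus)^n_{\disj}$, and taking $S_k$-coinvariants on the RHS yields exactly the direct sum on the LHS, summand by summand. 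The main (mild) obstacle will be verifying that this identification is realized by the factorization-structure map; this is a diagram chase resting on the equality $\union_n \circ (\sigma_{k_1} \times \cdots \times \sigma_{k_n}) = \sigma_k$ (associativity of $\union$) together with the identification of the multiplication $\Sym^{k_1}(V) \otimesstar \cdots \otimesstar \Sym^{k_n}(V) \to \Sym^k(V)$ with the canonical surjection $(V^{\otimesstar k})_{\prod_l S_{k_l}} \twoheadrightarrow (V^{\otimesstar k})_{S_k}$ onto further coinvariants.
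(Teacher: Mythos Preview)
Your proof is correct and carefully executed. The paper itself gives no proof at all beyond the phrase ``This is a standard computation,'' so there is nothing to compare against; you have supplied exactly the kind of detailed verification that the paper omits. Your key ingredients---the pseudo-proper base change for $\sigma_k$ and for $\sigma_{k_1}\times\cdots\times\sigma_{k_n}$, the functor-of-points description of the fiber product $\matheur{P}$, and the combinatorial decomposition of $\matheur{P}$ by $S_k$-orbits on ordered surjections---are all sound, and your final identification of the factorization map with the canonical surjection onto further coinvariants is the right way to close the argument.
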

\begin{proof}
  This is a standard computation.
\end{proof}

\begin{proof}[Proof of Theorem~\ref{thm:tcfa_vs_tcaX}]
  \label{proof:thm:tcfa_vs_tcaX}
  As mentioned above, it remains to characterize the essential image of the left adjoint $\Fact$ of $\delta^!$.

  We will start by proving that for each $\matheur{A} \in \ComAlg(\Shv(X)^{\FBplus})$, $\Fact(A) \in \Factstar(X, \FBplus)$. By~\cite{lurie_higher_2017}*{\S3.2.3 \& Prop. 4.7.3.14}, we can write $\matheur{A}$ as a geometric realization of a simplicial object $\matheur{A}_\bullet$, where each term is free in $\ComAlg(\Shv(X)^{\FBplus})$. Being a left adjoint, $\Fact$ commutes with colimits, and hence, $\Fact \matheur{A} \simeq |\Fact \matheur{A}_\bullet|$. But now, since each term of $\Fact \matheur{A}_\bullet$ is, by Lemma~\ref{lem:free_com_is_fact}, factorizable, we know that $\Fact \matheur{A}$ itself is also factorizable, by Remark~\ref{rmk:geometric_realization_sifted_colim}.

  Conversely, let $\matheur{A} \in \Factstar(X, \FBplus)$ and consider the co-unit map $\Fact(\delta^!\matheur{A}) \to \matheur{A}$. By Proposition~\ref{prop:check_isom_on_diagonal}, to check that this is an equivalence, it suffices to do so after applying $\delta^!$. But now, since $\Fact$ is fully-faithful, we see that $\delta^!(\Fact(\delta^! \matheur{A})) \simeq \delta^! \matheur{A}$, and we are done.
\end{proof}

\subsection{Variants}
\label{subsec:variants_tcfas}
The main goal of the current paper is to study the cohomology groups of generalized configuration spaces of a given scheme $X$. Our strategy is to relate these to the factorization cohomology of $X$ with coefficients in some explicit factorization algebras. This subsection provides a mechanism to construct these algebras.

The material in this subsection is a straightforward adaptation of~\cite{ho_homological_2021}*{\S6.6}, with essentially identical proofs. We will thus only indicate the main constructions and results. The interested readers can consult loc. cit. for the details.

\subsubsection{One-color case}
\label{subsubsec:motivation_for_comm_alg_truncated}
For each integer $n$, we consider the category $\FBplus^{< n}$ which is a full-subcategory of $\FBplus$ consisting of non-empty finite sets of size $< n$. Let $\matheur{C}$ be a symmetric monoidal stable infinity category. Then we have the following pair of adjoint functors
\[
  \tr_{< n}: \matheur{C}^{\FBplus} \rightleftarrows \matheur{C}^{\FBplus^{< n}}: \iota_{< n} \teq\label{eq:truncation_functors_FB}
\]
with $\tr_{< n}$ being the left adjoint. Here, $\tr_{< n}$ (truncation) is given by precomposing with the embedding $\FBplus^{< n} \to \FBplus$ whereas $\iota_{< n}$ is given by extension by $0$. We will also use the notation $\tr_{\leq n}$ and $\iota_{\leq n}$ to denote the obvious functors where $\FBplus^{<n}$ is replaced by $\FBplus^{\leq n} = \FBplus^{\leq n+1}$. Moreover, we will sometimes abuse notation and use these functors to denote truncation and extension by zero functors for the category $\FB$ as well.

The category $\matheur{C}^{\FBplus^{< n}}$ admits a natural symmetric monoidal structure in the obvious way which makes $\tr_{< n}$ symmetric monoidal. Hence, $\iota_{< n}$ is right lax symmetric monoidal. The adjoint pair $\tr_{< n} \dashv \iota_{< n}$ thus upgrades to an adjoint pair between the categories of algebras
\[
  \tr_{< n}: \ComAlg(\matheur{C}^{\FBplus}) \rightleftarrows \ComAlg(\matheur{C}^{\FBplus^{< n}}): \iota_{< n}. \teq\label{eq:one_color_adjunction_truncation}
\]

\begin{rmk}
  \label{rmk:iota_<n_left_adjoint}
  Note that $\iota_{<n}$ is also a left adjoint to $\tr_{<n}$. Thus, $\iota_{<n}$ is also left-lax symmetric monoidal, and hence, we obtain a pair of adjoint functors
  \[
    \iota_{<n}: \coLie(\Vect^{\FBplus}) \rightleftarrows \coLie(\Vect^{\FBplus^{<n}}): \tr_{<n}.
  \]
  A similar discussion applies for the functors $\iota_{\leq n} \dashv \tr_{\leq n}$.
\end{rmk}

\subsubsection{}
\label{subsubsec:computation_truncation_monochrome}
When $\matheur{C} = \Vect$ and $A = \Lambda[x]_+ \in \ComAlg(\Vect^{\FBplus})$ with $x$ in degree $1$ and $S_n$ acting trivially for all $n$. Then, $\iota_{< n}(\tr_{< n} A) \simeq (k[x]/x^n)_+$. If we view $\FBplus$ as $\Ran(\pt, \FBplus)$ then $\FBplus^{< n}$ could be viewed as the sub-prestack of $\Ran(\pt, \FBplus)$ where we only allow the multiplicity of any point (in this case, only one point) to be at most $n-1$.\footnote{This is not to be confused with $\Ran^{\leq n-1}(\pt, \FBplus)$ which is in fact equivalent to $\Ran(\pt, \FBplus)$.} $\FBplus^{< n}$ is thus the generalized ordered configuration space of a point.\footnote{When $n=2$, $\FBplus^{< 2} = \{*\}$ is exactly the ordered configuration space of a point.}

In what follows, we will give a multi-colored version of this construction. A similar calculation will allows us to produce commutative factorization algebras whose factorization cohomology compute the cohomology groups of generalized ordered configuration.

\subsubsection{Generalized ordered configuration spaces}
For each $n\geq 2$, we define $\Ran(X, \FBplus)_n$ to be the open sub-prestack of $\Ran(X, \FBplus)$ where we require that the multiplicity of any point is less than $n$. More formally, for any test scheme $S$,
\[
  \Ran(X, \FBplus)_n(S) = \{(I, \gamma)\in \Ran(X, \FBplus)(S) : \gamma|_{k(s)} \text{ is at most $n-1$ to 1}, \forall s\in S\}
\]
where $\gamma|_{k(s)}: I \to X(k(s))$ is the pullback of $\gamma$ to $\Spec k(s)$.

We also have the following decomposition of $\Ran(X, \FBplus)_n$
\[
  \Ran(X, \FBplus)_n \simeq \bigsqcup_{I \in \FBplus/\iso} P^I_n(X) /S_I \teq\label{eq:decompo_Ran(X, FBplus)_n}
\]
where $P^I_n(X)$ is defined as in~\S\ref{subsec:intro_outline_results}. As in the case of $\Ran(X, \FBplus)$, $\Ran(X, \FBplus)_n$ should be viewed as a way to package $P_n(X)$ along with all the $S_k$-actions in one space.

\subsubsection{}
We will equip $\Ran(X, \FBplus)_n$ with the structure of a commutative semi-group in $\Corr(\PreStk)$, the category of prestacks with correspondences as morphisms. Note that $\Ran(X, \FBplus)$ is also a commutative semi-group in $\Corr(\PreStk)$ using the following diagram
\[
  \begin{tikzcd}
    & \Ran(X, \FBplus)^k \ar[equal]{dl} \ar{dr}{\union_k} \\
    \Ran(X, \FBplus)^k && \Ran(X, \FBplus)
  \end{tikzcd}
\]

Now, pulling back $\union_k$ along the open embedding $\iota_n: \Ran(X, \FBplus)_n \to \Ran(X, \FBplus)$, we get
\[
  \begin{tikzcd}
    & \Ran(X, \FBplus)^k \times_{\Ran(X, \FBplus)} \Ran(X, \FBplus)_n  \ar{dl}[swap]{j} \ar{dr}{\union_k}\\
    (\Ran(X, \FBplus)_n)^k && \Ran(X, \FBplus)_n
  \end{tikzcd}
\]
where $\union_k$ is pseudo-proper (being a pullback of a pseudo-proper map, Lemma~\ref{lem:pseudo_properness_union}), and $j$ an open embedding.

\subsubsection{}
This induces the $\otimesstar$-symmetric monoidal structure on $\Shv(\Ran(X, \FBplus)_n)$ as follows: for $\matheur{F}_1, \dots, \matheur{F}_k \in \Shv(\Ran(X, \FBplus)_n)$,
\[
  \matheur{F}_1\otimesstar \cdots \otimesstar \matheur{F}_k = (\union_k)_! j^!(\matheur{F}_1 \boxtimes \cdots \boxtimes \matheur{F}_k).
\]
Using this structure, we can make sense of the category of commutative algebras
\[
  \ComAlgstar(\Ran(X, \FBplus)) = \ComAlg(\Shv(\Ran(X, \FBplus))^{\otimesstar})
\]
and the full-subcategory of commutative factorization algebras $\Factstar(X, \FBplus)_n$ consisting of algebras $\matheur{A}$ such that the maps (which are adjoints to the multiplication maps)
\[
  j^!(\matheur{F} \boxtimes \cdots \boxtimes \matheur{F}) \to \union^! \matheur{F}
\]
induce equivalences
\[
  (j^!(\matheur{F} \boxtimes \cdots \boxtimes \matheur{F}))|_{(\Ran(X, \FBplus)_n)^k_{\disj}} \simeq (\union^! \matheur{F})|_{(\Ran(X, \FBplus)_n)^k_{\disj}}.
\]

\subsubsection{}
We have a pair of adjoint functors
\[
  \iota_n^! \simeq \iota_n^*: \Shv(\Ran(X, \FBplus)) \rightleftarrows \Shv(\Ran(X, \FBplus)_n): \iota_{n,*}.
\]
Arguing similarly to~\cite{ho_homological_2021}*{6.6.6--6.6.9}, one checks that $\iota_n^! \simeq \iota_n^*$ and $\iota_{n, *}$ are symmetric monoidal and moreover they preserve factorizability. Thus, we see that this upgrades to a pair of adjoint functors
\[
  \iota_n^! \simeq \iota_n^*: \Factstar(X, \FBplus) \rightleftarrows \Factstar(X, \FBplus)_n: \iota_{n, *}. \teq\label{eq:adjoint_fact_truncation}
\]

\subsubsection{Truncating a tcfa}
The reader might wonder why it is that we take $\iota_{n,*}$ rather than $\iota_{n, !}$ in~\eqref{eq:adjoint_fact_truncation} even though extension by zero appears in~\S\ref{subsubsec:motivation_for_comm_alg_truncated}. This is because with respect to $!$-pullback to the closed complement, $*$-push forward along an open sub-prestack is indeed extension by zero. We will now explain a truncation process of twisted commutative factorization algebras which is the multi-colored analog of~\S\ref{subsubsec:motivation_for_comm_alg_truncated}.

Consider the following pullback square
\[
  \begin{tikzcd}
    \FBplus^{<n} \times X \ar{r}{\delta} \ar{d}{\iota_n} & \Ran(X, \FBplus)_n \ar{d}{\iota_n} \\
    \FBplus \times X \ar{r}{\delta} & \Ran(X, \FBplus)
  \end{tikzcd}
\]
Then, we have the following equivalences
\[
  \delta^! \circ \iota_{n, *} \circ \iota_n^! \circ \Fact \simeq \iota_{n, *} \circ \delta^! \circ \iota_n^! \circ \Fact \simeq \iota_{n, *} \circ \iota_n^! \circ \delta^! \circ \Fact \simeq \iota_{n, *} \circ \iota_n^! \teq\label{eq:truncation_multi_color}
\]
of functors
\[
  \ComAlg(\Shv(X)^{\FBplus}) \to \ComAlg(\Shv(X)^{\FBplus}).
\]

Now, the RHS of~\eqref{eq:truncation_multi_color} is just truncating everything of degrees $\geq n$ and we have the following

\begin{prop}
  \label{prop:truncation}
  Let $\matheur{A} \in \ComAlg(\Shv(X)^{\FBplus})$. Then $\delta^!(\iota_{n, *}(\iota_n^!(\Fact(\matheur{A}))))$ is obtained from $\matheur{A}$ by setting all components $\matheur{A}^{(I)} \in \Shv(X^I/S_I)$ to $0$ for all $|I| \geq n$.
\end{prop}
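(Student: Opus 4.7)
The plan is to essentially unpack the chain of equivalences already displayed at~\eqref{eq:truncation_multi_color}, which gives
\[
\delta^! \circ \iota_{n,*} \circ \iota_n^! \circ \Fact \;\simeq\; \iota_{n,*} \circ \iota_n^!
\]
as functors from $\ComAlg(\Shv(X)^{\FBplus})$ to itself, where on the right-hand side $\iota_n$ denotes the diagonal-level inclusion $\FBplus^{<n}\times X \hookrightarrow \FBplus \times X$. Once the chain is justified, the proposition is reduced to identifying this right-hand side with the asserted truncation operation at the level of underlying $\FBplus$-objects.

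First I would verify the three equivalences making up the chain. The first, $\delta^! \circ \iota_{n,*} \simeq \iota_{n,*} \circ \delta^!$, is an instance of base change for the pullback square displayed just before~\eqref{eq:truncation_multi_color}; this applies because $\iota_n$ (on the Ran space side) is an open embedding, in particular schematic, and $(-)_*$ satisfies base change against $(-)^!$ for schematic maps as recalled in~\S\ref{subsubsec:prelim_geometry} (and as proved in~\cite{ho_free_2017}*{\S2.6}). The second equivalence, $\delta^! \circ \iota_n^! \simeq \iota_n^! \circ \delta^!$, is merely the functoriality of $(-)^!$ applied to the same pullback square. The third equivalence, $\delta^! \circ \Fact \simeq \id$, is precisely the unit equivalence of the $(\Fact, \delta^!)$-adjunction established in Theorem~\ref{thm:tcfa_vs_tcaX}, i.e.\ the fully-faithfulness of $\Fact$. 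All three equivalences are compatible with commutative algebra structures because every functor in sight is either symmetric monoidal or right-lax symmetric monoidal, and the forgetful functor from algebras to underlying objects is conservative and preserves limits.

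It remains to identify $\iota_{n,*} \circ \iota_n^!$ on $\Shv(X)^{\FBplus} \simeq \Shv(\FBplus \times X)$ with the truncation that kills graded degrees $\geq n$. The crucial observation is that $\FBplus = \bigsqcup_{k\geq 1} BS_k$ is a disjoint union indexed by cardinality, so $\FBplus^{<n} = \bigsqcup_{1 \leq k < n} BS_k$ is a \emph{clopen} subgroupoid; hence $\iota_n: \FBplus^{<n}\times X \hookrightarrow \FBplus\times X$ is simultaneously an open and closed embedding. Consequently $\iota_n^!$ agrees with $\iota_n^*$ and is simply the projection onto the components of graded degree $<n$, while $\iota_{n,*}$ (equivalently $\iota_{n,!}$, since $\iota_n$ is clopen) is extension by zero; their composition therefore preserves the components $\matheur{A}^{(I)}$ with $|I|<n$ unchanged and sends those with $|I|\geq n$ to $0$. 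Passing back to $\ComAlg$ via conservativity of the forgetful functor yields the statement of the proposition. I expect no serious obstacle: the only points that require care are tracking which $\iota_n$ sits on the Ran-space side versus the diagonal side in each equivalence, and checking that the pullback square really is Cartesian (which is immediate from the functor-of-points descriptions of $\Ran(X, \FBplus)_n$ and $\Delta(X, \FBplus)$).
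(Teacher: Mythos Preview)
Your proposal is correct and follows exactly the paper's own approach: the paper's proof consists precisely of the chain~\eqref{eq:truncation_multi_color} together with the one-line remark that the right-hand side $\iota_{n,*}\circ\iota_n^!$ is truncation of degrees $\geq n$, and you have simply spelled out the justification for each link in that chain. Your identification of the key inputs (base change for $(-)_*$ against $(-)^!$ along the displayed Cartesian square, functoriality of $(-)^!$, the unit equivalence from Theorem~\ref{thm:tcfa_vs_tcaX}, and the clopenness of $\FBplus^{<n}\subset\FBplus$) is accurate.
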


\section{Factorization (co)homology}
\label{sec:factorization_coh}

Having laid down the foundation for the coefficient objects, twisted commutative factorization algebras, we will now turn to factorization (co)homology itself. In~\S\ref{subsec:mult_push_forward}, we introduce factorization homology as a multiplicative push-forward functor, acting on algebra objects rather than mere sheaves, and show that under the equivalence given in Theorem~\ref{thm:tcfa_vs_tcaX}, this multiplicative push-forward functor amounts to pushing forward along $\Ran(X, \FBplus) \to \FBplus$. Then, we show, in~\S\ref{subsec:Koszul_duality_vs_mult_pushforward} that factorization homology behaves nicely with respect to Koszul duality. In~\S\ref{subsec:fact_coh} we introduce the cohomological version of factorization homology, and show that it also behaves nicely with respect Koszul duality. The materials in this section is mostly routine; we only need adapt slightly the usual story to the twisted commutative setting.

\subsection{A multiplicative push-forward}
\label{subsec:mult_push_forward}
Let $\pi: X\to \pt$ be the structure map of $X$ and
\[
  \Ran(\pi): \Ran(X, \FBplus) \to \Ran(\pt, \FBplus) \simeq \FBplus
\]
be the induced map, see~\eqref{eq:Ran(pi)} and Remark~\ref{rmk:Ran(f)}. By \S\ref{subsubsec:conv_monoidal_vs_Ran(f)}, the functor
\[
  \Ran(\pi)_!: \Shv(\Ran(X, \FBplus)) \to \Vect^{\FBplus}
\]
is symmetric monoidal with respect to the $\otimesstar$-monoidal structure on the LHS and the Day convolution monoidal structure on the RHS. Thus, its right adjoint, $\Ran(\pi)^!$ is right lax symmetric monoidal, and hence, the adjoint pair $\Ran(\pi)_! \dashv \Ran(\pi)^!$ upgrades to a pair of adjoint functors
\[
  \Ran(\pi)_!: \ComAlgstar(\Ran(X, \FBplus)) \rightleftarrows \ComAlg(\Vect^{\FBplus}): \Ran(\pi)^!.
\]

\begin{defn}
  Let $\matheur{A} \in \Factstar(X, \FBplus)$, the factorization homology of $X$ with coefficients in $\matheur{A}$ is defined to be $\Ran(\pi)_!(\matheur{A}) \in \ComAlg(\Vect^{\FBplus})$.
\end{defn}

\begin{rmk}
  Note that this is not the same as $C^*_c(\Ran(X, \FBplus), \matheur{A})$. Indeed, $C^*_c(\Ran(X, \FBplus), \matheur{A})$ can be obtained from $\Ran(\pi)_! (\matheur{A})$ by taking $S_n$-coinvariant at each degree. See also Remark~\ref{rmk:taking_coinvariant}.
\end{rmk}

\subsubsection{} Observe that the functor
\[
  \pi^!: \Vect^{\FBplus} \to \Shv(X)^{\FBplus} \label{eq:pi_upper_!}
\]
is symmetric monoidal, where, $\pi$ is the structure map of $X$.\footnote{Alternatively, we can also view $\pi: \FBplus \times X \to \FBplus$ as the structure map of $\FBplus \times X$ induced by the structure map of $X$. The name structure map is justified because the prestacks we consider are defined over the constant prestack $\FBplus$, and we can view $\FBplus$ as the base prestack. In the same vein, $\Ran(\pi): \Ran(X, \FBplus) \to \FBplus$ can also be viewed as the structure map of $\Ran(X, \FBplus)$.} Thus, it upgrades to a functor
\[
  \pi^!: \ComAlg(\Vect^{\FBplus}) \to \ComAlg(\Shv(X)^{\FBplus}).
\]
By adjoint functor theorem~\cite{lurie_higher_2017-1}*{Cor. 5.5.2.9}, we have the following pair of adjoint functors
\[
  \pi_{?}: \ComAlg(\Shv(X)^{\FBplus}) \rightleftarrows \ComAlg(\Vect^{\FBplus}): \pi^!.
\]

\subsubsection{} These two constructions in fact coincide and we have the following result.

\begin{prop}
  Let $\matheur{A} \in \Factstar(X, \FBplus)$. Then we have a natural equivalence
  \[
    \pi_? (\delta^! \matheur{A}) \simeq \Ran(\pi)_!(\matheur{A}).
  \]
\end{prop}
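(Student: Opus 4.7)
The plan is to deduce the equivalence from the base change identity $\pi = \Ran(\pi) \circ \delta$ at the level of prestacks, together with Theorem~\ref{thm:tcfa_vs_tcaX} which identifies $\Fact$ as a left adjoint to $\delta^!$ that is a section on $\Factstar(X, \FBplus)$.

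First, I would observe that under the identification $\Delta(X, \FBplus) \simeq \FBplus \times X$ from~\eqref{eq:description_diagonal}, the composition $\Ran(\pi) \circ \delta: \FBplus \times X \to \Ran(X, \FBplus) \to \FBplus$ coincides with the projection $\pi$ onto $\FBplus$. Taking $!$-pullbacks at the sheaf level yields a natural equivalence
\[
    \pi^! \simeq \delta^! \circ \Ran(\pi)^! : \Vect^{\FBplus} \to \Shv(X)^{\FBplus}.
\]
Each of these three functors is right-lax symmetric monoidal: $\Ran(\pi)^!$ and $\pi^!$ are so because their left adjoints $\Ran(\pi)_!$ and $\pi_!$ are strong symmetric monoidal (the former by \S\ref{subsubsec:conv_monoidal_vs_Ran(f)}, and similarly for $\pi$ by \Kunneth), while $\delta^!$ is in fact strong symmetric monoidal by the lemma preceding Theorem~\ref{thm:tcfa_vs_tcaX}. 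The right-lax structures on $!$-pullbacks are induced from the strong structures on $!$-pushforwards via adjunction, and these are compatible with the factorization $\pi_! \simeq \Ran(\pi)_! \circ \delta_!$. Consequently the equivalence above upgrades to an equivalence of functors between the corresponding categories of commutative algebras.

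Passing to left adjoints on algebras then gives
\[
    \pi_? \simeq \Ran(\pi)_! \circ \Fact : \ComAlg(\Shv(X)^{\FBplus}) \to \ComAlg(\Vect^{\FBplus}),
\]
where $\Fact$ is the left adjoint to $\delta^!$ from Theorem~\ref{thm:tcfa_vs_tcaX}, and $\Ran(\pi)_!$ at the algebra level is the left adjoint to $\Ran(\pi)^!$.

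Finally, for $\matheur{A} \in \Factstar(X, \FBplus)$, the essential image description in Theorem~\ref{thm:tcfa_vs_tcaX} says precisely that the counit $\Fact(\delta^! \matheur{A}) \to \matheur{A}$ is an equivalence. Combining with the displayed equivalence above,
\[
    \pi_?(\delta^! \matheur{A}) \simeq \Ran(\pi)_!(\Fact(\delta^! \matheur{A})) \simeq \Ran(\pi)_!(\matheur{A}),
\]
as desired. The only point requiring real care is the compatibility of the right-lax symmetric monoidal structures under composition, which is needed to lift the sheaf-level identity $\pi^! \simeq \delta^! \circ \Ran(\pi)^!$ to the algebra-level identity; this is formal but tedious and is the principal technical step to verify.
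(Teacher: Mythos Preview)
Your proof is correct and follows essentially the same approach as the paper: both establish $\pi^! \simeq \delta^! \circ \Ran(\pi)^!$ at the level of right adjoints (on commutative algebras), pass to left adjoints to obtain $\pi_? \simeq \Ran(\pi)_! \circ \Fact$, and then invoke the counit equivalence $\Fact(\delta^! \matheur{A}) \simeq \matheur{A}$ for factorizable $\matheur{A}$ from Theorem~\ref{thm:tcfa_vs_tcaX}. You are simply more explicit than the paper about why the sheaf-level identity upgrades to the algebra level via the compatibility of the lax monoidal structures, a point the paper leaves implicit in the phrase ``clearly, the right adjoints commute.''
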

\begin{proof}
  Consider the following diagram
  \[
    \begin{tikzcd}
      \ComAlg(\Shv(X)^{\FBplus}) \ar[shift right=\arrdisp]{dr}[swap]{\pi_?} \ar[shift left=\arrdisp]{r}{\Fact} & \ComAlgstar(\Ran(X, \FBplus)) \ar[shift left=\arrdisp]{l}{\delta^!} \ar[shift right=\arrdisp]{d}[swap]{\Ran(\pi)_!}  \\
      & \ComAlg(\Vect^{\FBplus}) \ar[shift right=\arrdisp]{ul}[swap]{\pi^!} \ar[shift right=\arrdisp]{u}[swap]{\Ran(\pi)^!}
    \end{tikzcd}
  \]
  Clearly, the right adjoints commute, and hence, so do the left adjoints, and we are done.
\end{proof}

\subsection{Koszul duality}
\label{subsec:Koszul_duality_vs_mult_pushforward}
As discovered in~\cite{francis_chiral_2011}, the theory of Koszul duality interacts nicely with factorization homology. In this subsection, we will explain the extension to the twisted commutative setting.

\subsubsection{} By~\S\ref{subsubsec:FBplus_cat_pro-nilp}, $\Shv(X)^{\FBplus}$ is pro-nilpotent, and hence, Proposition~\ref{prop:koszul_duality_pro-nilp} says that we have mutually inverse functors
\[
  \coPrim[1]: \ComAlg(\Shv(X)^{\FBplus}) \rightleftarrows \coLie(\Shv(X)^{\FBplus}): \coChev
\]

\subsubsection{}
The functor $\pi^!$ of~\eqref{eq:pi_upper_!} is symmetric monoidal and hence, $\pi_!$, its left adjoint, is left-lax monoidal. Thus, we get a pair of adjoint functors
\[
  \pi_!: \coLie(\Shv(X)^{\FBplus}) \rightleftarrows \coLie(\Vect^{\FBplus}): \pi^!.
\]

Since $\pi^!$, being a right adjoint, commutes with limits, the right adjoints of the diagram below commute. Thus, so do the left adjoints.
\[
  \begin{tikzcd}[column sep=large]
    \ComAlg(\Shv(X)^{\FBplus}) \ar[shift left=\arrdisp]{r}{\coPrim[1]} \ar[shift right=\arrdisp]{d}[swap]{\pi_?} & \coLie(\Shv(X)^{\FBplus}) \ar[shift left=\arrdisp]{l}{\coChev} \ar[shift right=\arrdisp]{d}[swap]{\pi_!} \\
    \ComAlg(\Vect^{\FBplus}) \ar[shift right=\arrdisp]{u}[swap]{\pi^!} \ar[shift left=\arrdisp]{r}{\coPrim[1]} & \coLie(\Vect^{\FBplus}) \ar[shift right=\arrdisp]{u}[swap]{\pi^!} \ar[shift left=\arrdisp]{l}{\coChev}
  \end{tikzcd}
\]

Since all the horizontal arrows are equivalences, we immediately obtain the following

\begin{prop} \label{prop:coChev_coPrim_vs_pi!}
  We have the following natural equivalences of functors
  \[
    \pi_? \circ \coChev \simeq \coChev \circ \pi_! \qquad\text{and}\qquad \coPrim[1] \circ \pi^! \simeq \pi^! \circ \coPrim[1].
  \]
\end{prop}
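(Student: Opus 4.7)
The plan is to reduce both equivalences in the proposition to a single commutativity statement for the right adjoints of the Koszul-duality square, and then pass to left adjoints. Specifically, I would first verify that
\[
\pi^! \circ \coChev \;\simeq\; \coChev \circ \pi^!
\]
as functors $\coLie(\Vect^{\FBplus}) \to \ComAlg(\Shv(X)^{\FBplus})$. This is where the essential content sits: the functor $\pi^! : \Vect^{\FBplus} \to \Shv(X)^{\FBplus}$ is symmetric monoidal and, being a right adjoint in $\DGCatprescont$, commutes with all limits. Since $\coChev \mathfrak{a}$ admits the limit presentation $\lim_i \coChev^i \mathfrak{a}$ of Proposition~\ref{prop:computing_coChev_as_limit}, with successive fibers built from tensor powers of $\mathfrak{a}[-1]$, the symmetric-monoidal right-adjoint $\pi^!$ intertwines the entire tower and therefore its limit, yielding the claimed equivalence.

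Once the commutativity of right adjoints is in hand, uniqueness of adjoints produces the corresponding equivalence of left adjoints,
\[
\coPrim[1] \circ \pi_? \;\simeq\; \pi_! \circ \coPrim[1],
\]
as functors $\ComAlg(\Shv(X)^{\FBplus}) \to \coLie(\Vect^{\FBplus})$. By~\S\ref{subsubsec:FBplus_cat_pro-nilp}, both $\Vect^{\FBplus}$ and $\Shv(X)^{\FBplus}$ are pro-nilpotent, so Proposition~\ref{prop:koszul_duality_pro-nilp} makes $\coPrim[1]$ and $\coChev$ mutually inverse equivalences on each side. I can then pre- and post-compose the left-adjoint equivalence with $\coChev$ to obtain the first claim $\pi_? \circ \coChev \simeq \coChev \circ \pi_!$, and pre- and post-compose the right-adjoint equivalence with $\coPrim[1]$ to obtain the second claim $\coPrim[1] \circ \pi^! \simeq \pi^! \circ \coPrim[1]$.

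The only subtle point will be the very first step---checking that $\pi^!$ genuinely commutes with each $\coChev^i$ term by term in the tower. This should be automatic: $\pi^!$ being symmetric monoidal as a morphism in $\DGCatprescont$ preserves tensor products, and the symmetric powers that enter the successive fibers of the tower are, in characteristic zero, retracts of the corresponding tensor powers via the usual symmetrization idempotent; combined with the preservation of limits, this suffices.
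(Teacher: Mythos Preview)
Your proof is correct and follows essentially the same approach as the paper: establish $\pi^! \circ \coChev \simeq \coChev \circ \pi^!$ from the fact that $\pi^!$ is symmetric monoidal and commutes with limits, pass to left adjoints, and then use that the horizontal Koszul-duality arrows are equivalences in the pro-nilpotent setting. The only difference is cosmetic—the paper compresses your first step into the single clause ``since $\pi^!$, being a right adjoint, commutes with limits,'' whereas you unpack it via the tower of Proposition~\ref{prop:computing_coChev_as_limit}.
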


\subsection{Factorization cohomology}
\label{subsec:fact_coh}
For the applications of this paper, we will need the cohomological variant of factorization homology. This is because our main results are really about the cohomology groups of ordered configuration spaces.

In the theory of constructible sheaves on schemes, one starts with the construction of sheaf cohomology. Then, for any scheme $X$ which admits a compactification $j: X\hookrightarrow \lbar{X}$, cohomology with compact support is defined by first extending the sheaf on $X$ by zero using $j_!$ and then apply the usual sheaf cohomology to the resulting sheaf on $\lbar{X}$.

Since the sheaf theory on prestacks is biased toward the $!$-functors, we can only take cohomology with compact support usually. However, suppose $X$ admits a compactification $j: X\to \lbar{X}$, we can make sense of factorization cohomology. Unsurprisingly, this is obtained by first extending the sheaf from $X$ to $\lbar{X}$ using $j_*$ and then take factorization homology.

\subsubsection{Construction} Let $j: X\to \lbar{X}$ be a compactification of $X$, i.e. $j$ open embedding and $\lbar{X}$ is proper. Note that both functors
\[
  j^* \simeq j^!: \Shv(\lbar{X}) \rightleftarrows \Shv(X): j_*
\]
are symmetric monoidal with respect to the $\otimesshriek$-monoidal structures on both sides. Thus, the right/left adjoints commute with right/left adjoints in the diagram below.
\[
  \begin{tikzcd}[column sep=large]
    \ComAlg(\Shv(\lbar{X})^{\FBplus}) \ar[shift left=\arrdisp]{r}{\coPrim[1]} \ar[shift right=\arrdisp]{d}[swap]{j^*} & \coLie(\Shv(\lbar{X})^{\FBplus}) \ar[shift left=\arrdisp]{l}{\coChev} \ar[shift right=\arrdisp]{d}[swap]{j^*} \\
    \ComAlg(\Shv(X)^{\FBplus}) \ar[shift right=\arrdisp]{u}[swap]{j_*} \ar[shift left=\arrdisp]{r}{\coPrim[1]} & \coLie(\Shv(X)^{\FBplus}) \ar[shift right=\arrdisp]{u}[swap]{j_*} \ar[shift left=\arrdisp]{l}{\coChev}
  \end{tikzcd}
\]
Since all horizontal arrows are equivalences, everything commutes.

\subsubsection{} Arguing similarly, everything in the diagram below commutes
\[
  \begin{tikzcd}[column sep=large]
    \ComAlg(\Shv(\lbar{X})^{\FBplus}) \ar[shift left=\arrdisp]{r}{\Fact} \ar[shift right=\arrdisp]{d}[swap]{j^*} & \Factstar(\lbar{X}, \FBplus) \ar[shift left=\arrdisp]{l}{\delta^!} \ar[shift right=\arrdisp]{d}[swap]{\Ran(j)^*} \\
    \ComAlg(\Shv(X)^{\FBplus}) \ar[shift right=\arrdisp]{u}[swap]{j_*} \ar[shift left=\arrdisp]{r}{\Fact} & \Factstar(X, \FBplus) \ar[shift right=\arrdisp]{u}[swap]{\Ran(j)_*} \ar[shift left=\arrdisp]{l}{\delta^!}
  \end{tikzcd}
\]
Here, $\Ran(j)$ is defined as in Remark~\ref{rmk:Ran(f)}. Moreover, it is easy to see that $\Ran(j)$ is an open embedding, since $j$ is an open embedding.

\begin{defn}
  We define the functor of taking factorization cohomology
  \[
    \pi_{?*}: \ComAlg(\Shv(X)^{\FBplus}) \to \ComAlg(\Vect^{\FBplus})
  \]
  to be $\pi_{?*} = \lbar{\pi}_{?} \circ j_*$ where $\pi$ and $\lbar{\pi}$ are the structure maps of $X$ and $\lbar{X}$ respectively.
\end{defn}

\subsubsection{}
Observe that
\begin{align*}
  \pi_{?*} \simeq \lbar{\pi}_{?} \circ j_* \simeq \Ran(\lbar{\pi})_! \circ \Fact \circ j_* \simeq \Ran(\lbar{\pi})_! \circ \Ran(j)_* \circ \Fact.
\end{align*}
It is easy to see that $\Ran(\lbar{\pi})$ is a proper morphism, $\Ran(j)$ and $\Ran(\lbar{\pi}) \circ \Ran(j)$ are schematic, i.e. the pullback over any scheme is a scheme. Thus, by~\cite{ho_free_2017}*{Prop. 2.11.4},
\[
  \Ran(\lbar{\pi})_! \circ \Ran(j)_* \simeq \Ran(\pi)_*
\]
and hence, $\pi_{?*} \simeq \Ran(\pi)_* \circ \Fact$.

In more concrete terms, using the fact that $\Fact$ and $\delta^!$ are mutually inverses, we see that for any $\matheur{A} \in \Factstar(X, \FBplus)$, we have
\[
  \pi_{?*}(\delta^! \matheur{A}) \simeq \bigoplus_{I \in \FBplus/\iso} C^*(X^I, \matheur{A}^{(I)}) \in \ComAlg(\Vect^{\FBplus}) \teq \label{eq:fact_coh_vs_powers}
\]
where $\matheur{A}^{(I)}$ is the $I$-th component of $\matheur{A}$, which is an $S_I$-equivariant sheaf on $X^I$. In particular, $\pi_{?*}$ is independent of the choice of a compactification of $X$.

\subsubsection{Interaction with Koszul duality}
Similarly to $\pi_?$, the functor $\pi_{?*}$ also behaves nicely with respect to Koszul duality and we have the following result.

\begin{thm} \label{thm:pi_?*_vs_coChev}
  We have a natural equivalence of functors
  \[
    \pi_{?*} \circ \coChev \simeq \coChev \circ \pi_*
  \]
  as functors
  \[
    \coLie(\Shv(X)^{\FBplus}) \to \ComAlg(\Vect^{\FBplus}).
  \]
\end{thm}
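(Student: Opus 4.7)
The plan is to chain together three compatibilities of $\coChev$ with the three constituent functors that build $\pi_{?*}$. Recall that by definition $\pi_{?*} \simeq \lbar{\pi}_? \circ j_*$ for a compactification $j: X \hookrightarrow \lbar{X}$, and that $\pi_* \simeq \lbar{\pi}_* \circ j_*$. Since $\lbar{\pi}$ is proper, we have $\lbar{\pi}_* \simeq \lbar{\pi}_!$, so it is equivalent to establish
\[
	\lbar{\pi}_? \circ j_* \circ \coChev \simeq \coChev \circ \lbar{\pi}_! \circ j_*
\]
as functors $\coLie(\Shv(X)^{\FBplus}) \to \ComAlg(\Vect^{\FBplus})$.

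First, I would invoke the second diagram of \S\ref{subsec:fact_coh}, which states that $j_*$ intertwines $\coChev$ on $X$ with $\coChev$ on $\lbar{X}$; this gives $j_* \circ \coChev \simeq \coChev \circ j_*$ as functors $\coLie(\Shv(X)^{\FBplus}) \to \ComAlg(\Shv(\lbar{X})^{\FBplus})$. Second, I would apply Proposition~\ref{prop:coChev_coPrim_vs_pi!} with $X$ replaced by $\lbar{X}$ (nothing in the proof of that proposition required $X$ to be non-proper; it only used that $\pi^!$ is symmetric monoidal, which holds verbatim for $\lbar{\pi}$). This yields $\lbar{\pi}_? \circ \coChev \simeq \coChev \circ \lbar{\pi}_!$. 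Concatenating the two:
\[
	\pi_{?*} \circ \coChev \;=\; \lbar{\pi}_? \circ j_* \circ \coChev \;\simeq\; \lbar{\pi}_? \circ \coChev \circ j_* \;\simeq\; \coChev \circ \lbar{\pi}_! \circ j_* \;\simeq\; \coChev \circ \pi_*,
\]
as required.

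The only thing requiring a small clarification is the meaning of $\pi_*$ on the $\coLie$-side: it should be understood as $\lbar{\pi}_! \circ j_*$ at the level of $\coLie$-coalgebras, where $j_*: \coLie(\Shv(X)^{\FBplus}) \to \coLie(\Shv(\lbar{X})^{\FBplus})$ exists because $j^* \simeq j^!$ is symmetric monoidal (hence upgrades $j_*$ to coalgebras), and $\lbar{\pi}_!$ exists on $\coLie$-coalgebras as the left adjoint to the symmetric monoidal $\lbar{\pi}^!$. Properness of $\lbar{\pi}$ together with the identification $\lbar{\pi}_! \simeq \lbar{\pi}_*$ ensures this agrees with what one would naïvely call the push-forward along $\pi$, and independence of the chosen compactification is already guaranteed by the concrete formula~\eqref{eq:fact_coh_vs_powers}.

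I do not anticipate a serious obstacle: once Proposition~\ref{prop:coChev_coPrim_vs_pi!} and the commutation of $j_*$ with $\coChev$ are in hand, the result is essentially a bookkeeping exercise in composing equivalences. The only mildly delicate point is confirming that Proposition~\ref{prop:coChev_coPrim_vs_pi!} genuinely applies to $\lbar{\pi}$ (rather than being restricted to non-proper $X$); this is immediate from its proof, which uses only the symmetric monoidality of the $!$-pullback along the structure map of the target scheme.
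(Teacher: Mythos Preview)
Your proposal is correct and follows essentially the same route as the paper's proof: decompose $\pi_{?*}$ as $\lbar{\pi}_? \circ j_*$, commute $j_*$ past $\coChev$ (the paper phrases this as ``$j_*$ commutes with limits and is symmetric monoidal''), then apply Proposition~\ref{prop:coChev_coPrim_vs_pi!} to $\lbar{\pi}$, and finally identify $\lbar{\pi}_! \circ j_* \simeq \pi_*$. One small quibble: the diagram you invoke is the \emph{first} commutative square of~\S\ref{subsec:fact_coh} (the one with $\coChev$ and $\coPrim[1]$), not the second (which concerns $\Fact$ and $\delta^!$).
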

\begin{proof}
  Indeed, for any $\mathfrak{a} \in \coLie(\Shv(X)^{\FBplus})$, we have
  \[
    \pi_{?*}(\coChev \mathfrak{a}) \simeq \lbar{\pi}_?(j_*(\coChev \mathfrak{a})) \simeq \lbar{\pi}_?(\coChev(j_* \mathfrak{a})) \simeq \coChev(\lbar{\pi}_!(j_* \mathfrak{a})) \simeq \coChev(\pi_*(\mathfrak{a})),
  \]
  where, as above, $\pi$ and $\lbar{\pi}$ are structure maps of $X$ and $\lbar{X}$ respectively. Moreover, the first and last equivalences are by definition, the second equivalence by the fact that $j_*$ commutes with limit (being a right adjoint) and is symmetric monoidal, and the third equivalence is by Proposition~\ref{prop:coChev_coPrim_vs_pi!}. Note also that here, the $\coLie$-coalgebra structure on $\pi_* \mathfrak{a}$ is given by the fact that $\pi_*$ is left-lax monoidal. Indeed, $\pi_* \simeq \lbar{\pi}_! \circ j_*$ where $j_*$ is symmetric monoidal and $\lbar{\pi}_!$ is left-lax.
\end{proof}

\begin{rmk}
  \label{rmk:coLie_structure_on_pi_*pi^!a}
  When the $\coLie$-coalgebra involved is of the form $\pi^! \mathfrak{a}$ for some $\mathfrak{a} \in \coLie(\Vect^{\FBplus})$, we have a more explicit way to think about the $\coLie$-coalgebra structure on $\pi_* \mathfrak{a}$. Indeed,
  \[
    \pi_* \pi^! \mathfrak{a} \simeq C^*(X, \omega_X) \otimes \mathfrak{a} \teq\label{eq:pull_push_coLie}
  \]
  where $\omega_X$ is the dualizing sheaf on $X$. Here, $C^*(X, \omega_X) \in \Vect$ and the tensor written above is given by the obvious action of $\Vect$ on $\Vect^{\FBplus}$. Now, $C^*(X, \omega_X)$ has a natural co-commutative coalgebra since $C^*(X, -)$ is left lax symmetric monoidal and $\omega_X$ is naturally a co-commutative coalgebra via $\omega_X \otimesshriek \omega_X \simeq \omega_X$. The $\coLie$-coalgebra structure on $C^*(X, \omega_X) \otimes \mathfrak{a}$ is induced by that on $\mathfrak{a}$ and the co-commutative coalgebra structure on $C^*(X, \omega_X)$. See also~\cite{gaitsgory_study_2017}*{Vol. II, \S6.1.2} and \cite{ho_homological_2021}*{\S6.3.7}.
\end{rmk}

\begin{rmk}
  \label{rmk:triv_coLie_structure_C^*(X, omega)_trivial}
  When the co-commutative coalgebra structure on $C^*(X, \omega_X)$ is trivial (for example when $X = \mathbb{A}^d$) then the induced $\coLie$-coalgebra structure on $\pi_* \mathfrak{a}$ is trivial. Thus,
  \[
    \pi_{?*}(\coChev(\pi^!\mathfrak{a})) \simeq \coChev(\pi_*(\pi^! \mathfrak{a})) \simeq \coChev(C^*(X, \omega_X) \otimes \mathfrak{a}) \simeq \Sym(C^*(X, \omega_X)\otimes \mathfrak{a}[-1])_+.
  \]
\end{rmk}

\subsubsection{Unital factorization cohomology} We will now discuss a unital variant of factorization cohomology. Let $\matheur{A} \in \ComAlg^{\un, \aug}(\Shv(X)^{\FBplus})$ with augmentation ideal $\matheur{A}_+$ (see \S\ref{subsubsec:unit_and_augmentation} for the notation). We define
\[
  \pi_{?*}^{\un} \matheur{A} = \Lambda \oplus \pi_{?*} \matheur{A}_+ \in \ComAlg^{\un, \aug}(\Vect^{\FBplus})
\]
where $\Lambda$ is in degree $0$.

Let $\mathfrak{a} \in \coLie(\Shv(X)^{\FBplus})$ be the Koszul dual of $\matheur{A}_+$. Then
\[
  \pi_{?*}^{\un} \matheur{A} \simeq \Lambda \oplus \coChev(\pi_* \mathfrak{a}) \simeq \coChev^{\un}(\pi_* \mathfrak{a}),
\]
where the last equivalence is by definition, see~\S\ref{subsubsec:unital_variant_Koszul}.

When $\matheur{A} \in \ComAlg(\Shv(X)^{\FBplus})$ with Koszul dual $\mathfrak{a} \in \coLie(\Shv(X)^{\FBplus})$, we will abuse notation and also write
\[
  \pi^{\un}_{?*} \matheur{A} = \Lambda \oplus \pi_{?*} \matheur{A} \simeq \coChev^{\un}(\pi_* \mathfrak{a}). \teq\label{eq:unital_pi?*_vs_Koszul}
\]

\subsubsection{}
We also have the following unital version of the twisted commutative $\Ran$ space, $\Ran(X, \FB)$, given by the following functor of points description: for each scheme $S$
\[
  \Ran(X, \FB)(S) = \{(I, \gamma): I\in \FB, I \xrightarrow{\gamma} X(S)\}.
\]
Clearly, $\Ran(X, \FB) \simeq \{\pt\} \sqcup \Ran(X, \FBplus)$. And moreover, if we write
\[
  \Ran^{\un}(\pi): \Ran(X, \FB) \to \Ran(\pt, \FB) \simeq \FB
\]
for the obvious projection map, then for any $\matheur{A} \in \ComAlg(\Shv(X)^{\FBplus})$, we have
\[
  \pi_{?*}^{\un}(\matheur{A}) \simeq \Ran^{\un}(\pi)_*(\Fact(\matheur{A})) \simeq \bigoplus_{I \in \FB/\iso} C^*(X^I, \matheur{A}^{(I)}). \teq\label{eq:fact_coh_vs_powers_unital}
\]

\section{Representation stability for generalized ordered configuration spaces}
\label{sec:rep_stab}

We will now use the machinery developed thus far to study higher representation stability for generalized ordered configuration spaces. In~\S\ref{subsec:tcfa_for_gen_conf}, we relate the cohomology of generalized configuration spaces to factorization cohomology with coefficients in certain truncated twisted commutative algebras. In~\S\ref{subsec:coChev_gen_conf}, using Koszul duality, we rewrite them in terms of cohomological Chevalley complexes of certain twisted $\coLie$-coalgebras.

For our purposes, it is most convenient to phrase $\FI$-modules and higher analogs in terms of modules over certain twisted commutative algebra. When the $\coLie$-coalgebra involved is trivial, all the modules that appear will be free over the respective twisted commutative algebra. We make the necessary preparation for this case in in~\S\ref{subsec:splitting_Lie_i-acyclic}. In~\S\ref{subsec:splitting_Lie_general}, we introduce the $\TopTriv_m$ condition, a strictly weaker notion, and show that in this case, the cohomology of ordered configuration spaces still form a free module over a certain explicit twisted commutative algebra,  generalizing a previous result of~\cite{church_fi-modules_2015}.

Finally, in~\S\ref{subsec:sketch_higher_rep_stab}, we sketch the main ideas that go into the two flavors of higher representation stability studied in the paper. The actual execution of these ideas are then carried out in~\S\ref{subsec:higher_rep_stab_general} and~\S\ref{subsec:higher_rep_stab_TopTrivm}.

Throughout this section, we assume that $X$ is an equi-dimensional scheme of dimension $d$.\footnote{Most proofs will still go through without this restriction, but with more complicated notation to keep track of the dimension.}

\subsection{Twisted commutative factorization algebras}
\label{subsec:tcfa_for_gen_conf}
In this subsection, we will produce twisted commutative factorization algebras whose factorization cohomologies compute cohomology of generalized configuration spaces.

\subsubsection{The simplest case: powers of the space}
We start with a warm-up: the cohomology of $\bigsqcup_{I\in \FB/\iso} X^I$. We will use the following

\begin{notation}[Renormalized dualizing sheaf]
  \label{not:renormalized_dualizing_sheaf}
  Let $X$ be an equi-dimensional scheme of dimension $d$. We write
  \[
    \sOmega_X = \omega_X[-2d](-d),
  \]
  and
  \[
    \sOmega_{\Ran(X, \FBplus)} = \bigoplus_{I\in \FBplus/\iso} \omega_{X^I}[-2d|I|](-d|I|).
  \]
\end{notation}

\begin{rmk}
  Sheaf cohomology of $X$ with coefficients in $\sOmega_X$ could be thought of as a shifted (or renormalized) version of Borel--Moore homology. When $X$ is smooth of dimension $d$, all of these sheaves are just constant sheaves with values in $\Lambda$ (with no cohomological shift and no Tate twist). In this case, renormalized Borel--Moore homology is just the usual cohomology. Because of this reason, we will loosely use the term cohomology throughout the paper to refer to renormalized Borel--Moore homology.
\end{rmk}

\subsubsection{} Let $\freeAlgOr_+ \in \ComAlg(\Vect^{\FB})$ be the free twisted commutative algebra generated by $\Lambda_1[-2d](-d) \in \Vect^{\FBplus}$, where $\Lambda_1$ is the $1$-dimensional $\Lambda$-vector space put in degree $1$ with the (necessarily) trivial $S_1$ action. By Lemma~\ref{lem:free_twisted_commutative_alg},
\[
  \freeAlgOr_+ = \Free_{\ComAlg} (\Lambda_1[-2d](-d))
  \simeq \bigoplus_{n>0} \Lambda_n[-2dn](-dn)
  \simeq \Lambda[x]_+,
\]
where $x$ is in graded degree $1$, cohomological degree $2d$, and Tate twist $-d$. Moreover, all the $S_n$-actions are trivial. $\freeAlgOr_+$ is the augmentation ideal of $\freeAlgOr = \Lambda[x]$.

In what follows, we will use $\freeAlgOr_+(X) \in \ComAlg(\Shv(X)^{\FBplus})$ and $\freeAlgOr(X) \in \ComAlg^{\un, \aug}(\Shv(X)^{\FB})$ to denote $\pi^!(\freeAlgOr_+)$ and $\pi^!(\freeAlgOr)$ respectively.

\begin{lem}
  \label{lem:computing_fact_pi!_freeAlgOr}
  We have a natural equivalence
  \[
    \Fact(\freeAlgOr_+(X)) \simeq \sOmega_{\Ran(X, \FBplus)}
  \]
  of sheaves on $\Ran(X, \FBplus)$.
\end{lem}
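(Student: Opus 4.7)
The plan is to identify $\Fact(\freeAlgOr_+(X))$ as a free commutative factorization algebra and then match it with $\sOmega_{\Ran(X,\FBplus)}$ graded-piece by graded-piece. Since $\pi^!$ is symmetric monoidal (by compatibility of $!$-pullback with $\otimesshriek$) and extends to a symmetric monoidal functor on Day-convolution categories, it commutes with formation of free twisted commutative algebras, so
\[
\freeAlgOr_+(X) = \pi^!(\freeAlgOr_+) \simeq \Free_{\ComAlg}((\sOmega_X)_1),
\]
the free twisted commutative algebra on $(\sOmega_X)_1 \in \Shv(X)^{\FBplus}$ sitting in graded degree $1$.

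Next I would show that $\Fact$ intertwines free algebras: for any $V \in \Shv(X)^{\FBplus}$,
\[
\Fact(\Free_{\ComAlg}(V)) \simeq \Free_{\ComAlgstar}(\delta_! V).
\]
This is a purely formal manipulation using the adjunctions $\Fact \dashv \delta^!$ (Theorem~\ref{thm:tcfa_vs_tcaX}), $\delta_! \dashv \delta^!$, and $\Free \dashv \oblv$, together with the tautology $\oblv \circ \delta^! \simeq \delta^! \circ \oblv$; no symmetric monoidality of $\delta_!$ is required. Combining with the explicit description of free commutative factorization algebras in Lemma~\ref{lem:free_com_is_fact} gives
\[
\Fact(\freeAlgOr_+(X)) \simeq \bigoplus_{n>0}\bigl(\delta_!(\sOmega_X)_1\bigr)^{\otimesstar n}_{S_n}.
\]

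To match this with $\sOmega_{\Ran(X,\FBplus)}$, I would compute each graded summand. The sheaf $\delta_!(\sOmega_X)_1$ is supported on the graded-degree-$1$ stratum $X \simeq X/S_1 \subset \Ran(X,\FBplus)$ with value $\sOmega_X$. Unwinding $\matheur{F}_1 \otimesstar \cdots \otimesstar \matheur{F}_n = (\union_n)_!(\matheur{F}_1 \boxtimes \cdots \boxtimes \matheur{F}_n)$, the iterated $\otimesstar$-power is supported on the graded-degree-$n$ stratum $X^n/S_n$ and its underlying object is obtained by pushing forward $\omega_{X^n}[-2dn](-dn)$ along the stacky quotient $X^n \to X^n/S_n$; it thus carries two commuting $S_n$-actions, one from the monoidal symmetry and one from the permutation of coordinates on $X^n$. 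Taking coinvariants with respect to the monoidal $S_n$, exactly as in the proof of Lemma~\ref{lem:free_twisted_commutative_alg}, leaves $\omega_{X^n}[-2dn](-dn)$ on $X^n/S_n$ equipped with the natural coordinate-permutation $S_n$-equivariance, i.e., the degree-$n$ summand of $\sOmega_{\Ran(X,\FBplus)}$. Assembling over all $n$ gives the asserted equivalence.

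The main technical step is the last one: carefully disentangling the two $S_n$-actions on $(\delta_!(\sOmega_X)_1)^{\otimesstar n}$ as they emerge from the pseudo-proper pushforward $(\union_n)_!$ and from the symmetric monoidal structure, and then verifying that after quotienting by the monoidal action one recovers the standard $S_n$-equivariant structure on $\omega_{X^n}[-2dn](-dn)$. This is essentially a sheaf-theoretic enhancement of the argument in Lemma~\ref{lem:free_twisted_commutative_alg} and proceeds by direct inspection of the colimit defining $\otimesstar$, but requires some care with the bookkeeping.
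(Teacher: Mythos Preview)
Your proposal is correct and follows essentially the same route as the paper: both reduce to $\Free_{\ComAlgstar}(\delta_!(\sOmega_X)_1)$ via the adjunctions and then compute the $S_n$-coinvariants of $(\delta_!(\sOmega_X)_1)^{\otimesstar n}$. The only difference is in the final step: where you describe the computation in terms of two commuting $S_n$-actions in analogy with Lemma~\ref{lem:free_twisted_commutative_alg}, the paper carries it out via pseudo-proper base change along an explicit pullback square (pulling $\union_n$ back along $X^I \to \Ran(X,\FBplus)$ yields the trivial cover $p\colon S_I \times X^I \to X^I$, whence $(\delta_!\sOmega_X)^{\otimesstar I}_{S_I} \simeq (p_!\sOmega_{S_I\times X^I})_{S_I} \simeq \sOmega_{X^I}$), which makes the bookkeeping you flag as the main technical step entirely transparent.
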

\begin{proof}
  Since $\pi^!$ is continuous and symmetric monoidal, it preserves free commutative algebra objects, i.e. $\pi^! \circ \Free_{\ComAlg} \simeq \Free_{\ComAlg} \circ \pi^!$. Using the fact that $\Fact$ is the left adjoint of $\delta^!$ (Theorem~\ref{thm:tcfa_vs_tcaX}), it is also easy to see that $\Fact \circ \Free_{\ComAlg} \simeq \Free_{\ComAlg} \circ \delta_!$. Applying to the case of $\freeAlgOr_+$, we see that
  \[
    \Fact(\freeAlgOr_+(X)) = \Fact(\pi^! \freeAlgOr_+) = \Fact(\pi^!(\Free_{\ComAlg} \Lambda_1[-2d](-d))) \simeq \Free_{\ComAlg}(\delta_! \sOmega_X),
  \]
  where in the last term, $\sOmega_X \in \Shv(X)$ is viewed as an object in $\Shv(X)^{\FBplus}$ by putting it in degree $1$. Moreover, also in the last term, $\Free_{\ComAlg}$ is taken in $\ComAlgstar(\Ran(X, \FBplus))$.

  For any $I \in \FBplus$, $(\delta_! \sOmega_X)^{\boxtimes I} \in \Shv(\Ran(X, \FBplus)^{I})$ is supported only on $X^I$ embedded diagonally. Moreover, by degree considerations, we see that the $I$-th component $\Free_{\ComAlg}(\delta_! \sOmega_X)^{(I)} \in \Shv(X^I)$ is given by $(\delta_! \sOmega_X)^{\otimesstar I}_{S_I}$. Thus, using the pseudo-proper base change theorem for the following diagram,
  \[
    \begin{tikzcd}
      S_I \times X^I \ar{dd}{p} \ar{r} & X^I \ar{d} \\
      & \Ran(X, \FBplus)^{I} \ar{d} \\
      X^I \ar{r} & \Ran(X, \FBplus)
    \end{tikzcd}
  \]
  we get
  \[
    \Free_{\ComAlg}(\delta_! \sOmega_X)^{(I)} \simeq (\delta_! \sOmega_X)^{\otimesstar I}_{S_I} \simeq (p_! \sOmega_{S_I\times X^I})_{S_I} \simeq \sOmega_{X^I},
  \]
  where $p$ is just a covering of $X^I$ by $|S_I|$ disjoint copies of itself. Thus, we are done.
\end{proof}

The computation above allows us to realize cohomology of $X^I$ as factorization cohomology.
\begin{cor}
  \label{cor:fact_coh_vs_X^I}
  We have natural equivalences
  \[
    \pi_{?*} (\freeAlgOr_+(X)) \simeq \Ran(\pi)_*(\Fact(\pi^! \freeAlgOr_+)) \simeq \Ran(\pi)_*(\sOmega_{\Ran(X, \FBplus)}) \simeq \bigoplus_{n>0} C^*(X^n, \sOmega_{X^n}).
  \]
  And hence,
  \[
    \pi_{?*}^{\un} (\freeAlgOr_+(X)) \simeq \bigoplus_{n\geq 0} C^*(X^n, \sOmega_{X^n}).
  \]
\end{cor}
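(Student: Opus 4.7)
The proof is essentially a matter of assembling three pieces already in place, so the plan is to verify each equivalence separately by citation.

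First, for the identity $\pi_{?*} (\freeAlgOr_+(X)) \simeq \Ran(\pi)_*(\Fact(\pi^! \freeAlgOr_+))$, I would simply invoke the formula $\pi_{?*} \simeq \Ran(\pi)_* \circ \Fact$ established immediately before~\eqref{eq:fact_coh_vs_powers} (via the chain $\pi_{?*} \simeq \lbar{\pi}_? \circ j_* \simeq \Ran(\lbar{\pi})_! \circ \Ran(j)_* \circ \Fact \simeq \Ran(\pi)_* \circ \Fact$). Since $\freeAlgOr_+(X) = \pi^!\freeAlgOr_+$ by definition, this is tautological.

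For the second equivalence, apply $\Ran(\pi)_*$ to Lemma~\ref{lem:computing_fact_pi!_freeAlgOr}, which identifies $\Fact(\pi^!\freeAlgOr_+) \simeq \sOmega_{\Ran(X,\FBplus)}$. For the third equivalence, the cleanest route is to invoke the general formula~\eqref{eq:fact_coh_vs_powers} applied to $\matheur{A} = \Fact(\freeAlgOr_+(X))$: using $\delta^!\Fact \simeq \id$ from Theorem~\ref{thm:tcfa_vs_tcaX}, we get
\[
	\pi_{?*}(\freeAlgOr_+(X)) \simeq \bigoplus_{I \in \FBplus/\iso} C^*(X^I, \matheur{A}^{(I)}),
\]
and the identification $\matheur{A}^{(I)} \simeq \sOmega_{X^I}$ was already extracted inside the proof of Lemma~\ref{lem:computing_fact_pi!_freeAlgOr}. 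Alternatively, one can unpack directly: the decomposition $\Ran(X,\FBplus) \simeq \bigsqcup_{k>0} X^k/S_k$ (as a groupoid quotient) together with the matching decomposition $\sOmega_{\Ran(X,\FBplus)} = \bigoplus_{I} \omega_{X^I}[-2d|I|](-d|I|)$ reduces $\Ran(\pi)_*$ to the sum of pushforwards along $X^I/S_I \to \pt/S_I$, yielding $C^*(X^I, \sOmega_{X^I})$ with its natural $S_I$-action.

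The unital statement is then immediate from the definition $\pi^{\un}_{?*}(\matheur{A}) = \Lambda \oplus \pi_{?*}(\matheur{A})$ in~\eqref{eq:unital_pi?*_vs_Koszul}, with the extra $\Lambda$ summand matching the $n=0$ term $C^*(\pt, \sOmega_\pt) \simeq \Lambda$. No step presents a substantive obstacle: the real content is packaged into Lemma~\ref{lem:computing_fact_pi!_freeAlgOr} and the general identity~\eqref{eq:fact_coh_vs_powers}, both of which precede the corollary.
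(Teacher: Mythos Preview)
Your proposal is correct and follows essentially the same approach as the paper, which simply cites~\eqref{eq:fact_coh_vs_powers},~\eqref{eq:fact_coh_vs_powers_unital}, and Lemma~\ref{lem:computing_fact_pi!_freeAlgOr}. Your account is a bit more explicit in unpacking each equivalence, and you cite~\eqref{eq:unital_pi?*_vs_Koszul} rather than~\eqref{eq:fact_coh_vs_powers_unital} for the unital case, but these amount to the same thing.
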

\begin{proof}
  This is a direct consequence of~\eqref{eq:fact_coh_vs_powers},~\eqref{eq:fact_coh_vs_powers_unital}, and Lemma~\ref{lem:computing_fact_pi!_freeAlgOr}.
\end{proof}

\subsubsection{Generalized configuration spaces}
For each integer $n\geq 2$, we let
\[
  \algOr{n} = \freeAlgOr/x^n = \Lambda[x]/x^n = \bigoplus_{k=0}^{n-1} \Lambda_k[-2dk](-dk)
\]
obtained from $\freeAlgOr$ by truncating the part of degree $n$ or above. As above, we will use $\algOr{n, +}$ to denote the augmentation ideal of $\algOr{n}$, and $\algOr{n, +}(X) \in \ComAlg(\Shv(X)^{\FBplus})$ (resp. $\algOr{n}(X) \in \ComAlg^{\un, \aug}(\Shv(X)^{\FBplus})$) the pullbacks $\pi^! \algOr{n, +}$ (resp. $\pi^!\algOr{n}$).

\begin{lem}
  We have a natural equivalence
  \[
    \Fact(\algOr{n, +}(X)) \simeq \iota_{n, *}\iota_n^! \sOmega_{\Ran(X, \FBplus)} \simeq \iota_{n, *}\iota_n^* \sOmega_{\Ran(X, \FBplus)}
  \]
  where $\iota_n: \Ran(X, \FBplus)_n \hookrightarrow \Ran(X, \FBplus)$ is the open embedding.
\end{lem}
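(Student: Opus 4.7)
The idea is to bootstrap from Lemma~\ref{lem:computing_fact_pi!_freeAlgOr}, which identifies $\Fact(\freeAlgOr_+(X))$ with $\sOmega_{\Ran(X, \FBplus)}$, together with Proposition~\ref{prop:truncation}, which pins down the diagonal restriction of the truncated tcfa $\iota_{n,*}\iota_n^!\Fact(\matheur{A})$ in terms of the input algebra $\matheur{A}$. The key observation is that as an object of $\ComAlg(\Shv(X)^{\FBplus})$, the algebra $\algOr{n,+}(X) = \pi^!\algOr{n,+}$ is obtained from $\freeAlgOr_+(X) = \pi^!\freeAlgOr_+$ by setting all graded-degree-$\geq n$ components to zero (since $\algOr{n,+}$ is the corresponding truncation of $\freeAlgOr_+$ by construction, and $\pi^!$ preserves the $\FBplus$-grading). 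Applying Proposition~\ref{prop:truncation} to $\matheur{A} = \freeAlgOr_+(X)$ therefore yields
\[
\delta^!\bigl(\iota_{n,*}\iota_n^!\Fact(\freeAlgOr_+(X))\bigr) \simeq \algOr{n,+}(X).
\]

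By the discussion around~\eqref{eq:adjoint_fact_truncation}, the pair $\iota_n^! \dashv \iota_{n,*}$ preserves factorizability, so $\iota_{n,*}\iota_n^!\Fact(\freeAlgOr_+(X))$ lies in $\Factstar(X, \FBplus)$. The equivalence $\Fact \dashv \delta^!$ of Theorem~\ref{thm:tcfa_vs_tcaX} then upgrades the displayed equivalence to
\[
\Fact(\algOr{n,+}(X)) \simeq \iota_{n,*}\iota_n^!\Fact(\freeAlgOr_+(X)) \simeq \iota_{n,*}\iota_n^!\,\sOmega_{\Ran(X, \FBplus)},
\]
where the second equivalence is Lemma~\ref{lem:computing_fact_pi!_freeAlgOr}. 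This gives the first claimed equivalence.

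For the second equivalence, $\iota_n^! \simeq \iota_n^*$ because $\iota_n \colon \Ran(X, \FBplus)_n \hookrightarrow \Ran(X, \FBplus)$ is an open embedding, so $!$- and $*$-pullback agree; this was already recorded in~\eqref{eq:adjoint_fact_truncation}. No serious obstacle is anticipated: the proof is essentially a bookkeeping exercise combining the free-algebra computation with the truncation machinery developed in \S\ref{subsec:variants_tcfas}, the only subtlety being to match the algebra-side truncation $\algOr{n,+} = \freeAlgOr_+ / x^n$ with the geometric truncation $\iota_{n,*}\iota_n^!$ via Proposition~\ref{prop:truncation}.
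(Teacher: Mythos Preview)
Your proof is correct and follows exactly the approach the paper takes: the paper's proof is a one-line citation of Lemma~\ref{lem:computing_fact_pi!_freeAlgOr}, Proposition~\ref{prop:truncation}, and Theorem~\ref{thm:tcfa_vs_tcaX}, and you have spelled out precisely how these three ingredients combine. Your additional remark that $\iota_{n,*}\iota_n^!$ preserves factorizability (needed to invoke the equivalence of Theorem~\ref{thm:tcfa_vs_tcaX}) is a helpful clarification that the paper leaves implicit.
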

\begin{proof}
  The desirable conclusion is obtained by combining Lemma~\ref{lem:computing_fact_pi!_freeAlgOr}, Proposition~\ref{prop:truncation}, and Theorem~\ref{thm:tcfa_vs_tcaX}.
\end{proof}

Using this Lemma and we immediately obtain the following result.

\begin{prop}
  \label{prop:fact_coh_vs_pconf}
  We have natural equivalences
  \[
    \pi_{?*}(\algOr{n, +}(X)) \simeq \bigoplus_{k>0} C^*(P^k_n(X), \sOmega_{P^k_n(X)}) \quad\text{and}\quad \pi_{?*}^{\un}(\algOr{n, +}(X)) = \bigoplus_{k\geq 0} C^*(P^k_n(X), \sOmega_{P^k_n(X)}).
  \]
\end{prop}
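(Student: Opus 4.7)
The plan is to combine three ingredients already in place: the lemma immediately preceding the proposition, which identifies $\Fact(\algOr{n, +}(X))$ with $\iota_{n, *} \iota_n^! \sOmega_{\Ran(X, \FBplus)}$; the identity $\pi_{?*}(\matheur{B}) \simeq \pi_{?*}(\delta^! \Fact(\matheur{B}))$, which is just the fact that $\Fact$ and $\delta^!$ are mutual inverses on $\Factstar(X, \FBplus)$ by Theorem~\ref{thm:tcfa_vs_tcaX}; and the concrete description \eqref{eq:fact_coh_vs_powers} of factorization cohomology as a direct sum indexed by $\FBplus/\iso$. Stringing these together reduces the proposition to computing, for each $I \in \FBplus$, the $I$-th component of the sheaf $\iota_{n,*} \iota_n^! \sOmega_{\Ran(X, \FBplus)}$ on $X^I$, and then applying $C^*(X^I, -)$.

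For the component computation, I would use the decomposition \eqref{eq:decompo_Ran(X, FBplus)_n}, which exhibits $\Ran(X, \FBplus)_n$ as $\bigsqcup_{I \in \FBplus/\iso} P^I_n(X)/S_I$. This shows that the pullback of the open embedding $\iota_n : \Ran(X, \FBplus)_n \hookrightarrow \Ran(X, \FBplus)$ along $X^I \to \Ran(X, \FBplus)$ is precisely the open embedding $j_n^I : P^I_n(X) \hookrightarrow X^I$ (compatibly with the $S_I$-action). Since $\sOmega_{\Ran(X, \FBplus)}$ restricts on $X^I$ to $\sOmega_{X^I}$ by definition of Notation~\ref{not:renormalized_dualizing_sheaf}, base change along this pullback square (which is routine since $\iota_n$ is schematic) identifies the $I$-th component with $(j_n^I)_* (j_n^I)^! \sOmega_{X^I} \simeq (j_n^I)_* \sOmega_{P^I_n(X)}$, where the last equivalence uses that $j_n^I$ is an open embedding.

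Taking sheaf cohomology on $X^I$ then yields
\[
	C^*(X^I, (j_n^I)_* \sOmega_{P^I_n(X)}) \simeq C^*(P^I_n(X), \sOmega_{P^I_n(X)}),
\]
with the $S_I$-equivariance on the right induced from the one on the left. Summing over $I \in \FBplus/\iso$ (i.e.\ over $k > 0$ after choosing representatives $[k]$) gives the first claimed equivalence. The unital version is then immediate from \eqref{eq:unital_pi?*_vs_Koszul}, since the $k=0$ summand $C^*(P^0_n(X), \sOmega_{P^0_n(X)}) = C^*(\pt, \Lambda) = \Lambda$ is precisely the extra unit factor appended by $\pi^{\un}_{?*}$.

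I do not anticipate any serious obstacle: everything is a direct unwinding of the definitions. The only point deserving care is verifying the base-change identification in step two, namely that the $I$-th component of $\iota_{n, *} \iota_n^! \sOmega_{\Ran(X, \FBplus)}$ is really $(j_n^I)_* (j_n^I)^! \sOmega_{X^I}$ with its natural $S_I$-equivariance; this is handled cleanly by the pullback square coming from \eqref{eq:decompo_Ran(X, FBplus)_n} together with the fact that $\iota_n$ is schematic (so that both $\iota_n^!$ and $\iota_{n,*}$ base change as expected, cf.\ the discussion in \S\ref{subsec:prestacks_and_sheaves} on the $f^* \dashv f_*$ adjunction for schematic morphisms).
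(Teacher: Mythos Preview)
Your proof is correct and is precisely the unwinding the paper has in mind when it says the proposition follows ``immediately'' from the preceding lemma; the paper gives no further argument. The one cosmetic point is that for the unital version you could cite \eqref{eq:fact_coh_vs_powers_unital} directly rather than \eqref{eq:unital_pi?*_vs_Koszul}, since you only need the definition $\pi^{\un}_{?*}\matheur{A} = \Lambda \oplus \pi_{?*}\matheur{A}$ and not the Koszul-duality identification.
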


In what follows, to simplify the notations, we will use the following notation.
\begin{notation}
  Let $n\geq 2$ be an integer, we use $\alg{n, +}(X)$ to denote
  \[
    \alg{n, +}(X) = \pi_{?*}(\algOr{n, +}(X)) \simeq \bigoplus_{k>0} C^*(P^k_n(X), \sOmega_{P^k_n(X)}),
  \]
  and similarly for the unital version
  \[
    \alg{n}(X) = \pi_{?*}^{\un}(\algOr{n, +}(X)) = \bigoplus_{k\geq 0} C^*(P^k_n(X), \sOmega_{P^k_n(X)}).
  \]
\end{notation}

\subsection{Cohomological Chevalley complexes}
\label{subsec:coChev_gen_conf}
We will now apply the machinery of Koszul duality to study $\alg{n, +}(X)$. To start, we will study the Koszul duals $\liealgOr{n} = \coPrim[1](\algOr{n, +}) \in \coLie(\Vect^{\FBplus})$ of $\algOr{n, +}$. The case of $n=2$ is particularly simple.

\begin{lem}
  \label{lem:a_2_is_free}
  We have
  \[
    \liealgOr{2} \simeq \coFree_{\coLie}(\Lambda_1[-2d+1](-d)) = \bigoplus_{k>0} (\coLie(k) \otimes (\Lambda_1[-2d+1](-d))^{\otimes k})_{S_k}.
  \]
  In particular, for each $I\in \FBplus$, the $I$-th component $(\liealgOr{2})_I$ of $\liealgOr{2}$ concentrates in one degree $(2d-1)|I|$ and is of finite dimension.\footnote{See also Corollary~\ref{cor:a_2_vs_coh_P_2} for an expression of $\liealgOr{2}$ in terms of the cohomology of $P_2(\mathbb{A}^d)$, the ordered configuration space of $\mathbb{A}^d$.}
\end{lem}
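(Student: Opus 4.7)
The plan is to identify $\algOr{2, +}$ as a trivial commutative algebra and then invoke the fact (from Proposition~\ref{prop:koszul_duality_pro-nilp}) that Koszul duality in the pro-nilpotent setting $\Vect^{\FBplus}$ exchanges trivial and free objects, with the precise shape read off from the colimit formula in Proposition~\ref{prop:computing_coPrim[1]_as_colimit}.

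\textbf{Step 1: Triviality.} The object $\algOr{2, +} = \Lambda_1[-2d](-d)$ is concentrated entirely in graded degree $1$. Under the Day convolution $\otimesshriek$ on $\Vect^{\FBplus}$, the tensor $\algOr{2, +} \otimesshriek \algOr{2, +}$ is supported in graded degree $2$, while its target $\algOr{2, +}$ is supported in graded degree $1$. Hence the multiplication map is necessarily zero, and $\algOr{2, +} \simeq \triv(\Lambda_1[-2d](-d))$ as a commutative algebra.

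\textbf{Step 2: Computing $\coPrim[1]$ of a trivial algebra.} By Proposition~\ref{prop:computing_coPrim[1]_as_colimit} applied in the pro-nilpotent category $\Vect^{\FBplus}$ (see \S\ref{subsubsec:FBplus_cat_pro-nilp}), we have
\[
\coPrim[1](\triv(V)) \simeq \colim_{i \geq 1} \coPrim^i[1](\triv(V)),
\]
with successive cofibers $(\coLie(n) \otimes (V[1])^{\otimes n})_{S_n}$. The transition maps $\coPrim^{n-1}[1] \to \coPrim^n[1]$ are induced by the algebra multiplication and hence vanish when the algebra is trivial; in a stable category this forces the tower to split, so the colimit becomes the direct sum of its cofibers. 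Applying this to $V = \Lambda_1[-2d](-d)$, we obtain
\[
\liealgOr{2} \simeq \bigoplus_{k>0} \bigl(\coLie(k) \otimes (\Lambda_1[-2d+1](-d))^{\otimes k}\bigr)_{S_k} = \coFree_{\coLie}(\Lambda_1[-2d+1](-d)),
\]
which is the first assertion.

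\textbf{Step 3: Degree/finiteness assertion.} Fix $I \in \FBplus$ with $|I| = k$. Only the $k$-th summand of the decomposition above contributes to graded degree $k$, since $(\Lambda_1)^{\otimes k}$ is supported in graded degree $k$. The operad component $\coLie(k)$ is a finite-dimensional $S_k$-representation concentrated in cohomological degree $0$, while $(\Lambda_1[-2d+1](-d))^{\otimes k}$ is one-dimensional in graded degree $k$ and concentrated in cohomological degree $(2d-1)k$. Taking $S_k$-coinvariants (= $S_k$-invariants in characteristic $0$) preserves the cohomological degree and finite-dimensionality, giving the second assertion.

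I do not anticipate a real obstacle here: the only non-formal input is that the Postnikov-type filtration of $\coPrim[1]$ has vanishing transition maps on a trivial algebra, which is standard for the operadic bar construction. Everything else is bookkeeping of degrees and Tate twists.
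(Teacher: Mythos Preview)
Your proof is correct and follows essentially the same approach as the paper: identify $\algOr{2,+}$ as a trivial algebra by graded-degree reasons, then use that Koszul duality sends trivial to cofree. The only difference is cosmetic: the paper simply cites the trivial/free exchange in Proposition~\ref{prop:koszul_duality_pro-nilp} directly, whereas you re-derive that exchange by running the colimit formula of Proposition~\ref{prop:computing_coPrim[1]_as_colimit} and observing the transition maps vanish on a trivial algebra---a slightly more hands-on but equivalent route.
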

\begin{proof}
  The first statement is due to the fact that Koszul duality exchanges free objects with trivial objects (see Proposition~\ref{prop:koszul_duality_pro-nilp}) and moreover, $\algOr{2, +}$ is a trivial algebra object generated by $\Lambda_1[-2d](-d)$ (note the shift involved in Koszul duality). The second statement is immediate from the first.
\end{proof}

For a general $n$, the control we have over $\liealgOr{n}$ is not as precise. However, we have the following.

\begin{lem}
  \label{lem:homological_est_a_n}
  Let $n>2$ be an integer.
  \begin{myenum}{(\alph*)}
  \item The degree $1$ component of $\liealgOr{n}$ is $\Lambda_1[-2d+1](-d)$.
  \item The $I$-th component $(\liealgOr{n})_I$ vanishes when $1 < |I| < n$.
  \item When $|I| \geq n$, $(\liealgOr{n})_I$ concentrates in degrees $[(2d-1)|I|, (2d(n-1)-1)|I|/(n-1)]$ with finite dimensional cohomology.
  \end{myenum}
\end{lem}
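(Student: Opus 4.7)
The plan is to dispatch parts (a) and (b) via a truncation argument reducing to the free algebra $\freeAlgOr_+$, and then handle (c) using the filtration of $\coPrim[1]$ from Proposition~\ref{prop:computing_coPrim[1]_as_colimit}.

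For parts (a) and (b), I will exploit the fact that $\algOr{n,+}$ and $\freeAlgOr_+$ become isomorphic after applying the truncation $\tr_{<n}$: both restrict in graded degrees $1, \dots, n-1$ to $\Lambda[x]/x^n$ with the same multiplication. The functor $\tr_{<n}$ is continuous and symmetric monoidal, so it preserves free commutative algebras; combined with the fact that every commutative algebra is a sifted colimit of free ones and that $\coPrim[1]$ sends $\Sym V$ to $V[1]$ on free objects, this yields the commutation $\tr_{<n}\circ \coPrim[1] \simeq \coPrim[1]\circ \tr_{<n}$. Hence
\[
\tr_{<n}(\liealgOr{n}) \simeq \coPrim[1](\tr_{<n}\freeAlgOr_+) \simeq \tr_{<n}(\coPrim[1]\freeAlgOr_+).
\]
Since $\freeAlgOr_+ = \Sym(\Lambda_1[-2d](-d))$ is free, Proposition~\ref{prop:koszul_duality_pro-nilp} identifies its Koszul dual with the trivial $\coLie$-coalgebra $\Lambda_1[-2d+1](-d)$ concentrated in graded degree $1$; since $1 < n$, truncation leaves this unchanged. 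This proves (a) and (b) simultaneously.

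For part (c), I will apply Proposition~\ref{prop:computing_coPrim[1]_as_colimit} with $A = \algOr{n,+}$ to realize $\liealgOr{n} \simeq \colim_i \coPrim^i[1] A$ with successive cofibers $(\coLie(i)\otimes (A[1])^{\otimes i})_{S_i}$. Because $A$ is supported in graded degrees $\{1, \dots, n-1\}$, Day convolution shows that $(A[1])^{\otimes i}$ at graded degree $k$ is a finite direct sum indexed by compositions $(j_1, \dots, j_i)$ of $k$ with each $j_\ell \in \{1, \dots, n-1\}$; every such summand lies in cohomological degree $\sum_\ell(2d j_\ell - 1) = 2dk - i$. Since $\coLie(i)$ sits in cohomological degree $0$, the $i$-th cofiber at graded degree $k$ is concentrated in cohomological degree $2dk - i$. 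Such compositions exist precisely for $i$ with $\lceil k/(n-1)\rceil \leq i \leq k$, so only finitely many $i$ contribute at each graded degree, and as $i$ ranges over this interval the cohomological degrees sweep out $[(2d-1)k,\,(2d(n-1)-1)k/(n-1)]$. Finite dimensionality of $(\liealgOr{n})_I$ follows from finite dimensionality of each individual cofiber.

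The main conceptual point to verify will be the commutation $\tr_{<n}\circ \coPrim[1] \simeq \coPrim[1]\circ \tr_{<n}$; once this is in hand, parts (a) and (b) reduce to the classical Koszul dual of a free commutative algebra. The degree bookkeeping in (c) is then straightforward, since each $(A[1])^{\otimes i}_k$ is concentrated in a single cohomological degree depending only on $i$ and $k$.
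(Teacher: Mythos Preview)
Your proof is correct and follows essentially the same approach as the paper's: parts (a) and (b) via the commutation of $\tr_{<n}$ with $\coPrim[1]$ and the identification of $\tr_{<n}\algOr{n,+}$ with a free algebra, and part (c) via the filtration of Proposition~\ref{prop:computing_coPrim[1]_as_colimit}. Your treatment of (c) is in fact slightly sharper than the paper's, since you observe that each cofiber $(\coLie(i)\otimes(A[1])^{\otimes i})_{S_i}$ at graded degree $k$ is concentrated in the single cohomological degree $2dk-i$ rather than merely lying in the stated interval.
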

\begin{proof}
  The truncation functor $\tr_{<n}: \Vect^{\FB_+} \to \Vect^{\FBplus^{<n}}$ of~\S\ref{subsubsec:motivation_for_comm_alg_truncated} is symmetric monoidal, continuous, and co-continuous. Thus, it commutes with the functor $\coPrim[1]$. In particular
  \[
    \tr_{<n} \liealgOr{n} = \tr_{<n}(\coPrim[1](\algOr{n,+})) \simeq \coPrim[1](\tr_{<n}(\algOr{n, +})).
  \]
  But note that $\tr_{<n} \algOr{n, +}$ is the free commutative algebra in $\Vect^{\FBplus^{<n}}$, and since Koszul duality exchanges free and trivial objects, see Proposition~\ref{prop:koszul_duality_pro-nilp}, we obtain parts (a) and (b).

  For part (c), we will make use of Proposition~\ref{prop:computing_coPrim[1]_as_colimit}. Indeed, we have
  \[
    \coPrim[1] (\algOr{n, +}) \simeq \colim_{k \geq 1} \coPrim^k[1](\algOr{n, +})
  \]
  where
  \begin{align*}
    \coFib(\coPrim^{k-1}[1] \algOr{n, +} \to \coPrim^k[1] \algOr{n, +})
    &\simeq (\coLie(k) \otimes (\algOr{n,+}[1])^{\otimes k})_{S_k}
  \end{align*}

  From the explicit description of $\algOr{n, +}$, we see that for each $I\in \FBplus$ with $|I| \geq n$, $((\algOr{n,+}[1])^{\otimes k})_{I}$ lives in cohomological degrees $[(2d-1)|I|, (2d(n-1)-1)|I|/(n-1)]$ with finite dimensional cohomology. Moreover, for each $I$, the colimit is finite, i.e. it is equivalent to $(\coPrim^k[1] \algOr{n, +})_I$ when $k\gg 0$. Part (c) thus follows.
\end{proof}

\subsubsection{} As in the case of $\freeAlgOr_+$ and $\algOr{n, +}$, we will use $\trivLieAlgOr(X)$ and $\liealgOr{n}(X) \in \coLie(\Shv(X)^{\FBplus})$ to denote the pullbacks $\pi^! \trivLieAlgOr$ and $\pi^! \liealgOr{n}$. Since $\pi^!$ is symmetric monoidal and is both continuous and co-continuous, it is compatible with Koszul duality. Namely $\trivLieAlgOr(X)$ and $\liealgOr{n}(X)$ are Koszul duals of $\freeAlgOr_+(X)$ and $\algOr{n, +}(X)$ respectively. We also let
\[
  \liealg{n}(X) = C^*(X, \omega_X) \otimes \liealgOr{n} \simeq \pi_* \pi^! \liealgOr{n}.
\]
The following theorem says that $\liealg{n}(X)$ is the Koszul dual of $\alg{n, +}(X)$.

\begin{thm}
  \label{thm:coh_conf_vs_coChev}
  When $n\geq 2$, we have the following equivalences
  \[
    \alg{n, +}(X) \simeq \coChev(C^*(X, \omega_X) \otimes \liealgOr{n}) = \coChev(\liealg{n}(X)).
  \]
  We also have similar equivalences for the unital versions (by adding a copy of $\Lambda$ at degree $0$).
\end{thm}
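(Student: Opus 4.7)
The proof is essentially a matter of stringing together the various compatibility results already established, so I will outline which pieces go where rather than labor over any single step.

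The plan is to start from the definition $\alg{n,+}(X) = \pi_{?*}(\algOr{n,+}(X)) = \pi_{?*}(\pi^! \algOr{n,+})$ and then move the functor $\pi^!$ past $\coChev$, and afterwards move $\pi_{?*}$ past $\coChev$. First, since $\Vect^{\FBplus}$ is pro-nilpotent (\S\ref{subsubsec:FBplus_cat_pro-nilp}), Proposition~\ref{prop:koszul_duality_pro-nilp} makes $\coPrim[1]$ and $\coChev$ mutually inverse equivalences, so from $\liealgOr{n} := \coPrim[1](\algOr{n,+})$ we get $\algOr{n,+} \simeq \coChev(\liealgOr{n})$ in $\ComAlg(\Vect^{\FBplus})$.

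Next, applying $\pi^!$ and using the equivalence $\coPrim[1] \circ \pi^! \simeq \pi^! \circ \coPrim[1]$ from Proposition~\ref{prop:coChev_coPrim_vs_pi!} (which, by composing with $\coChev$ on both sides and using that $\coChev \circ \coPrim[1] \simeq \id$, is equivalent to $\pi^! \circ \coChev \simeq \coChev \circ \pi^!$ on objects in the essential image of $\coPrim[1]$), one obtains
\[
\algOr{n,+}(X) = \pi^! \algOr{n,+} \simeq \pi^!(\coChev \liealgOr{n}) \simeq \coChev(\pi^! \liealgOr{n}) = \coChev(\liealgOr{n}(X))
\]
in $\ComAlg(\Shv(X)^{\FBplus})$. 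Now apply $\pi_{?*}$ and invoke Theorem~\ref{thm:pi_?*_vs_coChev}, which says exactly that $\pi_{?*} \circ \coChev \simeq \coChev \circ \pi_*$, yielding
\[
\alg{n,+}(X) \simeq \pi_{?*}(\coChev(\liealgOr{n}(X))) \simeq \coChev(\pi_* \pi^! \liealgOr{n}).
\]
Finally, Remark~\ref{rmk:coLie_structure_on_pi_*pi^!a} identifies $\pi_* \pi^! \liealgOr{n}$ with $C^*(X,\omega_X) \otimes \liealgOr{n} = \liealg{n}(X)$ as a $\coLie$-coalgebra (using the co-commutative coalgebra structure on $C^*(X,\omega_X)$), and we are done. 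The unital variant is immediate from~\eqref{eq:unital_pi?*_vs_Koszul}: replacing $\coChev$ by $\coChev^{\un}$ throughout the chain above corresponds to adding a copy of $\Lambda$ in degree $0$.

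There is no real obstacle here; the only thing that requires care is the bookkeeping of what $\coLie$-coalgebra structure is being used at each stage. In particular, when moving through the last equivalence one must remember that the $\coLie$-structure on $\pi_*\pi^!\liealgOr{n}$ comes from $\pi_*$ being left-lax symmetric monoidal (equivalently, via $\pi_* \simeq \lbar\pi_! \circ j_*$ with $j_*$ symmetric monoidal and $\lbar\pi_!$ left-lax), which is precisely the structure described in Remark~\ref{rmk:coLie_structure_on_pi_*pi^!a} and matches the one implicit in the notation $\liealg{n}(X)$.
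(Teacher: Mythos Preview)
Your proof is correct and follows essentially the same route as the paper: the paper's proof is a one-line citation of Proposition~\ref{prop:fact_coh_vs_pconf}, Theorem~\ref{thm:pi_?*_vs_coChev}, \eqref{eq:unital_pi?*_vs_Koszul}, and \eqref{eq:pull_push_coLie}, and you have simply unpacked these ingredients in the natural order. The only additional step you make explicit---moving $\pi^!$ past $\coChev$ via Proposition~\ref{prop:coChev_coPrim_vs_pi!}---is handled in the paper by the remark immediately preceding the theorem that $\pi^!$ is compatible with Koszul duality.
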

\begin{proof}
  This is a direct consequence of Proposition~\ref{prop:fact_coh_vs_pconf}, Theorem~\ref{thm:pi_?*_vs_coChev},~\eqref{eq:unital_pi?*_vs_Koszul}, and~\eqref{eq:pull_push_coLie}.
\end{proof}

\begin{rmk}
  Note that the corresponding statement for $X^I$ is trivial since it can be seen directly by \Kunneth{} theorem that
  \[
    \bigoplus_{k>0} C^*(X^k, \sOmega_{X^k}) \simeq \Sym(C^*(X, \sOmega_X))_+
  \]
  where $\Sym$ is taken inside $\Vect^{\FBplus}$, see also Lemma~\ref{lem:free_twisted_commutative_alg}.
\end{rmk}

\subsubsection{}
Theorem~\ref{thm:coh_conf_vs_coChev} allows us to understand the cohomology of generalized ordered configuration spaces in terms of $\Lie$-cohomology: $\alg{n}(X) \simeq \coChev^{\un}(\liealg{n}(X))$. In the case of interest, the $\coLie$ structures on $\liealg{n}(X) = C^*(X, \omega_X) \otimes \liealgOr{n}$ are somewhat inexplicit since they are induced by the co-commutative coalgebra structure on $C^*(X, \omega_X)$ and the original $\coLie$ structures on $\liealgOr{n}$, see also Remark~\ref{rmk:coLie_structure_on_pi_*pi^!a}. Note that since these are dg-(co)algebras, these operations contain a large amount of homotopical information that is not easily accessible. However, already with the simplest cohomological estimate arguments (such as those performed in this paper), we are able to obtain non-trivial new results and recover many known results.

Let $\mathfrak{a} \in \coLie(\Vect^{\FBplus})$ such that it can be decomposed into a direct sum $\mathfrak{a} \simeq \mathfrak{o}\oplus \mathfrak{g}$ in $\coLie(\Vect^{\FBplus})$ (i.e. no non-trivial operations between the factors), then we know that
\[
  \coChev(\mathfrak{a}) \simeq \coChev(\mathfrak{o}) \sqcup \coChev(\mathfrak{g}) \teq\label{eq:coChev_coprod}
\]
where the co-product is taken inside $\ComAlg(\Vect^{\FBplus})$. Equivalently, we have
\[
  \coChev^{\un}(\mathfrak{a}) \simeq \coChev^{\un}(\mathfrak{o}) \otimes \coChev^{\un}(\mathfrak{g}),  \teq\label{eq:coChev_coprod_unital}
\]
where the tensor, taken in $\Vect^{\FB}$, computes the co-product in $\ComAlg^{\un, \aug}(\Vect^{\FBplus})$. In particular, we see that $\coChev^{\un}(\mathfrak{a})$ is a free module over the twisted commutative algebra $\coChev^{\un}(\mathfrak{o})$.\footnote{Thus, we should think of $\mathfrak{o}$ as the operators and $\mathfrak{g}$ as the generators of the free module.} When $\mathfrak{o}$ happens to be trivial, then $\coChev^{\un}(\mathfrak{o}) \simeq \Sym (\mathfrak{o}[-1])$, and $\coChev^{\un} \mathfrak{a}$ is a free module over an explicit twisted symmetric algebra.\footnote{\label{ftn:Sym_contains_both_sym_and_anti-sym} Note that the symmetric monoidal structure on chain complexes involves signs. Thus, for $V \in \Vect^{\FBplus}$, what we call symmetric algebra, $(\Sym V)_+$, can contain both symmetric and and skew-symmetric algebras in the usual sense.} Moreover, we have explicit bounds on the generators $\coChev^{\un}(\mathfrak{g})$.

In many situations, the splitting of the $\coLie$-coalgebra $\liealg{n}(X) = C^*(X, \omega_X)\otimes \liealgOr{n}$ into a direct sum like the above can be obtained by simple co-homological estimates, vastly generalizing a theorem of Church--Ellenberg--Farb on the freeness of $\FI$-modules coming from ordered configuration spaces of non-compact manifolds~\cite{church_fi-modules_2015}*{\S6.4}. The next two subsections~\S\ref{subsec:splitting_Lie_i-acyclic} and~\S\ref{subsec:splitting_Lie_general} will be devoted to studying situations where we have such a splitting on $\liealg{n}(X)$.

\begin{rmk}
  The unital version~\eqref{eq:coChev_coprod_unital} is more convenient and familiar than the non-unital version~\eqref{eq:coChev_coprod}. On the other hand, non-unital commutative algebras live in the pro-nilpotent setting, which ensures that Koszul duality is an equivalence. This necessitates the use of both unital and non-unital commutative algebras in the paper.
\end{rmk}

\subsection{Splitting $\Lie$-algebras: the case of $i$-acyclic spaces}
\label{subsec:splitting_Lie_i-acyclic}
In this subsection, we will concern ourselves with the simplest case where $C^*_c(X, \Lambda)$ (resp. $C^*(X, \omega_X)$) has a trivial algebra (resp. co-algebra) structure in the sense of footnote~\ref{ftn:trivial_algebra_meaning}, or equivalently, when $C^*_c(X, \Lambda) \simeq \Ho^*_c(X, \Lambda)$ (resp. $C^*(X, \omega_X) \simeq \Ho^*(X, \omega_X)$) as objects in $\ComAlg(\Vect)$ (resp. $\ComCoAlg(\Vect)$) and the cup-products on $\Ho^*_c(X, \Lambda)$ (co-products on $\Ho^*(X, \omega_X)$) vanish. By~\cite{petersen_cohomology_2020}*{Thm. 1.20}, a large class of examples for such spaces is given by $i$-acyclic spaces, i.e. those with vanishing maps $\Ho^i_c(X, \Lambda) \to \Ho^i(X, \Lambda), \forall i$. In this case, $C^*_c(X, \Lambda) \otimes \liealgOr{n}^\vee$ (resp. $C^*(X, \omega_X) \otimes \liealgOr{n}$) has a trivial $\Lie$-algebra (resp. $\coLie$-coalgebra) structure, see Remarks~\ref{rmk:coLie_structure_on_pi_*pi^!a} and~\ref{rmk:triv_coLie_structure_C^*(X, omega)_trivial}.

\begin{prop}
  \label{prop:spectral_seq_collapse_triv_alg}
  Let $n\geq 2$ and $X$ is so that the algebra structure on $C^*_c(X, \Lambda)$ is trivial (e.g. when $X$ is $i$-acyclic). Then, we have an isomorphism of twisted commutative algebras
  \[
    \Ho^*(\alg{n}(X)) = \bigoplus_{k \geq 0} \Ho^*(P^k_n(X), \sOmega_{P^k_n(X)}) \simeq \Sym(\Ho^*(X, \omega_X) \otimes \Ho^*(\liealgOr{n})[-1]) = \Sym(\Ho^*(\liealg{n}(X))[-1]),
  \]
  where $\Sym$ is taken inside $\Vect^{\FB}$. In particular, the cohomology of generalized configuration spaces of $X$ depends only on the graded vector space $\Ho^*(X, \omega_X)$.
\end{prop}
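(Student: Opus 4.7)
The plan is to combine the Koszul-theoretic description of $\alg{n}(X)$ coming from Theorem~\ref{thm:coh_conf_vs_coChev} with the fact that, under the hypothesis, the induced $\coLie$-cobracket on $\liealg{n}(X)$ vanishes, so that $\coChev$ degenerates to $\Sym$.

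First I would recall that by Theorem~\ref{thm:coh_conf_vs_coChev} we have
\[
\alg{n}(X)\;\simeq\;\coChev^{\un}\bigl(\liealg{n}(X)\bigr),\qquad \liealg{n}(X)=C^*(X,\omega_X)\otimes\liealgOr{n},
\]
where, as explained in Remark~\ref{rmk:coLie_structure_on_pi_*pi^!a}, the $\coLie$-coalgebra structure on $\liealg{n}(X)\simeq \pi_*\pi^!\liealgOr{n}$ is induced from the co-commutative coalgebra structure on $C^*(X,\omega_X)$ together with the $\coLie$-structure on $\liealgOr{n}$. Thus the key input is to show that, under the hypothesis, $C^*(X,\omega_X)$ is formal as a co-commutative coalgebra with trivial comultiplication.

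Next I would translate the hypothesis on $C^*_c(X,\Lambda)$ to a statement about $C^*(X,\omega_X)$. By Verdier duality, these are linearly dual, and the algebra structure on one corresponds to the coalgebra structure on the other; triviality of the algebra structure on $C^*_c(X,\Lambda)$ (i.e., $C^*_c(X,\Lambda)\simeq \Ho^*_c(X,\Lambda)$ as commutative algebras with vanishing products) therefore yields the corresponding statement for $C^*(X,\omega_X)\simeq\Ho^*(X,\omega_X)$ as co-commutative coalgebras with vanishing coproduct. With this in hand, the exact argument of Remark~\ref{rmk:triv_coLie_structure_C^*(X, omega)_trivial} (which was stated for $X=\mathbb{A}^d$ but only used triviality of the coalgebra structure on $C^*(X,\omega_X)$) applies verbatim to show that the $\coLie$-coalgebra structure on $\liealg{n}(X)$ is trivial.

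Given that $\liealg{n}(X)$ is a trivial $\coLie$-coalgebra in the pro-nilpotent category $\Shv(X)^{\FBplus}$, Proposition~\ref{prop:koszul_duality_pro-nilp} tells us that $\coChev$ carries it to a free commutative algebra, so as in Remark~\ref{rmk:triv_coLie_structure_C^*(X, omega)_trivial},
\[
\alg{n}(X)\;\simeq\;\coChev^{\un}\bigl(\liealg{n}(X)\bigr)\;\simeq\;\Sym\bigl(\liealg{n}(X)[-1]\bigr),
\]
as unital twisted commutative algebras in $\Vect^{\FB}$. Finally, taking cohomology, the \Kunneth{} formula identifies $\Ho^*(\liealg{n}(X))\simeq \Ho^*(X,\omega_X)\otimes\Ho^*(\liealgOr{n})$, and because we are in characteristic $0$ the functor $\Sym$ on $\Vect^{\FB}$ commutes with cohomology (since $S_n$-coinvariants are exact); combining these two gives the claimed identification of twisted commutative algebras. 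The only genuine subtlety, and thus the main thing to be careful about, is the identification in step two: one must make precise that triviality of the cup product on $C^*_c(X,\Lambda)$ really does force $C^*(X,\omega_X)$ to be quasi-isomorphic to its cohomology as a co-commutative coalgebra, so that the argument of Remark~\ref{rmk:triv_coLie_structure_C^*(X, omega)_trivial} applies.
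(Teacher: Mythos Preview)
Your proof is correct and follows essentially the same approach as the paper's: invoke Theorem~\ref{thm:coh_conf_vs_coChev}, use Verdier duality to transfer triviality from $C^*_c(X,\Lambda)$ to $C^*(X,\omega_X)$ so that $\liealg{n}(X)$ has trivial $\coLie$-structure, and then apply Proposition~\ref{prop:koszul_duality_pro-nilp} to identify $\coChev^{\un}$ with $\Sym$. One small slip: $\liealg{n}(X)$ lives in $\Vect^{\FBplus}$, not $\Shv(X)^{\FBplus}$, but this does not affect the argument since both are pro-nilpotent.
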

\begin{proof}
  This is direct from Theorem~\ref{thm:coh_conf_vs_coChev}, the fact that $\liealg{n}(X) = C^*(X, \omega) \otimes \liealgOr{n}$ has trivial $\coLie$-coalgebra structure as discussed above, and that Koszul duality exchanges free and trivial objects, Proposition~\ref{prop:koszul_duality_pro-nilp}.
\end{proof}

\begin{rmk}
  \label{rmk:spectral_sequence_coChev_gConf}
  Combining Proposition~\ref{prop:computing_coChev_as_limit} and Theorem~\ref{thm:coh_conf_vs_coChev}, we see that there is a spectral sequence of twisted commutative algebras converging to $\bigoplus_{k \geq 0} \Ho^*(P^k_n(X), \sOmega_{P^k_n(X)})$ whose first page is given by $\Sym(\Ho^*(X, \omega_X) \otimes \Ho^*(\liealgOr{n}))$. From this point of view, the trivial algebra structure on $C^*_c(X, \Lambda)$ ensures that this spectral sequence collapse and we have the equivalence given in Proposition~\ref{prop:spectral_seq_collapse_triv_alg}.

  In the case where $n=2$, $\liealgOr{2}$ is the free $\coLie$-coalgebra generated by a one dimensional vector space. Now using the fact that $\Lie(n)$ (the $n$-th component of the $\Lie$-operad) is known to be the same as the top homology $\Ho(\Pi_n)$ of the lattice of partition $\Pi_n$, see e.g.~\cite{wachs_cohomology_1998}, we recover a spectral sequence similar to the one of~\cite{bibby_generating_2019}*{Thm. A}.
\end{rmk}

\begin{cor}
  \label{cor:a_2_vs_coh_P_2}
  $\liealgOr{2}$ has the following explicit description in terms of the cohomology of the configuration spaces of $\mathbb{A}^d$:
  \[
    (\liealgOr{2})_I \simeq \Ho^{(2d-1)(|I| - 1)}(P^I_2(\mathbb{A}^d))[-(2d-1)|I|](-d), \quad \forall I \in \FBplus,
  \]
  In particular,
  \[
    \bigoplus_{k\geq 0} \Ho^*(P^k_2(\mathbb{A}^d)) \simeq \bigotimes_{k\geq 1} \Sym (\Ho^{(2d-1)(k - 1)}(P^k_2(\mathbb{A}^d))[-(2d-1)(|I| - 1)]_k), \teq\label{eq:the_case_of_A^d}
  \]
  where $\otimes$ and $\Sym$ on the RHS are taken in $\Vect^{\FB}$ and the subscript $k$ inside $\Sym$ denotes the graded degree (see the end of~\S\ref{subsubsec:FB_objs}).
\end{cor}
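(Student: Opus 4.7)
The plan is to specialize Proposition~\ref{prop:spectral_seq_collapse_triv_alg} to $X = \mathbb{A}^d$. Since $\Ho^*_c(\mathbb{A}^d, \Lambda)$ is concentrated in a single cohomological degree, $\mathbb{A}^d$ is trivially $i$-acyclic, and the proposition applies to give the isomorphism of twisted commutative algebras
\[
	\bigoplus_{k \geq 0} \Ho^*(P^k_2(\mathbb{A}^d)) \simeq \Sym\bigl(\Ho^*(\mathbb{A}^d, \omega_{\mathbb{A}^d}) \otimes \Ho^*(\liealgOr{2})[-1]\bigr),
\]
where $\Sym$ is taken in $\Vect^{\FB}$.

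I will then identify the cohomological position of the argument of $\Sym$ at each graded degree. Using that $\mathbb{A}^d$ is cohomologically contractible and $\omega_{\mathbb{A}^d} \simeq \Lambda[2d](d)$, the object $\Ho^*(\mathbb{A}^d, \omega_{\mathbb{A}^d})$ is one-dimensional in cohomological degree $-2d$ with Tate twist $d$. Combined with Lemma~\ref{lem:a_2_is_free}, which places $(\liealgOr{2})_k$ in cohomological degree $(2d-1)k$, a direct calculation shows that at graded degree $k$ the argument $\Ho^*(\mathbb{A}^d, \omega_{\mathbb{A}^d}) \otimes \Ho^*(\liealgOr{2})[-1]$ is a vector space concentrated in cohomological degree $(2d-1)(k-1)$.

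Next, I compare cohomological degrees at graded degree $k$ on both sides of the isomorphism. On the right hand side, a summand coming from a decomposition $k = k_1 + \cdots + k_n$ of graded degrees lives in cohomological degree $\sum_i (2d-1)(k_i-1) = (2d-1)(k-n)$, which is maximized exactly when $n=1$. Hence in the top cohomological degree $(2d-1)(k-1)$ only the generator of graded degree $k$ contributes, and matching the two sides yields
\[
	(\liealgOr{2})_k \simeq \Ho^{(2d-1)(k-1)}(P^k_2(\mathbb{A}^d))[-(2d-1)k](-d),
\]
which is the first claim. The only real obstacle here is the bookkeeping of cohomological shifts and Tate twists, which I have sketched above.

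Finally, the formula~\eqref{eq:the_case_of_A^d} follows by substituting this explicit description back into the $\Sym$ expression: since $\Sym$ in $\Vect^{\FB}$ turns direct sums into tensor products, the decomposition $\liealg{2}(\mathbb{A}^d)[-1] \simeq \bigoplus_{k \geq 1} \liealg{2}(\mathbb{A}^d)[-1]_k$ gives the factorization $\bigotimes_{k \geq 1} \Sym(\Ho^{(2d-1)(k-1)}(P^k_2(\mathbb{A}^d))[-(2d-1)(k-1)]_k)$, concluding the proof.
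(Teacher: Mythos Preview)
Your proof is correct and follows essentially the same route as the paper: specialize Proposition~\ref{prop:spectral_seq_collapse_triv_alg} to $X=\mathbb{A}^d$, use Lemma~\ref{lem:a_2_is_free} to see that the generator sits in a single cohomological degree at each graded degree, and then read off $(\liealgOr{2})_I$ as the top-degree piece of the $\Sym$ at graded degree $|I|$. The paper writes the argument of $\Sym$ directly as $\liealgOr{2}[2d-1](d)$ (absorbing $\Ho^*(\mathbb{A}^d,\omega_{\mathbb{A}^d})\simeq\Lambda[2d](d)$), whereas you keep the tensor factor explicit and spell out the decomposition $k=k_1+\cdots+k_n$ argument; these are cosmetic differences only.
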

\begin{proof}
  From Lemma~\ref{lem:a_2_is_free}, we know that for any $I$, $(\liealgOr{2})_I$ is concentrated at degree $(2d-1)|I|$. Applying Proposition~\ref{prop:spectral_seq_collapse_triv_alg} to the case where $X = \mathbb{A}^d$ and $n=2$, we have
  \[
    \bigoplus_{k\geq 0} \Ho^*(P_2^k(\mathbb{A}^d)) \simeq \bigoplus_{k\geq 0} \Ho^*(P_2^k(\mathbb{A}^d), \sOmega_{P_2^k(\mathbb{A}^d)}) \simeq \Sym(\liealgOr{2}[2d-1](d)). \teq\label{eq:coh_P^k_2(A^d)_via_Koszul}
  \]
  By inspection, we see that the top cohomological degree of $\Sym(\liealgOr{2}[2d-1])_I$ is $(2d-1)|I| - (2d-1) = (2d-1)(|I| - 1)$ and is given precisely by $(\liealgOr{2}[2d-1](d))_I$, i.e.
  \[
    (\liealgOr{2}[2d-1](d))_I \simeq \Ho^{(2d-1)(|I| - 1)}(P_2^I(\mathbb{A}^d))[-(2d-1)(|I| - 1)] \teq\label{eq:a_2[2d-1](d)}
  \]
  and hence
  \[
    (\liealgOr{2})_I \simeq \Ho^{(2d-1)(|I| - 1)}(P_2^I(\mathbb{A}^d))[-(2d-1)|I|](-d).
  \]

  The second statement is direct from~\eqref{eq:coh_P^k_2(A^d)_via_Koszul} and~\eqref{eq:a_2[2d-1](d)}.
\end{proof}


\subsection{Splitting $\Lie$-algebras: the $\TopTriv_m$ case}
\label{subsec:splitting_Lie_general}
For a general $X$, we do not have a nice presentation as in Proposition~\ref{prop:spectral_seq_collapse_triv_alg}. However, when $n=2$ and when $X$ satisfies some vanishing of the cup products (and higher analogs thereof, see Definition~\ref{defn:TopTriv_m_condition}), we still have a decomposition of $\liealg{2}(X) = C^*(X, \omega_X) \otimes \liealgOr{2}$ into a direct sum that could be used to prove higher representation stability for ordered configuration spaces of $X$. This is the content of the main result of this subsection, Proposition~\ref{prop:splitting_n=2}.

For the current purpose, it is convenient to use the language of $\Linfty$-algebras. The $\Linfty$-algebras involved are obtained by applying homotopy transfer theorem to the dual of $\liealg{2}(X)$. We will now describe it in more details.

\subsubsection{}
For each $I\in \FBplus$, $(\liealg{n})_I$ is a bounded chain complex with finite dimensional cohomology. Thus, we can take the linear dual $(\liealg{n}(X))^\vee$ (i.e. component-wise linear dual) to obtain an object in $\Lie(\Vect^{\FBplus})$ whose dual is, again, the original $\coLie$-coalgebra. Statements about $\liealg{n}(X)$ can be easily obtained from those about $(\liealg{n}(X))^\vee$ and vice versa. Since it is more convenient to speak about $\Lie$ algebras, we will perform most of our arguments on this side.

By Verdier duality, we have
\[
  (\liealg{n}(X))^\vee = (C^*(X, \omega_X) \otimes \liealgOr{n})^\vee \simeq C^*_c(X, \Lambda) \otimes \liealgOr{n}^\vee.
\]
Moreover, the $\Lie$ algebra structure on $C^*_c(X, \Lambda) \otimes \liealgOr{n}^\vee$ can also be obtained using the commutative (i.e. $E_\infty$) algebra structure on $C^*_c(X, \Lambda)$ and the $\Lie$-algebra structure on $\liealgOr{n}^\vee$, see also~\cite{gaitsgory_study_2017}*{Vol. II, \S6.1.2} and~\cite{ho_free_2017}*{Example 4.2.8}.

Since $\Vect^{\FBplus}$ has homological dimension $0$, we can apply the homotopy transfer theorem~\cite{loday_algebraic_2012}*{\S10.3}, which endows $\Ho^*(\liealg{n}(X)^\vee) = \Ho^*_c(X, \Lambda)\otimes \Ho^*(\liealgOr{n}^\vee)$ with a structure of an $\Linfty$-algebra in $\Vect^{\FB_+}$. Namely, for each $k\geq 2$, we have the following $k$-ary operation which is of cohomological degree $2-k$ (\cite{loday_algebraic_2012}*{Prop. 10.1.7})
\[
  l_k: (\Ho^*_c(X, \Lambda)\otimes \Ho^*(\liealgOr{n}^\vee))^{\otimes k} \to \Ho^*_c(X, \Lambda)\otimes \Ho^*(\liealgOr{n}^\vee)[2-k],
\]
where, as before, the tensor $\otimes k$ is taken in $\Vect^{\FB_+}$.

\subsubsection{}
By homotopy transfer, $\Ho^*_c(X, \Lambda)$ is also equipped with the structure of a $\Cinfty$-algebra. Namely, for each $k\geq 2$, we have the following $k$-ary operation of cohomological degree $2-k$
\[
  m_k: \Ho^*_c(X, \Lambda)^{\otimes k} \to \Ho^*_c(X, \Lambda)[2-k].
\]
Moreover, the $\Linfty$-algebra structure on $\Ho^*(\liealg{n}(X)^\vee) = \Ho^*_c(X, \Lambda) \otimes \Ho^*(\liealgOr{n}^{\vee})$ is induced by that of $\Ho^*(\liealgOr{n}^{\vee})$ and the $\Cinfty$-algebra structure on $\Ho^*_c(X, \Lambda)$.  This can be used to analyze $\liealg{n}(X)^\vee$ or, equivalently, $\liealg{n}(X)$.

\begin{rmk}
  \label{rmk:mixing_Cinfty_Linfty_n=2}
  When $n=2$, the $\Lie$-algebra $\liealgOr{2}^{\vee}$ is formal, being a free algebra generated by a one-dimensional vector space. Thus, the $k$-ary operations on $\Ho^*(\liealg{2}(X)^\vee)$ is obtained by combining the $k$-ary operations of the $\Cinfty$-algebra $\Ho^*_c(X, \Lambda)$ and $2$-ary operations of $\Ho^*(\liealgOr{2}^{\vee})$.
\end{rmk}

In the simplest case where $\Ho^*_c(X, \Lambda)$ has a trivial $\Cinfty$-algebra structure, the $\Linfty$-algebra structure on $\Ho^*(\liealg{n}(X)^\vee)$ is also trivial, and hence, so is the $\coLie$-coalgebra structure on $\liealg{n}(X)$. This was treated in~\S\ref{subsec:splitting_Lie_i-acyclic}. In what follows, we will make use of the following weaker condition.

\begin{defn}[$\TopTriv_m$ condition] \label{defn:TopTriv_m_condition}
  Let $X$ be a scheme. We say that $X$ satisfies condition $\TopTriv_1$ if $\Ho^0_c(X, \Lambda) = 0$.

  Let $m \geq 2$ be an integer. We say that $X$ satisfies condition $\TopTriv_m$ if there is a transferred $\Cinfty$-algebra structure on $\Ho_c^{*}(X, \Lambda)$ such that for all $k\leq m$, all $k$-ary operations on $\Ho_c^{*}(X, \Lambda)$ which land in $\Ho_c^{2d}(X, \Lambda)$ are trivial.
\end{defn}

\begin{rmk}
  When $m=2$, this simply says that the cup products
  \[
    \Ho^{k}_c(X, \Lambda) \otimes \Ho^{2d-k}_c(X, \Lambda) \to \Ho^{2d}_c(X, \Lambda)
  \]
  vanish, or dually, the map
  \[
    \Ho^{-2d}(X, \omega_X) \to \Ho^{-2d}(X\times X, \omega_{X\times X})
  \]
  induced by the diagonal $X \to X\times X$ vanishes.
\end{rmk}

\begin{rmk}
  \label{rmk:T2_and_above_vanishing_bottom}
  It is easy to see that when $X$ satisfies $\TopTriv_2$, then $X$ also satisfies $\TopTriv_1$, i.e. $\Ho^0_c(X, \Lambda) = 0$. In other words, $X$ is forced to be non-proper. Indeed, suppose otherwise, then $X$ has (at least) a proper connected component. Taking the cup product of the non-zero element class in $\Ho^0_c(X, \Lambda)$ and a class in $\Ho^{2d}_c(X, \Lambda)$ from the same component will yield a non-zero result.

  Clearly, the conclusion still holds when $X$ satisfies $\TopTriv_m$ for any $m\geq 2$. Thus, $\TopTriv_{m} \Rightarrow \TopTriv_{m'}$ when $m'\leq m$.
\end{rmk}

\begin{rmk}
  When $X$ is such that $\Ho^*_c(X, \Lambda)$ is pure, the algebra $C^*_c(X, \Lambda)$ is formal, and hence, all higher operations of $\Ho^*_c(X, \Lambda)$ vanish. For such an $X$, being $\TopTriv_m$ is equivalent to being $\TopTriv_2$.

  We do not know of an example of a scheme $X$ satisfying $\TopTriv_m$ for some $m > 2$ where $C^*_c(X, \Lambda)$ is not formal. It would be interesting to find such an example.
\end{rmk}

\subsubsection{Lie algebras of operators}
We let
\[
  \liealgOp{n}(X) = \Ho^{-2d}(X, \omega_X)[2d] \otimes \liealgOr{n} \quad\text{and its dual} \quad \liealgOp{n}(X)^{\vee} = \Ho^{2d}_c(X, \Lambda)[-2d] \otimes \liealgOr{n}^{\vee}. \teq\label{eq:defn_liealgOp}
\]
For any $m \geq 1$, we let $\liealgOp{n, \leq m}(X)$ (resp. $\liealgOp{n, \leq m}(X)^\vee$) be obtained from $\liealgOp{n}(X)$ (resp. $\liealgOp{n}(X)^\vee$) by setting all components of graded degrees greater than $m$ to zero. We equip $\liealgOp{n}(X)$ and $\liealgOp{n, \leq m}(X)$ (resp. $\liealgOp{n}(X)^\vee$ and $\liealgOp{n, \leq m}(X)^\vee$) with the trivial $\coLie$-coalgebra (resp. $\Lie$-algebra) structures. Thus, we have a splitting inside $\coLie(\Vect^{\FBplus})$
\[
  \liealgOp{n, \leq m}(X) \simeq \bigoplus_{k=1}^m\liealgOp{n, k}(X) \qquad\text{and}\qquad \liealgOp{n}(X) \simeq \bigoplus_{k=1}^\infty \liealgOp{n, k}(X)
\]
where $\liealgOp{n, k}(X)$ is the graded degree $k$ part of $\liealg{n}(X)$.

\begin{rmk}
  \label{rmk:liealgOp_irred_case}
  When $X$ is irreducible, $\Ho^{-2d}(X, \omega_X)$ and $\Ho^{2d}_c(X, \Lambda)$ are one dimensional. Hence, $\liealgOp{n}(X) \simeq \liealgOr{n}[2d](d)$ and $\liealgOp{n}(X)^{\vee} \simeq \liealgOr{n}^\vee[-2d](-d)$ in this case.
\end{rmk}

We have good control in the case where $n=2$: since $\liealgOr{2}$ is concentrated in exactly one cohomological degree per graded degree, see Lemma~\ref{lem:a_2_is_free}, so are $\liealgOp{2}(X)$ and $\liealgOp{2}(X)^\vee$. More precisely, $(\liealgOp{2}(X))_I$ (resp. $(\liealgOp{2}(X)^{\vee})_I$) concentrates in cohomological degree $(2d-1)(|I| - 1)-1$ (resp. $-(2d-1)(|I| - 1)+1$). For a general $n$, the only precise information we have about $\liealgOp{n}(X)$ is from Lemma~\ref{lem:homological_est_a_n}.

\begin{lem}
  \label{lem:unit_for_liealg_n(X)}
  We have natural morphisms in $\coLie(\Vect^{\FBplus})$:\footnote{See~\eqref{eq:truncation_functors_FB} and the discussion around it for the definitions of $\tr_{\leq n}$ and $\iota_{\leq n}$.}
  \[
    \liealgOp{n, \leq 1}(X) \to \iota_{\leq 1}\tr_{\leq 1} \liealg{n}(X) \to \liealg{n}(X),
  \]
  or, by abuse of notation,
  \[
    \liealgOp{n, \leq 1}(X) \to \liealg{n}(X)_1 \to \liealg{n}(X),
  \]
  where the first two terms are equipped with the trivial $\coLie$-coalgebra structures.
\end{lem}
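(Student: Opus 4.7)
The plan is to produce each of the two morphisms by natural categorical constructions and then verify that they are compatible with the (trivial) $\coLie$-structures on the first two terms.

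The second map $\iota_{\leq 1}\tr_{\leq 1}\liealg{n}(X)\to\liealg{n}(X)$ comes for free: by Remark~\ref{rmk:iota_<n_left_adjoint}, the functor $\iota_{\leq 1}$ is left adjoint to $\tr_{\leq 1}$ on the $\coLie$-side, so the counit of this adjunction evaluated at $\liealg{n}(X)$ gives the desired morphism in $\coLie(\Vect^{\FBplus})$. No further work is needed here.

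For the first map, I would first unravel both source and target. By Lemma~\ref{lem:homological_est_a_n}(a), $(\liealgOr{n})_1\simeq\Lambda_1[-2d+1](-d)$, so from the definitions in~\eqref{eq:defn_liealgOp} and of $\liealg{n}(X)$,
\[
	\liealgOp{n,\leq 1}(X) \simeq \bigl(\Ho^{-2d}(X,\omega_X)[1](-d)\bigr)_1,\qquad \iota_{\leq 1}\tr_{\leq 1}\liealg{n}(X) \simeq \bigl(C^*(X,\omega_X)[-2d+1](-d)\bigr)_1.
\]
Both are concentrated in graded degree $1$ of $\Vect^{\FBplus}$, and any such object carries a unique $\coLie$-coalgebra structure, namely the trivial one, because the cobracket lands in graded degrees $\geq 2$ while the target is supported only in degree $1$. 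Hence it suffices to construct the map at the level of underlying objects of $\Vect^{\FBplus}$.

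Such a map is supplied by the canonical inclusion $\Ho^{-2d}(X,\omega_X)[2d]\to C^*(X,\omega_X)$ given by the bottom cohomological truncation $\tau^{\leq -2d}$, which exists because $C^*(X,\omega_X)$ has no cohomology in degrees $<-2d$; tensoring with $(\liealgOr{n})_1$ and placing the result in graded degree $1$ yields the required morphism. The main (and only) thing to double-check is that the two objects $\liealgOp{n,\leq 1}(X)$ and $\iota_{\leq 1}\tr_{\leq 1}\liealg{n}(X)$ genuinely inherit the trivial $\coLie$-structure, which, as noted, is forced by support considerations; in particular the left-lax monoidal structure maps for $\iota_{\leq 1}$ contribute nothing extra.
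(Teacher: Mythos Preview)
Your argument is correct and follows the same route as the paper: the second map is the (co)unit of the adjunction $\iota_{\leq 1}\dashv \tr_{\leq 1}$ from Remark~\ref{rmk:iota_<n_left_adjoint}, the triviality of the $\coLie$-structure on anything supported in graded degree $1$ is forced by support, and the first map is the bottom cohomological truncation. In fact your terminology is more accurate than the paper's here: the map $\iota_{\leq 1}\tr_{\leq 1}\liealg{n}(X)\to\liealg{n}(X)$ is the \emph{counit} of $\iota_{\leq 1}\dashv\tr_{\leq 1}$, not the unit.
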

\begin{proof}
  Using the unit map of the adjunction in Remark~\ref{rmk:iota_<n_left_adjoint}, we obtain a morphism in $\coLie(\Vect^{\FBplus})$
  \[
    \iota_{\leq 1}(\tr_{\leq 1} \liealg{n}(X)) \to \liealg{n}(X).
  \]
  By degree consideration, the $\coLie$-coalgebra structure on $\iota_{\leq 1}(\tr_{\leq 1} \liealg{n}(X))$ is necessarily trivial. Thus, we have a morphism between trivial $\coLie$-coalgebras
  \[
    \tau^{\leq -1} \iota_{\leq 1}(\tr_{\leq 1} \liealg{n}(X)) \to \iota_{\leq 1}(\tr_{\leq 1} \liealg{n}(X)),
  \]
  where $\tau^{\leq -1}$ is the cohomological truncation. Observe that $-1$ is the bottom cohomological degree of the LHS, we get
  \[
    \liealgOp{n, \leq 1}(X) \simeq \Ho^{-1}(\iota_{\leq 1}(\tr_{\leq 1} \liealg{n}(X)))[1] \simeq \tau^{\leq -1} \iota_{\leq 1}(\tr_{\leq 1} \liealg{n}(X)).
  \]
  Composing these morphisms together, we obtain the desired map $\liealgOp{n, \leq 1}(X) \to \liealg{n}(X)$.
\end{proof}

When $n=2$, we can obtain more refined results.

\begin{prop}
  \label{prop:splitting_n=2}
  Suppose $X$ satisfies $\TopTriv_m$ for some $m\geq 1$. Then we have a direct sum decomposition in $\coLie(\Vect^{\FBplus})$
  \[
    \liealg{2}(X) \simeq \liealgOp{2, \leq m}(X) \oplus \liealgGen{2, \leq m}(X) \simeq \bigoplus_{k=1}^m \liealgOp{2, k}(X) \oplus \liealgGen{2, \leq m}(X)
  \]
  for some $\liealgGen{2, \leq m}(X) \in \coLie(\Vect^{\FBplus})$ whose cohomological support is obtained from that of $\liealg{2}(X)$ minus that of $\liealgOp{2, \leq m}(X)$. In particular, for any $I\in \FBplus$, the cohomological support of $\liealgGen{2, \leq m}(X)_I$ is concentrated on cohomological degrees
  \[
    \begin{cases}
      [(2d-1)|I| - 2d + 1, (2d-1)|I| -1 ], &\text{when } |I| \leq m, \\
      [(2d-1)|I|-2d, (2d-1)|I| - 1], &\text{when } |I| > m
    \end{cases}
  \]
\end{prop}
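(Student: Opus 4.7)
The plan is to pass to the Verdier dual picture, construct a $\Lie$-algebra direct sum decomposition of $\liealg{2}(X)^\vee = C^*_c(X, \Lambda) \otimes \liealgOr{2}^\vee$ in $\Lie(\Vect^{\FBplus})$, and then dualize. By Lemma~\ref{lem:a_2_is_free} each graded component of $\liealgOr{2}^\vee$ is concentrated in a single cohomological degree with finite-dimensional cohomology, so this dualization is harmless, and by homotopy transfer $\liealg{2}(X)^\vee$ is quasi-isomorphic to $\Ho^*_c(X, \Lambda) \otimes \liealgOr{2}^\vee$ with an $\Linfty$-structure whose $k$-ary operation $l_k$ has $C^*_c$-factor equal to the $\Cinfty$-operation $m_k$ on $\Ho^*_c(X, \Lambda)$, by Remark~\ref{rmk:mixing_Cinfty_Linfty_n=2}. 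I then define $\liealgOp{2, \leq m}(X)^\vee := \Ho^{2d}_c(X, \Lambda)[-2d] \otimes (\liealgOr{2}^\vee)_{\leq m}$, the subcomplex concentrated in the top cohomological degree of $C^*_c$ and in graded degrees $\leq m$. This is automatically an $\Linfty$-sub-algebra with trivial bracket by a degree count: for $k \geq 2$, $m_k$ applied to inputs in $\Ho^{2d}_c$ lands in cohomological degree $2dk + 2 - k > 2d$, which is zero since $\Ho^{>2d}_c(X, \Lambda) = 0$.

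The core step is to construct an $\Linfty$-retraction $r: \liealg{2}(X)^\vee \to \liealgOp{2, \leq m}(X)^\vee$. I factor $r$ as the quotient $\liealg{2}(X)^\vee \twoheadrightarrow (\liealg{2}(X)^\vee)_{\leq m}$ by the graded-degree-$>m$ subcomplex (which is a $\Lie$-ideal, since any bracket involving an input of graded degree $>m$ lands in graded degree $>m$), followed by the projection to $\Ho^{2d}_c(X, \Lambda)[-2d]$ on the $C^*_c$-factor. For this second map to be an $\Linfty$-morphism to the trivial target, I need every $l_k$ on $(\liealg{2}(X)^\vee)_{\leq m}$ to vanish after projection to $\Ho^{2d}_c$. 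But any $l_k$ whose output lies in graded degree $\leq m$ must have arity $k \leq m$, since each input carries graded degree $\geq 1$, and by the hypothesis $\TopTriv_m$ the operation $m_k$ does not hit $\Ho^{2d}_c$ for such $k$. Hence the projection annihilates every bracket, so $r$ is a well-defined $\Lie$-morphism, and composing with the inclusion yields the identity, establishing the splitting.

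Dualizing produces the desired $\coLie$-coalgebra decomposition $\liealg{2}(X) \simeq \liealgOp{2, \leq m}(X) \oplus \liealgGen{2, \leq m}(X)$. For the support calculation, $\liealg{2}(X)_I = C^*(X, \omega_X) \otimes (\liealgOr{2})_I$ is a priori supported in cohomological degrees $[(2d-1)|I| - 2d, (2d-1)|I|]$; under $\TopTriv_1$ (which follows from $\TopTriv_m$ for $m \geq 2$ by Remark~\ref{rmk:T2_and_above_vanishing_bottom}, and holds by definition when $m = 1$), the top degree coming from $\Ho^0(X, \omega_X)$ vanishes, so the actual support is $[(2d-1)|I| - 2d, (2d-1)|I| - 1]$. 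Subtracting $\liealgOp{2, \leq m}(X)_I$, which is concentrated in the bottom degree $(2d-1)|I| - 2d$, yields the stated range for $|I| \leq m$, while for $|I| > m$ we have $\liealgOp{2, \leq m}(X)_I = 0$ and the support is unchanged. The principal obstacle is the retraction step: justifying the $\Linfty$-bookkeeping requires the precise control in Remark~\ref{rmk:mixing_Cinfty_Linfty_n=2} on how the $\Cinfty$-operations on $\Ho^*_c(X, \Lambda)$ govern the $\Linfty$-operations after tensoring with $\liealgOr{2}^\vee$, which is exactly what $\TopTriv_m$ is tailored to control.
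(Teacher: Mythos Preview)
Your overall strategy—dualize, apply homotopy transfer, and work with the explicit $\Linfty$-structure on $\Ho^*_c(X,\Lambda)\otimes\liealgOr{2}^\vee$—matches the paper's. Your retraction argument is correct as far as it goes, and it cleanly establishes both that $\liealgOp{2,\leq m}(X)^\vee$ is a sub-$\Linfty$-algebra (with trivial brackets) and that \emph{every} $l_k$ on the ambient algebra lands in $\ker r=\liealgGen{2,\leq m}(X)^\vee$. In particular $\liealgGen{2,\leq m}(X)^\vee$ is closed under all operations, which is one of the two conditions needed.

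The gap is that a Lie retraction only gives a \emph{semidirect} decomposition, not a direct sum in $\Lie(\Vect^{\FBplus})$. What the proposition asserts (and what is used downstream to get $\coChev^{\un}(\liealg{2}(X))\simeq\coChev^{\un}(\liealgOp{2,\leq m}(X))\otimes\coChev^{\un}(\liealgGen{2,\leq m}(X))$) is a product in $\Lie$, i.e.\ a direct sum of underlying objects with \emph{no cross-operations}. Your argument shows cross-brackets land in $\ker r$, not that they vanish. Concretely, you have not ruled out that some $l_k$ with one input in $\Ho^{2d}_c\otimes(\liealgOr{2}^\vee)_{\leq m}$ and the others elsewhere is nonzero; such a term would sit in $\ker r$ and be invisible to your projection. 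The paper handles this with a separate degree count using only $\TopTriv_1$: given inputs in bidegrees $(-(2d-1)|I_i|+1,|I_i|)$ for $i<k$ and one input in $(-(2d-1)|I_k|+2d,|I_k|)$, the output of $l_k$ lands in cohomological degree $-(2d-1)\sum|I_i|+2d+1$, which exceeds the top of the support of $\Ho^*(\liealg{2}(X)^\vee)$ in that graded degree. Adding this single paragraph to your argument closes the gap; without it the splitting you obtain is only $\liealgOp{2,\leq m}(X)^\vee\ltimes\liealgGen{2,\leq m}(X)^\vee$, which is not enough.
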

\begin{proof}
  We will prove everything on the dual side and work with $\Linfty$-algebras. More specifically, for any $m$, the underlying object $\Ho^*(\liealg{2}(X)^{\vee})$ in $\Vect^{\FBplus}$ naturally decomposes as a direct sum
  \[
    \Ho^*(\liealg{2}(X)^{\vee}) \simeq \liealgOp{2, \leq m}(X)^\vee \oplus \liealgGen{2, \leq m}(X)^\vee
  \]
  for some $\liealgGen{2, \leq m}(X)^\vee \in \Vect^{\FBplus}$.\footnote{More accurately, we view these objects as doubly graded vector spaces: one coming from the cohomological grading and the other from the $\FBplus$ grading. The splitting is most naturally seen from this point of view.} Our task now is to see that this is in fact a splitting of $\Linfty$-algebras. To do that, it suffices to check two facts:
  \begin{myenum}{(\roman*)}
  \item the result of any $\Linfty$-operation involving elements from only one summand stays in that summand, and
  \item the result of any $\Linfty$-operation involving elements from both summands vanish.
  \end{myenum}

  When $X$ satisfies $\TopTriv_m$, (i) is verified for the $\liealgGen{2, \leq m}(X)^{\vee}$ factor by design, using the fact that $k$-ary operations on $\Ho^*(\liealg{2}(X)^\vee)$ is obtained by combining $k$-ary operations on $\Ho^*_c(X, \Lambda)$ and $2$-ary operations on $\Ho^*(\liealgOr{2}^\vee)$, see Remark~\ref{rmk:mixing_Cinfty_Linfty_n=2}. In general, (i) is also verified for the $\liealgOp{2, \leq m}(X)^{\vee}$ factor. Indeed, by degree considerations, it is easy to see that all $\Linfty$-operations of $\Ho^*(\liealg{2}(X)^{\vee})$ vanish when restricted to $\liealgOp{2}(X)^{\vee}$. Thus, it remains to check (ii).

  For this, we only need the fact that $X$ satisfies $\TopTriv_1$, which is implied by $\TopTriv_m$ for any $m\geq 1$, by Remark~\ref{rmk:T2_and_above_vanishing_bottom}. In this case, we know that $\Ho^*(\liealg{2}(X)^{\vee}_I)$ lives in cohomological degrees $[-(2d-1)|I| + 1, -(2d-1)|I| + 2d]$. We will show that the result of any $k$-ary operation, with $k\geq 2$, involving at least one element in $\liealgOp{2}(X)^{\vee}$ will lie outside of this range, forcing it to vanish, concluding (ii).

  Indeed, the smallest cohomological degree obtained from such a $k$-ary operation is given by elements of ``coordinates'' $(-(2d-1)|I_i|+1, |I_i|)$ for $i=1, \dots, k-1$ and $(-(2d-1)|I_k|+2d, |I_k|)$, where the first and second coordinates refer to cohomological degree and graded degree respectively. Now, the result of a $k$-ary operation involving these will live in
  \[
    (-(2d-1)\sum_{i=1}^k |I_i| + k-1 + 2d + (2-k), \sum_{i=1}^k |I_i|) = (-(2d-1)\sum_{i=1}^k |I_i| + 2d + 1, \sum_{i=1}^k |I_i|)
  \]
  which has cohomological degree $> -(2d-1)\sum_{i=1}^k |I_i| + 2d$, the highest degree supported by $\Ho^*(\liealg{2}(X)^\vee_{\sqcup_{i=1}^k I_i})$.

  The last statement about cohomological amplitude of $\liealgGen{2, \leq m}(X)$ comes directly from that of $\liealg{2}(X)$ and the construction.
\end{proof}

\begin{rmk}
  We could have run a similar argument as in Proposition~\ref{prop:splitting_n=2} for $\liealg{n}(X)$ where $n>2$ if we had more information about the cohomological amplitude of $\liealgOr{n}$, the Koszul dual of the truncated twisted polynomial algebra $(\Lambda[x]/x^n)_+$. Even though this is a purely algebraic question, its answer would hold the key to understanding higher representation stability of generalized configuration spaces.

	For usual commutative algebras (as opposed to TCAs), the Koszul duals of the truncated polynomial algebras have indeed been used in the author's previous paper~\cite{ho_homological_2021} to completely compute (the analog of) $\liealgOr{n}$. The twisted case, however, is more subtle. The reason is that the twisted polynomial algebra generated by a single generator in degree $n$ is much bigger than just $\Lambda[x^n]$. Because of that, we do not have a nice pushout square to work with.
\end{rmk}

\subsection{Higher representation stability: a sketch}
\label{subsec:sketch_higher_rep_stab}
Having defined all the necessary objects, we will now sketch the ideas involved in establishing (higher) representation stability for ordered configuration spaces. The execution of these ideas will be carried out in the next two subsections.

As before, we will assume throughout that $X$ is an equidimensional scheme of dimension $d$. This is not a serious restriction. In fact, with more complicated notation to keep track of the dimension, most results in this paper are also valid when the equidimension condition is relaxed. We choose not to do this to avoid needlessly complicating the formulations while not introducing any new idea.

\subsubsection{}
Theorem~\ref{thm:coh_conf_vs_coChev} allows us to understand the cohomology of generalized ordered configuration spaces in terms of $\Lie$-cohomology: $\alg{n}(X) \simeq \coChev^{\un}(\liealg{n}(X))$. From this point of view, each morphism $\liealgOp{} \to \liealg{n}(X)$ in $\coLie(\Vect^{\FBplus})$ induces, via Koszul duality, a morphism $\coChev^{\un} \liealgOp{} \to \coChev^{\un} \liealg{n}(X) \simeq \alg{n}(X)$ in $\ComAlg^{\un, \aug}(\Vect^{\FBplus})$. The latter can thus be viewed as a module over the former. The induced action could then be used to formulate and prove representation stability.

For example, when $X$ is irreducible, we take $\liealgOp{} = \Ho^{-1}(\liealg{n}(X)_1)[1] \simeq \Ho^{-2d}(X, \omega_X)[2d] \otimes (\liealgOr{n})_1$ (the bottom cohomology of the graded degree $1$ part) which is a $1$-dimensional trivial $\coLie$-coalgebra. In this case, $\coChev^{\un} \liealgOp{} \simeq \Sym \Lambda_1$, and note that a module over this twisted commutative algebra is the same as an $\FI$-module, see~\cite{sam_introduction_2012}. We thus obtain an $\FI$-module structure on $\alg{n}(X)$ which is classically used to formulate representation stability. In general, $\liealgOp{}$ is extracted from $\liealg{n}(X)$ in a similar way, namely, by picking out certain cohomological and graded degree pieces of $\liealg{n}(X)$.

\subsubsection{}
This procedure can be iterated. Namely, we can take the relative tensor
\[
  \alg{n}(X) \otimes_{\coChev^{\un} \liealgOp{}} \Lambda_0 \simeq \coChev^{\un} (\liealg{n}(X)) \otimes_{\coChev^{\un} \liealgOp{}} \Lambda_0 \simeq \coChev^{\un} (\liealg{n}(X) \sqcup_{\liealgOp{}} 0) \teq \label{eq:factoring_out}
\]
where the last equivalence is due to the fact that Koszul duality, being an equivalence, preserves pushouts. In the language of~\cite{galatius_cellular_2018}, the process of taking relative tensor is known as taking derived indecomposables.

Now, note that since the forgetful functor
\[
  \coLie(\Vect^{\FBplus}) \to \Vect^{\FBplus},
\]
is a left adjoint, it commutes with colimits, and hence, the underlying chain complex of $\liealg{n}(X) \sqcup_{\liealgOp{}} 0$ is just the mapping cone of $\liealgOp{} \to \liealg{n}(X)$. We thus have a very good control over the cohomological amplitude of $\liealg{n}(X) \sqcup_{\liealgOp{}} 0$ and hence, also of its cohomological Chevalley complex, via Proposition~\ref{prop:computing_coChev_as_limit}. Now, we can apply the procedure above to $\liealg{n}(X) \sqcup_{\liealgOp{}} 0$ itself. Namely, we can produce more actions by finding a map $\liealgOp{}' \to \liealg{n}(X) \sqcup_{\liealgOp{}} 0$ in $\coLie(\Vect^{\FBplus})$.

Back to the example above where $\liealgOp{} = \Ho^{-1}(\liealg{n}(X)_1)[1]$. The first term in~\eqref{eq:factoring_out} is precisely the $\FI$-hyperhomology appearing in~\cite{gan_linear_2019, miller_$mathrmfi$-hyperhomology_2019} (hyper because all our tensors are derived by convention). In the non-twisted settings, i.e. homological stability rather than representation stability, similar quotients also appear in~\cite{galatius_$e_2$-cells_2019} under the guise of relative homology and is used to capture failure of homological stability and to formulate higher homological stability. Similarly, in~\cite{miller_higher_2016}, $\FI$-homology is used to capture the failure of representation stability ($\FI$-hyperhomology does not appear since the objects involved are free $\FI$-modules). In both case, secondary homological/representation stability is formulated in terms of quotients in this form. The iterated nature of the procedure above can thus be viewed as a natural way to capture and formulate higher stability phenomena.

\subsubsection{}
We will now describe how we can iteratively construct maps between $\coLie$-coalgebras of the form mentioned above. To fix ideas, we will restrict to the case of ordered configuration spaces (rather than the generalized variants) to the former case in the remainder of this subsection.

The iterative procedure we will describe is best seen graphically. Let us start with Figure~\ref{fig:supp_a_2(X)[-1]_and_A_2(X)}, where the supports of $\liealg{2}(X)[-1]$ and $\alg{2}(X)$ are illustrated. Here, we choose to draw $\liealg{2}(X)[-1]$ rather than $\liealg{2}(X)$ itself since the passage from $\coLie$-coalgebras to commutative algebras via Koszul duality involves a shift to the right by $1$, see Proposition~\ref{prop:computing_coChev_as_limit}.

\begin{figure}[ht]
  \begin{tikzpicture}[scale=0.8, every node/.style={scale=0.8}]
    {
      \def\dimension{2}
      \draw[thick, opacity=0.25] pic {SuppLines={4}};
      \draw[thick, opacity=0.55] pic {LineThruOrgn={1}{4}{14.4}{node[above, pos=0.98, opacity=1]{$L$}}};
      \draw pic {Coord={14}{5}};
      \draw pic {Grid={1}{4}};

      \foreach \x in {0, 1}
      \draw pic {xCoordtick={\x}{node[anchor=north] {$\x$}}};

      \draw pic {xCoordtick={3}{node[anchor=north] {$2d-1$}}};
      \draw pic {xCoordtick={4}{node[anchor=north] {$2d$}}};

      \draw pic {xCoordtick={6}{node[anchor=north] {$2(2d-1)$}}};

      \foreach \y in {0,1,2,3,4}
      \draw pic {yCoordtick={\y}{node[anchor=east] {$\y$}}};
    }
  \end{tikzpicture}
  \caption{The supports of $\liealg{2}(X)[-1]$ and $\alg{2}(X)$} \label{fig:supp_a_2(X)[-1]_and_A_2(X)}
\end{figure}

Let us now explain the notation used in Figure~\ref{fig:supp_a_2(X)[-1]_and_A_2(X)}:
\begin{myenum}{(\roman*)}
\item The horizontal (resp. vertical) axis labeled by $c$ (resp. $k$) represents the cohomological (resp. graded, or $\FB$) degrees.
\item The lines $L_B$, $L_1$, $L_T$, and $L$ are given by the following equations
  \begin{align*}
    L_B&: c = (2d-1)(k - 1), & L_1&: c = (2d-1)k, \\
    L_T&: c = (2d-1)k + 1, & L&: c=2dk.
  \end{align*}
  The support of $\liealg{2}(X)[-1]$, represented as $\times$ in the figure, lies between the lines $L_B$ and $L_T$ ($B$ and $T$ stand for bottom and top respectively). When $X$ satisfies $\TopTriv_1$, then $\liealg{2}(X)[-1]_k$ is supported in cohomological degrees $[(2d-1)(k-1), (2d-1)k]$, and hence, the support of $\liealg{2}(X)[-1]$ lies between $L_B$ and $L_1$.

\item The support of $\Sym (\liealg{2}(X)[-1])$ lies between the $k$-axis and the line $L$, the smallest cone through the origin containing the support of $\liealg{2}(X)[-1]$.

  By Proposition~\ref{prop:computing_coChev_as_limit}, we know that $\coChev \liealg{2}(X)$ could be computed as a limit with successive fibers together form $\Sym \liealg{2}(X)_+$. Moreover, for each $k$, by graded degree considerations, we see that the limit computing $(\coChev \liealg{2}(X))_k$ stabilizes. Thus, the cohomological support of $\coChev \liealg{2}(X)$, and hence, of $\alg{2}(X) = \coChev^{\un} \liealg{2}(X)$, also lies between the $k$-axis and $L$.

  When $X$ satisfies $\TopTriv_1$, by a similar argument, we see that the support of $\alg{2}(X)$ lies between the $k$-axis and $L_1$.
\end{myenum}

For ordered configuration spaces, the iterative process that we will consider in this paper will come in two flavors, illustrated by the orange and blue strips in Figure~\ref{fig:iterative_rep_stability}, which will be considered in the next two subsections~\S\ref{subsec:higher_rep_stab_general} and~\S\ref{subsec:higher_rep_stab_TopTrivm} respectively. The former is available for all spaces $X$ whereas the latter is only available when $X$ satisfies $\TopTriv_m$ for some $m$.\footnote{For the blue strip, Figure~\ref{fig:iterative_rep_stability} illustrates the situation where $X$ satisfies $\TopTriv_3$.}

\begin{figure}[ht]
  \begin{tikzpicture}[scale=0.8, every node/.style={scale=0.8}]
    {
      \def\dimension{2}

      \draw pic {Coord={14}{5}};
      \draw[thick, opacity=0.25] pic {SuppLines={4}};

      \draw[orange, opacity=0.40] pic {FillRow={1}};
      \draw[orange, opacity=0.30] pic {FillRow={2}};
      \draw[orange, opacity=0.25] pic {FillRow={3}};
      \draw[orange, opacity=0.20] pic {FillRow={4}};
      \draw[blue, opacity=0.30] pic {FillFirstLine={3}};

      \draw pic {Grid={1}{4}};

      \foreach \x in {0, 1}
      \draw pic {xCoordtick={\x}{node[anchor=north] {$\x$}}};

      \draw pic {xCoordtick={3}{node[anchor=north] {$2d-1$}}};
      \draw pic {xCoordtick={4}{node[anchor=north] {$2d$}}};

      \draw pic {xCoordtick={6}{node[anchor=north] {$2(2d-1)$}}};

      \foreach \y in {0,1,2,3,4}
      \draw pic {yCoordtick={\y}{node[anchor=east] {$\y$}}};
    }
  \end{tikzpicture}
  \caption{$\liealg{2}(X)[-1] = C^*(X, \omega_X)\otimes \liealgOr{2}[-1]$} \label{fig:iterative_rep_stability}
\end{figure}

\subsection{Higher representation stability: the general case}
\label{subsec:higher_rep_stab_general}
This subsection is devoted to the study of higher representation stability phenomena coming from the yellow strips illustrated in Figure~\ref{fig:iterative_rep_stability}. This is an iterative version of Lemma~\ref{lem:unit_for_liealg_n(X)}.

\subsubsection{}
At the first level $k=1$, we have a morphism in $\coLie(\Vect^{\FBplus})$,
\[
  \begin{tikzcd}[column sep=tiny, row sep=small]
    \Ho^{-1}(\liealg{2}(X)_1)[1] \ar[phantom]{d}[description]{\rotatebox[origin=c]{90}{$\simeq$}} \ar{r} & \liealg{2}(X)_1 \ar[phantom]{d}[description]{\rotatebox[origin=c]{90}{$\simeq$}} \ar{r} & \liealg{2}(X) \\
    \Ho^0(X, \sOmega_X)[1]_1 \ar{r} & C^*(X, \sOmega_X)[1]_1
  \end{tikzcd} \teq\label{eq:higher_degree_stabilization}
\]
with the first two terms on the first row being, necessarily, equipped with the trivial $\coLie$-coalgebra structure. In Figure~\ref{fig:iterative_rep_stability},
\[
  (\liealg{2}(X)[-1])_1 \simeq C^*(X, \sOmega_X)_1 \quad\text{and}\quad
  \Ho^{-1}((\liealg{2}(X))_1) \simeq \Ho^0((\liealg{2}(X)[-1])_1) \simeq \Ho^0(X, \sOmega_X)_1.
\]
are illustrated by the bottom orange strip and, respectively, by the point with coordinate $(c, k) = (0, 1)$.

This induces a morphism of twisted commutative algebras
\[
  \Sym(\Ho^{-1}(\liealg{2}(X))_1) \to \alg{2}(X)
\]
and hence, turns the latter into a module over the former. When $X$ is irreducible, $\Ho^{-1}(\liealg{2}(X)_1) \simeq \Lambda_1$, and hence, $\alg{2}(X)$ has a module structure over $\Sym \Lambda_1$, which is equivalent to saying that $\alg{2}(X)$ is naturally an $\FI$-module, see~\cite{sam_introduction_2012}. Representation stability is usually studied via establishing finite generation of $\Ho^c(\alg{2}(X))$ as a module over $\Sym(\Lambda_1)$ along with the graded-degree bound on the generator. Moreover, the failure of representation stability of $\Ho^c(\alg{2}(X))$ is measured by its $\FI$-homology.

In fact, from our point of view, since $\alg{2}(X)$ is naturally a chain complex, $\FI$-hyperhomology of $\alg{2}(X)$ rather than $\FI$-homology of its cohomology groups considered separately is the more natural object to study. Even in the case of homological stability, this point of view is taken in~\cite{galatius_$e_2$-cells_2019}, where the failure of homological stability is taken to be the cone of the stabilizing map, the analog of $\FI$-hyperhomology in the non-twisted setting. Note that in~\cite{miller_higher_2016}, $\FI$-hyperhomology does not make an appearance since all the modules involved are free.

In general, the link between $\FI$-hyperhomology and their higher analogs to (higher) representation stability is somewhat illusive. We will return to this question in some special cases in Corollary~\ref{cor:Tm_image_spanned} and~\ref{cor:T1_image_spanned}; see also Remark~\ref{rmk:hyper_vs_actual_stability}. In the current subsection, we focus on the study of $\FI$-hyperhomology and their higher analogs.

Back to our problem, taking derived indecomposables with respect to the action of $\Sym(\Ho^{-1}(\liealg{2}(X))_1)$ on $\alg{2}(X)$, i.e. taking its $\FI$-hyperhomology, gives
\[
  \Lambda_0 \otimes_{\Sym(\Ho^{-1}(\liealg{2}(X))_1)} \alg{2}(X) \simeq \coChev^{\un}(\liealg{2}'(X)) =: \alg{2}'(X) \teq\label{teq:hyper_FI}
\]
where $\liealg{2}'(X) \simeq \liealg{2}(X) \sqcup_{\Ho^{-1}(\liealg{2}(X)_1)[1]} 0$ is obtained from $\liealg{2}(X)$ by truncating away the lowest term in graded degree $0$, see also~\eqref{eq:factoring_out}. In terms of Figure~\ref{fig:iterative_rep_stability} above, the support of $\liealg{2}'(X)[-1]$ is given by removing the point at $(c, k) = (0, 1)$ from the support of $\liealg{2}(X)[-1]$.

As above, the support of $\coChev^{\un}(\liealg{2}'(X))$ is contained in the smallest cone through the origin containing the support of $\liealg{2}'(X)[-1]$. We thus obtain the following result.

\begin{prop} \label{prop:primary_hyper_FI_homology}
  Let $X$ be an irreducible scheme of dimension $d$ and $b = \min\left(\frac{2d-1}{2}, 1\right)$. Then, the support of the $\FI$-hyperhomology of the cohomology of ordered configuration spaces of $X$ lies below the line given by $c=bk$ (where $c$ and $k$ as as labeled in Figure~\ref{fig:iterative_rep_stability}). In particular, if we let $\HHo^c_{\FI}(-)$ denote the functor of taking the $c$-th $\FI$-hyperhomology, then $\HHo_{\FI}^c(\alg{2}(X))_k \simeq 0$ when
  \begin{myenum}{(\alph*)}
  \item $k>2c$ and $d = 1$, or
  \item $k>c$ and $d > 1$.
  \end{myenum}
  In general, for any $c$ and $k$, $\HHo^c_{\FI}(\alg{2}(X))_k$ is finite dimensional.

  More generally, for any $X$ equidimensional of dimension $d$, we have the same conclusion when replacing $\HHo^c_{\FI}(\alg{2}(X))_k$ by $\Ho^c(\Lambda_0 \otimes_{\Sym(\Ho^{-1}(\liealg{2}(X))_1)} \alg{2}(X))_k$.
\end{prop}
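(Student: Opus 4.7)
The plan is to reduce everything to a cohomological-support estimate on $\coChev^{\un}(\liealg{2}'(X))$ via~\eqref{teq:hyper_FI}. The two key inputs are Theorem~\ref{thm:coh_conf_vs_coChev}, which identifies $\alg{2}(X)$ with $\coChev^{\un}(\liealg{2}(X))$, and the fact that the forgetful functor $\coLie(\Vect^{\FBplus}) \to \Vect^{\FBplus}$ commutes with colimits; this guarantees that the underlying chain complex of $\liealg{2}'(X) = \liealg{2}(X) \sqcup_{\Ho^{-1}(\liealg{2}(X)_1)[1]} 0$ is literally the mapping cone of the morphism constructed as in Lemma~\ref{lem:unit_for_liealg_n(X)}, i.e., $\liealg{2}(X)$ with the cohomological-degree $-1$ piece in graded degree $1$ excised.

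Next I would compute the cohomological support of $\liealg{2}'(X)[-1]$. By Lemma~\ref{lem:a_2_is_free}, $(\liealgOr{2})_k$ is concentrated in cohomological degree $(2d-1)k$, and since $X$ is equidimensional of dimension $d$, $C^*(X,\omega_X)$ is supported in degrees $[-2d,0]$. Hence $\liealg{2}(X)[-1]_k$ is supported in $[(2d-1)(k-1),\, (2d-1)k+1]$, matching the strip between $L_B$ and $L_T$ in Figure~\ref{fig:supp_a_2(X)[-1]_and_A_2(X)}. Excising the piece at $(c,k) = (0,1)$ (one-dimensional for irreducible $X$) leaves $\liealg{2}'(X)[-1]_1$ supported in $[1, 2d]$, while $\liealg{2}'(X)[-1]_k$ for $k \geq 2$ is unchanged. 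A direct check shows that the minimum of $c/k$ over this support is attained either at $k=1$ (value $1$) or at $k=2$ (value $(2d-1)/2$), yielding the slope $b = \min\left(\frac{2d-1}{2},\, 1\right)$.

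The transfer from support of the $\coLie$-coalgebra to support of the Chevalley complex uses Proposition~\ref{prop:computing_coChev_as_limit} in the pro-nilpotent category $\Vect^{\FBplus}$ (see~\S\ref{subsubsec:FBplus_cat_pro-nilp}): for each graded degree $k$, $(\coChev \liealg{2}'(X))_k$ is computed by a stabilized limit whose successive fibers are $\Sym^n(\liealg{2}'(X)[-1])_k$. Since cohomological and graded degrees add under $\Sym^n$, if every nonzero bidegree $(c,k)$ of $\liealg{2}'(X)[-1]$ satisfies $c \geq bk$ with $k\geq 1$, so does every nonzero bidegree of each $\Sym^n$ summand, hence of $\coChev^{\un}(\liealg{2}'(X))$. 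Specializing gives $b=1/2$ (and the vanishing $k>2c$) when $d=1$, and $b=1$ (giving $k>c$) when $d>1$. Finite-dimensionality in each bidegree follows from the same setup: for a fixed $(c,k)$ only finitely many $n$ contribute to the stabilized limit, and each $\Sym^n(\liealg{2}'(X)[-1])_k$ is cohomologically finite since $X$ is of finite type (so $C^*(X,\omega_X)$ and each $(\liealgOr{2})_k$ are).

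For the generalization to equidimensional $X$, the recipe applies verbatim after replacing the one-dimensional quotient in the irreducible case by the possibly higher-dimensional $\Ho^{-1}(\liealg{2}(X)_1)$; the support and finiteness analyses are insensitive to the dimension of this piece. The only nontrivial conceptual step is the passage through $\coChev$, which Proposition~\ref{prop:computing_coChev_as_limit} handles cleanly; the rest is combinatorics on the bidegree lattice, and I expect the only delicate point will be the careful treatment of the boundary $k=1$, where the excision both changes the supporting interval and governs whether the slope $1$ or the slope $(2d-1)/2$ is binding.
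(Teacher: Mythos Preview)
Your proposal is correct and follows essentially the same approach as the paper: the paper's argument is the discussion in~\S\ref{subsec:higher_rep_stab_general} immediately preceding the proposition, which identifies the $\FI$-hyperhomology with $\coChev^{\un}(\liealg{2}'(X))$ via~\eqref{teq:hyper_FI}, computes the support of $\liealg{2}'(X)[-1]$ by removing the point $(c,k)=(0,1)$ from Figure~\ref{fig:iterative_rep_stability}, and then bounds the support of $\coChev^{\un}$ by the smallest cone through the origin using Proposition~\ref{prop:computing_coChev_as_limit}. Your write-up is in fact more explicit than the paper's about the slope computation and the finite-dimensionality claim, but the strategy is identical.
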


\begin{rmk}
  \label{rmk:hyper_vs_actual_stability}
  For manifolds, a similar result is obtained in~\cite{miller_$mathrmfi$-hyperhomology_2019} and serves as the starting point for their new bounds on representation stability of ordered configuration spaces. We expect that, at least for rational coefficients, similar considerations might allow us to generalize the result there beyond the case of manifolds.
\end{rmk}

\subsubsection{}
\label{subsubsec:iterative_procedure}
This process can be studied iteratively. Namely, after taking derived indecomposables with respect to the action induced by the bottom left corner as above, we proceed to the next one on the first row $k=1$, i.e. the one at $(c, k) = (1, 1)$ in Figure~\ref{fig:iterative_rep_stability}. More explicitly, we obtain from~\eqref{eq:higher_degree_stabilization} a morphism in $\coLie(\Vect^{\FB_+})$
\[
  \Ho^1(X, \sOmega_X) \simeq \Ho^{0}(\liealg{2}(X)_1) \to \liealg{2}'(X),
\]
where the LHS is equipped with the structure of a trivial $\coLie$-coalgebra. This induces a $\Sym (\Ho^1(X, \sOmega_X)[-1]_1)$-module structure on $\Lambda_0 \otimes_{\Sym(\Lambda_1)} \alg{2}(X) \simeq \coChev^{\un}(\liealg{2}'(X))$.

We can then factor out the action induced by this one as well, that is, we can take
\[
  \Lambda_0 \otimes_{\Sym (\Ho^1(X, \sOmega_X)[-1])} \alg{X}'(X) \simeq \coChev^{\un}(\liealg{2}^{(2)}(X)) =: \alg{2}^{(2)}(X),
\]
which could be viewed as an analog of $\FI$-hyperhomology. As above, the support of $\liealg{2}^{(2)}(X)$ is obtained from that of $\liealg{2}'(X)$ by deleting the point at $(c, k) = (1, 1)$.

We can repeat this procedure until we exhaust the line $k=1$. Once this happens, we move onto the line $k=2$, work iteratively from left to right like the above, and repeat the procedure for $k=3, 4, \dots$. In terms of Figure~\ref{fig:iterative_rep_stability}, at each step, the effect of taking derived indecomposables with respect to an action induced by a piece at coordinate $(c, k)$ corresponds to, on the $\coLie$ side, to removing the piece at $(c, k)$ (up to a shift to the left by $1$, since Figure~\ref{fig:iterative_rep_stability} shifts everything to the right by $1$). The support of the resulting object is hence also easy to read off. Indeed, if we let $\matheur{Q} = \coChev^{\un} \mathfrak{q}$ be the resulting object at some step, then the support of $\matheur{Q}$ is the smallest cone through the origin containing the support of $\mathfrak{q}[-1]$.

\begin{thm}
  \label{thm:iterative_procedure_conf}
  There is an iterative procedure of producing actions on $\alg{2}(X)$ and taking derived indecomposables with respect to these actions which generalizes the $\FI$-module structure on $\alg{2}(X)$ and the process of taking $\FI$-hyperhomology. Moreover, at each step, the cohomological support of the resulting object can be explicitly described and the dimension of any cohomological and graded degree piece is finite.
\end{thm}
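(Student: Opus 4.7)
The plan is to make the sketch preceding the theorem precise and then verify the two claims by induction on the step index $n$. Set $\liealg{2}^{(0)}(X):=\liealg{2}(X)$, so that $\alg{2}(X)\simeq\coChev^{\un}(\liealg{2}^{(0)}(X))$ by Theorem~\ref{thm:coh_conf_vs_coChev}. Inductively, at stage $n$ I pick a bidegree $(c_n,k_n)$ in the cohomological-by-graded-degree support of $\liealg{2}^{(n)}(X)$; the order dictated by the sketch is to sweep the row $k=1$ from left to right, then $k=2$, and so on, and a first consistency check shows that with $(c_0,k_0)=(0,1)$ (for $X$ irreducible) this specializes to the morphism $\Ho^{-1}(\liealg{2}(X)_1)[1]\to \liealg{2}(X)$ of~\eqref{eq:higher_degree_stabilization} and to the $\FI$-hyperhomology of~\eqref{teq:hyper_FI}, establishing the ``generalizes'' part of the statement.

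The core inductive step is to produce a morphism $\mathfrak{o}^{(n)}:=\Ho^{c_n}(\liealg{2}^{(n)}(X))_{k_n}[-c_n]_{k_n}\to \liealg{2}^{(n)}(X)$ in $\coLie(\Vect^{\FBplus})$, with $\mathfrak{o}^{(n)}$ carrying the trivial $\coLie$-coalgebra structure. Generalizing Lemma~\ref{lem:unit_for_liealg_n(X)}, the key observation is that once all pieces of $\liealg{2}^{(n)}(X)$ of graded degree $<k_n$ have been deleted by earlier iterations, the graded-$k_n$ piece is automatically a trivial $\coLie$-subcoalgebra: any $j$-ary cooperation (for $j\geq 2$) out of it lands in a sum over compositions $k_n=k_{n,1}+\cdots+k_{n,j}$ with each $k_{n,i}\geq 1$, hence $k_{n,i}<k_n$, and so entirely in the deleted region. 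Extracting the $c_n$-th cohomology then yields the map; applying $\coChev^{\un}$ and using that Koszul duality in the pro-nilpotent category $\Vect^{\FBplus}$ (Proposition~\ref{prop:koszul_duality_pro-nilp}, \S\ref{subsubsec:FBplus_cat_pro-nilp}) preserves pushouts gives
\[
\alg{2}^{(n+1)}(X):=\Lambda_0\otimes_{\Sym(\mathfrak{o}^{(n)}[-1])}\alg{2}^{(n)}(X)\simeq \coChev^{\un}\bigl(\liealg{2}^{(n+1)}(X)\bigr),\qquad \liealg{2}^{(n+1)}(X):=\cone\bigl(\mathfrak{o}^{(n)}\to \liealg{2}^{(n)}(X)\bigr).
\]
Since the forgetful functor $\coLie(\Vect^{\FBplus})\to \Vect^{\FBplus}$ is a left adjoint, the underlying chain complex of $\liealg{2}^{(n+1)}(X)$ is the cone of the underlying map, which a short long-exact-sequence argument identifies with that of $\liealg{2}^{(n)}(X)$ minus precisely the bidegree piece at $(c_n,k_n)$ (the map is an isomorphism on $H^{c_n}$ of the graded-$k_n$ piece and zero elsewhere).

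For the explicit-support claim, Proposition~\ref{prop:computing_coChev_as_limit} presents $\coChev(\liealg{2}^{(n+1)}(X))$ as a limit whose successive fibers are the symmetric powers $\Sym^j(\liealg{2}^{(n+1)}(X)[-1])$, each of whose bidegree support is the $j$-fold sum of that of $\liealg{2}^{(n+1)}(X)[-1]$; consequently the cohomological support of $\alg{2}^{(n+1)}(X)$ lies in the smallest cone through the origin in the $(c,k)$-plane containing the support of $\liealg{2}^{(n+1)}(X)[-1]$, and by the preceding paragraph this is simply the support of $\liealg{2}(X)[-1]$ with the bidegrees $(c_0,k_0),\ldots,(c_n,k_n)$ deleted. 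For finite-dimensionality I would use that $\liealg{2}(X)=C^*(X,\omega_X)\otimes \liealgOr{2}$ is, at every graded degree $k$, a bounded complex with finite-dimensional cohomology (\Kunneth, Lemma~\ref{lem:a_2_is_free}, and the finite-type hypothesis on $X$), and that both the cone and each $\Sym^j$ preserve this property; at fixed graded degree $k$ only $j\leq k$ contribute to $\coChev(\liealg{2}^{(n+1)}(X))_k$, so every cohomological piece of $\alg{2}^{(n+1)}(X)_k$ is a finite iterated extension of finite-dimensional spaces.

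The main technical hurdle is the inductive production of a genuine $\coLie$-morphism $\mathfrak{o}^{(n)}\to \liealg{2}^{(n)}(X)$: one must verify that extracting a bidegree piece yields a $\coLie$-map rather than merely a chain-complex map, and it is precisely the choice of ordering (always removing all of graded degree $<k_n$ before any piece at graded degree $k_n$) that makes this formal. Once this step is in place, everything else — the identification of derived indecomposables with $\coChev^{\un}$ of a cone, the explicit support, and the finite-dimensionality — is a mechanical consequence of the framework developed in the paper.
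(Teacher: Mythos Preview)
Your proposal is correct and follows precisely the approach the paper takes: the paper's ``proof'' is the sketch in \S\ref{subsubsec:iterative_procedure} immediately preceding the theorem (the paper explicitly declines to spell out formulae), and you have faithfully formalized that sketch---producing the $\coLie$-map at each step via the truncation adjunction generalizing Lemma~\ref{lem:unit_for_liealg_n(X)}, identifying derived indecomposables with $\coChev^{\un}$ of the pushout along $\mathfrak{o}^{(n)}\to 0$, and reading off support and finite-dimensionality from Proposition~\ref{prop:computing_coChev_as_limit}. One small indexing slip: with your formula $\mathfrak{o}^{(n)}=\Ho^{c_n}(\liealg{2}^{(n)}(X))_{k_n}[-c_n]$, the first step should have $(c_0,k_0)=(-1,1)$ rather than $(0,1)$ to recover $\Ho^{-1}(\liealg{2}(X)_1)[1]$; the coordinate $(0,1)$ is the Figure's convention for $\liealg{2}(X)[-1]$, not for $\liealg{2}(X)$ itself.
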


Since it is easier and more illuminating to describe (which we did above) than to give explicit formulae, we omit the discussion of the precise formulae here. However, to demonstrate the simplicity of the method, we explicitly spell out the case where $k=1$ (the bottom orange strip in Figure~\ref{fig:iterative_rep_stability}) in the Corollary below.

\begin{cor} \label{cor:factoring_out_first_line_k=1}
  Let $X$ be an equidimensional scheme of dimension $d$, $c_0 \geq 0$ an integer, and $b=\min\left(\frac{2d-1}{2}, c_0 + 1\right)$. Let $\alg{2}^{(c_0)}(X) = \coChev^{\un} (\liealg{2}^{(c_0)}(X))$ be the object obtained from iteratively taking derived decomposables with respect to actions on $\alg{2}(X)$ by
  \[
    \Sym (\Ho^0(X, \sOmega_X)_1), \Sym (\Ho^1(X, \sOmega_X)[-1]_1), \dots, \Sym (\Ho^{c_0}(X, \sOmega_X)[-c_0]_1). \teq\label{eq:higher_dim_stab_algs}
  \]
  Then, $\Ho^c(\alg{2}^{(c_0)}(X))_k \simeq 0$ when $c < bk$. In general, $\Ho^c(\alg{2}^{(c_0)}(X))_k$ is finite dimensional.
\end{cor}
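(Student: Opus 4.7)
The plan is to transport the entire iteration through Koszul duality. By Theorem~\ref{thm:coh_conf_vs_coChev}, $\alg{2}(X) \simeq \coChev^{\un}(\liealg{2}(X))$; and since $\coChev^{\un}$ of a trivial $\coLie$-coalgebra $\mathfrak{o}$ is $\Sym(\mathfrak{o}[-1])$, each action by $\Sym(\Ho^i(X, \sOmega_X)[-i]_1)$ is Koszul-dual to a morphism $\mathfrak{o}_i \to \liealg{2}^{(i-1)}(X)$ with $\mathfrak{o}_i := \Ho^i(X, \sOmega_X)[1-i]_1$ trivial and concentrated in graded degree $1$, cohomological degree $i-1$. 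Taking derived indecomposables is a pushout against $0$; since Koszul duality is an equivalence of $\infty$-categories (Proposition~\ref{prop:koszul_duality_pro-nilp}), it commutes with this pushout, so $\liealg{2}^{(i)}(X) \simeq \cone(\mathfrak{o}_i \to \liealg{2}^{(i-1)}(X))$. The strategy is then to track the cohomological support through this iterated cone.

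The lifts exist: any object in graded degree $1$ of $\Vect^{\FBplus}$ has vanishing cobracket, because $(\mathfrak{a} \otimes \mathfrak{a})_1$ would require two non-empty factors of total size $1$, impossible in $\FBplus$; hence any chain map of underlying complexes automatically promotes to a $\coLie$-coalgebra map. Inductively, I choose the chain map $\mathfrak{o}_i \to \liealg{2}^{(i-1)}(X)$ to represent the identity on $\Ho^i(X, \sOmega_X)$ in the cohomological-degree-$(i-1)$ slot of the graded-degree-$1$ slice. This class is available at step $i$ because the $j$-th cone ($j < i$) affects, by its LES, only cohomological degrees $j-2$ and $j-1$ of the graded-degree-$1$ slice, and $i - 1 \notin \{j-2, j-1\}$ whenever $j < i$. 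The same LES shows that step $j$ kills exactly the $\Ho^{j-1}$ class of the graded-degree-$1$ slice, leaving everything else intact; higher graded degrees are untouched throughout.

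Setting $\mathfrak{g} := \liealg{2}^{(c_0)}(X)[-1]$, the cohomological support of $\mathfrak{g}_1$ therefore lies in $\{c \geq c_0 + 1\}$, while for $k \geq 2$ a direct degree count using Lemma~\ref{lem:a_2_is_free} (which places $(\liealgOr{2})_k$ in cohomological degree $(2d-1)k$) together with the nonpositive-degree range of $C^*(X, \omega_X)$ puts the support of $\mathfrak{g}_k$ in $\{c \geq (2d-1)(k-1)\}$. In either case the slope $c/k$ on the support of $\mathfrak{g}$ is bounded below by $b = \min\!\left(c_0 + 1,\ (2d-1)/2\right)$, the minimum over $k \geq 2$ being attained at $k = 2$.

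To conclude, I invoke Proposition~\ref{prop:computing_coChev_as_limit}: $\alg{2}^{(c_0)}(X) \simeq \coChev^{\un}(\liealg{2}^{(c_0)}(X))$ is the (per-graded-degree stabilizing) limit whose successive cofibers are $\Sym^n(\mathfrak{g})$. The slope bound $c \geq bk$ is preserved under tensor and by $S_n$-coinvariants, so $\Sym^n(\mathfrak{g})$ satisfies it for every $n$, yielding $\Ho^c(\alg{2}^{(c_0)}(X))_k = 0$ for $c < bk$. Finite-dimensionality follows from the finite-dimensionality of $\Ho^*(X, \sOmega_X)$ and of each component of $\liealgOr{2}$ (Lemma~\ref{lem:a_2_is_free}), which is preserved by cones, by $\Sym^n$, and by the stabilizing limit. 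The main obstacle is the bookkeeping in the middle paragraph — tracking the cone LES across all $c_0 + 1$ iterations so that each class to be killed remains available at its step — but this is routine once the single-step LES has been worked out.
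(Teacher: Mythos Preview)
Your proof is correct and follows the same approach as the paper: transport the iteration through Koszul duality, identify each derived-indecomposables step as a cone on the $\coLie$ side (removing one cohomology class from the graded-degree-$1$ slice), then bound the support of $\coChev^{\un}$ via Proposition~\ref{prop:computing_coChev_as_limit}. The paper treats the corollary as an immediate instance of the procedure described in \S\ref{subsubsec:iterative_procedure} and visualized in Figure~\ref{fig:iterative_rep_stability}, without spelling out the LES bookkeeping you provide; your justification that a chain map concentrated in graded degree $1$ automatically lifts to a $\coLie$-coalgebra map is equivalent to the paper's use of the adjunction $\iota_{\leq 1} \dashv \tr_{\leq 1}$ (Remark~\ref{rmk:iota_<n_left_adjoint}, cf.\ Lemma~\ref{lem:unit_for_liealg_n(X)}).
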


\subsubsection{}
Besides~\cite{gan_linear_2019}, not much is known about the link between taking derived indecomposables and higher representation stability. We expect that once analogs of~\cite{gan_linear_2019} are established, these results could be used to prove stronger statements about higher representation stability.

\subsubsection{The $i$-acyclic case}
\label{subsubsec:first_flavor_i-acyclic}
In the case where $\alg{2}(X)$ is free as a module over the algebras listed in~\eqref{eq:higher_dim_stab_algs}, such as when $X$ is $i$-acyclic, we obtain a more traditionally looking representation stability statement, which we now turn our attention to.

To start, note that taking derived indecomposables of a free module is particularly simple: suppose $\alg{2}(X) \simeq \matheur{O} \otimes \matheur{G}$, i.e. $\matheur{A}$ is a free module, generated by $\matheur{G}$, over a (twisted) commutative algebra $\matheur{O}$, then taking derived indecomposables with respect to $\matheur{O}$ simply gives $\matheur{G}$, the generators. In particular, the derived nature of taking derived indecomposables goes away.

Moreover, traditionally, higher representation stability is usually considered in the case where $\matheur{O} = \Sym \mathfrak{o}$ for some $\mathfrak{o}$ which concentrates in one cohomological and graded degree. An $\matheur{O}$-module structure could thus intuitively be thought of as a way to repeatedly multiply by elements of $\mathfrak{o}$.

Also traditionally, (higher) representation stability usually considers only modules rather than chain complexes thereof. More specifically, let $(c_{\mathfrak{o}}, k_{\mathfrak{o}})$ denote the cohomological and graded degrees where $\mathfrak{o}$ is supported. Then, for each cohomological degree $i$, one usually considers
\[
  \bigoplus_{t=0}^\infty \Ho^{i + tc_\mathfrak{o}}(\alg{2}(X))_{1+tk_\mathfrak{o}} \teq\label{eq:shifted_to_heart}
\]
as a module over $\matheur{O}' = \Sym \Ho^{c_\mathfrak{o}}(\mathfrak{o})$ or $\matheur{O}' = \Alt \Ho^{c_\mathfrak{o}}(\mathfrak{o})$ (the free twisted anti-commutative algebra generated by $\Ho^{c_\mathfrak{o}}(\mathfrak{o})$) depending whether $c_\mathfrak{o}$ is even or odd (see also footnote~\ref{ftn:Sym_contains_both_sym_and_anti-sym}). Moreover, if $\alg{2}(X)$ is free over $\matheur{O}$ then for each $i$, the same is true for~\eqref{eq:shifted_to_heart} as a module over $\matheur{O}'$. Thus, taking derived indecomposables, which a priori needs the whole chain complex, could be computed using just objects of the form~\eqref{eq:shifted_to_heart}. This point of view, however, leads to somewhat complicated notation since $c_\mathfrak{o}$ changes at each step in the iteration. We thus will not adopt this perspective and the notation that arises from it in the paper.

\subsubsection{}
This procedure can be visualized in terms of Figure~\ref{fig:iterative_rep_stability} as follows. Suppose we are interested in studying the action given by multiplying with $\Ho^{c_\mathfrak{o}}(X, \sOmega_X)[c_\mathfrak{o}]$ on $\alg{2}(X)$, then the space constructed in~\eqref{eq:shifted_to_heart} is obtained from taking the direct sum of all the cohomology lying on the line going through $(c, k) = (i, 1)$ that is parallel to the line $c=kc_\mathfrak{o}$, i.e. the one through the origin and the point $(c, k) = (c_\mathfrak{o}, 1)$.

Now, for $c_0 \geq 0$, let $\alg{2}^{(c_0 -1)}(X)$ be obtained from taking derived indecomposables with respect to actions of
\[
  \Sym (\Ho^0(X, \sOmega_X)_1), \Sym (\Ho^1(X, \sOmega_X)[-1]_1), \dots, \Sym (\Ho^{c_0-1}(X, \sOmega_X)[-c_0+1]_1)
\]
from $\alg{2}(X)$. Then, as above, for any $i\geq 0$,
\[
  \bigoplus_{t=0}^\infty \Ho^{i + tc_0}(\alg{2}^{(c_0-1)}(X))_{1+t} \teq\label{eq:mod_i_c_0}
\]
is naturally equipped with the structure of a free module over $\matheur{O}_{c_0} = \Sym \Ho^{c_0}(X, \sOmega_X)_1$ or $\matheur{O}_{c_0} = \Alt \Ho^{c_0}(X, \sOmega_X)_1$ depending on whether $c_0$ is even or odd, and moreover, the generators of this module is given by
\[
  \bigoplus_{t=0}^\infty \Ho^{i + tc_0}(\alg{2}^{(c_0)}(X))_{1+t}. \teq\label{eq:gen_i_c_0}
\]
Directly from Corollary~\ref{cor:factoring_out_first_line_k=1}, we see that when $c_0 < \frac{2d-1}{2}$, the direct sum at~\eqref{eq:gen_i_c_0} is in fact finite, i.e. the $k$-th summand vanishes when $k \gg 0$. In other words, the module written at~\eqref{eq:mod_i_c_0} is finitely generated. Using the bound $b$ in Corollary~\ref{cor:factoring_out_first_line_k=1}, we obtain the following.

\begin{cor} \label{cor:conf_spaces_k=1_i-acyclic}
  Let $X$ be an equidimensional scheme of dimension $d$ such that the multiplication structure on $C^*_c(X, \Lambda)$ is trivial (e.g. when $X$ is $i$-acyclic), and $c_0, i \geq 0$ be integers such that $c_0 < \frac{2d-1}{2}$. Then, $\bigoplus_{t=0}^\infty \Ho^{i + tc_0}(\alg{2}^{(c_0-1)}(X))_{1+t}$ is a finitely generated free module over $\Sym \Ho^{c_0}(X, \sOmega_X)_1$ (resp. $\Alt \Ho^{c_0}(X, \sOmega_X)_1$) when $c_0$ is even (resp. odd), where $\alg{2}^{(c_0-1)}$ is defined in Corollary~\ref{cor:factoring_out_first_line_k=1}. Moreover, the generators live in graded degrees $k$ with
  \begin{myenum}{(\roman*)}
  \item $1\leq k\leq i-c_0$, when $c_0 < d-1$, and
  \item $1\leq k\leq 2(i-d+1)$, when $c_0 = d-1$.
  \end{myenum}
\end{cor}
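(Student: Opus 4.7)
\emph{Proof proposal.} The plan is to leverage the $i$-acyclic simplification to reduce the derived-indecomposables computation to an elementary calculation on generators, and then to read off the support bound from Corollary~\ref{cor:factoring_out_first_line_k=1}. First, since the multiplication on $C^*_c(X, \Lambda)$ is trivial, Proposition~\ref{prop:spectral_seq_collapse_triv_alg} tells us that the $\coLie$-coalgebra structure on $\liealg{2}(X) = C^*(X, \omega_X) \otimes \liealgOr{2}$ is trivial, and consequently $\alg{2}(X) \simeq \Sym(\Ho^*(\liealg{2}(X))[-1])$ as twisted commutative algebras. Applying the same observation after each removal step, every $\alg{2}^{(c)}(X)$ appearing in the iterative procedure of Corollary~\ref{cor:factoring_out_first_line_k=1} remains a free twisted commutative algebra on a trivial $\coLie$-coalgebra.

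Next, because each stage of the iteration is free over the preceding operator algebra, taking derived indecomposables collapses to a non-derived operation: as discussed in~\S\ref{subsubsec:first_flavor_i-acyclic}, if $\matheur{A} \simeq \matheur{O} \otimes \matheur{G}$ as $\matheur{O}$-modules then $\Lambda \otimes_{\matheur{O}} \matheur{A} \simeq \matheur{G}$. Applying this to the action of $\Sym(\Ho^{c_0}(X, \sOmega_X)[-c_0]_1)$ on $\alg{2}^{(c_0-1)}(X)$, the re-indexed sum $\bigoplus_{t} \Ho^{i+tc_0}(\alg{2}^{(c_0-1)}(X))_{1+t}$ is free over $\matheur{O}_{c_0}$, where the Koszul sign rule forces $\matheur{O}_{c_0} = \Sym \Ho^{c_0}(X, \sOmega_X)_1$ when $c_0$ is even and $\matheur{O}_{c_0} = \Alt \Ho^{c_0}(X, \sOmega_X)_1$ when $c_0$ is odd (compare footnote~\ref{ftn:Sym_contains_both_sym_and_anti-sym}). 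The module of generators is then precisely $\bigoplus_{t} \Ho^{i+tc_0}(\alg{2}^{(c_0)}(X))_{1+t}$.

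Finally, the graded-degree bound on the generators is obtained by applying the vanishing range of Corollary~\ref{cor:factoring_out_first_line_k=1} to $\alg{2}^{(c_0)}(X)$: the piece $\Ho^{i+tc_0}(\alg{2}^{(c_0)}(X))_{1+t}$ vanishes whenever $i+tc_0 < b(1+t)$, where $b = \min\bigl(\tfrac{2d-1}{2}, c_0+1\bigr)$. When $c_0 < d-1$ one has $b = c_0+1$, and solving $i+tc_0 \geq (c_0+1)(1+t)$ gives $t \leq i-c_0-1$, i.e.\ $k = 1+t \leq i-c_0$. When $c_0 = d-1$ one has $b = (2d-1)/2$, and the inequality $i+t(d-1) \geq \tfrac{2d-1}{2}(1+t)$ rearranges to $t \leq 2i - 2d + 1$, i.e.\ $k = 1+t \leq 2(i-d+1)$. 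Finite generation follows from the fact that only finitely many $k$ satisfy either bound.

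The only step with any real content is the identification of the iterated derived-indecomposables in the free case with the naive passage to generators, together with the correct determination of whether $\matheur{O}_{c_0}$ is a $\Sym$ or an $\Alt$; both are bookkeeping consequences of Proposition~\ref{prop:spectral_seq_collapse_triv_alg} and the Koszul sign rule, so no genuinely new ingredient is needed beyond the results already established.
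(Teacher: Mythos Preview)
Your proof is correct and follows essentially the same approach as the paper: you invoke the freeness coming from the trivial $\coLie$ structure (as in \S\ref{subsubsec:first_flavor_i-acyclic}) to identify the generators with $\bigoplus_t \Ho^{i+tc_0}(\alg{2}^{(c_0)}(X))_{1+t}$, and then apply the support bound of Corollary~\ref{cor:factoring_out_first_line_k=1} to solve for the range of $k$. The paper's proof phrases the last step as intersecting the line $c = i + c_0(k-1)$ with the bounding line $c = bk$, which is exactly your inequality computation in different clothing.
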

\begin{proof} The free generators of $\bigoplus_{t=0}^\infty \Ho^{i + tc_0}(\alg{2}^{(c_0-1)}(X))_{1+t}$, given by $\bigoplus_{t=0}^\infty \Ho^{i + tc_0}(\alg{2}^{(c_0)}(X))_{1+t}$, are obtained by taking the direct sum of cohomology along the line $c = i+c_0(k-1)$.

  When $c_0 + 1 \leq \frac{2d-1}{2}$, which is equivalent to saying that $c_0 < d - 1$ since $c_0$ is an integer,
  Corollary~\ref{cor:factoring_out_first_line_k=1} implies that the support of $\alg{2}^{(c_0)}(X)$ lives below the line $c = (c_0+1)k$. The $k$-coordinate of the intersection between these two lines is $k=i-c_0$ which resolves the first part.

  For the second part, also from Corollary~\ref{cor:factoring_out_first_line_k=1}, we know that the support of $\alg{2}^{(c_0)}(X)$ lives below the line $c=\frac{2d-1}{2}k$. Using the fact that $c_0 = d-1$, we see that the $k$-coordinate of the intersection is given by $2(i-d+1)$. The second part is thus also resolved.
\end{proof}

\subsubsection{Generalized configuration spaces}
We conclude this subsection with the case of generalized configuration spaces. As we have seen in Lemma~\ref{lem:homological_est_a_n}, for $n>2$, we know less about $\liealgOr{n}$, and hence also about $\liealg{n}(X)$, than about $\liealgOr{2}$ and $\liealg{2}(X)$. However, $\liealg{2}(X)$ shares two important features with $\liealg{n}(X)$ which could be used to establish analogs of the results above.

First of all, $\liealg{2}(X)[-1]_1$ is the same as $\liealg{n}(X)[-1]_1$. Thus, the iterative process described above still makes sense if we restrict ourselves to the actions induced by the first orange strip in Figure~\ref{fig:iterative_rep_stability}. Secondly, the last part of Lemma~\ref{lem:homological_est_a_n} provides us with the necessary upper bound as in Corollary~\ref{cor:factoring_out_first_line_k=1} and Corollary~\ref{cor:conf_spaces_k=1_i-acyclic}. Thus, arguing as before, we obtain the following results which are analogs of these Corollaries.

\begin{prop}
  \label{prop:first_flavor_gen_conf}
  Let $X$ be an equidimensional scheme of dimension $d$, $c_0\geq 0$, $n\geq 2$ be integers, and
  \[
    b=\min\left(\frac{(2d-1)(n-1)}{n} , c_0+1\right).
  \]
  Let $\alg{n}^{(c_0)}(X) = \coChev^{\un}(\liealg{n}^{(c_0)}(X))$ be the object obtained from iteratively taking derived indecomposables with respect to actions on $\alg{n}(X)$ by
  \[
    \Sym (\Ho^0(X, \sOmega_X)_1), \Sym(\Ho^1(X, \sOmega_X)[-1]_1), \dots, \Sym(\Ho^{c_0}(X, \sOmega_X)[-c_0]_1).
  \]
  Then $\Ho^c(\alg{n}^{(c_0)}(X))_k \simeq 0$ when $c<bk$. In general, $\Ho^c(\alg{n}^{(c_0)}(X))_k$ is finite dimensional.
\end{prop}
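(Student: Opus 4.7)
The plan is to mirror the proof of Corollary~\ref{cor:factoring_out_first_line_k=1}, substituting Lemma~\ref{lem:homological_est_a_n} for Lemma~\ref{lem:a_2_is_free} to control the cohomological support of $\liealgOr{n}$ in graded degrees $k\geq n$. First, by Theorem~\ref{thm:coh_conf_vs_coChev}, $\alg{n}(X)\simeq \coChev^{\un}(\liealg{n}(X))$. As in~\S\ref{subsec:sketch_higher_rep_stab} and Lemma~\ref{lem:unit_for_liealg_n(X)}, each action of $\Sym(\Ho^i(X,\sOmega_X)[-i]_1)$ on $\alg{n}^{(i-1)}(X)$ is, under Koszul duality, induced by a trivial $\coLie$-coalgebra map $\Ho^i(X,\sOmega_X)[-i+1]_1 \to \liealg{n}^{(i-1)}(X)$ (with the convention $\liealg{n}^{(-1)}(X):=\liealg{n}(X)$) which picks out the cohomology in the bottom cohomological degree of the graded-degree-one component at each stage. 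Koszul duality being an equivalence, taking derived indecomposables on the algebra side corresponds to forming the pushout by $0$ on the $\coLie$ side, and since $\coLie(\Vect^{\FBplus})\to \Vect^{\FBplus}$ is a left adjoint, the underlying chain complex of $\liealg{n}^{(c_0)}(X)$ is obtained iteratively as the cone of these maps.

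Next, I will track the support. Since $(\liealg{n}(X))_1 \simeq C^*(X,\sOmega_X)[1]_1$ is supported in cohomological degrees $[-1,\,2d-1]$, iteratively coning off the bottom cohomology $c_0+1$ times leaves $(\liealg{n}^{(c_0)}(X))_1$ supported in $[c_0,\,2d-1]$. The components in graded degrees $1<k<n$ already vanish by Lemma~\ref{lem:homological_est_a_n}(b), and those in degrees $k\geq n$ are untouched by the iteration. Combining Lemma~\ref{lem:homological_est_a_n}(c) with the fact that $C^*(X,\omega_X)$ is concentrated in cohomological degrees $[-2d,\,0]$, $(\liealg{n}^{(c_0)}(X))_k$ for $k\geq n$ is supported in $[(2d-1)k-2d,\,(2d(n-1)-1)k/(n-1)]$ with finite dimensional cohomology in each degree.

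Accordingly, $(\liealg{n}^{(c_0)}(X)[-1])_k$ has lowest cohomological degree $c_0+1$ when $k=1$ and $(2d-1)(k-1)$ when $k\geq n$. Since $(2d-1)(k-1)/k$ is increasing in $k$, the smallest slope $b$ of a line through the origin dominating all these lower bounds is
\[
	b=\min\!\left(c_0+1,\;\inf_{k\geq n}\tfrac{(2d-1)(k-1)}{k}\right)=\min\!\left(c_0+1,\;\tfrac{(2d-1)(n-1)}{n}\right),
\]
as claimed. Finally, I apply Proposition~\ref{prop:computing_coChev_as_limit}: in each graded degree $k$, the limit computing $(\coChev\,\liealg{n}^{(c_0)}(X))_k$ is finite because $\Sym^m(\liealg{n}^{(c_0)}(X)[-1])_k$ vanishes for $m>k$; each remaining fiber is supported cohomologically at $\geq bk$ by the slope estimate above, yielding $\Ho^c(\alg{n}^{(c_0)}(X))_k=0$ for $c<bk$. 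Finite dimensionality of each $\Ho^c(\alg{n}^{(c_0)}(X))_k$ follows because each $\Ho^{\bullet}((\liealg{n}^{(c_0)}(X)[-1])_k)$ is finite dimensional (by \Kunneth{}, Lemma~\ref{lem:homological_est_a_n}, and the cone construction), and the limit is finite in each bi-degree $(c,k)$.

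The only substantive step is the careful bookkeeping of the cohomological support through the iterated cones and through $\coChev$; once this is in place the vanishing and finiteness statements are automatic, with the slope bound $b$ being realized precisely at $k=1$ or $k=n$.
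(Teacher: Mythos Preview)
Your proof is correct and follows essentially the same approach as the paper: the paper's proof is the one-liner ``Immediate from the discussion above,'' where the discussion observes that $\liealg{n}(X)_1 = \liealg{2}(X)_1$ so the iterative coning process on the degree-$1$ piece works identically, and then invokes Lemma~\ref{lem:homological_est_a_n} (in place of Lemma~\ref{lem:a_2_is_free}) for the lower bound on the support in graded degrees $k\geq n$. You have simply spelled out the bookkeeping that the paper leaves implicit, including the computation that the minimal slope over $k\geq n$ of $(2d-1)(k-1)/k$ is attained at $k=n$.
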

\begin{proof}
  Immediate from the discussion above.
\end{proof}

\begin{prop}
  \label{prop:first_flavor_gen_conf_degree_wise}
  Let $X$ be an equidimensional scheme of dimension $d$ such that $C^*_c(X, \Lambda)$ has trivial multiplication structure (e.g. when $X$ is $i$-acyclic), and $c_0, i\geq 0$, $n\geq 2$ be integers such that $c_0<\frac{(2d-1)(n-1)}{n}$. Then, $\bigoplus_{t=0}^\infty \Ho^{i+tc_0}(\alg{n}^{(c_0-1)}(X))_{t+1}$ is a finitely generated free module over $\Sym \Ho^{c_0}(X, \sOmega_X)_1$ (resp. $\Alt \Ho^{c_0}(X, \sOmega_X)_1$) when $c_0$ is even (resp. odd). Moreover, the generators live in graded degrees $k$ with
  \begin{myenum}{(\roman*)}
  \item $1\leq k\leq i - c_0$, when $c_0 + 1 \leq \frac{(2d-1)(n-1)}{n}$, and

  \item $1\leq k \leq \left(\frac{(2d-1)(n-1)}{n}-c_0\right)^{-1}(i-c_0)$, otherwise.
  \end{myenum}
\end{prop}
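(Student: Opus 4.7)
The proof will proceed exactly along the lines used to pass from Corollary~\ref{cor:factoring_out_first_line_k=1} to Corollary~\ref{cor:conf_spaces_k=1_i-acyclic}, now invoking Proposition~\ref{prop:first_flavor_gen_conf} instead. The key simplification comes from Proposition~\ref{prop:spectral_seq_collapse_triv_alg}: when $C^*_c(X,\Lambda)$ has trivial multiplication, the $\coLie$-coalgebra $\liealg{n}(X) = C^*(X,\omega_X)\otimes\liealgOr{n}$ has trivial $\coLie$-structure, so the spectral sequence computing $\coChev^{\un}\liealg{n}(X)$ collapses and $\Ho^*(\alg{n}(X)) \simeq \Sym(\Ho^*(\liealg{n}(X))[-1])$ as twisted commutative algebras. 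Consequently, any splitting off a trivial sub-$\coLie$-coalgebra yields on the algebra side an honest splitting as a free module: iteratively splitting off $\Ho^0(X,\sOmega_X)_1, \Ho^1(X,\sOmega_X)[-1]_1,\dots,\Ho^{c_0-1}(X,\sOmega_X)[-c_0+1]_1$ and finally $\Ho^{c_0}(X,\sOmega_X)[-c_0]_1$ makes $\alg{n}^{(c_0-1)}(X)$ free over the twisted symmetric (or antisymmetric, depending on the parity of $c_0$) algebra $\matheur{O}_{c_0}$ generated by $\Ho^{c_0}(X,\sOmega_X)[-c_0]_1$.

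With freeness in place, the derived indecomposables become classical: the module of $\matheur{O}_{c_0}$-generators of $\alg{n}^{(c_0-1)}(X)$ is exactly $\alg{n}^{(c_0)}(X)$. Restricting to the $\matheur{O}_{c_0}$-orbit of cohomological degree $i$ in graded degree $1$, the module $\bigoplus_{t=0}^\infty \Ho^{i+tc_0}(\alg{n}^{(c_0-1)}(X))_{t+1}$ is free over $\matheur{O}_{c_0}$ with generating space $\bigoplus_{t=0}^\infty \Ho^{i+tc_0}(\alg{n}^{(c_0)}(X))_{t+1}$. Finite generation is therefore equivalent to the vanishing of all but finitely many of these summands, and the bound on generators reduces to a purely geometric question: for which $k = t+1$ can $\Ho^{i+(k-1)c_0}(\alg{n}^{(c_0)}(X))_{k}$ be non-zero?

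The answer is read off from Proposition~\ref{prop:first_flavor_gen_conf}, which places the support of $\alg{n}^{(c_0)}(X)$ below (and possibly on) the line $c = bk$ with $b = \min\!\left(\tfrac{(2d-1)(n-1)}{n},\, c_0+1\right)$. The generators live on the line $c = i + c_0(k-1)$ through $(i,1)$ of slope $c_0$, so non-vanishing forces $i + c_0(k-1) \geq bk$, i.e.\ $k(b-c_0) \leq i - c_0$. When $c_0+1 \leq \tfrac{(2d-1)(n-1)}{n}$ we have $b = c_0+1$, hence $b - c_0 = 1$ and $k \leq i - c_0$, giving (i); otherwise $b = \tfrac{(2d-1)(n-1)}{n} > c_0$ (this strict inequality is exactly the hypothesis $c_0 < \tfrac{(2d-1)(n-1)}{n}$, which also guarantees that the line of generators eventually escapes the support cone so that the bound is effective), and we get $k \leq \bigl(\tfrac{(2d-1)(n-1)}{n} - c_0\bigr)^{-1}(i-c_0)$, giving (ii). The main subtlety — and the only place anything beyond Proposition~\ref{prop:first_flavor_gen_conf} is needed — is ensuring that the iterative process really does produce a free module and not just a derived one; this is precisely what the triviality of the multiplication on $C^*_c(X,\Lambda)$ buys us via Proposition~\ref{prop:spectral_seq_collapse_triv_alg}.
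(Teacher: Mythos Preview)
Your proposal is correct and follows essentially the same approach as the paper: the paper's proof simply refers back to the argument for Corollary~\ref{cor:conf_spaces_k=1_i-acyclic} (which in turn relies on the freeness coming from Proposition~\ref{prop:spectral_seq_collapse_triv_alg} via the discussion in~\S\ref{subsubsec:first_flavor_i-acyclic}) and then computes the same two line intersections $c=i+c_0(k-1)$ with $c=(c_0+1)k$ and with $c=\tfrac{(2d-1)(n-1)}{n}k$. Your write-up is in fact more explicit than the paper's about why freeness holds and why the hypothesis $c_0<\tfrac{(2d-1)(n-1)}{n}$ is needed, but the substance is identical.
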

\begin{proof}
  As in the proof of Corollary~\ref{cor:conf_spaces_k=1_i-acyclic}, the upper bound of $k$ in the first part is given by the $k$-coordinate of the intersection of the lines given by $c=i+c_0(k-1)$ and $c=(c_0+1)k$, which, as above, is $k=i-c_0$, resolving the first part.

  For the second part, we need to find the intersection between the lines given by $c=i+c_0(k-1)$ and $c=\frac{(2d-1)(n-1)}{n}k$. Working out the $k$-coordinate, we obtain the second part.
\end{proof}

\subsubsection{Finite generation for higher representation stability}
In general, it is not known how to relate bounds on derived indecomposables with higher representation stability ranges. The only known case is~\cite{gan_linear_2019}, which was exploited in~\cite{miller_$mathrmfi$-hyperhomology_2019} to yield the best known stability range for (primary) representation stability of ordered configuration spaces of manifolds. We expect that similar results as in~\cite{gan_linear_2019} are possible and might be used alongside with the bounds established above to yield precise ranges for higher representation stability. For the time being, we content ourselves with the following more qualitative result.

\begin{thm}
  \label{thm:finite_generation_higher_rep_stab}
  Let $X$ be an equidimensional scheme of dimension $d$, and $c_0, i\geq 0, n\geq 2$ be integers such that $c_0 < \frac{(2d-1)(n-1)}{n}$. Then $\bigoplus_{t=0}^\infty \Ho^{i+tc_0}(\alg{n}^{(c_0-1)}(X))_{t+1}$ is a finitely generated module over $\Sym \Ho^{c_0}(X, \sOmega_X)_1$ (resp. $\Alt \Ho^{c_0}(X, \sOmega_X)_1$) when $c_0$ is even (resp. odd).

  More concretely, when $t\gg 0$, the map
  \[
    \Ho^{c_0}(X, \sOmega_X)_1 \otimes \Ho^{i+(t-1)c_0}(\alg{n}^{(c_0-1)}(X))_{t} \to \Ho^{i+tc_0}(\alg{n}^{(c_0-1)}(X))_{t+1}
  \]
  is surjective. Here, the tensor on the LHS is the one in $\Vect^{\FB}$.
\end{thm}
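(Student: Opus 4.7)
\emph{Setup and overall plan.} The plan is to apply the tower of Proposition~\ref{prop:computing_coChev_as_limit} to $\liealg{n}^{(c_0-1)}(X)$, track the action of the algebra in question through the associated spectral sequence, and combine freeness on the first page with the Noetherianity results of~\cite{sam_grobner_2016} to pass finite generation all the way to the abutment. Concretely, set $V := \Ho^{c_0}(X, \sOmega_X)[-c_0]_1 \in \Vect^{\FBplus}$ and $R := \coChev^{\un}(V[1]) \simeq \Sym V$ (which, under Koszul signs, is a twisted symmetric algebra when $c_0$ is even and a twisted exterior algebra when $c_0$ is odd), so that the morphism of trivial $\coLie$-coalgebras $V[1] \to \liealg{n}^{(c_0-1)}(X)$ picking out the bottom cohomology of the graded-degree-$1$ piece induces exactly the $R$-action on $A := \alg{n}^{(c_0-1)}(X)$ appearing in the statement. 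The goal is to show that the diagonal slice $M := \bigoplus_t \Ho^{i+tc_0}(A)_{t+1}$ is finitely generated as an $R$-module.

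\emph{First-page analysis.} The spectral sequence has first page $E_1 \simeq \Sym(\Ho^*(\liealg{n}^{(c_0-1)}(X))[-1])$, and by construction of the iterative derived indecomposables, the underlying object $\Ho^*(\liealg{n}^{(c_0-1)}(X))[-1]$ splits in $\Vect^{\FBplus}$ as $V \oplus \matheur{R}$, where $\matheur{R}$ collects the graded-degree-$1$ cohomology in degrees $>c_0$ together with the full contribution of graded degrees $\geq 2$. The identity $\Sym(V \oplus \matheur{R}) \simeq R \otimes \Sym \matheur{R}$ realises $E_1$ as a free $R$-module whose module of generators is $\Sym\matheur{R}$. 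A cohomological support estimate—using the bound $c \geq c_0 + 1$ in graded degree $1$ and the estimate $c \geq (2d-1)(k-1)$ from Lemma~\ref{lem:homological_est_a_n}(c) in graded degrees $k \geq n$—shows that $\matheur{R}$, and hence $\Sym \matheur{R}$, lies in the cone $c \geq bk$ for $b = \min((2d-1)(n-1)/n, c_0+1)$. The $R$-generators of the diagonal slice of $E_1$ are therefore identified with $\bigoplus_{k\geq 0} (\Sym\matheur{R})^{i+(k-1)c_0}_k$ and, since the hypothesis $c_0 < (2d-1)(n-1)/n$ forces $b > c_0$, are constrained to lie in graded degrees $k \leq (i-c_0)/(b-c_0)$ with finite-dimensional pieces. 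Thus the diagonal slice of $E_1$ is finitely generated over $R$.

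\emph{Passage to the abutment and surjectivity.} The multiplicativity of the Chevalley filtration $\coChev^i \cdot \coChev^j \subseteq \coChev^{i+j}$ (visible from the polynomial-degree filtration underlying Proposition~\ref{prop:computing_coChev_as_limit}) makes the whole spectral sequence $R$-equivariant with $R$-linear differentials; by the Noetherianity of $R = \Sym V_1$ for the finite-dimensional vector space $V$, established in~\cite{sam_grobner_2016}, each subsequent page retains finite generation on the diagonal slice as a subquotient of the previous one. Bidegree-wise convergence (Proposition~\ref{prop:computing_coChev_as_limit}) then transfers the property to $E_\infty$ and to the abutment $M$. Finally, because $R$ is generated in graded degree $1$ by $V$, any finitely generated $R$-module satisfies $V \cdot M_t = M_{t+1}$ for $t$ exceeding the top graded degree of any generator, which is precisely the concrete surjectivity statement. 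The main technical point that needs careful verification is the $R$-equivariance of the spectral sequence—i.e.\ the multiplicativity of the Chevalley filtration together with the compatibility of the splitting $V \oplus \matheur{R}$ with naturality—after which everything is a clean combination of a first-page support bound and Sam--Snowden Noetherianity along the single diagonal line of slope $c_0$.
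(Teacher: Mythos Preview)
Your approach is essentially identical to the paper's: construct the spectral sequence from the $\coChev$ filtration of Proposition~\ref{prop:computing_coChev_as_limit}, verify finite generation on the first page via the support bound (this is precisely the content of Proposition~\ref{prop:first_flavor_gen_conf_degree_wise}), and pass to $E_\infty$ using Noetherianity.

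One point you gloss over: while you correctly note that $R$ is a twisted \emph{exterior} algebra when $c_0$ is odd, the result you cite from~\cite{sam_grobner_2016} (Corollary~7.1.5, the $\FI_d$ case) covers only the twisted \emph{symmetric} case. The paper treats the odd-$c_0$ case by a separate lemma which reduces Noetherianity of $\Mod_{\Alt V_1}(\Vect^{\FB})$ to that of $\Mod_{\Sym V_1}(\Vect^{\FB})$ via Schur--Weyl duality combined with the transpose involution on $\Rep^{\poly}(\GL_\infty)$. You should either supply this reduction or cite a source handling the anti-commutative case explicitly; without it, your invocation of~\cite{sam_grobner_2016} does not literally apply when $c_0$ is odd.
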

\begin{proof}
  From Proposition~\ref{prop:computing_coChev_as_limit}, we know that there exists a spectral sequence of twisted commutative algebras converging to~$\Ho^*(\alg{n}^{(c_0-1)})$ with the first page being $E_1^{p,q} = \Ho^{p+q}(\Sym^p (\liealg{n}^{(c_0-1)}(X)[-1]))$. In other words, the cohomology groups in page $1$ is exactly the situation encountered in Proposition~\ref{prop:first_flavor_gen_conf_degree_wise}, and hence, we obtain finite generation statements for $\Sym \Ho^{c_0}(X, \sOmega_X)_1$-, resp. $\Alt \Ho^{c_0}(X, \sOmega_X)_1$-, modules appearing on the first page.

  The corresponding finite generation statement for page infinity follows from the fact that the (abelian) categories of modules over a free twisted commutative/anti-commutative algebras generated in graded degree $1$ are locally Noetherian. The commutative case (a.k.a. the $\FI_d$ case) follows from~\cite{sam_grobner_2016}*{Corollary 7.1.5}. The anti-commutative case follows from Lemma~\ref{lem:rohit_nagpal} below.

  The second statement is just a rephrase of the first.
\end{proof}

The following lemma establishes local Noetherianity of the abelian category of modules over a free twisted anti-commutative ring. The statement and proof work in the abelian (as opposed to derived) setting. Thus, to avoid clustered notation, for this lemma, we will adopt the convention that all categories that appear are abelian. For example, we use $\Vect^{\FB}$ to denote the abelian category of $\FB$-modules, and for any twisted algebra $A \in \Vect^{\FB}$, $\Mod_A(\Vect^{\FB})$ to denote the abelian category of modules over $A$.

\begin{lem}\label{lem:rohit_nagpal}
  Let $V$ be a finite dimensional vector space over $\Lambda$ and $A = \Alt V_1$, the free twisted anti-commutative algebra generated by $V$ placed in graded degree $1$ in $\Vect^{\FB}$. Then, the (abelian) category $\Mod_A(\Vect^{\FB})$ of modules over $A$ is locally Noetherian.
\end{lem}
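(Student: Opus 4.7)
The plan is to reduce the anti-commutative case to the commutative case via a sign-twist auto-equivalence of $\Vect^{\FB}$, and then invoke the cited Noetherianity result for $\FI_d$-modules.

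First, I would introduce the sign-twist functor $T\colon \Vect^{\FB} \to \Vect^{\FB}$ defined by $T(M)_n = M_n \otimes \mathrm{sgn}_n$, where $\mathrm{sgn}_n$ denotes the sign representation of $S_n$. This is manifestly an abelian auto-equivalence (indeed, an involution), so it suffices to upgrade it compatibly with the relevant algebraic structures.

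The key step is to verify that $T$ is monoidal with respect to the Day convolution $\otimes$ on $\Vect^{\FB}$, i.e.\ to produce a natural isomorphism $T(A \otimes B) \cong T(A) \otimes T(B)$ for all $A, B \in \Vect^{\FB}$. This reduces to the projection formula
\[
\mathrm{Ind}_{S_m \times S_n}^{S_{m+n}}(W) \otimes \mathrm{sgn}_{m+n} \;\cong\; \mathrm{Ind}_{S_m \times S_n}^{S_{m+n}}\bigl(W \otimes (\mathrm{sgn}_m \boxtimes \mathrm{sgn}_n)\bigr),
\]
together with the restriction identity $\mathrm{sgn}_{m+n}|_{S_m \times S_n} \cong \mathrm{sgn}_m \boxtimes \mathrm{sgn}_n$. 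I expect this to be the main technical point, requiring careful sign bookkeeping; note that the symmetric monoidal structure on $T$ is actually twisted by a Koszul sign $(-1)^{mn}$, but only the underlying monoidal (not symmetric monoidal) structure is needed for transferring algebras and their module categories.

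Next, I would identify $T(\Alt V_1) \cong \Sym V_1$ as objects of $\ComAlg(\Vect^{\FB})$. By Lemma~\ref{lem:free_twisted_commutative_alg} and its evident analogue for $\Alt$, the graded degree $n$ component of $\Alt V_1$ is $V^{\otimes n}$ with $S_n$ acting by permutation twisted by $\mathrm{sgn}_n$; tensoring with $\mathrm{sgn}_n$ gives $V^{\otimes n}$ with the plain permutation action, which is the graded degree $n$ component of $\Sym V_1$. The monoidal structure on $T$ constructed above guarantees that this degreewise isomorphism intertwines the multiplications.

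Finally, since $T$ is a monoidal auto-equivalence sending $\Alt V_1$ to $\Sym V_1$, it induces an equivalence of abelian categories
\[
\Mod_{\Alt V_1}(\Vect^{\FB}) \;\simeq\; \Mod_{\Sym V_1}(\Vect^{\FB}).
\]
The right-hand side is precisely the category of $\FI_{\dim V}$-modules, which is locally Noetherian by \cite{sam_grobner_2016}*{Corollary 7.1.5}. Transporting along the equivalence yields local Noetherianity of $\Mod_{\Alt V_1}(\Vect^{\FB})$, completing the proof.
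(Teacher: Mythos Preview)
Your proof is correct and in spirit the same as the paper's, but your route is more direct. The paper passes through Schur--Weyl duality twice: it transports $\Mod_{\Alt V_1}(\Vect^{\FB})$ to a module category over $\Alt(\Lambda^\infty \otimes V)$ in $\Rep^{\poly}(\GL_\infty)$, applies the \emph{transpose duality} of Sam--Snowden there (which exchanges $\Alt$ and $\Sym$), and then transports back to $\Mod_{\Sym V_1}(\Vect^{\FB})$. Your sign-twist $T(M)_n = M_n \otimes \mathrm{sgn}_n$ is exactly what the transpose duality becomes under Schur--Weyl, so you are performing the same maneuver intrinsically on the $\FB$ side without the detour. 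This is arguably cleaner: you avoid invoking Schur--Weyl at all, and the projection-formula verification you outline is elementary. The paper's route has the advantage of citing the transpose duality as a black box from \cite{sam_introduction_2012}, so no sign bookkeeping is written down. One small imprecision: you write ``$T(\Alt V_1) \cong \Sym V_1$ as objects of $\ComAlg(\Vect^{\FB})$'', but since you (rightly) only claim a monoidal structure on $T$, this identification is a priori only as associative algebras; that is all you need for the module-category equivalence, so the argument goes through unchanged.
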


We thank R. Nagpal for help with the following proof.

\begin{proof}
  By Schur--Weyl duality, see~\cite{sam_introduction_2012}*{Thm. 5.4.1}, $\Mod_A(\Vect^{\FB})$ is equivalent to $\Mod_B(\Rep^{\poly}(\GL_\infty))$ where $B = \Alt(\Lambda^{\infty} \otimes V)$ and where $\Rep^{\poly}(\GL_\infty)$ is defined in~\cite{sam_introduction_2012}*{\S5.2}. By the transpose duality, see~\cite{sam_introduction_2012}*{\S7.4}, we see that $\Mod_B(\Rep^{\poly}(\GL_\infty))$ is equivalent to $\Mod_{B'}(\Rep^{\poly}(\GL_\infty))$ where $B' = \Sym(\Lambda^{\infty} \otimes V)$. Applying Schur--Weyl duality again, we see that $\Mod_{B'}(\Rep^{\poly}(\GL_\infty))$ is equivalent to $\Mod_{\Sym(V)}(\Vect^{\FB})$. Since the latter is locally Noetherian by~\cite{sam_grobner_2016}*{Corollary 7.1.5}, so is the former and we are done.
\end{proof}

\subsection{Higher representation stability: the $\TopTriv_m$ case}
\label{subsec:higher_rep_stab_TopTrivm}
We will next turn to the blue strip in Figure~\ref{fig:iterative_rep_stability}, assuming that $X$ satisfies $\TopTriv_m$ for some $m\geq 1$ (Figure~\ref{fig:iterative_rep_stability} above illustrates the situation where $X$ satisfies $\TopTriv_3$). This is an analog of the type of secondary representation stability obtained by introducing a pair of orbiting points near the boundary of an open manifold studied in~\cite{miller_higher_2016}.

\subsubsection{}
By Proposition~\ref{prop:splitting_n=2} we have a splitting in $\coLie(\Vect^{\FBplus})$
\[
  \liealg{2}(X) \simeq \liealgOp{2, \leq m}(X) \oplus \liealgGen{2, \leq m}(X) \simeq \bigoplus_{k=1}^m \liealgOp{2, k}(X) \oplus \liealgGen{2, \leq m}(X).
\]
The blue strip illustrates the support of $\liealgOp{2, \leq m}(X)[-1]$ while the support of $\liealgGen{2, \leq m}(X)[-1]$ consists of points marked by $\times$ lying between $L_B$ and $L_1$ and outside of the blue strip. Hence
\[
  \coChev^{\un}(\liealg{2}(X)) \simeq \coChev^{\un}(\liealgOp{2, \leq m}(X)) \otimes \coChev^{\un}(\liealgGen{2, \leq m}(X)),
\]
which means that $\alg{2}(X) \simeq \coChev^{\un}(\liealg{2}(X))$ is a free module over $\coChev^{\un}(\liealgOp{2, \leq m}(X))$ generated by $\coChev^{\un}(\liealgGen{2, \leq m}(X))$. Moreover, as discussed above, the cohomological support of $\coChev^{\un}(\liealgGen{2, \leq m}(X))$ lies in the smallest cone through the origin containing the support of $\liealgGen{2, \leq m}(X)[-1]$.

For the purpose of representation stability, we are mainly interested in the upper bound of the graded degree of $\coChev^{\un}(\liealgGen{2, \leq m}(X))$. Indeed, let $s\geq 0$ be such that $\Ho^k(X, \sOmega_X) = 0$ for all $k \in (0, s)$ and
\[
  b = \min\left(\frac{(2d-1)m}{m+1}, s\right)
\]
then it is easy to see that the support of $\coChev^{\un}(\liealgGen{2, \leq m}(X))$ lies below (and possibly including) the line $c = bk$. In other words, for each cohomological degree $c$, $\Ho^c(\coChev^{\un}(\liealgGen{2, \leq m}(X)))$ lives in graded degrees $\leq c/b$.

\subsubsection{}
The twisted commutative algebra $\coChev^{\un}(\liealgOp{2, \leq m}(X))$ itself can be explicitly described in terms of the cohomology of the ordered configuration spaces of $\mathbb{A}^d$. When $X$ is irreducible, by Remark~\ref{rmk:liealgOp_irred_case} and Corollary~\ref{cor:a_2_vs_coh_P_2}, we have
\[
  \liealgOp{2, \leq m}(X) \simeq \bigoplus_{k=1}^m (\liealgOr{2}[2d](d))_k \simeq \bigoplus_{k=1}^m \Ho^{(2d-1)(k-1)}(P^k_2(\mathbb{A}^d))[-(2d-1)(k-1)+1]
\]
and hence,
\[
  \coChev^{\un}(\liealgOp{2, \leq m}(X)) \simeq \Sym(\liealgOp{2, \leq m}(X)[-1]) \simeq \bigotimes_{k=1}^m \Sym(\Ho^{(2d-1)(k-1)}(P^k_2(\mathbb{A}^d))[-(2d-1)(k-1)]_k).
\]
Note that this is the same as~\cite{miller_higher_2016}*{Definition 3.29}.

The discussion above thus leads to the following theorem.

\begin{thm} \label{thm:stability_Tm}
  Let $X$ be an irreducible scheme of dimension $d$ which satisfies $\TopTriv_m$ for some $m\geq 1$. Then the cohomology of the ordered configuration spaces of $X$, $\alg{2}(X) = \bigoplus_{k=0}^\infty C^*(P^k_2(X), \sOmega_{P^k_2(X)})$, is a free module over the free twisted commutative algebra
  \[
    \algOp{2, \leq m}(X) = \bigotimes_{k=1}^m \Sym(\Ho^{(2d-1)(k-1)}(P^k_2(\mathbb{A}^d))[-(2d-1)(k-1)]_k),
  \]
  generated by $\algGen{2, \leq m}(X) = \coChev^{\un}(\liealgGen{2, \leq m}(X))$, with $\liealgGen{2, \leq m}(X)$ given in Proposition~\ref{prop:splitting_n=2}, i.e.\footnote{As before, the tensor product is taken in $\Vect^{\FB}$.}
  \[
    \alg{2}(X) \simeq \algOp{2, \leq m}(X) \otimes \algGen{2, \leq m}(X).
  \]
  Moreover, if $s\geq 1$ be such that $\Ho^k(X, \sOmega_X) = 0$ for all $k \in (0, s)$ and
  \[
    b = \min\left(\frac{(2d-1)m}{m+1}, s\right),
  \]
  then the support of $\algGen{2, \leq m}(X)$ lies below (and possibly including) the line $c=bk$ where $c$ and $k$ denote cohomological and graded degrees respectively. In other words, $\Ho^c(\algGen{2, \leq m}(X))$ lives in graded degrees $\leq c/b$.
\end{thm}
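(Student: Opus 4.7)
The plan is to assemble the theorem from results already built up in the paper: the Koszul-dual presentation of $\alg{2}(X)$ from Theorem~\ref{thm:coh_conf_vs_coChev}, the splitting in Proposition~\ref{prop:splitting_n=2}, and the explicit computation of $\liealgOr{2}$ in Corollary~\ref{cor:a_2_vs_coh_P_2}. The only thing that requires any genuine work is the support bound for the generators $\algGen{2,\leq m}(X)$, and even that reduces to tracking a cone.

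First I would recall from Theorem~\ref{thm:coh_conf_vs_coChev} that $\alg{2}(X) \simeq \coChev^{\un}(\liealg{2}(X))$. By Proposition~\ref{prop:splitting_n=2}, applied under the hypothesis $\TopTriv_m$, there is a direct sum decomposition
\[
\liealg{2}(X) \simeq \liealgOp{2,\leq m}(X) \oplus \liealgGen{2,\leq m}(X)
\]
in $\coLie(\Vect^{\FBplus})$. Applying $\coChev^{\un}$, which sends coproducts in $\coLie$ to coproducts in $\ComAlg^{\un,\aug}(\Vect^{\FBplus})$, i.e.\ tensor products in $\Vect^{\FB}$ (see~\eqref{eq:coChev_coprod_unital}), yields
\[
\alg{2}(X) \simeq \coChev^{\un}(\liealgOp{2,\leq m}(X)) \otimes \coChev^{\un}(\liealgGen{2,\leq m}(X)).
\]
Since $\liealgOp{2,\leq m}(X)$ is defined to carry the trivial $\coLie$-coalgebra structure, Proposition~\ref{prop:koszul_duality_pro-nilp} gives $\coChev^{\un}(\liealgOp{2,\leq m}(X)) \simeq \Sym(\liealgOp{2,\leq m}(X)[-1])$, so we obtain a free module presentation with $\algGen{2,\leq m}(X) := \coChev^{\un}(\liealgGen{2,\leq m}(X))$ as generators.

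Next I would identify the operator algebra with the stated tensor product over cohomology of $P^k_2(\mathbb{A}^d)$. Because $X$ is irreducible, $\Ho^{-2d}(X,\omega_X)[2d]$ is one dimensional, so by the definition~\eqref{eq:defn_liealgOp} and Remark~\ref{rmk:liealgOp_irred_case}, $\liealgOp{2,\leq m}(X) \simeq \bigoplus_{k=1}^{m} (\liealgOr{2}[2d](d))_k$. Substituting the explicit formula $(\liealgOr{2}[2d](d))_k \simeq \Ho^{(2d-1)(k-1)}(P^k_2(\mathbb{A}^d))[-(2d-1)(k-1)+1]_k$ from Corollary~\ref{cor:a_2_vs_coh_P_2}, shifting by $[-1]$, and applying $\Sym$ in $\Vect^{\FB}$ yields exactly $\algOp{2,\leq m}(X)$ as stated.

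The last step is the degree bound for $\algGen{2,\leq m}(X)$. From Proposition~\ref{prop:splitting_n=2}, the support of $\liealgGen{2,\leq m}(X)_I$ in cohomological degree lies in $[(2d-1)|I|-2d+1,(2d-1)|I|-1]$ for $|I|\le m$ and in $[(2d-1)|I|-2d,(2d-1)|I|-1]$ for $|I|>m$; after the shift by $[-1]$ needed to pass to $\coChev^{\un}$ via Proposition~\ref{prop:computing_coChev_as_limit}, the top cohomological degree occupied by $\liealgGen{2,\leq m}(X)[-1]_k$ is $(2d-1)k$, while the candidate bottom degrees are $(2d-1)k-2d+1$ for $k\le m$ and $(2d-1)k-2d$ for $k>m$. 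By construction, $\liealgGen{2,\leq m}(X)$ is obtained from $\liealg{2}(X) = C^*(X,\sOmega_X)\otimes\liealgOr{2}$ by removing the top line for $k\le m$, so the piece in graded degree $k\le m$ is built from $\Ho^{\ge s}(X,\sOmega_X)\otimes(\liealgOr{2})_k$ under the vanishing hypothesis on $X$; hence the bottom cohomological degree of $\liealgGen{2,\leq m}(X)[-1]_k$ is at least $\min((2d-1)k-2d+s,\; (2d-1)k-2d+1+1)$ for $k\le m$ and at least $(2d-1)k-2d+1$ for $k>m$. The smallest slope of any ray through the origin passing through the support is therefore at least $b=\min\!\bigl(\frac{(2d-1)m}{m+1},s\bigr)$, the worst case being the transition point at $k=m+1$ responsible for the first term and the low-cohomology piece at $k=1$ responsible for the second. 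Finally, Proposition~\ref{prop:computing_coChev_as_limit} expresses $\coChev(\liealgGen{2,\leq m}(X))$ as a stabilizing limit whose graded pieces are sub-quotients of $\Sym(\liealgGen{2,\leq m}(X)[-1])$, whose support is contained in the cone through the origin spanned by the support of $\liealgGen{2,\leq m}(X)[-1]$. Hence the support of $\algGen{2,\leq m}(X)$ lies below the line $c=bk$, as claimed.

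The only non-bookkeeping step is the degree-count producing $b$; the main obstacle there is keeping straight the interaction between the $[-1]$ shift in $\coChev$, the cohomological vanishing of $\Ho^k(X,\sOmega_X)$ below $s$, and the bifurcated bottom-degree bound from Proposition~\ref{prop:splitting_n=2} at the threshold $|I|=m$. A careful case analysis comparing the slopes of the two constraint lines $c=sk$ and $c=(2d-1)k-2d+1$ (the latter saturated only at $k=m+1$) extracts the minimum $b=\min\!\bigl(\tfrac{(2d-1)m}{m+1},s\bigr)$.
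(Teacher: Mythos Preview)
Your proof is correct and follows essentially the same route as the paper: invoke Theorem~\ref{thm:coh_conf_vs_coChev} for the Koszul-dual presentation, apply the splitting of Proposition~\ref{prop:splitting_n=2} together with~\eqref{eq:coChev_coprod_unital} to get the free-module decomposition, identify the operator algebra via Remark~\ref{rmk:liealgOp_irred_case} and Corollary~\ref{cor:a_2_vs_coh_P_2}, and then read off the support of $\algGen{2,\leq m}(X)$ from the smallest cone through the origin containing the support of $\liealgGen{2,\leq m}(X)[-1]$, using Proposition~\ref{prop:computing_coChev_as_limit}. There are a couple of harmless off-by-one slips in your intermediate degree bookkeeping (e.g.\ the shifted bottom degree for $k\le m$ should be $(2d-1)k-2d+2$ rather than $(2d-1)k-2d+1$), but the extremal slopes you extract at $k=1$ and $k=m+1$ are correct and give the stated $b$.
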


The corollary below follows immediately from Theorem~\ref{thm:stability_Tm}.

\begin{cor} \label{cor:Tm_image_spanned}
  Let $X$ be an irreducible scheme of dimension $d$ which satisfies $\TopTriv_m$ for some $m \geq 1$. Then, for each $c, k \geq 0$ such that $k > c/b$ (with $b$ being given in Theorem~\ref{thm:stability_Tm}) $\Ho^c(\alg{2}(X))_k$ is spanned by image of actions of $\algOp{2, \leq m}(X)$ on lower graded and cohomological degree pieces of $\Ho^*(\alg{2}(X))$.
\end{cor}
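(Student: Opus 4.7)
The plan is to unpack Theorem~\ref{thm:stability_Tm} directly. That theorem gives an isomorphism of unital twisted commutative algebras
\[
    \alg{2}(X) \;\simeq\; \algOp{2, \leq m}(X) \otimes \algGen{2, \leq m}(X)
\]
in $\Vect^{\FB}$. Taking $\Ho^c(-)_k$ and using the fact that Day convolution in $\Vect^{\FB}$ combined with the \Kunneth{} formula splits along bidegrees, I would obtain a decomposition
\[
    \Ho^c(\alg{2}(X))_k \;\simeq\; \bigoplus_{\substack{c_1 + c_2 = c \\ k_1 + k_2 = k}} \Ind_{S_{k_1}\times S_{k_2}}^{S_k}\!\bigl(\Ho^{c_1}(\algOp{2, \leq m}(X))_{k_1} \otimes \Ho^{c_2}(\algGen{2, \leq m}(X))_{k_2}\bigr).
\]

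Next I would identify the two ``halves'' of this sum. The summands with $k_1 > 0$ are tautologically in the image of the multiplication map $\algOp{2, \leq m}(X)_{>0} \otimes \alg{2}(X) \to \alg{2}(X)$; they are obtained by acting by a positive-graded-degree element of $\algOp{2, \leq m}(X)$ on a piece of $\Ho^{c - c_1}(\alg{2}(X))_{k - k_1}$, which has strictly smaller graded degree $k - k_1 < k$ (and cohomological degree $\leq c$). For the summands with $k_1 = 0$, the explicit $\Sym$-presentation of $\algOp{2, \leq m}(X)$ in Theorem~\ref{thm:stability_Tm} has all of its generators concentrated in graded degrees $\geq 1$, so the graded-degree-zero component of $\algOp{2, \leq m}(X)$ is just the unit $\Lambda$ in cohomological degree $0$. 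Consequently the full $k_1 = 0$ contribution to the decomposition above collapses to $\Ho^c(\algGen{2, \leq m}(X))_k$.

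Putting the two observations together, the quotient of $\Ho^c(\alg{2}(X))_k$ by the image of the action of $\algOp{2, \leq m}(X)$ on strictly lower graded-degree pieces is exactly $\Ho^c(\algGen{2, \leq m}(X))_k$. The corollary then reduces to the vanishing statement $\Ho^c(\algGen{2, \leq m}(X))_k = 0$ whenever $k > c/b$, which is precisely the final support bound asserted by Theorem~\ref{thm:stability_Tm}. There is no genuine obstacle; the only point requiring care is the identification of the $k_1 = 0$ summand as the ``generators'' piece, and this is immediate from the freeness of $\algOp{2, \leq m}(X)$ and the positivity of its generating graded degrees.
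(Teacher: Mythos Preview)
Your argument is correct and is exactly the unpacking of ``follows immediately from Theorem~\ref{thm:stability_Tm}'' that the paper leaves implicit. The only minor remark is that the corollary's phrase ``lower graded and cohomological degree'' should be read as strictly lower graded degree and weakly lower cohomological degree (since the graded-degree-$1$ generator of $\algOp{2,\leq m}(X)$ sits in cohomological degree $0$), which is precisely what your decomposition gives.
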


The $m=1$ case of the Corollary above is a generalization of~\cite{miller_higher_2016}*{Theorem 3.27} and~\cite{church_fi-modules_2015} beyond the case where $X$ is a manifold (see also~\S\ref{subsubsec:use_scheme_as_default}). Indeed, the following is a direct consequence of the Corollary above.

\begin{cor} \label{cor:T1_image_spanned}
  Let $X$ be a non-proper irreducible scheme of dimension $d$, i.e. $X$ satisfies $\TopTriv_1$. Then, for each $c\geq 0$, $\Ho^c(\alg{2}(X))$ is, as an $\FI$-module, finitely freely generated and has generators in degrees $\leq 2c$ when $d=1$ and $\leq c$ when $d>1$.
\end{cor}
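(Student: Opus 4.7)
The plan is to derive Corollary~\ref{cor:T1_image_spanned} directly from Theorem~\ref{thm:stability_Tm} specialized to $m=1$, coupled with the standard identification between $\FI$-modules and modules over the free twisted commutative algebra on a single generator placed in graded degree~$1$. First I would verify that the hypothesis applies: for irreducible $X$, non-properness is equivalent to $\Ho^0_c(X, \Lambda) = 0$, so $X$ satisfies $\TopTriv_1$, and Theorem~\ref{thm:stability_Tm} applies with $m=1$. By the formula there,
\[
	\algOp{2, \leq 1}(X) = \Sym\bigl(\Ho^0(P^1_2(\mathbb{A}^d))[0]_1\bigr) \simeq \Sym(\Lambda_1),
\]
since $P^1_2(\mathbb{A}^d) = \mathbb{A}^d$ has one-dimensional cohomology in degree $0$. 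Thus the theorem produces an equivalence $\alg{2}(X) \simeq \Sym(\Lambda_1) \otimes \algGen{2, \leq 1}(X)$ of $\Sym(\Lambda_1)$-modules in $\Vect^{\FB}$.

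Next, I would translate this free-ness into the $\FI$-module language. Since $\Sym(\Lambda_1)$-module structures in $\Vect^{\FB}$ coincide with $\FI$-module structures (see~\cite{sam_introduction_2012}), the equivalence above exhibits $\Ho^c(\alg{2}(X))$, for each fixed $c$, as a free $\FI$-module with generators given by $\Ho^c(\algGen{2, \leq 1}(X))$.

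The remaining step is to read off the degree bound from Theorem~\ref{thm:stability_Tm}. The theorem asserts that the generators $\algGen{2, \leq m}(X)$ are supported on and below the line $c = bk$, where $b = \min\bigl(\tfrac{(2d-1)m}{m+1}, s\bigr)$ for any integer $s\geq 1$ with $\Ho^k(X, \sOmega_X) = 0$ for $k \in (0, s)$. The choice $s = 1$ is always admissible because the open interval $(0, 1)$ contains no integers, so with $m = 1$ we obtain $b = \min\bigl(\tfrac{2d-1}{2}, 1\bigr)$, which equals $\tfrac{1}{2}$ when $d = 1$ and equals $1$ when $d > 1$. Consequently, for each cohomological degree $c$, the piece $\Ho^c(\algGen{2, \leq 1}(X))$ is concentrated in graded degrees $k \leq c/b$, i.e. $k \leq 2c$ when $d=1$ and $k \leq c$ when $d>1$.

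Finally, finite generation amounts to checking that, for each cohomological degree $c$, $\Ho^c(\algGen{2, \leq 1}(X))$ is a finite dimensional object of $\Vect^{\FB}$. This is immediate from the bound above (only finitely many graded degrees contribute) together with Proposition~\ref{prop:splitting_n=2}, which controls the cohomological amplitude of each graded piece of $\liealgGen{2, \leq 1}(X)$, combined with the finite dimensionality of $\Ho^*(X, \omega_X)$ for $X$ of finite type and the finite dimensionality of each $(\liealgOr{2})_I$ recorded in Lemma~\ref{lem:a_2_is_free}. No genuine obstacle is anticipated beyond assembling these ingredients; the only point requiring care is the justification that $s = 1$ is a valid choice in Theorem~\ref{thm:stability_Tm} without any additional hypothesis on $X$.
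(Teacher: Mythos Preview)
Your proposal is correct and follows essentially the same route as the paper, which simply states that the corollary is a direct consequence of Corollary~\ref{cor:Tm_image_spanned} (itself immediate from Theorem~\ref{thm:stability_Tm}); you have just unpacked the specialization $m=1$ and the valid choice $s=1$ explicitly. The one point worth making explicit in your write-up is that $\algOp{2,\leq 1}(X)\simeq\Sym(\Lambda_1)$ sits entirely in cohomological degree~$0$, so the tensor decomposition of $\alg{2}(X)$ passes cleanly to each $\Ho^c$, yielding free $\FI$-modules degree by degree.
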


\subsubsection{}
From the above, we see, in particular, that when $X$ satisfies $\TopTriv_m$, $\alg{2}(X)$ is a free module over
\[
  \algOp{2, k}(X) = \Sym(\Ho^{(2d-1)(k-1)}(P^k_2(\mathbb{A}^d))[-(2d-1)(k-1)]_k)
\]
for each $k$ such that $1 \leq k \leq m$. Usually in (higher) homological/representation, the actions of these algebras are considered iteratively as we mentioned at the beginning of this subsection. This can also be done here: one first considers the action of $\algOp{2, 1}(X)$, which equips $\alg{2}(X)$ with the structure of a free $\FI$-module. $\FI$-homology of $\alg{2}(X)$, which is given by $\Lambda_0 \otimes_{\algOp{2, 1}(X)} \alg{2}(X)$,\footnote{\label{ftn:hyper-FI-homology_vs_FI-homology} Strictly speaking, this computes the $\FI$-hyperhomology. However, since the module involved is free, $\Ho^*(\Lambda_0 \otimes_{\algOp{2, 1}(X)} \alg{2}(X))$ computes precisely the $\FI$-homology of $\Ho^*(\alg{2}(X))$.}
and which measures the failure of representation stability, then inherits the structure of a free module over $\algOp{2, 2}(X)$. One can iterate this process and consider
\[
  \Lambda_0 \otimes_{\algOp{2, m}(X)} \Lambda_0 \otimes_{\algOp{2, m-1}(X)} \cdots \otimes_{\algOp{2, 1}(X)} \alg{2}(X),
\]
which could be thought of as taking higher analogs of $\FI$-homology iteratively.\footnote{Again, a priori, these compute higher analogs of $\FI$-hyperhomology rather than of $\FI$-homology. However, since the modules involved are free, the two coincide. See also footnote~\ref{ftn:hyper-FI-homology_vs_FI-homology}.} But the resulting object is easily seen to be
\[
  \Lambda_0 \otimes_{\algOp{2, \leq m} (X)} \alg{2}(X) \simeq \coChev^{\un}(\liealgGen{2, \leq m}) = \algGen{2, \leq m}(X).
\]
We thus obtain the following, which resolves~\cite{miller_higher_2016}*{Conjecture~3.31} in the case where $X$ satisfies $\TopTriv_m$.

\begin{cor} \label{cor:miller_wilson_conj}
  Let $X$ be an irreducible scheme of dimension $d$ which satisfies $\TopTriv_m$ for some $m \geq 1$. Then, for each $c, k \geq 0$ such that $k > c/b$ ($b$ is given in Theorem~\ref{thm:stability_Tm}),
  \[
    \Ho^c(\Lambda_0 \otimes_{\algOp{2, m}(X)} \Lambda_0 \otimes_{\algOp{2, m-1}(X)} \cdots \otimes_{\algOp{2, 1}(X)} \alg{2}(X))_k = 0.
  \]
\end{cor}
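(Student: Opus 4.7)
The plan is to reduce the iterated relative tensor product to a single relative tensor product over the combined operator algebra, and then invoke the free-module structure from Theorem~\ref{thm:stability_Tm} to identify the result with $\algGen{2,\leq m}(X)$, where the support bound has already been established.

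First I would verify that the iterated base change collapses. The key point is that in $\ComAlg^{\un,\aug}(\Vect^{\FB})$, the coproduct is computed by tensor product, so the canonical maps $\algOp{2,k}(X) \to \algOp{2,\leq m}(X)$ exhibit $\algOp{2,\leq m}(X) \simeq \bigotimes_{k=1}^m \algOp{2,k}(X)$. A standard associativity argument for relative tensors (iteratively base-changing along each coproduct factor) then identifies
\[
\Lambda_0 \otimes_{\algOp{2,m}(X)} \Lambda_0 \otimes_{\algOp{2,m-1}(X)} \cdots \otimes_{\algOp{2,1}(X)} \alg{2}(X) \simeq \Lambda_0 \otimes_{\algOp{2,\leq m}(X)} \alg{2}(X).
\]
This uses only that each $\algOp{2,k}(X)$ acts freely and that $\Lambda_0$ is the augmentation module.

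Next, by Theorem~\ref{thm:stability_Tm}, the $\TopTriv_m$ hypothesis gives a splitting $\alg{2}(X) \simeq \algOp{2,\leq m}(X) \otimes \algGen{2,\leq m}(X)$ as modules over $\algOp{2,\leq m}(X)$, where the action on the right-hand factor is trivial. Therefore
\[
\Lambda_0 \otimes_{\algOp{2,\leq m}(X)} \alg{2}(X) \simeq \algGen{2,\leq m}(X) = \coChev^{\un}(\liealgGen{2,\leq m}(X)).
\]
Combining this with the previous step gives a canonical equivalence between the iterated tensor product appearing in the statement and $\algGen{2,\leq m}(X)$.

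Finally, Theorem~\ref{thm:stability_Tm} already records that the cohomological support of $\algGen{2,\leq m}(X)$ lies on or below the line $c = bk$; equivalently, $\Ho^c(\algGen{2,\leq m}(X))_k = 0$ whenever $k > c/b$. Substituting the equivalence from the previous paragraph yields the desired vanishing. The only nontrivial step is the first one (checking that iterated relative tensors compose into a single one over $\algOp{2,\leq m}(X)$); this is formal and follows from the coproduct description above together with the fact that base change is functorial in the algebra variable, so there is no real obstacle.
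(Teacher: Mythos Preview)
Your proposal is correct and matches the paper's own argument essentially verbatim: the paper also collapses the iterated relative tensor to $\Lambda_0 \otimes_{\algOp{2,\leq m}(X)} \alg{2}(X)$, invokes the free-module splitting of Theorem~\ref{thm:stability_Tm} to identify this with $\algGen{2,\leq m}(X)$, and then reads off the support bound. You supply slightly more justification for the collapsing step (via the coproduct description $\algOp{2,\leq m}(X) \simeq \bigotimes_k \algOp{2,k}(X)$), which the paper leaves implicit.
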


\begin{rmk}\label{rmk:numerology_vs_miller_wilson}
  Corollary~\ref{cor:miller_wilson_conj} might seem different from~\cite{miller_higher_2016}*{Conjecture 3.31}. However, these are mostly cosmetic and are similar to the differences between Corollary~\ref{cor:factoring_out_first_line_k=1} and Corollary~\ref{cor:conf_spaces_k=1_i-acyclic}. We will now comment on these differences in more details.

  \begin{myenum}{(\roman*)}
  \item The first difference is that instead of the chain $\alg{2}(X)$, $\Ho^*(\alg{2}(X))$ is used and the (co)homological degrees are kept tracked of throughout. However, the two yield the same result since the modules involved are free in our case.

  \item The second difference is that the numbers involved in our statement seem simpler than those of~\cite{miller_higher_2016}. This is due to the fact that there is a re-indexing of the objects involved in that paper: note the appearance of $|S|$ in the homological degree in their definition of $\matheur{W}[d]^M_i(S)$. Since their paper considers each cohomological degree separately, this is necessary to make sure that the result of a multiplication (which is used to witness higher representation stability) lands in the correct place.

  \item Finally, because of the re-indexing, the vanishing statement in the conclusion of the conjecture is stronger than ours. In terms of Figure~\ref{fig:iterative_rep_stability} above, our vanishing statement applies to vertical lines (i.e. fixed cohomological degrees) whereas theirs applies to a skew line with slope given by the line through the origin and the support of $\liealgOp{2, m}(X)[-1]$ (note that our $m$ is their $d$). This necessitates the need for extra vanishing assumption on the cohomology of $X$. It is easy to check that once we add this extra vanishing condition to Corollary~\ref{cor:miller_wilson_conj}, we also obtain this stronger vanishing result. See also the discussion in~\S\ref{subsubsec:first_flavor_i-acyclic}.
  \end{myenum}
\end{rmk}

\section*{Acknowledgments}
This work took the current shape when the author visited O. Randal-Williams, who explained to him the twisted commutative algebra point of view for $\FI$-modules, which fits perfectly with the formalism of ``non-commutative'' $\Ran$ space (which is now called twisted commutative $\Ran$ space in this paper) the author was developing. The author would like to thank Randal-Williams for his encouragement and many illuminating conversations, and the University of Cambridge for their hospitality.

The author thanks J. Miller for many helpful conversations around Theorem~\ref{thm:finite_generation_higher_rep_stab} and for many comments which help improve the introduction. The author also thanks R. Nagpal for informing him about the Noetherianity statements in~\cite{sam_grobner_2016} and the proof of Lemma~\ref{lem:rohit_nagpal}.

The author also thanks the anonymous referee for many helpful comments and suggestions which greatly helps improve the paper.

The author gratefully acknowledges the support of the Lise Meitner fellowship, Austrian Science Fund (FWF): M 2751, and the Hong Kong RGC GRF grants 16304923 and 16301324.

\bibliography{higher_rep_stab_conf}
\end{document}